\long\def\symbolfootnote[#1]#2{\begingroup
\def\thefootnote{\fnsymbol{footnote}}\footnote[#1]{#2}\endgroup}
\newtheorem*{theorem*}{Theorem}
\newtheorem{theorem}{Theorem}[section]
\newtheorem{lem}[theorem]{Lemma}
\newtheorem{thm}[theorem]{Theorem}
\newtheorem{prop}[theorem]{Proposition}
\newtheorem{cor}[theorem]{Corollary}
\theoremstyle{definition}
\newtheorem{rmk}[theorem]{Remark}
\newtheorem{claim}{Claim}
\newtheorem{dfn}[theorem]{Definition}
\newcommand{\R}{\mathbb{R}}
\newcommand{\Z}{\mathbb{Z}}
\newcommand{\Out}{\mathrm{Out}}
\newcommand{\Aut}{\mathrm{Aut}}
\newcommand{\ord}{\mathrm{ord}}
\newcommand{\st}{\mathrm{st}}
\newcommand{\lk}{\mathrm{lk}}
\renewcommand{\L}{\mathcal{L}}
\newcommand{\X}{\mathcal{X}}
\newcommand{\Y}{\mathcal{Y}}
\renewcommand{\S}{\mathcal{S}}
\renewcommand{\P}{\mathcal{P}}
\def\link-preserving{}
\renewcommand{\leq}{\leqslant}
\def\im{\mathrm{im}}
\def\into{\hookrightarrow}
\def\iff{if and only if }
\def\wrt{with respect to }
\def\GL{\mathrm{GL}}
\def\s-{\smallsetminus}
\def\t{\widetilde}
\def\U{\mathrm{U}}
\begin{document}

\title[Nielsen realisation for RAAGs]{Nielsen realisation for untwisted automorphisms of right-angled Artin groups}
\author{Sebastian Hensel and Dawid Kielak}


\maketitle

\begin{abstract}
\noindent We prove Nielsen realisation for finite subgroups of the groups of untwisted outer automorphisms of RAAGs in
the following sense: given any graph $\Gamma$, and any finite group $G\leqslant \U^0(A_\Gamma) \leqslant \Out^0(A_\Gamma)$, we find a non-positively curved cube complex
with fundamental group $A_\Gamma$ on which $G$ acts by
isometries, realising the action on $A_\Gamma$.
\end{abstract}

\section{Introduction} 
A right-angled Artin group (RAAG) $A_\Gamma$ is a group given by a very simple
presentation, which is defined by a graph $\Gamma$: the group $A_\Gamma$ has one
generator for each vertex of $\Gamma$, and
two generators commute if and only if the corresponding vertices
are joined by an edge in $\Gamma$.

RAAGs have been an object of intense study over the last years, and
indeed seem to be ubiquitous in geometry and topology. The most
striking example is possibly the role they played in the recent
solution of the virtual Haken conjecture by Agol~\cite{Agol2013}.
They also possess a rich intrinsic structure, maybe most visibly so in
the variety of surprising
properties their subgroups can exhibit (see
e.g. the work of Bestvina and Brady~\cite{BestvinaBrady1997}).

\smallskip
A general RAAG $A_\Gamma$ can be seen as interpolating
between a non-abelian free group $F_n$ (corresponding to the graph
with $n$ vertices and no edges) and a free Abelian group $\mathbb{Z}^n$
(defined by the complete graph on $n$ vertices). If a property holds
for both $F_n$ and $\mathbb{Z}^n$, it is then natural to look for an analogue that
works for all RAAGs.

In this article we investigate \emph{Nielsen realisation} from this
point of view. For free groups this takes the following form: suppose
one is given a finite subgroup $H < \Out(F_n)$. Is there a graph $X$
with $\pi_1(X) = F_n$ on which the group $H$ acts by isometries,
inducing the given action on the fundamental group?
The answer turns out to be yes (as shown independently by
Culler~\cite{culler1984}, Khramtsov~\cite{khramtsov1985}, and
Zimmermann~\cite{Zimmermann1981}; see also \cite{Henseletal2014} for a more recent,
topological proof).

Let us note here that Nielsen realisation for free groups is equivalent to the statement that in the action of $\Out(F_n)$ on the Culler--Vogtmann Outer Space every finite subgroup fixes a point. The result is also an essential tool in the work of Bridson--Vogtmann~\cite{bridsonvogtmann2011} and the second-named author~\cite{kielak2013, kielak2015a}, and is used to prove certain rigidity phenomena for $\Out(F_n)$.

The
corresponding statement for free abelian groups follows from the (classical) fact that any finite (in fact compact) subgroup of $\GL_n(\R)$ can be conjugated to be a subgroup of the orthogonal group. This implies that any finite $H < \Out(\Z^n) = \GL_n(\Z)$ acts isometrically on an $n$-torus, and the induced action on the fundamental group is the given one.

\smallskip
For RAAGs the natural analogue is as follows: suppose
one is given a finite subgroup $H < \Out(A_\Gamma)$ in the outer
automorphism group of a RAAG. Is there a compact non-positively curved metric
space on which $H$ acts by isometries, realising the action on the
fundamental group?

The close relationship between RAAGs and cube complexes tempts one to
ask the above question with cube complexes in place of
metric spaces. This is however bound to lead to a negative answer,
since already for general finite subgroups of $\GL_n(\mathbb{Z})$ the action
on the torus described above cannot be made cubical and cocompact simultaneously.

The main result of this article proves Nielsen Realisation
for a large class of RAAGs. The restrictions are chosen in a way allowing us to use cube complexes, and we obtain
\begin{theorem*}
  Suppose $\Gamma$ is a simplicial graph,
  and let $H<\U^0(A_\Gamma)$ be finite. Then there is a compact non-positively curved cube
  complex realising the action of $H$. Moreover, the dimension of the complex is the same as the dimension of the Salvetti complex of $A_\Gamma$.
\end{theorem*}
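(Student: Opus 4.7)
The plan is to produce an $H$-fixed point in the untwisted Outer Space of Charney, Stambaugh and Vogtmann, a contractible space of NPC cube complexes with fundamental group $A_\Gamma$ and of the same dimension as the Salvetti, on which $\U^0(A_\Gamma)$ acts. I would proceed by induction on $|V(\Gamma)|$. The base cases are handled by existing results: when $\Gamma$ is discrete, $A_\Gamma = F_n$ and Culler--Khramtsov--Zimmermann applies; when $\Gamma$ is complete, $A_\Gamma = \Z^n$ and one averages a flat metric on a torus; the case of a single vertex is trivial. The dimension condition falls out automatically because blow-ups of Salvettis used in the untwisted Outer Space never increase dimension.

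For the inductive step I would split along the geometry of $\Gamma$. \emph{Case 1: $\Gamma = \Gamma_1 \ast \Gamma_2$ is a join.} Then $A_\Gamma = A_{\Gamma_1} \times A_{\Gamma_2}$. Since every element of $\U^0$ preserves the conjugacy classes of standard generators, $H$ preserves each factor, and one realises the restricted actions by induction and takes a product of the resulting cube complexes. \emph{Case 2: $\Gamma$ has no join decomposition.} Here $H$ preserves natural combinatorial structure on $\Gamma$, most usefully the equivalence relation on vertices where $v \sim w$ whenever $\lk(v) = \lk(w)$, and the partial order by link inclusion $\lk(v) \subseteq \st(w)$. Using these invariant data, one finds an $H$-equivariant splitting of $A_\Gamma$ as a graph of groups over proper subRAAGs (e.g.\ splitting off the stars of $H$-orbits of dominant vertices, or using an invariant proper visual subgraph whose link gives a centralising subgroup). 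Apply induction to the vertex groups and edge groups, and assemble the results as a graph of NPC cube complexes.

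The main technical obstacle is the equivariant assembly step. The inductive hypothesis produces realising cube complexes for each vertex group of the splitting, together with realising cube complexes for the edge groups; these must be glued into a single NPC cube complex whose dimension still equals $\dim \Gamma$ and on which $H$ acts by cubical isometries. This requires some form of equivariant uniqueness or flexibility: given two $H$-equivariant realisations of the same edge-group action, one must be able to modify the vertex realisations so that their induced structures on the edge space agree. A second, intertwined obstacle is the treatment of folds (non-adjacent transvections $v \mapsto vw$): already a single fold does not act cellularly on the Salvetti, so its realisation requires a blow-up, and one must ensure that the blow-ups needed for partial conjugations, inversions, graph symmetries and folds can be chosen simultaneously without raising dimension. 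I expect the proof to devote most of its energy to building the correct ambient blow-up on which all generators of $H$ act as cubical isometries, using the invariant structure isolated in Case 2 to guide the construction.
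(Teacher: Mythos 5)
Your outline has the same large-scale shape as the paper's argument --- induction, a join case handled by products, and a non-join case handled by assembling complexes for invariant pieces --- so the strategic instinct is sound. However, you have correctly located the hard step (``equivariant assembly'') and then left it unsolved, and that step is not a technicality: it is essentially the entire content of the paper. Specifically, you write that one needs ``some form of equivariant uniqueness or flexibility: given two $H$-equivariant realisations of the same edge-group action, one must be able to modify the vertex realisations so that their induced structures on the edge space agree.'' The paper's response to this is the machinery of \emph{cubical systems} (Section~\ref{sec: cubical systems}): rather than producing a single realising cube complex per invariant subgraph, one produces a compatible family $\{X_\Delta\}_{\Delta \in \L^\phi}$ of realisations for all invariant subgraphs, together with embeddings $X_\Delta \times X_{\lk_\Theta(\Delta)} \into X_\Theta$ satisfying Product, Orthogonal, Intersection and System Intersection axioms, and the induction is run at the level of these systems via a ``Relative Nielsen Realisation'' statement (Theorem~\ref{thm: main}) which says that any cubical system on an invariant subgraph extends. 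This extra structure is what makes the overlap complexes genuinely agree, rather than merely agree up to an unspecified modification.

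There is a second gap hidden inside the assembly step that your proposal does not anticipate. Even once the complexes for the two pieces are arranged to contain literally identical copies of the overlap complex, the result of gluing is \emph{not} automatically a realisation of the given outer action: the two induced actions can differ by a conjugation in $C(A_E)$, where $E$ is the overlap. One must measure this ``fault'' and show it can be corrected by choosing a different identification of the two copies of $\t X_E$ --- this is the content of the Gluing Lemma (\cref{prop: gluing}), which reduces the obstruction to $Z(A_E)$ and then eliminates it by a careful analysis of rotation numbers on the circle factors (\cref{lem: rotating circles}). Your plan, by phrasing Case~2 as ``assemble the results as a graph of NPC cube complexes,'' implicitly assumes a graph-of-groups splitting over subRAAGs; when $\Gamma$ is connected this is generally unavailable, and the paper instead glues along a common \emph{subcomplex} (not a vertex/edge space decomposition), which is what forces the fault analysis.

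Two smaller remarks. The paper inducts on $\dim\Gamma$ and then on the ``depth'' of the invariant-subgraph lattice, not on $|V(\Gamma)|$; the dimension induction is important because in the non-join case $\partial\Gamma'$, $\Theta$, and their links have strictly smaller dimension, which is exactly how the inductive hypothesis is invoked. And your worry about folds forcing a blow-up that raises dimension is misplaced in this framework: the paper does not realise $H$ by choosing a point in an ambient Outer Space and hence never takes a blow-up of a fixed Salvetti; the complexes are built from scratch via \cref{rel NR} and products, both of which preserve dimension automatically.
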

The group $\U^0(A_\Gamma)<\Out(A_\Gamma)$ is the intersection of the
group $\U(A_\Gamma)$ of untwisted outer automorphisms (introduced by
Charney--Stambaugh--Vogtmann~\cite{charneyetal2012}) with the
finite index subgroup $\Out^0(A_\Gamma)$.

Observe that when $\Gamma$ has no symmetries, and the link of any
vertex in $\Gamma$ is not a cone, then
$\Out(A_\Gamma) = \U^0(A_\Gamma)$. In particular, our result holds for
connected triangle- and symmetry-free defining graphs.

\subsection{Outline of the proof}

Since this article is rather substantial in length, let us offer here
a somewhat informal outline of the proof of the main theorem.


\smallskip
The proof (Sections~\ref{sec:proof-main} through~\ref{sec:proof-main part II}) is
inductive on the dimension of $\Gamma$, that is the maximal size of a maximal
clique in $\Gamma$.

We proceed by identifying maximal proper subgraphs $\Gamma_1, \dots, \Gamma_n$ of $\Gamma$, which are \emph{invariant}, that is we have an induced action of $H$ on (conjugacy classes in) $A_{\Gamma_i}$, the subgroup of $A_\Gamma$ generated by vertices of $\Gamma_i$, for each $i$. We assume that the result holds for $\Gamma_i$.

Now one of the following situations occurs: the subgraphs $\Gamma_i$ might be disjoint, or they might intersect in some non-trivial fashion.
The first of these cases is essentially covered by Relative Nielsen Realisation for free products, the main theorem in our previous article~\cite[Theorem 5.4]{HenselKielak2016a}; the precise statement we use here is \cref{prop: sticking complexes together}.

The second situation requires a different type of argument. Here we take some maximal invariant proper subgraph $\Gamma'$, and build the cube complex for $\Gamma$ from the one for $\Gamma'$ (given by induction), by gluing to it cube complexes for invariant subgraphs which intersect $\Gamma'$ and its complement non-trivially. This process presents two types of difficulties: firstly, we need the complexes to agree on the overlap; this requirement forces us to study Relative Nielsen Realisation, which is a way of building cube complexes for our actions having some prescribed subcomplexes.
To make sense of such statements we introduce \emph{cubical systems} (Section~\ref{sec: cubical systems}), which are precisely cube complexes with subcomplexes realising the induced actions on relevant invariant subgraphs.

The second difficulty arises when we try to glue two cube complexes over a subcomplex they both posses; here care needs to be taken to make sure that the object we obtain from the gluing realises the given action, and not some other action related to the given one by a partial conjugation. This is the content of Section~\ref{sec: gluing}.

For the reasons outlined here, our paper is rich with technical details. The (positive) side effect of this is that the cube complexes realising the action of a finite group $H \to \U^0(A_\Gamma)$ we start with come equipped with a plethora of invariant subcomplexes, which gives our realisation an extra layer of potential applicability. 

\bigskip
\textbf{Acknowledgments.} The authors would like to thank Piotr
Przytycki and Ruth Charney for many helpful comments and
discussions. The authors would furthermore like to thank Karen
Vogtmann for discussions and suggesting the statement and use of
adapted realisation.  

\section{Preliminaries}
\label{sec: prelims}

\subsection{Graphs and RAAGs}
\label{subsec: graphs and raags}
Throughout the paper $\Gamma$ will denote a fixed simplicial graph.
We define the associated RAAG $A_\Gamma$ to be the group generated by
the vertices of $\Gamma$, and with a presentation in which the only
relations occurring are commutators of vertices adjacent in $\Gamma$.

The only subgraphs of $\Gamma$ we will encounter will be induced subgraphs; such a subgraph is uniquely determined by its vertex set (since $\Gamma$ is fixed).
Hence we will
use $\cap, \cup, \s-$ etc.  of two graphs to denote the induced subgraph
spanned by the corresponding operation applied to the vertices of the two graphs.

\begin{dfn}
Let $\Delta, \Sigma$ be two induced subgraphs of $\Gamma$. We say that they form a \emph{join} \iff each vertex in $\Delta$ is connected by an edge (in $\Gamma$) to each vertex of $\Sigma$.
The induced subgraph spanned by all vertices of $\Delta$ and $\Sigma$ will be denoted by $\Delta \ast \Sigma$.

An induced subgraph $\Theta$ is a \emph{join} \iff we have $\Theta = \Delta \ast \Sigma$ for some non-empty induced subgraphs $\Delta$ and $\Sigma$; furthermore $\Theta$ is a cone if $\Delta$ can be taken to be a singleton.
\end{dfn}
Note that, in accordance with our convention, we have $\Delta \ast \Sigma = \Delta \cup \Sigma$, with the join notation indicating the presence of the relevant edges.

Note that induced subgraphs (their vertices to be more specific) generate subgroups of $A_\Gamma$; given such a subgraph $\Delta$ we will call the corresponding subgroup $A_\Delta$. This subgroup is abstractly isomorphic to the RAAG defined by $\Delta$. We adopt the convention $A_{\emptyset} = \{1\}$.

Throughout the paper we use the (standard) convention of denoting the normaliser, centraliser, and centre of a subgroup $H \leqslant A_\Gamma$ by, respectively, $N(H), C(H)$ and $Z(H)$.
We will also use $c(x) \in \Aut(A_\Gamma)$ to denote conjugation by $x \in A_\Gamma$.

We will need the following definitions throughout the paper. Some of them are new; others may be non-standard.
\begin{dfn}
Suppose $\Delta \subseteq \Gamma$ is an induced subgraph.
\begin{enumerate}[i)]
\item The \emph{link} of $\Delta$ is
  \[ \lk(\Delta) = \bigcap_{v \in \Delta} \lk(v) \]
\item The \emph{star} of $\Delta$ is
  \[ \st(\Delta) = \lk(\Delta) \ast \Delta \]
\item The \emph{extended star} of $\Delta$ is
  \[\widehat \st(\Delta) = \lk(\Delta) \ast \lk( \lk(\Delta)) =
  \st(\lk(\Delta)) \]
\item Given a second full subgraph $\Theta \subseteq \Gamma$ with $\Delta
  \subseteq \Theta$ we define the \emph{restricted link} and
  \emph{restricted star} of $\Delta$ in $\Theta$ to be respectively
  \[ \lk_\Theta(\Delta) = \lk(\Delta) \cap \Theta \textrm{ and }
  \st_\Theta(\Delta) = \st(\Delta) \cap \Theta \]
\end{enumerate}
\end{dfn}

Let us observe the following direct consequences of the definition.

\begin{lem}
\label{lem: link and star calculus}
Let $\Delta$ and $\Theta$ be two induced subgraphs of $\Gamma$, and let $v$ be a vertex of $\Gamma$. Then
\begin{enumerate}
\item $\Delta \subseteq \lk(\Theta) \Leftrightarrow \Theta \subseteq \lk(\Delta)$
\item $\Delta \subseteq \st(v) \Rightarrow v \in \st(\Delta)$
\item $\lk(\Delta) \subseteq \st(v) \Rightarrow v \in \widehat \st(\Delta)$
\end{enumerate}
\end{lem}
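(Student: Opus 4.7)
The plan is to unpack the definitions directly; each claim reduces to a very short symmetry argument, and (3) will follow from (2).

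For (1), I would simply note that both inclusions $\Delta \subseteq \lk(\Theta)$ and $\Theta \subseteq \lk(\Delta)$ say precisely the same thing, namely that every vertex of $\Delta$ is adjacent in $\Gamma$ to every vertex of $\Theta$. Indeed, $\Delta \subseteq \lk(\Theta) = \bigcap_{w \in \Theta} \lk(w)$ means each $v \in \Delta$ lies in $\lk(w)$ for every $w \in \Theta$, which is symmetric in $v$ and $w$.

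For (2), I would split into two cases. If $v \in \Delta$, then $v \in \Delta \subseteq \st(\Delta)$ and we are done. If $v \notin \Delta$, then since $\st(v) = \{v\} \cup \lk(v)$ and $\Delta \subseteq \st(v)$, every vertex of $\Delta$ must lie in $\lk(v)$; that is, $\Delta \subseteq \lk(v)$. Applying part (1) with $\Theta = \{v\}$ gives $v \in \lk(\Delta) \subseteq \st(\Delta)$.

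For (3), I would just invoke (2) with $\Delta$ replaced by $\lk(\Delta)$: the hypothesis $\lk(\Delta) \subseteq \st(v)$ yields $v \in \st(\lk(\Delta))$, which by definition is $\widehat\st(\Delta)$.

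Since each part is essentially a definition-chase, there is no real obstacle here; the main thing to be careful about is the bookkeeping in (2) to ensure the case $v \in \Delta$ is handled, given that vertices are not in their own links (no self-loops), so $v \in \st(v)$ must be treated separately from $v \in \lk(v)$.
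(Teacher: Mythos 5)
Your proof is correct and follows essentially the same route as the paper: part (1) by observing both inclusions encode pairwise adjacency, part (2) by the case split on whether $v \in \Delta$ and then invoking (1), and part (3) by applying (2) to $\lk(\Delta)$. Nothing to add.
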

\begin{proof}
\noindent
\begin{enumerate}
 \item Both statements are equivalent to saying that each vertex in $\Delta$ is connected to each vertex in $\Theta$.
  \item If $v \in \Delta$ then the result follows trivially. If not, then $\Delta \subseteq \lk(v)$ and the result follows from the previous one.
  \item $\lk(\Delta) \subseteq \st(v) \Rightarrow v \in \st(\lk(\Delta)) = \widehat \st(\Delta)$. \qedhere
\end{enumerate}
\end{proof}

\begin{dfn}[Join decomposition]\label{def:join-decomp}
Let $\Delta \subseteq \Gamma$ be an induced subgraph. We say that
\[ \Delta = \Delta_1 \ast \dots \ast \Delta_k\]
is a \emph{join decomposition} of $\Delta$ \iff each $\Delta_i$ is an induced subgraph of $\Gamma$ which is not a join.

We define $Z(\Delta)$ to be the union all subgraphs $\Delta_i$ which are singletons.
\end{dfn}

Such a decomposition is unique up to reordering the factors.

\begin{prop}[{\cite[Proposition~2.2]{charneyetal2012}}]
\label{prop: ccv}
Given $\Delta \subseteq \Gamma$ we have the following identifications
\begin{itemize}
  \item $N(A_\Delta) = A_{\st(\Delta)} = A_\Delta \times A_{\lk(\Delta)}$
  \item $Z(A_\Delta) = A_{Z(\Delta)}$
 \item $C(A_\Delta) = A_{Z(\Delta)} \times A_{\lk(\Delta)}$
\end{itemize}
Given another induced subgraph $\Sigma \subseteq \Gamma$ we also have
\[ x^{-1} A_\Delta x \leqslant A_\Sigma \Longleftrightarrow x \in N(A_\Delta) N(A_\Sigma) \textrm{ and } \Delta \subseteq \Sigma\]
\end{prop}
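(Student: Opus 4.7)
The plan is to derive the three bullet points from two standard facts about parabolic subgroups of RAAGs, and to isolate the conjugation statement at the end as the one real technical ingredient. The two inputs I rely on are (a) the normal form consequence that intersections of standard parabolics are standard, i.e.\ $A_\Lambda \cap A_\Xi = A_{\Lambda \cap \Xi}$, and (b) Servatius's centralizer theorem, which for a single vertex gives $C_{A_\Gamma}(v) = A_{\st(v)}$.

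For the centralizer bullet, I would compute
\[ C(A_\Delta) = \bigcap_{v \in \Delta} C(v) = \bigcap_{v \in \Delta} A_{\st(v)} = A_{\bigcap_{v \in \Delta} \st(v)}, \]
the last equality by (a). A small combinatorial check then identifies $\bigcap_{v \in \Delta} \st(v)$ with $Z(\Delta) \cup \lk(\Delta)$: a vertex $w$ lies in every $\st(v)$ iff either $w \in \Delta$ and is adjacent to all remaining vertices of $\Delta$ (so $w \in Z(\Delta)$ by \cref{def:join-decomp}), or $w \notin \Delta$ and is adjacent to all of $\Delta$ (so $w \in \lk(\Delta)$). Since $Z(\Delta) \subseteq \Delta$ and $\lk(\Delta)$ is joined with $\Delta$, these two vertex sets span a join, yielding the product decomposition $A_{Z(\Delta)} \times A_{\lk(\Delta)}$. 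The center bullet then drops out as $Z(A_\Delta) = C(A_\Delta) \cap A_\Delta$, using the intersection identity (a) again together with $\Delta \cap \lk(\Delta) = \emptyset$ (no vertex is in its own link, as $\Gamma$ is simplicial).

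For the normalizer bullet, the inclusion $A_{\st(\Delta)} \subseteq N(A_\Delta)$ is immediate (elements of $A_{\lk(\Delta)}$ commute with all of $A_\Delta$, and $A_\Delta$ normalizes itself), and the factorization $A_{\st(\Delta)} = A_\Delta \times A_{\lk(\Delta)}$ is read off the defining relations. The reverse inclusion is a direct consequence of the final displayed equivalence specialised to $\Sigma = \Delta$: any $x$ with $x^{-1} A_\Delta x \leq A_\Delta$ lies in $N(A_\Delta) \cdot N(A_\Delta) = N(A_\Delta)$, so the set of such $x$ is exactly $N(A_\Delta)$, and the equivalence then forces $N(A_\Delta) \subseteq A_{\st(\Delta)}$ because $N(A_\Delta)$ itself must be of the form described. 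Thus the whole proposition reduces to the final claim, which I would tackle last.

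The main obstacle is the final displayed equivalence, which is essentially Servatius's conjugation theorem for parabolics. I would prove it geometrically using the universal cover $\widetilde{S}_\Gamma$ of the Salvetti complex, which is a CAT(0) cube complex on which $A_\Gamma$ acts: the subgroup $A_\Delta$ stabilises a standard combinatorial convex subcomplex $F_\Delta$, and the condition $x^{-1} A_\Delta x \leq A_\Sigma$ translates to $x \cdot F_\Delta$ being contained in an $A_\Sigma$-invariant standard subcomplex parallel to $F_\Sigma$. Comparing axes of vertices of $\Delta$ with the hyperplane combinatorics of $F_\Sigma$ forces $\Delta \subseteq \Sigma$; decomposing a CAT(0) geodesic from the basepoint to $x \cdot F_\Delta$ into a portion moving inside the $A_\Delta$-invariant carrier and a portion moving inside the $A_\Sigma$-invariant carrier yields the factorization $x = pq$ with $p \in N(A_\Delta)$ and $q \in N(A_\Sigma)$. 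The reverse implication is immediate from the fact that $N(A_\Delta)$ maps $A_\Delta$ to itself and $N(A_\Sigma)$ maps $A_\Sigma$ to itself. Since this equivalence is the statement attributed to Charney--Stambaugh--Vogtmann, in practice I would simply invoke their proof and use the earlier bullets of the proposition as direct corollaries.
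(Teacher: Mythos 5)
The paper does not actually prove this proposition; it is cited directly from \cite[Proposition~2.2]{charneyetal2012}, so there is no internal proof to compare your proposal against. Your derivations of the centralizer and center bullets are sound: $C(A_\Delta) = \bigcap_{v\in\Delta} C(v)$ because $A_\Delta$ is generated by its vertices, Servatius's centralizer theorem gives $C(v) = A_{\st(v)}$, and the intersection identity for standard parabolics together with your (correct) combinatorial identification of $\bigcap_{v\in\Delta}\st(v)$ with $Z(\Delta) \cup \lk(\Delta)$ finish it; the center then drops out via $Z(A_\Delta) = C(A_\Delta) \cap A_\Delta$.

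However, your reduction of the normalizer bullet to the final equivalence is circular. Specialising $\Sigma = \Delta$, the equivalence reads
\[ x^{-1} A_\Delta x \leq A_\Delta \iff x \in N(A_\Delta)\,N(A_\Delta) \text{ and } \Delta \subseteq \Delta, \]
i.e.\ $\{x : x^{-1}A_\Delta x \leq A_\Delta\} = N(A_\Delta)$. This is a genuine (co-Hopficity-style) statement, but its right-hand side still refers to $N(A_\Delta)$ itself; nothing in it identifies $N(A_\Delta)$ with $A_{\st(\Delta)}$, and the phrase ``$N(A_\Delta)$ must be of the form described'' does not cash out because the equivalence describes no form. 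In fact the logical dependency runs the other way in any actual proof: one first shows $N(A_\Delta) = A_{\st(\Delta)}$ (e.g.\ by a normal-form argument that a word $x = x_1 x_2$ with $x_1 \in A_{\st(\Delta)}$ and $x_2$ beginning with a letter outside $\st(\Delta)$ cannot normalize $A_\Delta$, or, in your geometric picture, by identifying $N(A_\Delta)$ with the stabilizer of the standard convex subcomplex $F_\Delta$), and only then is the displayed equivalence both stateable in useful form and provable. So the normalizer bullet should be established directly alongside, or prior to, the conjugation equivalence, not deduced from it.
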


\begin{dfn}
Given a simplicial graph $\Gamma$ we define its \emph{dimension} $\dim \Gamma$ to be the number of vertices in a largest clique in $\Gamma$.
\end{dfn}
The dimension of $\Gamma$ coincides with the dimension of the Salvetti
complex of $A_\Gamma$.

%
%
\subsection{Words in RAAGs}
\label{subsec: words in raags}

Since $A_\Gamma$ is given in terms of a presentation, its elements are
equivalence classes of words in the alphabet formed by vertices of
$\Gamma$ (which we will refer to simply as the alphabet
$\Gamma$). There is a robust
notion of normal form based on reduced and cyclically reduced
words. We will only mention the results necessary for our arguments;
for further details see the work of Servatius~\cite{Servatius1989}.

\begin{dfn}
Given a word $w=v_1 \cdots v_n$, where each $v_i$ is a letter, i.e. a vertex of $\Gamma$ or its inverse, we define two \emph{basic moves}:
\begin{itemize}
 \item \emph{reduction}, which consists of removing $v_i$ and $v_j$ from $w$ (with $i<j$), provided that $v_i = v_j^{-1}$, and that $v_k$ commutes with $v_i$ in $A_\Gamma$ for each $i<k<j$.
  \item \emph{cyclic reduction}, which consists of removing $v_i$ and $v_j$ from $w$ (with $i<j$), provided that $v_i = v_j^{-1}$, and that $v_k$ commutes with $v_i$ in $A_\Gamma$ for each $k<i$ and $j<k$.
\end{itemize}
A word $w$ which does not allow for any reduction is called \emph{reduced}; if in addition it does not allow for any cyclic reduction, it is called \emph{cyclically reduced}.
\end{dfn}

Servatius shows that, starting with a word $w$, there is a unique reduced word obtainable from $w$ by reductions, and a unique cyclically reduced word obtainable from $w$ by basic moves. It is clear that the former gives the same element of $A_\Gamma$ as $w$ did, and the latter gives the same conjugacy class.

He also shows that two reduced words give the same element in $A_\Gamma$ \iff they differ by a sequence of moves replacing a subword $v v'$ by $v' v$ with $v,v'$ being commuting letters; let us call those \emph{swaps}.

\begin{lem}
\label{lem: cyclically reduced word}
Let $\Sigma \subseteq \Gamma$ be an induced subgraph, and suppose that an element $x \in A_\Gamma$ satisfies $x \in y^{-1} A_\Sigma y$ for some $y \in A_\Gamma$. Then the elements of $A_\Gamma$ given by cyclically reduced words representing the conjugacy class of $x$ lie in $A_\Sigma$.
\end{lem}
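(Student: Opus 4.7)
My plan is to combine the existence and essential uniqueness of cyclically reduced normal forms (as summarised just before the statement) with the obvious fact that swaps and cyclic permutations preserve the set of letters appearing in a word.

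First I would pick a specific element $s \in A_\Sigma$ with $x = y^{-1} s y$; this $s$ is conjugate to $x$, so it is represented by some word $s_0$ in the letters of $\Sigma$ (and their inverses). Apply basic moves to $s_0$ to obtain its (unique) cyclically reduced form $\overline{s}$. Since each basic move only deletes a pair of mutually inverse letters, $\overline{s}$ is still a word in letters from $\Sigma$, and in particular represents an element of $A_\Sigma$.

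Next, let $w$ be any cyclically reduced word representing the conjugacy class of $x$. Then $w$ and $\overline{s}$ are both cyclically reduced and represent conjugate elements of $A_\Gamma$. By Servatius' theorem (the conjugacy analogue of the statement quoted just before the lemma), any two cyclically reduced words representing conjugate elements differ by a sequence of swaps and cyclic permutations of letters. Both operations preserve the multiset of letters occurring in the word, so $w$ must use exactly the same letters as $\overline{s}$. Hence $w$ is a word in letters from $\Sigma$, and the element it represents lies in $A_\Sigma$.

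The only real obstacle is the invocation of the conjugacy version of Servatius' normal form result; the excerpt only stated the equality version for reduced words, so I would cite \cite{Servatius1989} for the fact that cyclically reduced representatives of conjugate elements differ by swaps and cyclic permutations. Once this is in hand the argument is purely combinatorial and requires no further calculation.
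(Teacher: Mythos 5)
Your proof is logically sound but takes a genuinely different route from the paper's. For the existence half (producing one cyclically reduced representative of $[x]$ lying in $A_\Sigma$), your argument is the same: a word in the alphabet $\Sigma$ can only lose letters under basic moves, so its cyclic reduction stays in $A_\Sigma$. The difference lies in the uniqueness half. You invoke the conjugacy criterion for RAAGs---that two cyclically reduced words represent conjugate elements iff they differ by swaps and cyclic permutations---as a citable theorem of Servatius, and then observe that swaps and cyclic permutations preserve the letter set. The paper, by contrast, does \emph{not} cite this result (the preliminaries only quote the \emph{equality} version for reduced words), and instead proves exactly the consequence it needs from scratch: assuming $w = z^{-1}w'z$ with $w$ cyclically reduced and in $A_\Sigma$, $w'$ cyclically reduced, and $z$ of minimal length, it runs an inductive shortening argument, at each stage removing a letter $v$ from $z$ and replacing $w'$ by a cyclic conjugate $v^{-1}w''$ with the same letters, until $z$ is empty and $w$ differs from the final word only by swaps. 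Your route buys brevity at the cost of an external reference whose precise form you would need to verify sits in \cite{Servatius1989} (you are right to flag this---the conjugacy analogue is a genuinely stronger statement than the equality version the paper quotes, and does not follow from it formally, though it is a true and standard fact). The paper's route buys self-containment at the cost of a longer combinatorial argument. Both are valid.
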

\begin{proof}
Take a reduced word $w$ in $\Sigma$ representing $yxy^{-1}$; let $w_y$ be a word in $\Gamma$ representing $y$. Then $w_y w w_y^{-1}$ represents $x$, and it is clear that there is a series of cyclic reductions taking this word to $w$. Further reductions and cyclic reduction will yield another word in $\Sigma$ representing the conjugacy class of $x$. Hence there exists a cyclically reduced word representing the conjugacy class of $x$ as required.

Now suppose that we have two cyclically reduced words, $w$ and $w'$, representing the same conjugacy class in $A_\Gamma$, and such that $w$ is a word in $\Sigma$. In $A_\Gamma$ we have the equation
\[ w = z^{-1} w' z\]
where $z$ is some reduced word in $\Gamma$. Let us take $z$ of minimal word length.

Suppose that the word $z^{-1} w' z$ is not reduced. Then there is a reduction allowed, and it cannot happen within $w'$, since $w'$ is reduced. Thus there exists a letter $v$ such that, without loss of generality, it occurs in $z$, its inverse occurs in $w'$, and the two can be removed.
Note that if the inverse of $v$ occurred only in $z^{-1}$ then performing the reduction would yield a word $z'$, shorter than $z$, which satisfies the equation
\[ w = (z')^{-1} w' z'\]
over $A_\Gamma$. This contradicts the minimality of $z$.

Since $v$ can be removed,
we can perform a number of swaps to $w'$ and obtain a reduced word $ w'' v^{-1}$; we can do the same for $z$ and obtain a reduced word $v z' $. We now have the following equality in $A_\Gamma$
\[ w = z'^{-1} v^{-1} w'' v^{-1} v z' = z'^{-1} v^{-1} w''  z' \]
with $v^{-1} w''$ consisting of exactly the same letters as $w'$ (it is a cyclic conjugate of $ w'' v^{-1}$), and $z'$ shorter than $z$. We repeat this procedure until we obtain a reduced word. But then we know that it differs from $w$ by a sequence of swaps, and hence is a word in $\Sigma$. Thus $w'$ must have been a word in $\Sigma$ as well.
\end{proof}

Now we can prove the following proposition.

\begin{prop}
\label{prop: conj elemnts in RAAGs}
Let $\phi \in \Aut(A_\Gamma)$. Let $\Sigma \subseteq \Gamma$ be such that for all $x \in A_\Sigma$, the element $\phi(x)$ is conjugate to some element of $A_\Sigma$. Then there exists $y \in A_\Gamma$ such that
\[ \phi(A_\Sigma) \leqslant y^{-1} A_\Sigma y \]
\end{prop}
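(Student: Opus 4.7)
The plan is to prove this by induction on $|\Sigma|$. The base case $|\Sigma|=1$ is immediate from Lemma~\ref{lem: cyclically reduced word}: writing $\Sigma=\{v\}$, the hypothesis says $\phi(v)$ is conjugate to some $v^k\in A_\Sigma=\langle v\rangle$, and the cyclic reduction lemma provides $y\in A_\Gamma$ with $\phi(v)=y^{-1}v^k y$, whence $\phi(A_\Sigma)\leqslant y^{-1}A_\Sigma y$.

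For the inductive step, apply Lemma~\ref{lem: cyclically reduced word} to each $v\in\Sigma$ to write $\phi(v)=z_v^{-1}s_v z_v$ with $s_v\in A_\Sigma$ cyclically reduced. Fix a vertex $v_0\in\Sigma$ and replace $\phi$ by $c(z_{v_0})\circ\phi$; since conjugation does not alter conjugacy classes, the hypothesis on $\phi$ is preserved, and one now has $\phi(v_0)=s_0\in A_\Sigma$. It then suffices to show that in this new setting $\phi(v)\in A_\Sigma$ for every $v\in\Sigma$, which gives $\phi(A_\Sigma)\leqslant z_{v_0}^{-1}A_\Sigma z_{v_0}$ for the original $\phi$.

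To prove this reduction, fix $v\neq v_0$ with $\phi(v)=z^{-1}s z$, $s\in A_\Sigma$ cyclically reduced, and apply the hypothesis to $v_0^N v\in A_\Sigma$ for large $N$. By Lemma~\ref{lem: cyclically reduced word}, the cyclically reduced form of $\phi(v_0^N v)=s_0^N z^{-1} s z$ lies in $A_\Sigma$. For $N$ large, the block $s_0^N$ is too long to be erased by cyclic reduction, so Servatius's analysis of reduced words under swaps forces every letter of $z$ lying outside $\Sigma$ to cancel with its inverse in $z^{-1}$ by swapping through $s_0^N$, which requires it to commute with $s_0$. Iterating this argument with each other $v'\in\Sigma$ in place of $v_0$—running the same analysis on $(v')^N v$ in the original $\phi$ and combining the resulting commutation relations—shows that each non-$\Sigma$ letter of $z$ commutes with a generating set of $A_\Sigma$, hence lies in $A_{\lk(\Sigma)}$ by Proposition~\ref{prop: ccv}. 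Since $A_{\lk(\Sigma)}$ centralises $s$, one concludes $\phi(v)=z^{-1}s z\in A_\Sigma$.

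The main obstacle is the rigorous combinatorial execution of the cyclic reduction and swap argument, and the iteration over different anchor vertices: specifically, one must show that the collection $\{s_{v'}\}_{v'\in\Sigma}$ has enough combined support in $\Gamma$ to force the non-$\Sigma$ letters of $z$ into $\lk(\Sigma)$, and one must track carefully how non-$\Sigma$ letters of $z$ interact when their inverses must meet across $s_0^N$ and $s$ simultaneously. If the combinatorial route becomes unwieldy, a conceptually cleaner alternative is to work on the universal cover of the Salvetti complex of $A_\Gamma$: each $\phi(v)$ stabilises a translate of the $A_\Sigma$-subcomplex, and the hypothesis applied to all of $A_\Sigma$ (not just its generators) should force these translates to coincide, giving the desired $y$ geometrically.
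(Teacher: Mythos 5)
Your proposal takes a genuinely different route from the paper, but it has a gap at precisely the place you flagged as an obstacle, and the gap is substantive rather than cosmetic.

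The paper's proof does not induct on $|\Sigma|$ and does not use large powers. Instead it uses the hypothesis plus a Hopficity argument on abelianisations to show that $\phi_*$ restricts to an automorphism of $Z_\Sigma$, and then produces a \emph{single} element $w\in A_\Sigma$ whose image $\phi(w)$ is conjugate to a cyclically reduced word $x\in A_\Sigma$ containing \emph{every} generator of $\Sigma$. The whole point of this maneuver is that any element commuting with $x$ must lie in $C(A_\Sigma)$, so a single conjugation $c(y)$ with $\psi=c(y)\phi$, $\psi(w)=x$, pins down the normalization once and for all. The remaining work is a careful combinatorial argument (the polygon/basic-moves analysis) showing that $\psi(u)\in A_\Sigma$ for all $u$.

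Your plan tries to get the same effect by iterating over anchor vertices $v'\in\Sigma$, and this is exactly where it breaks down. Two problems. First, the cyclically reduced representatives $s_{v'}$ of the $\phi(v')$ need not individually have support equal to $\Sigma$, and the fact that their supports jointly cover $\Sigma$ is not a freebie --- it is precisely what the paper's abelianisation argument establishes (the images $\bar s_{v'}$ must form a basis of $Z_\Sigma$). You never invoke this, so you have no control over which letters of $\Sigma$ appear in each $s_{v'}$. Second, and more seriously, each anchor $v'$ forces you to renormalize $\phi$ by a different conjugator $z_{v'}$. After normalizing at $v'$, the element you write as $\phi(v)=z^{-1}sz$ has a \emph{different} $z$ than the one you get after normalizing at $v''$. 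The commutation constraints you extract therefore concern different words, and there is no obvious way to combine them into a statement about a single conjugator $z$, which is what you need to conclude $z\in A_{\lk(\Sigma)}\cdot A_\Sigma$ and hence $\phi(v)\in A_\Sigma$. The paper sidesteps this entirely by doing one normalization, against one carefully constructed $w$, and never touching $\phi$ again.

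The "for $N$ large, $s_0^N$ is too long to be erased by cyclic reduction" step is also not rigorous as stated. In a RAAG, cyclic reductions can exploit commutation with $\lk(\Sigma)$ letters, letters can be swapped past \emph{both} $s_0^N$ and $s$, and cancellations can happen cyclically in either direction. The paper's own combinatorial argument is delicate precisely because of these possibilities; a length estimate alone will not close it. Your closing suggestion of a geometric argument on the universal cover of the Salvetti complex is appealing but would need a genuine fixed-point or intersection argument to replace the combinatorics, and as sketched it is not a proof.

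In short: the abelianisation step is not an optional optimization but the load-bearing idea that guarantees sufficient support and a single coherent normalization, and your proposal is missing a substitute for it.
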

\begin{proof}
Let the vertex set of $\Sigma$ be  $\{v_1, \dots, v_m \}$; these letters are then generators of $A_\Sigma$.
Let $Z_\Gamma = H_1(A_\Gamma;\Z)$ denote the abelianisation of $A_\Gamma$, and let $Z_\Sigma \leqslant Z_\Gamma$ denote the image of $A_\Sigma$ in the abelianisation. Note that $Z_\Sigma$ is also generated by $\{v_1, \dots, v_m \}$ in a natural way.

Let $\phi_* \colon Z_\Gamma \to Z_\Gamma$,  be the induced isomorphism on abelianisations.
Now, by assumption, $\phi_*$ induces a surjection $Z_\Gamma /
Z_\Sigma  \to Z_\Gamma /  Z_\Sigma $; observe that $Z_\Gamma /  Z_\Sigma$ is
isomorphic to $\Z^n$ for some $n$, and such groups are Hopfian, so
this induced morphism is an isomorphism. Hence
$\phi_*\vert_{Z_\Sigma}$ is an isomorphism (since $\phi_*$ is), and so there exists an
element $w \in A_\Sigma$, such that its image in $Z_\Sigma$ is mapped
by $\phi_*$ to the element $v_1\cdots v_m$. This implies that $w$ is
mapped by $\phi$ to a conjugate (by some element $y^{-1}$) of a
cyclically reduced word $x$, which contains each letter
$v_i$. Crucially, an element $z \in A_\Gamma$ commutes with $x$ \iff
$z \in C(A_\Sigma)$ (as two reduced words define the same element \iff one can be obtained from the other by a sequence of swaps described above).

Consider $\psi =   c(y) \phi$, so that $\psi(w) = x$. We now aim to show that $\psi(A_\Sigma) \leqslant A_\Sigma$.

Suppose for a contradiction that there exists
$u \in A_\Sigma$ such that $\psi(u) \not\in A_\Sigma$.

It could be possible that $\psi(u) \in A_{\st(\Sigma)} = A_\Sigma \times A_{\lk(\Sigma)}$. But the only elements in $A_\Sigma \times A_{\lk(\Sigma)}$ conjugate to elements in $A_\Sigma$ are in fact the elements of $A_\Sigma$. Hence we can assume that $\psi(u) \not\in A_{\st(\Sigma)}$.

Since $\psi(u)$ is conjugate to an element in $A_\Sigma$, yet is not
in $A_{\st(\Sigma)}$, we can write
\[ \psi(u) = a^{-1} b^{-1} v^{-1} x' v b a \]
where the word is reduced, $a$ is a subword containing only letters in
\[Z(\Sigma) \ast \lk(\Sigma)\]
the subword $b$ contains only letters in $\Sigma \s- Z(\Sigma)$, the
letter $v$ does not lie in $\st(\Sigma)$, and $x'$ is any subword. We
will obtain a contradiction from this form of the word.

By assumption $\psi(wu) = x a^{-1} b^{-1} v^{-1} x' v b a $ is conjugate to an element of $A_\Sigma$. In particular it lies in $A_\Sigma$ after a sequence of reductions and cyclic reductions, by Lemma~\ref{lem: cyclically reduced word}. We are going to visualise the situation as follows.
We take a polygon with the number of vertices matching the length of
the word describing $\psi(wu)$; now we label the vertices by letters
so that going around the polygon clockwise and reading the labels
gives us a cyclic conjugate of our word.

In this picture, a reduction or cyclic reduction consists of a deletion of two
vertices $V_1,V_2$ labeled by the same letter but with opposite signs, and such that
there is a path between $V_1$ and $V_2$ whose vertices are only
labeled by letters commuting with the letter labeling $V_1$.

Recall that we have a finite sequence of basic moves which takes the word
\[ x a^{-1} b^{-1} v^{-1} x' v b a\] to a word in $A_\Sigma$.
Note that after every successive move we can still identify which part of our new word came from $x$, and which from $a^{-1} b^{-1} v^{-1} x' v b a$.

We claim that we can never remove two occurrences of letters in
\[a^{-1} b^{-1} v^{-1} x' v b a\]
 along a path lying in this
subword (and the same is true for the subword $x$)

Consider the first time we use a move which deletes the
occurrences of a letter and its inverse in (what remains of) the
subword $a^{-1} b^{-1} v^{-1} x' v b a$ along a path lying in this
subword. Let $q$ denote the letter we are removing. Since the subword
is reduced, such a move was not possible until a prior removal of a
letter $q'$ lying between the two letters we are removing, and such
that $q$ and $q'$ do not commute. But now $q'$ must have been removed
by a path not contained in our subword, since the removal of $q$ is
the first move of this type. Therefore the path used to remove $q'$
must contain $q$, and so $q$ and $q'$ must commute. This is a
contradiction which shows the claim.

\smallskip
It is clear that we can remove all letters in $a$ and $a^{-1}$ with a
path going over $x$; let us perform these moves first.

Since our finite sequence of moves takes us to a word in $A_\Sigma$, it must at some point remove the letters $v$ and $v^{-1}$. Consider the first move removing occurrences of this letter. Note that both these occurrences must lie in the subword $v^{-1} x' v$. Hence our move removes this occurrences along a path containing all of $x$. We have however assumed that $v$ does not commute with $x$, and so we must have first removed an occurrence of a letter $q$ from $x$, where $q$ does not commute with $v$.

In fact all occurrences of $q$ must be removed from $b x b^{-1}$ by reductions before we can remove $v$. This implies that $x$ contains at least two occurrences of $q$ and its inverse, which is a contradiction.
\end{proof}

\subsection{Automorphisms of a RAAG}
\label{subsec: aut(raag)}

Let us here briefly discuss a generating set for the group $\Aut(A_\Gamma)$.

By work of Servatius \cite{Servatius1989} and Laurence \cite{Laurence1995},
$\Aut(A_\Gamma)$ is generated by the following
classes of automorphisms:
\begin{enumerate}[i)]
\item Inversions
\item Partial conjugations
\item Transvections
\item Graph symmetries
\end{enumerate}
Here, an \emph{inversion} maps one generator of $A_\Gamma$ to its
inverse, fixing all other generators.

A \emph{partial conjugation} requires a vertex $v$ in $\Gamma$ whose
star disconnects $\Gamma$. For such a $v$, a partial conjugation is an
automorphism which conjugates all generators in one of the
complementary components of $\st(v)$ by $v$ and fixes all other generators.

A \emph{transvection} requires vertices $v, w$ with $\st(v)\supseteq
\lk(w)$. For such $v,w$, a transvection is the automorphism which maps
$w$ to $w v$, and fixes all other generators.
Transvections come in two types: \emph{folds} (or \emph{type I}) occur when \[\lk(v)\supseteq
\lk(w)\] and \emph{twists} (or \emph{type II}) when $v \in \lk(w)$.

The group $\mathrm{U{A}ut}(A_\Gamma)$ is defined to be the subgroup generated by all generators from our list except the twists. The group $\U(A_\Gamma)$ is its quotient by inner automorphisms.

A \emph{graph symmetry} is an automorphism of $A_\Gamma$ which permutes
the generators according to a combinatorial automorphism of $\Gamma$.

The group $\Aut^0(A_\Gamma)$ is defined to be the subgroup generated by generators of the first four types, i.e. without graph symmetries. Again, $\Out^0(A_\Gamma)$ is its quotient by inner automorphisms.

The group $\U^0(A_\Gamma)$ is defined to be the quotient by inner automorphisms of the subgroup $\mathrm{UAut}^0(A_\Gamma)$ of $\Aut(A_\Gamma)$ generated by all generators from our list except the twists and graph symmetries.

Note that $\U^0(A_\Gamma) = \U(A_\Gamma) \cap \Out^0(A_\Gamma)$, since conjugating any of our generators by a graph symmetry gives a generator of the same kind.

\begin{lem}
\label{lem: link non-cone}
Suppose that $\lk(v)$ is not a cone for all vertices $v$ of $\Gamma$. Then $\mathrm{U{A}ut}(A_\Gamma) = \Aut(A_\Gamma)$ and $\mathrm{U{A}ut^0}(A_\Gamma) = \Aut^0(A_\Gamma)$.
\end{lem}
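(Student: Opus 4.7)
The plan is to reduce the statement to showing that, under the hypothesis, no twist occurs among the Servatius--Laurence generators of $\Aut(A_\Gamma)$. By definition $\mathrm{U{A}ut}(A_\Gamma)$ is the subgroup generated by inversions, partial conjugations, folds, and graph symmetries, while $\Aut(A_\Gamma)$ is generated by these together with twists; so to obtain the first equality it suffices to prove that the list of twists is empty under our hypothesis. The second equality, for $\mathrm{U{A}ut}^0(A_\Gamma) = \Aut^0(A_\Gamma)$, follows by the same argument after simply discarding graph symmetries from both generating sets.

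The key observation I would establish is: a twist $w \mapsto wv$ exists in the generating set precisely when $v \in \lk(w)$ and $\st(v) \supseteq \lk(w)$, and these two conditions together force $\lk(w)$ to be a cone with apex $v$. Indeed, the inclusion $\lk(w) \subseteq \st(v) = \{v\} \cup \lk(v)$, combined with $v \in \lk(w)$, implies that every vertex of $\lk(w)$ distinct from $v$ lies in $\lk(v)$; hence $\lk(w) = \{v\} \ast (\lk(w) \setminus \{v\})$ in the notation of Section~\ref{subsec: graphs and raags}, which is the definition of a cone.

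Given this observation, the hypothesis that no vertex link is a cone immediately rules out the existence of any twist, so the generating sets for $\mathrm{U{A}ut}(A_\Gamma)$ and $\Aut(A_\Gamma)$ (respectively $\mathrm{U{A}ut}^0(A_\Gamma)$ and $\Aut^0(A_\Gamma)$) coincide, proving both equalities. There is no real obstacle: the statement is essentially a direct unpacking of the defining conditions of a twist against the definition of a cone. The only mild care needed is in treating the case $\lk(w) = \{v\}$, where the cone decomposition degenerates but is still (vacuously) a cone with apex $v$, so the argument goes through unchanged.
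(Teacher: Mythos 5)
Your proof is essentially the paper's own argument: both reduce to the non-existence of twists, and both observe that the twist conditions $v \in \lk(w) \subseteq \st(v)$ force $\lk(w) = \{v\} \ast (\lk(w) \s- \{v\})$, which is a cone. One caveat on your remark about the degenerate case $\lk(w) = \{v\}$: under the paper's definition a cone is a join of two \emph{non-empty} subgraphs with one a singleton, so a lone vertex is strictly not a cone, and the argument would stall there. This is in fact a latent issue in the paper's own one-line proof as well; it is harmless in practice because the lemma is only invoked for graphs without leaves (cf.\ Corollary~\ref{cor:dim-2-case}), where $\lk(w)$ always contains at least two vertices, or equivalently one may adopt the convention that a singleton counts as a cone, which is what your parenthetical tacitly does.
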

\begin{proof}
It is enough to show that the assumption prohibits the existence of twists. Let us suppose (for a contradiction) that such a transvection exists; this is equivalent to assuming that there exist vertices $v$ and $w$ such that $v \in \lk(w) \subseteq \st(v)$. But in this case we have
\[ \lk(w) = (v \cap \lk(w)) \ast (\lk(v) \cap \lk(w)) = v \ast (\lk(w) \s- v) \]
which is a cone.
\end{proof}

\subsection{Markings}
\label{subsec: markings}
We also need the notion of marked topological spaces in the sense introduced
in~\cite{HenselKielak2016a}.
\begin{dfn}
We say that a path-connected topological space $X$ with a universal covering $\widetilde X$ is \emph{marked} by a group $A$ \iff it
comes equipped with an isomorphism between $A$ and the deck transformation group of $\t X$.
\end{dfn}
\begin{rmk}
Given a space $X$ marked by a group $A$, we obtain an isomorphism $A
\cong \pi_1(X,p)$ by choosing a basepoint $\widetilde p \in \widetilde
X$ (where $p$ denotes its projection in $X$). We adopt the notation
that the image of a point or set under the universal covering map will
be denoted as its \emph{projection}. To keep the notation uniform, we
will also call $X$ the projection of $\t X$.

Conversely, an isomorphism $A \cong \pi_1(X,p)$ together with a choice
of a lift $\widetilde p \in \widetilde X$ of $p$ determines the
marking in the sense of the previous definition.
\end{rmk}

\begin{dfn}
Suppose that we are given an embedding $\pi_1(X) \into \pi_1(Y)$ of fundamental groups of two path-connected spaces $X$ and $Y$, both marked. A map $\iota \colon X \to Y$ is said to \emph{respect the markings via the map $\widetilde \iota$} \iff $\widetilde \iota \colon \widetilde X \to \widetilde Y$ is $\pi_1(X)$-equivariant (\wrt the given embedding $\pi_1(X) \into \pi_1(Y)$), and satisfies the commutative diagram
\[ \xymatrix{ \widetilde X \ar[r]^{\widetilde \iota} \ar[d] & \widetilde Y \ar[d] \\
X \ar[r]^\iota & Y } \]

We say that $\iota$ \emph{respects the markings} \iff such an $\widetilde \iota$ exists.
\end{dfn}

To keep then notation (slightly) more clean, given a space $X_\Delta$ we will denote its universal cover by $\t X_\Delta$ (rather than $\t{X_\Delta}$). 

Next we describe a construction that allows us to glue two marked spaces.
\begin{lem}
\label{lem: markings}
Suppose that for each $i \in \{0,1,2\}$ we are given a group $A_i$ and
a space $X_i$ marked by this group. Suppose further that for each $i
\in \{1,2\}$ we are given a monomorphism $\phi_i \colon A_0 \into
A_i$, and a continuous embedding  $\iota_i \colon X_0 \into X_i$
which respect the markings via a map $\t \iota_i$.

Then there exists a group $A$, a space $X$ marked by $A$, and maps
$\phi'_i \colon A_i \to A$ and $\iota'_i \colon X_i \to X$, the latter
respecting the markings via maps $\t \iota'_i$, such that the
following diagrams commute
\[ \xymatrix{
A_0 \ar[r]^{\phi_1} \ar[d]^{\phi_2} & A_1 \ar[d]^{\phi'_1} & X_0 \ar[r]^{\iota_1} \ar[d]^{\iota_2} & X_1 \ar[d]^{\iota'_1} & \t X_0 \ar[r]^{\t \iota_1} \ar[d]^{\t \iota_2} & \t X_1 \ar[d]^{\t \iota'_1} \\
A_2 \ar[r]^{\phi'_2} & A & X_2 \ar[r]^{\iota'_2} & X & \t X_2 \ar[r]^{\t \iota'_2} & \t X
} \]
\end{lem}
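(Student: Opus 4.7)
The natural candidates are the amalgamated free product $A = A_1 \ast_{A_0} A_2$, formed along the monomorphisms $\phi_1,\phi_2$, together with the topological pushout $X = X_1 \sqcup_{X_0} X_2$ formed along the embeddings $\iota_1,\iota_2$. The maps $\phi'_i$ and $\iota'_i$ are the canonical maps into the pushout; $\phi'_1,\phi'_2$ are injective because $\phi_1,\phi_2$ are.

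The main content is to build $\t X$ as a marked universal cover and produce the lifts $\t \iota'_i$. The plan is to use the Bass--Serre tree $T$ of the amalgamated product, whose vertices of type $i\in\{1,2\}$ are the cosets $A/\phi'_i(A_i)$ and whose edges are the cosets $A/\phi'_1\phi_1(A_0) = A/\phi'_2\phi_2(A_0)$. For each type-$i$ vertex $v$ I would take a copy $\t X_i^v$ of $\t X_i$, for each edge $e$ a copy $\t X_0^e$ of $\t X_0$, and glue $\t X_0^e$ into $\t X_i^v$ whenever $v$ is an endpoint of $e$ using an appropriate $A$-translate of $\t\iota_i$. The resulting space $\t X$ carries a natural $A$-action permuting the pieces over the action on $T$, and on each individual piece $\t X_i^v$ the stabiliser $\phi'_i(A_i)$ acts through the given marking of $\t X_i$. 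Simple connectivity follows by a standard induction over finite subtrees of $T$, using iterated van Kampen (pushouts of simply connected spaces along a non-empty connected subspace are simply connected); freeness and proper discontinuity of the $A$-action reduce to the corresponding statements on each $\t X_i$, which are built into the markings. Hence $X = \t X/A$ is marked by $A$.

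For the three commutative squares: the group square is the defining property of the pushout $A = A_1 \ast_{A_0} A_2$; the space square is the defining property of the pushout $X$. For the universal cover square I would pick a distinguished edge $e_0$ of $T$ joining vertices $v_1,v_2$ with $v_i$ of type $i$ and trivial coset representative, then set $\t\iota'_i \colon \t X_i \to \t X$ to be the inclusion $\t X_i = \t X_i^{v_i} \hookrightarrow \t X$. By construction $\t\iota'_i$ is $\phi'_i$-equivariant, and both composites $\t\iota'_i \circ \t\iota_i$ send $\t X_0$ onto the copy $\t X_0^{e_0}$ via the same gluing identification used to build $\t X$, so the diagram commutes on the nose.

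The main technical point — and the only step requiring genuine care — is the equivariance of $\t\iota'_i$: the tree-of-spaces construction must be done so that the $A$-action on $\t X$ restricts on the piece $\t X_i^{v_i}$ to the given $A_i$-action through $\phi'_i$, and so that the gluing along $e_0$ intertwines $\t\iota_1$ and $\t\iota_2$ correctly. This is forced by the requirement that $\t\iota_i$ be $A_0$-equivariant in the original data: the subgroup $\phi'_1\phi_1(A_0) = \phi'_2\phi_2(A_0)$ of $A$ fixes $e_0$ and acts on $\t X_0^{e_0}$ through the marking of $\t X_0$, which is precisely what is needed for consistent gluing. Everything else is formal bookkeeping with cosets.
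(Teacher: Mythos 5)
Your plan is correct, but it takes a genuinely different route from the paper. The paper's proof is much more economical: it simply takes $A$ and $X$ to be the algebraic and topological pushouts, lets $\t X$ be the universal cover of $X$, picks compatible basepoints $\t p$, $\t q$, and then defines each $\t\iota'_i$ as the \emph{unique lift} of $\iota'_i\colon X_i\to X$ through the covering projection sending the basepoint of $\t X_i$ to $\t q$ (such a lift exists because $\t X_i$ is simply connected, and is unique once the basepoint image is fixed). Equivariance of $\t\iota'_i$ and commutativity of the third square then follow from uniqueness of lifts with matched basepoints. This short-circuits your tree-of-spaces construction entirely; in exchange it leans on the tacit fact that $\pi_1$ of the pushout \emph{is} $A_1\ast_{A_0}A_2$, which requires a form of van Kampen or Bass--Serre theory and is not spelled out. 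Your bottom-up construction over the Bass--Serre tree proves that fact as a byproduct — you build $\t X$ by hand, show it is simply connected and carries a free, properly discontinuous $A$-action with quotient $X$, and thereby obtain the marking without appealing to van Kampen for the pushout. The trade-off is length: your approach requires the bookkeeping with cosets and the induction over finite subtrees, which the paper avoids completely, while making the structure of $\t X$ (and the equivariance of the $\t\iota'_i$) considerably more transparent, which can be useful elsewhere. Both are sound; the paper's is shorter, yours is more self-contained.
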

We will refer to the construction in this lemma as \emph{obtaining $\t
  X$ from $\t X_1$ and $\t X_2$ by gluing $\im (\t \iota_1)$ to $\im
  (\t \iota_2)$}. Note that the projection $X$ of $\t X$ is in fact obtained by an honest gluing of the projections $X_1$ and $X_2$ along the respective subspaces.
\begin{proof}
We define $A$ and $X$ to be the push-outs of the appropriate
diagrams. Take a point $p \in X_0$, and its lift $\t p \in \t
X_0$. The point $p$ has its copies in $X_1, X_2$ and $X$; let $\t q$
denote some lift of $p$ in $\t X$. We also have copies of the point
$\t p$ in $\t X_1$ and $\t X_2$. We define the maps $\t \iota'_i$ to
be the unique maps satisfying the following commutative diagrams of
pointed spaces
\[ \xymatrix{
 \t X_i, \t p \ar[r]^{\t \iota'_i} \ar[d] & \t X, \t q \ar[d] \\
 X_i, p \ar[r]^{\iota_i}  &  X, p
} \]
where the vertical maps are the coverings. The verification that these
maps satisfy the third commutative diagram above is easy.
\end{proof}


\section{Relative Nielsen Realisation}
\label{sec: adapted realisation}

One of the crucial tools used in this article is the Relative Nielsen Realisation theorem, proven by the authors in~\cite{HenselKielak2016a}. In this section we will look at the necessary definitions, the statement of the theorem, and then a special case thereof.

\begin{dfn}
Let $A$ and $H$ be groups, and let $\phi \colon H \to \Out(A)$ be a
homomorphism. We say that a metric space $X$, on which $H$ acts by
isometries, \emph{realises} the action $\phi$ \iff $X$ is marked by
$A$, and the action of $H$ on conjugacy classes in $\pi_1(X) \cong A$
is equal to the one induced by $\phi$.

If $X$ is a (metric) graph or a cube complex, we require the action to respect the combinatorial structure as well.
\end{dfn}

\begin{rmk}
The action of $H$ on $X$ induces a group extension
\[ A \to \overline{A} \to H \]
When $A$ is centre-free, this extension carries precisely the same information as the action $H \to \Out(A)$; when $A$ has non-trivial centre however, then the extension contains more information.
\end{rmk}

\begin{dfn}
Suppose that $A$ contains a subgroup $A_1$ which is invariant under the (outer) action of $H$ (up to conjugation). Then the extension $\overline A$ contains a subgroup, such that the map $\overline A \to H$ restricted to this subgroup is onto $H$, and its kernel is equal to the subgroup. We call this subgroup  $\overline{A_1}$.
\end{dfn}

\begin{thm}[Relative Nielsen Realisation~{\cite[Theorem 5.4]{HenselKielak2016a}}]
\label{rel NR}
Let \[\phi \colon H \to \Out(A)\]
be a homomorphism with a finite domain, and let
\[A = A_1 \ast \dots \ast A_n \ast B\]
be a free-product decomposition, with $B$ a (possibly trivial) finitely generated free group, such that $H$ preserves the conjugacy class of each $A_i$.

Suppose that for each $i \in \{1, \dots, m\}$ we are given an NPC space $X_i$ marked by $A_i$, on which $H$ acts in such a way that the associated extension of $A_i$ by $H$ is isomorphic (as an extension) to the extension $\overline{A_i}$ coming from $\overline A$.
Then there exists an NPC space $X$ realising the action $\phi$, and such that for each $i \in \{1, \dots, m\}$ we have an $H$-equivariant embedding $\iota_i \colon X_i \to X$ which preserves the marking.

Moreover, the images of the spaces $X_i$ are disjoint, and collapsing each $X_i$ individually to a point yields a graph with fundamental group abstractly isomorphic to the free group $B$.
\end{thm}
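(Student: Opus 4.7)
The plan is to realize $X$ as the quotient of a tree of NPC spaces by an equivariant action of the extension $\overline{A}$. Start from the Bass-Serre tree $T$ associated to the free product decomposition $A = A_1 \ast \dots \ast A_n \ast B$: this is a simplicial tree on which $A$ acts with trivial edge stabilisers and vertex stabilisers conjugate either to some $A_i$ or trivial, and with finite quotient graph $T/A$ whose fundamental group is abstractly $B$.

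The first step is to extend the $A$-action on $T$ to an action of $\overline{A}$ on some tree $T'$ in the same $A$-equivariant homeomorphism class, with vertex stabilisers realising the prescribed extensions $\overline{A_i}$. Because each conjugacy class of $A_i$ is $H$-invariant, the outer action of $H$ on $A$ permutes the $A$-conjugacy classes of vertex stabilisers of $T$, and so $H$ acts on the outer space of free splittings of $A$ in the sense of Guirardel-Levitt. This space is contractible and admits fixed points for finite group actions (by a barycentre-of-orbit argument, or a direct Culler-style construction), which yields the desired tree $T'$.

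Next, replace each vertex $v$ of $T'$ with stabiliser $A_v$, a conjugate of some $A_i$, by a copy of $\t X_i$ attached at a chosen lift of the basepoint; replace each edge of $T'$ by a Euclidean unit interval. The hypothesis identifies the $H$-extension of $A_i$ coming from its action on $X_i$ with the extension $\overline{A_i}$ sitting inside $\overline{A}$, so these replacements can be made $\overline{A}$-equivariantly: the stabiliser $\overline{A_v}$ acts on the attached $\t X_i$ via the given marked isometric action, and this action is transported across the orbit by coset representatives for $A$ in $\overline{A}$. The result is a simply connected space $\t X$ with a free, properly discontinuous $A$-action and a compatible $\overline{A}$-action; the quotient $X = \t X / A$ carries the desired $H$-action realising $\phi$, contains $H$-equivariantly embedded disjoint copies of each $X_i$, and collapses along these copies to the graph $T'/A$, whose fundamental group is abstractly $B$.

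The NPC property of $X$ follows from the Bridson-Haefliger criterion for trees of NPC spaces: the attaching maps are inclusions of single points into the $\t X_i$, so link conditions at attaching loci are automatic and the local NPC structure on each $X_i$ is preserved. The main obstacle will be the compatibility of the $\overline{A}$-action on $T'$ with the marking and extension data on each $X_i$: one needs the vertex stabiliser action on $\t X_i$ to agree with the given one as an extension of $A_i$ by $H$, and not merely up to conjugation by an element of $A$ (which would correspond to the two actions differing by a partial conjugation). This is precisely where the hypothesis on the extensions $\overline{A_i}$ enters, and it forces a careful bookkeeping of basepoints and of the identifications between each stabilising copy of $A_i$ in $\overline{A}$ and the abstract free factor.
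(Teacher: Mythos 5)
This theorem is not proved in the present paper; it is imported verbatim from the authors' earlier work \cite{HenselKielak2016a}, so there is no ``paper's own proof'' here to compare against. Judged on its own merits, your proposal reduces the statement to two steps: (a) produce a finite-group fixed point in the Guirardel--Levitt deformation space of the free splitting $A = A_1 \ast \dots \ast A_n \ast B$, yielding a tree $T'$ with an action of $\overline A$; and (b) blow up the vertices of $T'$ into copies of $\t X_i$ and quotient. Step (b) is sound in outline (modulo the usual care that incident edges attach along an $A_v$-orbit of points rather than at one basepoint, and the bookkeeping you yourself flag). Step (a), however, contains the real content of the theorem, and as written it is a gap.

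Concretely: Guirardel--Levitt deformation spaces are contractible but are not CAT(0), so ``a barycentre-of-orbit argument'' does not apply; contractibility alone is far from a fixed point theorem for finite groups. And ``a direct Culler-style construction'' is not a citation but a promise to reprove, in the relative setting, the analogue of Nielsen realisation for $\Out(F_n)$ --- which is essentially what the theorem asserts. (Recall that even the absolute case, that finite subgroups of $\Out(F_n)$ fix a point of Culler--Vogtmann Outer Space, is a substantial theorem of Culler, Khramtsov and Zimmermann, not a formal consequence of contractibility.) To repair the argument you would need to either cite an actual fixed-point theorem for finite groups acting on the relevant deformation space, or carry out the Culler/Khramtsov/Zimmermann adaptation --- and at that point you would, in effect, be reproving \cite[Theorem 5.4]{HenselKielak2016a}. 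Separately, you correctly identify that the $\overline{A}$-action on the blown-up tree must induce the prescribed extension $\overline{A_i}$ on each $\t X_i$ (and not merely one conjugate by an element of $A$, which corresponds to a partial conjugation); you flag this as ``the main obstacle'' but leave it unresolved, and this too needs an explicit argument rather than a note that it ``forces careful bookkeeping.''
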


Note that the above is a slightly restricted version of the Relative Nielsen Realisation, since in its most general form $H$ is allowed to permute the factors $A_i$.

\begin{rmk}[{\cite[Remark 5.6]{HenselKielak2016a}}]
\label{cubical rmk}
When each $X_i$ is a cube complex, we may take $X$ to be a cube complex too; the embeddings $X_i \into X$ are now maps of cube complexes, provided that we allow ourselves to cubically barycentrically subdivide the complexes $X_i$.
\end{rmk}

\smallskip
Now let us state a version of the above theorem stated for graphs.

Throughout, we consider only graphs without
vertices of valence 1, that is without leaves.
\begin{theorem}[Adapted Realisation]
\label{thm:adapted-realisation}
  Let $H$ be finite and $\phi \colon H \to \Out(F_n)$ be a homomorphism. Suppose that
  \[F_n = A_1 * \dots * A_k * B\]
  is a free splitting such that $\phi(H)$ preserves the conjugacy class of
  $A_i$ for each $1\leq i\leq k$. Let $\phi_i\colon H\to \Out(A_i)$ denote the
  induced actions. For each $i$ let $X_i$ be a marked metric graph
  realising $\phi_i$.

  Then there is a marked metric graph $X$ with the following properties.
  \begin{enumerate}[i)]
  \item $H$ acts on $X$ by combinatorial isometries, realising $\phi$.
  \item There are isometric embeddings $\iota_i \colon X_i \to X$ which respect the markings via maps $\widetilde \iota_i$.
  \item The embedded subgraphs $X_i$ are preserved by $H$, and the restricted action
  induces the action $\phi_i$ on $X_i$ up to homotopy.
  \end{enumerate}
  When $\pi_1(X_i) =A_i \not\cong \Z$, the map $\iota_i$ is actually $H$-equivariant (and not just $H$-equivariant up to homotopy).

We also arrange for the images $\iota_i(X_i)$ to be pairwise disjoint, unless we have $F_n = A_1 \ast A_2$, in which case we require $X = \im(\iota_1) \cup \im(\iota_2)$, and $\im(\widetilde \iota_1) \cap \im(\widetilde \iota_2)$ to be a single point. 
\end{theorem}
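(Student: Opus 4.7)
The plan is to invoke the Relative Nielsen Realisation theorem (\cref{rel NR}) directly, with $A = F_n$, the given free splitting $F_n = A_1 \ast \dots \ast A_k \ast B$, and the marked graphs $X_i$ as input (they are NPC as 1-dimensional cube complexes). Its conclusion yields an NPC space $X$ realising $\phi$, together with disjoint $H$-equivariant embeddings $\iota_i \colon X_i \to X$ preserving markings, such that collapsing each $X_i$ individually gives a graph of fundamental group $B$. Since $\pi_1(X) = F_n$ is free and the $X_i$ are graphs, the construction can be arranged so that $X$ is a metric graph; by \cref{cubical rmk}, we may further take the $\iota_i$ to be combinatorial after possibly subdividing the $X_i$. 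This immediately gives properties (i), (ii), and (iii) with pairwise disjoint images (after rescaling edge lengths to make $\iota_i$ isometric).

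To upgrade to strict equivariance of $\iota_i$ when $A_i \not\cong \Z$, we observe that in this case $A_i$ is either trivial or a nontrivial non-cyclic free group, and so has trivial centre; the universal cover $\t X_i$ is then a tree on which $\overline{A_i}$ acts, and a choice of splitting (provided by Nielsen realisation for the free group $A_i$) lets us lift $H$ to a finite subgroup of $\overline{A_i}$, which by the classical fixed-point theorem fixes a vertex of $\t X_i$. This yields an $H$-fixed vertex of $X_i$ at which we can pin down the embedding strictly equivariantly. When $A_i \cong \Z$ the circle $X_i$ may carry a free $H$-action (e.g.\ by rotations), and the construction can only be guaranteed to be equivariant up to homotopy.

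For the special case $F_n = A_1 \ast A_2$ (so $k = 2$ and $B$ is trivial): the output of \cref{rel NR} has $X_1, X_2$ disjoint and joined by an $H$-invariant subgraph whose image in the quotient obtained by collapsing $X_1 \sqcup X_2$ has trivial fundamental group. Since this quotient has exactly two vertices, the connecting subgraph must be a single $H$-invariant arc with one endpoint in each $X_i$. Collapsing this arc $H$-equivariantly produces the required space $X = \iota_1(X_1) \cup \iota_2(X_2)$ whose factors meet at a single point $p$; choosing lifts of $X_1, X_2$ through a common preimage $\t p$ of $p$ then yields $\im(\t \iota_1) \cap \im(\t \iota_2) = \{\t p\}$. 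The main obstacle throughout is the case $A_i \cong \Z$: here $X_i$ has no canonical $H$-fixed point, which forces the up-to-homotopy clause and some care in how the collapsed arc meets the circle; this slack is however sufficient, because any two graph realisations of the given outer action on $S^1$ are $H$-equivariantly homotopy equivalent, so we may replace the given $X_i$ by an equivalent realisation admitting an $H$-fixed vertex to serve as the gluing point.
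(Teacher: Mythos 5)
The main gap is that you apply \cref{rel NR} directly to the given graphs $X_i$, but you never verify its extension hypothesis, which requires that the extension of $A_i$ by $H$ coming from the $H$-action on $X_i$ be isomorphic, \emph{as an extension}, to $\overline{A_i}$. When $A_i \not\cong \Z$ the group $A_i$ has trivial centre, so both extensions are determined by the outer action $\phi_i$ and automatically agree. But when $A_i \cong \Z$, the centre is all of $\Z$, the extension carries strictly more information than $\phi_i$, and the extension induced by the action on $X_i$ may genuinely differ from $\overline{A_i}$. The paper's proof confronts this head-on: for $A_i \cong \Z$ it \emph{replaces} $X_i$ by a different circle $Y_i$ carrying an $H$-action that realises the correct extension, feeds the $Y_i$ into \cref{rel NR}, and then observes that the resulting embedding of $X_i$ is only $H$-equivariant up to homotopy precisely because $X_i$ was swapped for $Y_i$. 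Your proposal mentions replacing $X_i$ only at the very end, and only for the purpose of finding an $H$-fixed gluing vertex in the two-factor case, not to satisfy the hypothesis of \cref{rel NR}; without the replacement, the theorem simply does not apply.

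Your equivariance argument also does not match what is actually going on. The reason $\iota_i$ is strictly $H$-equivariant when $A_i \not\cong \Z$ is simply that $Y_i = X_i$ in that case, so the equivariant embedding provided by \cref{rel NR} is literally an embedding of $X_i$. Your alternative argument — lifting $H$ to a finite subgroup of $\overline{A_i}$ via a splitting and invoking a fixed-point theorem — assumes the extension $A_i \to \overline{A_i} \to H$ splits, which is not automatic, and in any event addresses a question (a fixed vertex in $\t X_i$) that is not what blocks equivariance. Similarly, in the $A_1 \ast A_2$ case your claim that the collapsed quotient ``has exactly two vertices, so the connecting subgraph is a single $H$-invariant arc'' is not justified: the tree obtained by collapsing $X_1$ and $X_2$ can have interior vertices. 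The paper instead lifts to $\t X$, observes that the closure of the complement of the two subgraphs has preimage a forest whose components each meet any single copy of $\t Y_i$ in at most one point, and collapses those components equivariantly; your argument would need the same care.
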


Note that since we only require the embeddings $\iota_i$ to be isometric, the presence of vertices of valence 2 is completely irrelevant. They might appear in the graphs $X_i$ as well as $X$ to make the action of $H$ combinatorial, but we do not require these appearances to agree under $\iota_i$.

\begin{proof}
Without loss of generality let us assume that the factors $A_i$ are non-trivial.

When $n=1$ we either have $F_n = B$, in which case we take $X$ to be the circle, or we have $F_n = A_1$, in which case we take $X = X_1$.

When $n \geqslant 2$, the group $F_n$ has trivial centre, and so the action $H \to \Out(F_n)$ yields a finite extension
\[ F_n \to \overline{F_n} \to H \]

For any $A_i \not\cong \Z$, the extension yielded by the action of $H$ on $X_i$ and the extension $\overline {A_i}$ are isomorphic as extensions. For convenience let us set $Y_i = X_i$ in this case.

When $A_i \cong \Z$, there exists an action of $H$ on a circle $S_1$ such that the induced extension agrees with $\overline {A_i}$. We define $Y_i$ to be precisely this circle with the $H$-action. Note that the action of $H$ on $Y_i$ and $X_i$ agree up to homotopy.

We now apply \cref{rel NR}, using the graphs $Y_i$.

When $F_n = A_1 \ast A_2$, we know that collapsing the images of $Y_1$ and $Y_2$ yields a graph with trivial fundamental group, and hence a tree. The preimage of this tree lifts is a forest in $X$ in such a way that each connected component of the forest intersects each $Y_i$ in at most one point. Collapsing each of these components individually to a point yields the result. 
\end{proof}

\begin{lem} \label{lem: fixed point in AR with two graphs}
In the context of Theorem~\ref{thm:adapted-realisation}, the
projection of the intersection point 
\[
\im(\widetilde \iota_1) \cap \im(\widetilde \iota_2)
\]
in $X$ is $H$-fixed (if it exists).
\end{lem}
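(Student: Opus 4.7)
Let $p$ denote the projection to $X$ of the intersection point $\widetilde p := \im(\widetilde \iota_1) \cap \im(\widetilde \iota_2)$, and set $I := \iota_1(X_1) \cap \iota_2(X_2)$, viewed as a subgraph of $X$. My plan is to show that $I = \{p\}$; since part iii of \cref{thm:adapted-realisation} states that $H$ preserves both $\iota_i(X_i)$ and therefore their intersection, the conclusion then follows immediately, as $H$ must fix the unique element $p$.

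First I would show that every connected component $I_j$ of $I$ is simply connected. The inclusion $I_j \hookrightarrow \iota_i(X_i)$ is $\pi_1$-injective (subgraph inclusions of graphs always are), and composing with the $\pi_1$-injective map $\iota_i$ gives an injection $\pi_1(I_j) \hookrightarrow A_i \leqslant F_n$ for each $i$. The two resulting injections into $F_n$ must agree since both are induced by the single inclusion $I_j \hookrightarrow X$, so $\pi_1(I_j)$ lies inside $A_1 \cap A_2$, which is trivial in the free product $F_n = A_1 \ast A_2$. Hence each $I_j$ is a tree. Next, a Seifert--van Kampen computation on $X = \iota_1(X_1) \cup \iota_2(X_2)$ identifies $\pi_1(X)$ with the fundamental group of a graph of groups having two vertex groups $A_1, A_2$ and $k$ edges all carrying the trivial group, giving $\pi_1(X) = A_1 \ast A_2 \ast F_{k-1}$; matching this with $F_n = A_1 \ast A_2$ forces $k = 1$, so $I$ is a single tree.

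To conclude that $I$ is a single point, I would pass to the universal cover. Since each $\iota_i$ is $\pi_1$-injective, the preimage of $\iota_i(X_i)$ in $\widetilde X$ is a disjoint union of copies $g \cdot \im(\widetilde \iota_i)$ for $g \in F_n/A_i$, with $\im(\widetilde \iota_i)$ being the component containing $\widetilde p$. Since $I$ is simply connected, its preimage in $\widetilde X$ is a disjoint union of copies of $I$; let $\widetilde I$ denote the one containing $\widetilde p$. By connectedness, $\widetilde I$ must lie in a single copy of the preimage of $\iota_i(X_i)$ for each $i$, and since $\widetilde p \in \widetilde I$ these copies must be $\im(\widetilde \iota_1)$ and $\im(\widetilde \iota_2)$ respectively. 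Thus $\widetilde I \subseteq \im(\widetilde \iota_1) \cap \im(\widetilde \iota_2) = \{\widetilde p\}$, so $\widetilde I$, and hence $I$, is a single point. The main obstacle is controlling the topology of $I$, in particular the simple-connectedness of its components; once that is reduced to the triviality of $A_1 \cap A_2$ in a free product, the rest is a direct rank count and standard covering-space reasoning.
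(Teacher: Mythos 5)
Your argument follows essentially the same strategy as the paper: use a Seifert--van Kampen / rank-count argument to see that $\iota_1(X_1)\cap\iota_2(X_2)$ is simply connected, lift it into $\im(\widetilde\iota_1)\cap\im(\widetilde\iota_2)$ to conclude it is a single point, and then note that a point which is the intersection of two $H$-invariant subspaces is $H$-fixed; you have merely spelled out the Seifert--van Kampen step that the paper invokes tersely. The only place worth tightening is the assertion that $\pi_1(I_j)$ lands in $A_1\cap A_2$: for a component $I_j$ not containing the designated basepoint, the image actually lies in $g_1A_1g_1^{-1}\cap g_2A_2g_2^{-1}$ for some $g_1,g_2\in F_n$ depending on choices of paths, but this intersection is still trivial in the free product $A_1\ast A_2$, so the conclusion stands.
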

\begin{proof}
The Seifert--van Kampen Theorem tells us that the intersection
\[\im(\iota_1) \cap \im(\iota_2)\]
 is simply-connected, and so in particular path connected. But this means that it can be lifted to the universal cover in such a way that the lift lies within
\[\im(\widetilde \iota_1) \cap \im(\widetilde \iota_2)\]
 which is just a singleton. Hence so is $\im(\iota_1) \cap \im(\iota_2)$. Now this point is the intersection of two $H$-invariant subspaces, and so is itself $H$-invariant, and thus $H$-fixed.
\end{proof}

\section{Systems of subgraphs and invariance}
\label{sec: systems of graphs}

\begin{dfn}
  Given a homomorphism $\phi \colon H \to \Out(A_\Gamma)$ we
  define \emph{the system of invariant subgraphs} $\mathcal L^\phi$
  to be the set of induced subgraphs $\Delta \subseteq \Gamma$
  (including the empty one) such that $\phi(H)$ preserves the
  conjugacy class of $A_\Delta \leqslant A_\Gamma$. Note that
  $\L^\phi$ is partially ordered by inclusion.
\end{dfn}

We next show that $\L^\phi$ is closed under taking intersections and
certain unions.

\begin{lem}
\label{lem: intersections in L}
Let $\Delta, \Sigma \in  \L^\phi$. Then
\begin{enumerate}[i)]
\item $\Delta \cap \Sigma \in \L^\phi$.
\item If $\lk(\Delta \cap \Sigma) \subseteq \st (\Delta)$ then $\Delta \cup
  \Sigma \in \L^\phi$.
\end{enumerate}
\end{lem}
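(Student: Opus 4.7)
The strategy is to control a representative $\alpha\in\Aut(A_\Gamma)$ of $\phi(h)$ (for each $h\in H$) using Proposition~\ref{prop: conj elemnts in RAAGs} together with the normal form results of Section~\ref{subsec: words in raags}. Membership $\Delta\in\L^\phi$ lets us modify $\alpha$ by an inner automorphism so that $\alpha(A_\Delta)=A_\Delta$ on the nose; I will also use the standard fact that $A_\Delta\cap A_\Sigma=A_{\Delta\cap\Sigma}$, which is immediate from uniqueness of the reduced form up to swaps.

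For (i), choose $\alpha$ with $\alpha(A_\Delta)=A_\Delta$ and write $\alpha(A_\Sigma)=z^{-1}A_\Sigma z$, so $\alpha(A_{\Delta\cap\Sigma})=A_\Delta\cap z^{-1}A_\Sigma z$. For any $x\in A_{\Delta\cap\Sigma}$ the element $\alpha(x)$ lies in $A_\Delta$ and is conjugate to an element of $A_\Sigma$; by Lemma~\ref{lem: cyclically reduced word} a cyclically reduced representative of its conjugacy class lies in $A_\Sigma$, and since this representative can also be obtained by cyclic reductions performed entirely inside $A_\Delta$, it must in fact lie in $A_\Delta\cap A_\Sigma=A_{\Delta\cap\Sigma}$. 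Proposition~\ref{prop: conj elemnts in RAAGs} then gives $\alpha(A_{\Delta\cap\Sigma})\leqslant y^{-1}A_{\Delta\cap\Sigma}y$; running the same argument with $\alpha^{-1}$ produces the reverse containment up to conjugation, and combining the two shows that $A_{\Delta\cap\Sigma}$ contains a conjugate of itself, which by the last statement of Proposition~\ref{prop: ccv} forces equality.

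For (ii), combine (i) with an in-$A_\Delta$ adjustment of $\alpha$ to arrange simultaneously $\alpha(A_\Delta)=A_\Delta$ and $\alpha(A_{\Delta\cap\Sigma})=A_{\Delta\cap\Sigma}$, and write $\alpha(A_\Sigma)=z^{-1}A_\Sigma z$. The inclusion $A_{\Delta\cap\Sigma}=\alpha(A_{\Delta\cap\Sigma})\leqslant z^{-1}A_\Sigma z$ together with the last statement of Proposition~\ref{prop: ccv} yields $z\in N(A_\Sigma)\cdot N(A_{\Delta\cap\Sigma})$. Since $z$ is only well defined up to left multiplication by $N(A_\Sigma)$, we may take $z\in N(A_{\Delta\cap\Sigma})=A_{\Delta\cap\Sigma}\times A_{\lk(\Delta\cap\Sigma)}$; the $A_{\Delta\cap\Sigma}$-factor lies in $N(A_\Sigma)$ and can likewise be absorbed, leaving $z\in A_{\lk(\Delta\cap\Sigma)}$. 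The hypothesis $\lk(\Delta\cap\Sigma)\subseteq\st(\Delta)$ then places $z$ inside $A_{\st(\Delta)}=N(A_\Delta)$, so that $z\in N(A_\Delta)\cap N(A_{\Delta\cap\Sigma})$; conjugating $\alpha$ by $z$ therefore preserves both earlier normalizations while achieving $\alpha(A_\Sigma)=A_\Sigma$. Since $A_{\Delta\cup\Sigma}=\langle A_\Delta,A_\Sigma\rangle$, this gives $\alpha(A_{\Delta\cup\Sigma})=A_{\Delta\cup\Sigma}$, as desired. The main obstacle is exactly this simultaneous bookkeeping: the hypothesis $\lk(\Delta\cap\Sigma)\subseteq\st(\Delta)$ is precisely what guarantees that the element killing the twist of $A_\Sigma$ can also be chosen to normalise $A_\Delta$, so that no previous normalisation is destroyed in the final step.
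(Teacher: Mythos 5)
Your proof is correct and follows essentially the same route as the paper's: part (i) hinges on Lemma~\ref{lem: cyclically reduced word} to place a cyclically reduced representative in $A_{\Delta\cap\Sigma}$, then Proposition~\ref{prop: conj elemnts in RAAGs} plus the $N(A_\Theta)N(A_\Theta)=N(A_\Theta)$ trick from Proposition~\ref{prop: ccv}, and part (ii) uses Proposition~\ref{prop: ccv} to locate the conjugating element and the hypothesis $\lk(\Delta\cap\Sigma)\subseteq\st(\Delta)$ to kill the obstruction. The only cosmetic differences are that you arrange a representative fixing $A_\Delta$ and $A_{\Delta\cap\Sigma}$ simultaneously and then reduce $z$ to lie in $A_{\lk(\Delta\cap\Sigma)}\subseteq N(A_\Delta)\cap N(A_{\Delta\cap\Sigma})$, whereas the paper starts from a representative fixing only $A_{\Delta\cap\Sigma}$, derives $A_\Delta$-invariance from the hypothesis, and then peels off an $A_{\st(\Delta)}$-factor from the conjugator.
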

\begin{proof}
  \begin{enumerate}[i)]
  \item Pick $h \in H$. Since $\Delta \in \L^\phi$, there exists a
    representative
    \[h_1 \in \Aut(A_\Gamma)\] of $\phi(h)$ such that $h_1(A_\Delta) =
    A_\Delta$. Analogously, there exists $h_2 \in \Aut(A_\Gamma)$
    representing $\phi(h)$ such that $h_2(A_\Sigma) = A_\Sigma$.

    Since $h_1$ and $h_2$ represent the same element in
    $\Out(A_\Gamma)$, we have
    \[ h_1^{-1} h_2 = c(r) \] with $r \in A_\Gamma$.

    Let $\Theta = \Delta \cap \Sigma$, and
    take $x \in A_\Theta$. Then $h_1(x) \in A_\Delta$, and so any
    cyclically reduced word in the alphabet $\Gamma$ representing the conjugacy class of
    $h_1(x)$ lies in $A_\Delta$ (by Lemma~\ref{lem: cyclically reduced word}). Now
    \[c(r) h_1(x) = h_2(x) \in A_\Sigma\] and thus any reduced word
    representing the conjugacy class of $h_1(x)$ lies in
    $A_\Sigma$. Hence such a word lies in $A_\Theta$. But this implies
    that $h_1(x)$ is a conjugate of an element of $A_\Theta$ for each $x \in A_\Theta$, and so
    we apply Proposition~\ref{prop: conj elemnts in RAAGs} and conclude that
\[ h_1(A_\Theta) \leqslant y^{-1} A_\Theta y \]
for some $y \in A_\Gamma$.

We repeat the argument for $h_1^{-1}$, which is a representative of $\phi(h^{-1})$, and obtain
\[ A_\Theta = h_1^{-1} h_1(A_\Theta) \leqslant h_1^{-1} (y^{-1} A_\Theta y) \leqslant y'^{-1} A_\Theta y'\]
for some $y' \in A_\Gamma$. Now Proposition~\ref{prop: ccv} tells us that $y'^{-1} A_\Theta y' = A_\Theta$ and so
both inequalities in the expression above are in fact equalities. Thus
\[ h_1(A_\Theta) = y^{-1} A_\Theta y \]
and so $\Theta \in \L^\phi$ as claimed.

    \item Now suppose that $\lk(\Theta) \subseteq \st
    (\Delta)$. Since $\Theta \in \L^\phi$, there exists a
    representative $h_3$ of $\phi(h)$ which fixes $A_\Theta$. Since
    $\Delta \in \L^\phi$, the subgroup $h_3(A_\Delta)$ is a conjugate
    of $A_\Delta$ by an element $r \in A_\Gamma$. Now we have
\[r A_\Theta r^{-1} \leq A_\Delta\]
 and so by Proposition~\ref{prop: ccv} we know that
\[r \in
    N(A_\Theta) N(A_\Delta) = A_{\st(\Theta)} A_{\st(\Delta)} =
    A_{\st(\Delta)}\]
 since $\lk(\Theta) \subseteq \st(\Delta)$ by
    assumption. But then $r^{-1} A_\Delta r = A_\Delta$, and so
    $h_3(A_\Delta) = A_\Delta$.

    Let us apply the same argument to $h_3(A_\Sigma)$ -- it must be a
    conjugate of $A_\Sigma$ by some $s \in A_\Gamma$, and we conclude
    as above that
    \[s \in N(A_\Theta) N(A_\Sigma) \leq A_{\st(\Delta)}
    A_{\st(\Sigma)}\] We take a new representative $h_4$ of $\phi(h)$,
    which differs from $h_3$ by the conjugation by the
    $A_{\st(\Delta)}$-factor of $s$. This way we get $h_4(A_\Delta) =
    A_\Delta$, and $h_4(A_\Sigma)$ equal to a conjugate of $A_\Sigma$
    by an element of $A_{\st(\Sigma)} = N(A_\Sigma)$. Hence we have
    $h_4(A_\Sigma) = A_\Sigma$, and the result follows. \qedhere
  \end{enumerate}
\end{proof}
The following lemma is very much motivated by the work of Charney--Crisp--Vogtmann~\cite{charneyetal2012}.

\begin{lem}
\label{lem: extended stars in L}
Suppose that $\phi (H) \leqslant \Out^0(A_\Gamma)$. Then $\mathcal L^\phi$ contains
\begin{enumerate}
\item each connected component of $\Gamma$ which contains at least one edge;
\item the extended star of each induced subgraph;
\item the link of each subgraph $\Delta$, such that $\Delta$ is not a cone;
\item the star of each subgraph in $\L^\phi$.
\end{enumerate}
\end{lem}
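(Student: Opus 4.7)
The plan is to verify each of the four items by running through the standard generating set of $\Aut^0(A_\Gamma)$ from Section~\ref{subsec: aut(raag)} -- inversions, partial conjugations, and transvections -- and checking that each sends the claimed parabolic subgroup to a conjugate of itself in $A_\Gamma$. Statement (4) is formal and does not even require the $\Out^0$ hypothesis: Proposition~\ref{prop: ccv} identifies $A_{\st(\Delta)}$ with the normaliser $N(A_\Delta)$, and any representative of $\phi(h)$ sending $A_\Delta$ to $g A_\Delta g^{-1}$ must also send $A_{\st(\Delta)}$ to $g A_{\st(\Delta)} g^{-1}$. For (1)--(3), inversions act trivially on every $A_\bullet$, so only transvections and partial conjugations require analysis.

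A transvection $w \mapsto wv$ with $\st(v) \supseteq \lk(w)$ is a concern only when $w$ lies in the invariant subgraph $\bullet$, and one then uses Lemma~\ref{lem: link and star calculus} to place $v$ in $\bullet$ as well. For (1), $\Lambda$ having an edge ensures $\lk(w) \neq \emptyset$, and any vertex of $\lk(w) \subseteq \st(v)$ connects $v$ to $\Lambda$. For (2), whether $w \in \lk(\Delta)$ or $w \in \lk(\lk(\Delta))$, the transvection hypothesis gives $\Delta \subseteq \st(v)$ or $\lk(\Delta) \subseteq \st(v)$, hence $v \in \widehat\st(\Delta)$ by parts (2) and (3) of Lemma~\ref{lem: link and star calculus}; the inclusion $\Delta \subseteq \widehat\st(\Delta)$ takes care of the borderline case $v \in \Delta$. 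For (3), $w \in \lk(\Delta)$ forces $\Delta \subseteq \st(v)$, so $v \in \st(\Delta) = \Delta \cup \lk(\Delta)$ by Lemma~\ref{lem: link and star calculus}(2); the non-cone hypothesis on $\Delta$ is invoked precisely to exclude the alternative $v \in \Delta$ (which would produce the join decomposition $\Delta = \{v\} \ast (\Delta \setminus \{v\})$), leaving $v \in \lk(\Delta)$.

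For a partial conjugation $p$ by a vertex $x$ conjugating one component of $\Gamma \setminus \st(x)$, the case $x \in \bullet$ is immediate: every image $u$ or $xux^{-1}$ still lies in $A_\bullet$. Otherwise, the plan is to show that $\bullet \setminus \st(x)$ is contained in a single connected component of $\Gamma \setminus \st(x)$, which forces $p$ to act on $A_\bullet$ either trivially or as conjugation by $x$ (vertices of $\bullet \cap \st(x)$ lie in $\lk(x)$, so they commute with $x$ and absorb into either behaviour). For (1) the clustering is obvious since $\Lambda$ is connected and disjoint from $\st(x)$. For (2) and (3), one deduces from $x \notin \widehat\st(\Delta)$ (resp.~$x \notin \lk(\Delta)$, with the sub-case $x \in \Delta$ handled separately using that $\lk(\Delta) \subseteq \lk(x) \subseteq \st(x)$ is then pointwise fixed) that one may pick vertices $d \in \Delta \setminus \st(x)$ and $u \in \lk(\Delta) \setminus \st(x)$, producing an anchor edge $d - u$ inside $\Gamma \setminus \st(x)$; every other vertex of the invariant subgraph outside $\st(x)$ is then adjacent in $\Gamma \setminus \st(x)$ to $d$ or $u$ (via the defining link-adjacency) and thus lies in the same component. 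The main obstacles are this clustering argument for partial conjugations in (2) and (3), and the precise identification of type II transvections as the sole source of the non-cone hypothesis in (3): none of the easier cases depend on it.
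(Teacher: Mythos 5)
Your proposal matches the paper's proof: one checks each generator type (inversions trivially, transvections and partial conjugations substantively) against items (1)--(3), with (4) following formally from $N(A_\Delta) = A_{\st(\Delta)}$, and your ``anchor edge'' clustering argument for partial conjugations is exactly the paper's chain of contradictions phrased constructively. One small imprecision worth flagging: in case (3) with $x \notin \st(\Delta)$ the vertex $u \in \lk(\Delta)\smallsetminus\st(x)$ need not exist (e.g.\ $\Gamma$ a $4$-cycle, $\Delta$ a vertex, $x$ its antipode), but this does no harm since $d \in \Delta\smallsetminus\st(x)$ alone already clusters $\lk(\Delta)\smallsetminus\st(x)$, and when that set is empty the conclusion is immediate.
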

\begin{proof}
We will prove the first three points on our list for
$\Out^0(A_\Gamma)$ (and therefore for any subgroup).
It is enough to verify that each type of generator of $\Out^0(A_\Gamma)$ preserves the listed subgroups up to conjugacy. It is certainly true for all inversions, and thus we only need to verify it for transvections and partial conjugations.

We will make a rather liberal use of Lemma~\ref{lem: link and star calculus}.

\smallskip \noindent \textbf{Transvections.} Take two vertices in $\Gamma$, say $v$ and $w$, such that
\[\lk(w) \subseteq \st(v)\]
In this case we have a transvection $w \mapsto wv$. To prove our assertion we need to check that whenever $w$ belongs to the subgraph defining our subgroup, so does $v$.

\begin{enumerate}
\item If $w$ belongs to a connected component of $\Gamma$ which is not a singleton, then $\lk(w) \neq \emptyset$ lies in the same component, and so our assumption $\lk(w) \subseteq \st(v)$ forces $v$ to lie in the component as well.
\item Take $\Delta \subseteq \Gamma$ with $w \in \widehat \st(\Delta)$. If $w \in \lk(\lk(\Delta))$ then
\[\lk(\Delta) \subseteq \lk(w) \subseteq \st(v)\]
 and so $v \in \widehat \st(\Delta)$. If $w \in \lk(\Delta)$ then $\Delta \subseteq \lk(w) \subseteq \st(v)$ and so $v \in \st(\Delta)$.
\item Take $\Delta \subseteq \Gamma$ which is not a cone, and such that $w \in \lk(\Delta)$. Then $\Delta \subseteq \st(v)$, and so $v \in \st(\Delta)$ as above. However, if $v \in \Delta$, then $\Delta \s- \{v \}  \subseteq \st(v) \s- \{v\} = \lk(v)$, and so $\Delta$ is a cone over $v$, which is a contradiction. Thus $v \in \lk(\Delta)$.
\end{enumerate}
Note that in the last part the assumption of $\Delta$ not being a cone is used only to guarantee that $v \not\in \Delta$. If the transvection under consideration was of type I, we would now this immediately since $\Delta \subseteq \lk(w) \subseteq \lk(v)$ in this case, and so the assumption of $\Delta$ not being a cone would be unnecessary.

\smallskip \noindent \textbf{Partial conjugations.} Take a vertex $v$ in $\Gamma$, such that its star disconnects $\Gamma$. In this case we have a partial conjugation of the subgroup $A_\Theta$ by $v$, with $\Theta \subseteq \Gamma \s- \st(v)$ being a connected component. Let $\Sigma = \Gamma \s- (\Theta \cup \st(v))$. To show that this automorphism preserves the desired subgroups up to conjugation, we need to show that if the subgraphs defining the subgroups do not contain $v$, then they cannot intersect both $\Theta$ and $\Sigma$.

\begin{enumerate}
\item If a connected component intersects (and so contains) $\Theta$ but does not contain $v$, then it is in fact equal to $\Theta$, and so intersects $\Sigma$ trivially.
\item Take $\Delta \subseteq \Gamma$ with $v \not\in \widehat\st(\Delta)$, and such that $\widehat\st(\Delta)$ intersects $\Theta$ and $\Sigma$ non-trivially. If $\lk(\Delta) \subseteq \st(v)$, then $v \in \widehat \st(\Delta)$, which is a contradiction. Hence $\lk(\lk(\Delta))$ cannot intersect both $\Theta$ and $\Sigma$, since if it did, then we would have $\lk(\Delta) \subseteq \st(v)$.
    Similarly, if $\Delta \subseteq \st(v)$, then $v \in \st(\Delta)$. This is again a contradiction, and again we conclude that $\lk(\Delta)$ cannot intersect both $\Theta$ and $\Sigma$. Hence, without loss of generality, we have $\lk(\lk(\Delta))$ intersecting $\Theta$ and $\lk(\Delta)$ intersecting $\Sigma$. But then the two cannot form a join. This is a contradiction.
\item Take $\Delta \subseteq \Gamma$, with $\lk(\Delta)$ intersecting both $\Theta$ and $\Sigma$ non-trivially. This condition forces $\Delta \subseteq \st(v)$. In fact we see that $\Delta \subseteq \lk(v)$, since otherwise we would have $\lk(\Delta) \subseteq \st(v)$, and thus the link would intersect neither $\Theta$ nor $\Sigma$.
Now $\Delta \subseteq \lk(v)$ gives $v \in \lk(\Delta)$.
\end{enumerate}
Note that in the last part we did not use the assumption on $\Delta$ not being a cone.

\smallskip
Now we need to prove (4). Take $\Delta \in \L^\phi$. Pick $h \in H$ and let
\[h_1 \in \Aut(A_\Gamma)\]
be a representative of $\phi(h)$ such that $h_1(A_\Delta) = A_\Delta$. Then also
\[h_1(N(A_\Delta)) = N(A_\Delta) = A_{\st(\Delta)} \qedhere \]
\end{proof}

\begin{dfn}
We say that $\phi \colon H \to \Out(A_\Gamma)$ is \emph{link-preserving}
 \iff
$\L^\phi$ contains links of all induced subgraphs of $\Gamma$.
\end{dfn}
Note that if $\dim \Gamma = 1$, then every action $\phi$ is link-preserving.

\begin{rmk}
\label{rmk: link-preserving}
Note that when $\phi$ is link-preserving, then so is every induced action $H \to
\Out(A_\Sigma)$ for each $\Sigma \in \L^\phi$.
\end{rmk}

\begin{lem}
\label{lem: no adjacent transvections}
If $\phi(H) \leqslant \mathrm{U^0}(A_\Gamma)$ then $\phi$ is link-preserving.
\end{lem}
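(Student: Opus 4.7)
The plan is to revisit the proof of Lemma~\ref{lem: extended stars in L}(3), which already establishes that $\lk(\Delta) \in \L^\phi$ whenever $\Delta$ is not a cone. It remains to handle cone subgraphs, and the strategy is to observe that, once twists are forbidden by the hypothesis $\phi(H) \leqslant \U^0(A_\Gamma)$, the non-cone assumption used in that earlier argument can be dropped.

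By definition $\U^0(A_\Gamma)$ is the quotient by inner automorphisms of the subgroup of $\Aut(A_\Gamma)$ generated by inversions, partial conjugations, and folds (type I transvections). Since membership in $\L^\phi$ concerns only conjugacy classes, it suffices to verify, generator by generator, that $A_{\lk(\Delta)}$ is preserved up to conjugation for every induced subgraph $\Delta$ (cone or not). Inversions act trivially on subgraphs, and the case of partial conjugations in the proof of Lemma~\ref{lem: extended stars in L}(3) was already handled without any appeal to the non-cone hypothesis, so both of these transfer verbatim.

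The one place where that hypothesis mattered in the original proof was the transvection step, specifically to rule out the possibility $v \in \Delta$ when $w \in \lk(\Delta)$ and $\lk(w) \subseteq \st(v)$. I would invoke the observation made in the remark immediately following that proof: for a fold (type I transvection) we have $\lk(v) \supseteq \lk(w)$, so $\Delta \subseteq \lk(w) \subseteq \lk(v)$, which directly yields $v \notin \Delta$ (as $v \notin \lk(v)$) and therefore $v \in \lk(\Delta)$. Thus folds preserve $A_{\lk(\Delta)}$ up to conjugation regardless of whether $\Delta$ is a cone. This is the only subtle step; once it is in place, link-preservation for arbitrary $\Delta$ follows immediately by combining the three generator cases.
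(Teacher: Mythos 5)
Your proposal is correct and takes essentially the same route as the paper: it checks the three generator types of $\mathrm{U}^0(A_\Gamma)$, reuses the partial-conjugation argument from Lemma~\ref{lem: extended stars in L}(3) unchanged, and invokes the remark there that for a fold the inclusion $\Delta \subseteq \lk(w) \subseteq \lk(v)$ already rules out $v \in \Delta$, making the non-cone hypothesis superfluous. The only minor imprecision is the phrase ``inversions act trivially on subgraphs''; what you mean (and what suffices) is that an inversion fixes $A_\Sigma$ setwise for every induced subgraph $\Sigma$.
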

\begin{proof}
Let $\Delta \subseteq
\Gamma$ be given. Again we will show that each generator of $\mathrm{U^0}(A_\Gamma)$ sends $A_{\lk(\Delta)}$ to a conjugate of itself.  The inversions clearly have the desired property;
so do partial conjugations and transvections of type I, since the proof of (3) above did not
use the assumption on $\Delta$ not being a cone (as remarked). But these are the generators of $\mathrm{U^0}(A_\Gamma)$ and so we are done.
\end{proof}

\begin{cor}\label{cor:dim-2-case}
  If $\dim \Gamma = 2$ and $\Gamma$ has no leaves, then any $\phi \colon H \to \Out^0(A_\Gamma)$ is link-preserving.
\end{cor}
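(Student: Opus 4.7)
The plan is to reduce the corollary to Lemma \ref{lem: no adjacent transvections}, which already gives link-preservingness for subgroups of $\U^0(A_\Gamma)$. I will show that under the hypotheses on $\Gamma$ we have $\U^0(A_\Gamma) = \Out^0(A_\Gamma)$, so that the assumption $\phi(H) \leqslant \Out^0(A_\Gamma)$ automatically upgrades to $\phi(H) \leqslant \U^0(A_\Gamma)$.

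To do this, I verify that no twist transvection exists in $\Aut(A_\Gamma)$. Recall that a twist requires vertices $v, w$ with $v \in \lk(w) \subseteq \st(v)$, so in particular $v$ and $w$ are adjacent. The no-leaves hypothesis forces $|\lk(w)| \neq 1$, so $\lk(w)$ contains some vertex $u \neq v$. Then $u \in \st(v) \setminus \{v\} = \lk(v)$, so $\{u, v, w\}$ spans a triangle in $\Gamma$, contradicting $\dim \Gamma = 2$. Hence the generating sets of $\Aut^0(A_\Gamma)$ and $\mathrm{UAut}^0(A_\Gamma)$ coincide (the former differs only by the inclusion of twists), so $\mathrm{UAut}^0(A_\Gamma) = \Aut^0(A_\Gamma)$, and passing to the quotient by inner automorphisms yields $\U^0(A_\Gamma) = \Out^0(A_\Gamma)$.

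Applying Lemma \ref{lem: no adjacent transvections} to $\phi(H) \leqslant \U^0(A_\Gamma)$ now finishes the proof. I do not anticipate any real obstacle: the only substantive calculation is the ``triangle plus no-leaves'' observation ruling out twists, after which the conclusion follows immediately from previously proved lemmas. (An alternative route would case-split on whether $\Delta \subseteq \Gamma$ is a cone and invoke Lemma \ref{lem: extended stars in L}(3) for the non-cone case while noting that a cone in a triangle-free graph has empty link, but the route via $\U^0 = \Out^0$ is more streamlined and makes the role of the no-leaves hypothesis transparent.)
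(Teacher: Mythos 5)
Your proof is correct and follows essentially the same route as the paper: both reduce to showing $\U^0(A_\Gamma) = \Out^0(A_\Gamma)$ by ruling out twists, and then apply Lemma~\ref{lem: no adjacent transvections}. The paper phrases the twist obstruction by citing Lemma~\ref{lem: link non-cone} (under the hypotheses every vertex link is a discrete graph on at least two vertices, hence not a cone), which is the same ``no-leaves plus triangle-free'' computation you carry out directly.
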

\begin{proof}
Since $\dim \Gamma =2$, the links of vertices are discrete graphs; since $\Gamma$ has no leaves, they contain at least 2 vertices. Hence such links are never cones, and Lemma~\ref{lem: link non-cone} tells us that
\[ \Out^0(A_\Gamma) = \mathrm{U^0}(A_\Gamma)\]
Lemma~\ref{lem: no adjacent transvections} completes the proof.
\end{proof}

\begin{dfn}
  \begin{enumerate}[i)]
  \item Any subset $\S$ of $\L^\phi$ closed under taking intersections of
    its elements will be called a \emph{subsystem of invariant
      subgraphs}.

  \item Given such a subsystem $\S$, and any induced subgraph $\Theta \in
    \L^\phi$, we define
    \begin{itemize}
    \item $\S_\Theta = \{ \Sigma \cap \Theta \mid \Sigma \in \S \} $
    \item $\bigcup \S = \bigcup \limits_{\Sigma \in \S} \Sigma$
    \item $\bigcap \S = \bigcap \limits_{\Sigma \in \S} \Sigma$
    \end{itemize}
  \end{enumerate}
\end{dfn}

\begin{lem}
Let $\P \subseteq \L^\phi$ be a subsystem of invariant graphs, and let $\Theta \in \P$. Then
\[ \P_\Theta = \{ \Delta \in \P \mid \Delta \subseteq \Theta \} \]
\end{lem}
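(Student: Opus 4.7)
The plan is to verify both inclusions of the claimed set equality; each follows by directly unwinding the definitions, together with the closure of $\P$ under intersections (which is part of the definition of a subsystem).

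For the inclusion $\P_\Theta \subseteq \{ \Delta \in \P \mid \Delta \subseteq \Theta \}$, I would take a typical element of $\P_\Theta$, which by definition has the form $\Sigma \cap \Theta$ for some $\Sigma \in \P$. Since $\Theta \in \P$ by hypothesis and $\P$ is closed under taking intersections of its elements, we get $\Sigma \cap \Theta \in \P$. The containment $\Sigma \cap \Theta \subseteq \Theta$ is immediate, so $\Sigma \cap \Theta$ lies in the right-hand set.

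For the reverse inclusion, I would take $\Delta \in \P$ with $\Delta \subseteq \Theta$ and observe $\Delta = \Delta \cap \Theta$, which exhibits $\Delta$ as an element of $\P_\Theta$ by the very definition of $\P_\Theta$.

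There is no real obstacle here: the statement is a direct bookkeeping consequence of the definitions of subsystem and of $\P_\Theta$. The only subtle point worth flagging is the use of the hypothesis $\Theta \in \P$ (rather than merely $\Theta \in \L^\phi$), which is exactly what makes the intersection $\Sigma \cap \Theta$ land back in $\P$.
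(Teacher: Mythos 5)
Your proof is correct and is precisely the argument the paper has in mind (the paper just states that the equality "follows directly from the fact that $\P$ is closed under taking intersections"); your two-inclusion unwinding fills in exactly those routine steps.
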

\begin{proof}
This follows directly from the fact that $\P$ is closed under taking intersections.
\end{proof}

Note that given $\Delta \in \L^\phi$, we get an induced action $ \psi \colon H \to \Out(A_\Delta)$. This follows from the fact that the normaliser $N(A_\Delta)$ satisfies
\[ N(A_\Delta) = A_{\st(\Delta)} = A_\Delta \times A_{\lk(\Delta)}  \]
and $A_{\lk(\Delta)}$ centralises $A_\Delta$.

It is immediate that $\L^\psi = \L^\phi_\Delta$.

\subsection{Boundaries of subgraphs}

Let us record here a useful fact about boundaries of subgraphs.

\begin{dfn}
 Let $\Sigma \subseteq \Gamma$ be a subgraph. We define its \emph{boundary} $\partial \Sigma$ to be the set of all vertices of $\Sigma$ whose link is not contained in $\Sigma$.
\end{dfn}

\begin{lem}
\label{lem: boundary}
Suppose that $\Sigma$ is a maximal (\wrt inclusion) proper subgraph of $\Gamma$ such that $\Sigma \in \L^\phi$. Then for every $w \in \partial \Sigma$ we have $\Gamma \s- \Sigma \subseteq \lk(w)$.
\end{lem}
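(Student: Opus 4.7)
The plan is to construct an invariant subgraph strictly between $\Sigma$ and $\Gamma$ and then appeal to maximality. As a warm-up, by item~(4) of Lemma~\ref{lem: extended stars in L} we have $\st(\Sigma)\in\L^\phi$, so by maximality $\st(\Sigma)\in\{\Sigma,\Gamma\}$; if $\st(\Sigma)=\Gamma$ then $\Gamma\s-\Sigma=\lk(\Sigma)\subseteq\lk(w)$ for every $w\in\Sigma$ and we are done, so we may assume $\st(\Sigma)=\Sigma$.

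Under the paper's standing hypothesis $\phi(H)\leq\U^0(A_\Gamma)$, the action is link-preserving by Lemma~\ref{lem: no adjacent transvections}, so $\lk(w)\in\L^\phi$ and hence $\widehat\st(w)=\st(\lk(w))\in\L^\phi$ by item~(4) of Lemma~\ref{lem: extended stars in L}. I now apply Lemma~\ref{lem: intersections in L}(ii) with $\Delta=\widehat\st(w)$: since $w\in\Sigma$ and $w\in\lk(\lk(w))\subseteq\widehat\st(w)$, the intersection $\Sigma\cap\widehat\st(w)$ contains $w$, so
\[\lk(\Sigma\cap\widehat\st(w))\subseteq\lk(w)\subseteq\widehat\st(w)=\st(\widehat\st(w)),\]
using that $\widehat\st$ is idempotent. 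Therefore $\Sigma\cup\widehat\st(w)\in\L^\phi$, and since $w\in\partial\Sigma$ gives $\lk(w)\not\subseteq\Sigma$ and hence $\widehat\st(w)\not\subseteq\Sigma$, maximality forces $\Sigma\cup\widehat\st(w)=\Gamma$, so $\Gamma\s-\Sigma\subseteq\lk(w)\cup\lk(\lk(w))$.

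The hardest part is refining this to $\Gamma\s-\Sigma\subseteq\lk(w)$. I would case-split on whether $\lk(\lk(w))\subseteq\Sigma$. In the easy case the previous equality already collapses to $\Sigma\cup\lk(w)=\Gamma$, which is exactly the conclusion. In the hard case a second application of Lemma~\ref{lem: intersections in L}(ii) with $\Delta=\lk(\lk(w))$ (using $w\in\Sigma\cap\lk(\lk(w))$ and $\st(\lk(\lk(w)))=\widehat\st(w)$ to verify the hypothesis) produces a further invariant extension of $\Sigma$, equal to $\Gamma$ by maximality. I would then rule out a ``twin'' vertex $v\in\lk(\lk(w))\s-\lk(w)$ lying outside $\Sigma$ (which would satisfy $\lk(w)\subseteq\lk(v)$ and $v\not\sim w$) by re-running the first construction with $v$ in place of $w$: the alternative $\widehat\st(v)\subseteq\Sigma$ is excluded because it would give $\lk(w)\subseteq\lk(v)\subseteq\Sigma$, contradicting $w\in\partial\Sigma$, and a comparison of the two forced equalities $\Sigma\cup\widehat\st(w)=\Sigma\cup\widehat\st(v)=\Gamma$ should close the remaining case.
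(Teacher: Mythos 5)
Your first step reproduces the paper's opening move: $\Sigma\cup\widehat\st(w)\in\L^\phi$ by Lemma~\ref{lem: intersections in L}(ii), hence (by maximality) $\Gamma\s-\Sigma\subseteq\widehat\st(w)=\lk(w)\ast\lk(\lk(w))$, and the interesting work is to eliminate the $\lk(\lk(w))$-part. Where you then diverge is hypothesis: you invoke the link-preserving condition ($\U^0$, via Lemma~\ref{lem: no adjacent transvections}) to put $\lk(\lk(w))$ directly in $\L^\phi$, whereas the paper argues under $\Out^0$ alone and has to reach $\lk(w)\in\L^\phi$ indirectly (as the link of the non-cone $\lk(\lk(w))$), then build up $\st(\widehat\st(w)\cap\Sigma)\cup\Sigma$ through a chain of intersections and stars. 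Your route is genuinely shorter, at the cost of a slightly stronger standing hypothesis --- which is fine, since every application of this lemma in the paper occurs under the link-preserving assumption.

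There is, however, a real gap at the very end. In your hard case the second application of Lemma~\ref{lem: intersections in L}(ii) (with $\Delta=\lk(\lk(w))$) correctly yields $\Sigma\cup\lk(\lk(w))=\Gamma$. But the ``twin vertex'' detour that follows does not close the case: a vertex $v\in\lk(\lk(w))\s-\Sigma$ is \emph{not} in $\Sigma$, so you cannot ``re-run the first construction with $v$ in place of $w$'' --- that construction verified the hypothesis of Lemma~\ref{lem: intersections in L}(ii) by using $w\in\Sigma\cap\widehat\st(w)$, and there is no analogous anchor in $\Sigma\cap\widehat\st(v)$. The claimed equality $\Sigma\cup\widehat\st(v)=\Gamma$ is never justified, and the hedged ``should close the remaining case'' leaves the argument incomplete.

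The fix is easier than what you attempted: your own derived equality already finishes it. From $\Sigma\cup\lk(\lk(w))=\Gamma$ we get $\Gamma\s-\Sigma\subseteq\lk(\lk(w))$. But $w\in\partial\Sigma$ supplies a vertex $u\in\lk(w)\s-\Sigma\subseteq\Gamma\s-\Sigma$, so $u\in\lk(w)\cap\lk(\lk(w))$. This intersection is empty in a simplicial graph (such a $u$ would have to be adjacent to itself), a contradiction. So the ``hard'' case is vacuous, $\lk(\lk(w))\subseteq\Sigma$ always holds, and the ``easy'' case gives the conclusion. With that last paragraph replaced, your proof is correct and cleaner than the paper's, modulo the explicit extra hypothesis.
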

\begin{proof}
Let $\Theta = \Gamma \s- \Sigma$, and let $w \in \partial \Sigma$. We have $\widehat \st(w) \in \L^\phi$. Now
\[\lk(\widehat \st(w) \cap \Sigma) \subseteq \lk(w) \subseteq \widehat \st(w)\]
and so $\Sigma \cup \widehat \st(w) \in \L^\phi$ by Lemma~\ref{lem: intersections in L}. Thus the maximality of $\Sigma$ implies that $\Sigma \cup \widehat \st(w) = \Gamma$. Hence $\Theta \subseteq \widehat \st(w)$. Let $\Theta_1 = \Theta \cap \lk(w)$ and  $\Theta_2 = \Theta \cap \lk(\lk(w))$. If the latter is empty then we are done, so let us assume that it is not empty.

If $\widehat \st(w) = \st(w)$ then $\Theta_2 = \Theta \cap \{w\} = \emptyset$ and we are done. Otherwise
$\lk(\lk(w))$ is not a cone (since $w$ is isolated in it), and thus the link of $\lk(\lk(w))$ is contained in $\L^\phi$. But this triple link is in fact equal to $\lk(w)$, and so we have $\lk(w) \in \L^\phi$.
Since $\L^\phi$ is closed under taking intersections, we also have $\Sigma \cap \lk(w) \in \L^\phi$ and thus also $\st(\Sigma \cap \lk(w)) \in \L^\phi$. We have
\[ \Theta_2 = \Theta \cap \lk(\lk(w)) \subseteq \lk(\lk(w)) \subseteq \lk(\Sigma \cap \lk(w)) \subseteq \st(\Sigma \cap \lk(w))\]
and so $\st(\Sigma \cap \lk(w))$ is not contained in $\Sigma$.
We also have
\[\Sigma \cup \st(\Sigma \cap \lk(w)) \in \L^\phi\]
since \[\lk(\Sigma \cap \st(\Sigma \cap \lk(w))) \subseteq  \lk(\Sigma \cap \lk(w)) \subseteq \st(\Sigma \cap \lk(w))\]
 as before. The graph $\Sigma \cup \st(\Sigma \cap \lk(w))$ must contain $\Theta_1$ as well, by the maximality of $\Sigma$. Thus $\Theta_1 \subseteq \st(\Sigma \cap \lk(w))$, and so
 \[\Theta_1 \subseteq \lk (\lk_\Sigma(w))\]

We have $\Theta_1 \subseteq \lk(w) = \lk(\lk(\lk(w)))$ and so, combining this with the previous observation, we get
\begin{eqnarray*}
\Theta_1 & \subseteq& \lk\big(\lk_\Sigma(w)\big) \cap \lk\big(\lk(\lk(w))\big) \\
& =& \lk\Big(\lk(\lk(w)) \cup (\lk(w) \cap \Sigma)\Big) \\
& \subseteq& \lk\Big(\big(\lk(\lk(w)) \cup \lk(w)\big) \cap \Sigma\Big) \\
& =& \lk\big(\widehat \st(w) \cap \Sigma \big) \\
& \subseteq&  \st\big(\widehat \st(w) \cap \Sigma\big)
\end{eqnarray*}

The last subgraph is a star of a subgraph in $\L^\phi$, and so is itself in $\L^\phi$. As before we have
\[\lk\Big( \st\big(\widehat \st(w) \cap \Sigma\big) \cap \Sigma \Big) \subseteq \lk(w) = \lk_\Sigma (w) \cup \Theta_1 \subseteq \st\big(\widehat \st(w) \cap \Sigma\big)\]
and so $\st\big(\widehat \st(w) \cap \Sigma\big) \cup \Sigma \in \L^\phi$. It contains $\Sigma$ and  $\Theta_1$, and therefore it must contain all of $\Theta$. But then $\Theta \subseteq \st\big(\widehat \st(w) \cap \Sigma\big)$, which is only possible when
\[ \Theta \subseteq \lk\big(\widehat \st(w) \cap \Sigma\big) \subseteq \lk(w)\]
which is the desired statement.
\end{proof}

\section{Cubical systems}
\label{sec: cubical systems}

In this section we give the definition of the most fundamental object
in the paper. We then begin proving some central properties that will
be used throughout.

\begin{dfn}[Metric cube complex]
  \begin{enumerate}[i)]
  \item A \emph{metric cube complex} is a (realisation of a)
    combinatorial cube complex, which comes equipped with a metric
    such that every $n$-cube in $X$ (for each $n$) is isometric to a
    Cartesian product of $n$ closed intervals in $\R$.

  \item Given two such complexes $X$ and $Y$, we say that $Y$ is a
    \emph{subdivision} of $X$ \iff the combinatorial cube complex
    underlying $Y$ is a subdivision of $X$, and the induced map (on
    the realisations) $Y \to X$ is an isometry.

  \item We say that $Z \subseteq X$ is a \emph{subcomplex} of $X$ \iff
    there exists a subdivision $Y$ of $X$ such that, under the
    identification $X = Y$, the subspace $Z \subseteq Y$ is a
    subcomplex in the combinatorial sense.

  \item A connected metric cube complex is called \emph{non-positively curved}, or
    NPC, whenever its universal cover is CAT(0).
  \end{enumerate}
\end{dfn}
At this point we want to warn the reader that our notion of subcomplex
is more relaxed than the usual definition.

Note further that we will make no distinction between a metric cube complex $X$ and its subdivisions; as metric spaces they are isomorphic; moreover given any group action (by isometries) $H \curvearrowright X$ which respects the combinatorial structure of $X$ and any subdivision $Y$ of $X$, there exists a further subdivision $Z$ of $Y$ such that the inherited action of $H$ on $Z$ respects the combinatorial structure of $Z$.

\begin{dfn}[Cubical systems]
\label{dfn: cubical systems}
Suppose we have a subsystem of invariant subgraphs $\P$, such that $\P$ is closed under taking restricted links, that is for all $\Delta, E \in \P$ with $\Delta \subseteq E$ we have $\lk_E(\Delta)\in \P$. A \emph{cubical system} $\X$ (for $\P$) consists of the following data.
\begin{enumerate}
\item For each $\Delta \in \P$ a marked metric NPC cube complex $X_\Delta$, of the same dimension as $A_\Delta$, realising $H \to \Out(A_\Delta)$. We additionally require $X_\Delta$ not to have leaves when $\Delta$ is 1-dimensional.
\item For each pair $\Delta, \Theta \in \P$ with $\Delta \subseteq \Theta$, an $H$-equivariant isometric embedding
\[\iota_{\Delta, \Theta} \colon X_\Delta \times X_{\lk_\Theta(\Delta)} \to X_{\Theta}\] whose image is a subcomplex, and
 which respects the markings via a map $\widetilde \iota_{\Delta, \Theta}$, where the product is given the product marking.  
We set $\iota_{\Delta, \Delta}$ and $\widetilde \iota_{\Delta, \Delta}$ to be the respective identity maps.
\end{enumerate}
Given $\Delta, \Theta \in \P$ we will refer to the map $\widetilde
\iota_{\Delta, \Theta}\vert_{\widetilde{X}_\Delta \times \{x\}}$ for
any $x \in \widetilde{X}_{\lk_\Theta(\Delta)}$ as the \emph{standard
  copy of $\widetilde{X}_\Delta$ in $\widetilde{X}_\Theta$ determined
  by $x$}, or simply \emph{a standard copy of $\widetilde{X}_\Delta$
  in $\widetilde{X}_\Theta$}.

We say that such a standard copy is \emph{fixed} \iff its projection in $X_\Theta$ is $H$-invariant.

We require our maps to satisfy four conditions.

\smallskip \noindent \textbf{Product Axiom.} Given $\Delta, \Theta \in \P$ such that $\Theta = \Delta \ast \lk_\Theta(\Delta)$, we require
 $\t \iota_{\Delta, \Theta}$ to be surjective.

\smallskip \noindent \textbf{Orthogonal Axiom.} Given $\Delta \subseteq \Theta$, both in $\P$, for each $\t x \in \t  X_\Delta$ we require $\t \iota_{\Delta, \Theta}(\{\t x\} \times \t X_{\lk_\Theta(\Delta)})$ to be equal to the image of some standard copy of $\t X_{\lk_\Theta(\Delta)}$ in $\t X_\Theta$.

\smallskip \noindent \textbf{Intersection Axiom.} Let $\Sigma_1,
\Sigma_2, \Theta \in \P$ be such that $\Sigma_i \subseteq \Theta$ for both values of $i$. Suppose that we are given standard copies of $\widetilde X_{\Sigma_i}$ in $\widetilde X_{\Theta}$ whose images intersect non-trivially. Then the intersection of the images is equal to the image of a standard copy of $\widetilde X_{\Sigma_1 \cap \Sigma_2}$ in $\t X_\Theta$. Moreover, this intersection is also the image of a standard copy of  $\widetilde X_{\Sigma_1 \cap \Sigma_2}$ in $\t X_{\Sigma_i}$, under the given standard copy $\t X_{\Sigma_i} \to \t X_\Theta$, for both values of $i$

\smallskip \noindent \textbf{System Intersection Axiom.} Let $\S
\subseteq \P$ be a subsystem of invariant graphs closed under taking
unions of its elements.
 Then for each $\Sigma \in \S$
there exists a  standard copy of $\widetilde X_\Sigma$ in $\widetilde X_{\bigcup \S}$ such that the images of all of these copies intersect non-trivially.
\end{dfn}

\begin{rmk}
%
We will often make no distinction between a standard copy and its
image. Let us remark here that any standard copy is a
subcomplex (with our non-standard definition of a subcomplex; see above).
\end{rmk}

Now we will list some implications of the definition.

\begin{rmk}
Suppose we are given a cubical system $\X$ for $\L^\phi$, and let $\Delta \in \L^\phi$. Then the subsystem $\X_\Delta$, consisting of all complexes $X_\Sigma \in \X$ with $\Sigma \subseteq \Delta$ together with all relevant maps, is a cubical system for $\L^\phi_\Delta$.
\end{rmk}

\begin{lem}
\label{lem: extending intersections}
Let $\X$ be a cubical system for $\L^\phi$, and let $\P \subseteq
\L^\phi$ be a subsystem of invariant graphs which is closed under
taking unions. Suppose that $\P$ contains another subsystem $\P'$,
also closed under taking unions, such that $\Sigma \cup \bigcap \P' \in \P'$ for each $\Sigma \in \P$. Suppose further that for each $\Sigma' \in \P'$ we are given a standard copy $\widetilde Y_{\Sigma'}$ of $\widetilde X_{\Sigma'}$ in $\widetilde X_{\bigcup \P}$ such that
\[ \bigcap_{\Sigma' \in \P'} \widetilde Y_{\Sigma'} \neq \emptyset \]
Then for each $\Sigma \in \P \s- \P'$ there exists a standard copy $\widetilde Y_\Sigma$ of $\widetilde X_\Sigma$ in $\widetilde X_{\bigcup \P}$ such that
\[ \bigcap_{\Sigma \in \P} \widetilde Y_\Sigma \neq \emptyset \]
\end{lem}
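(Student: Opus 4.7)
Set $\Delta := \bigcap \P'$, which lies in $\P'$ since $\P'$ is closed under intersections. Applying the Intersection Axiom iteratively to the pairs $(\widetilde Y_\Delta, \widetilde Y_{\Sigma'})$ for $\Sigma' \in \P'$ (each of which contains the assumed common intersection point) shows that $\widetilde Y_\Delta \subseteq \widetilde Y_{\Sigma'}$ for every such $\Sigma'$, and hence $\bigcap_{\Sigma' \in \P'} \widetilde Y_{\Sigma'} = \widetilde Y_\Delta$.

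Now invoke the System Intersection Axiom on the subsystem $\P$ itself (which is closed under unions by hypothesis), producing standard copies $(\widetilde Z_\Sigma)_{\Sigma \in \P}$ in $\widetilde X_{\bigcup \P}$ with a common point $q$. The same reasoning inside this new family gives that $\widetilde Z_\Delta := \bigcap_{\Sigma' \in \P'} \widetilde Z_{\Sigma'}$ is itself a standard copy of $\widetilde X_\Delta$ containing $q$.

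The key step is then to produce an element $g \in A_{\bigcup \P}$ acting by deck transformations on $\widetilde X_{\bigcup \P}$ (via the marking) such that $g \cdot \widetilde Z_\Delta = \widetilde Y_\Delta$; once $g$ is available, define $\widetilde Y_\Sigma := g \cdot \widetilde Z_\Sigma$ for each $\Sigma \in \P \s- \P'$. Then $g \cdot q$ lies in $g \cdot \widetilde Z_\Sigma = \widetilde Y_\Sigma$ for every $\Sigma \in \P \s- \P'$ automatically. For $\Sigma' \in \P'$, both $g \cdot \widetilde Z_{\Sigma'}$ and $\widetilde Y_{\Sigma'}$ are standard copies of $\widetilde X_{\Sigma'}$ in $\widetilde X_{\bigcup \P}$ containing the common standard copy $\widetilde Y_\Delta = g \cdot \widetilde Z_\Delta$ of $\widetilde X_\Delta$; since distinct fibres of the product embedding $\widetilde \iota_{\Sigma', \bigcup \P}$ are disjoint, two standard copies of $\widetilde X_{\Sigma'}$ that share a point must coincide, so $g \cdot \widetilde Z_{\Sigma'} = \widetilde Y_{\Sigma'}$ and thus $g \cdot q \in \widetilde Y_{\Sigma'}$. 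This yields the required non-empty total intersection.

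The delicate point is showing that $g$ exists, i.e.\ that $\widetilde Y_\Delta$ and $\widetilde Z_\Delta$ lie in the same orbit of the deck-group action on $\widetilde X_{\bigcup \P}$. The hypothesis that $\Sigma \cup \Delta \in \P'$ for every $\Sigma \in \P$ is what makes this possible: the prescribed standard copies $\widetilde Y_{\Sigma \cup \Delta}$ already contain $\widetilde Y_\Delta$, and, together with the Product and Orthogonal Axioms applied inside each $\widetilde X_{\Sigma \cup \Delta}$, they pin down the projection class to which the output of the System Intersection Axiom must be matched. Carrying out this projection-class alignment is the main technical obstacle I expect to encounter.
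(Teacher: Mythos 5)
Your overall scaffolding is sound — set $\Delta = \bigcap \P'$, invoke the System Intersection Axiom on $\P$ to produce a second family of standard copies with a common point, and try to ``move'' one family to line up with the other — and you correctly single out the hypothesis $\Sigma \cup \Delta \in \P'$ as the place where the work must happen. But the transport mechanism you propose does not work, and the gap you flag at the end is not a technical loose end: it is where the argument actually breaks.

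The problem is that two standard copies of $\widetilde X_\Delta$ in $\widetilde X_{\bigcup\P}$ need not lie in the same orbit of the deck group $A_{\bigcup\P}$. A standard copy is $\widetilde\iota_{\Delta,\bigcup\P}(\widetilde X_\Delta \times \{x\})$ for a point $x \in \widetilde X_{\lk_{\bigcup\P}(\Delta)}$, and the deck group does not act transitively on such $x$. The degenerate case $\Delta = \emptyset$ already makes this concrete: then a standard copy of $\widetilde X_\Delta$ is a single point of $\widetilde X_{\bigcup\P}$, and two points of the universal cover are related by a deck transformation only if they cover the same point downstairs. So the element $g$ with $g\cdot\widetilde Z_\Delta = \widetilde Y_\Delta$ simply may not exist, and the heuristic about ``pinning down the projection class'' via the $\overline\Sigma = \Sigma\cup\Delta$ copies does not produce it — the System Intersection Axiom gives no control over where the family $\widetilde Z_\Sigma$ sits. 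There is also a second, subtler issue you gloss over: even when some $g \in A_{\bigcup\P}$ carries $\widetilde Z_\Delta$ onto $\widetilde Y_\Delta$, the translate $g\cdot\widetilde Z_\Sigma$ is a standard copy only if $g$ preserves the product subcomplex $\im(\widetilde\iota_{\Sigma,\bigcup\P})$, i.e.\ if $g$ lies in $A_{\st_{\bigcup\P}(\Sigma)}$; a generic deck translate of a standard copy is just a convex subcomplex with no reason to be a fibre of the product embedding.

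The paper gets around both difficulties by working metrically rather than group-theoretically. After using the Orthogonal Axiom to replace the given common point of the $\widetilde Y_{\Sigma'}$ by a point $\widetilde b'$ lying in the same standard copy $\widetilde W$ of $\widetilde X_{\lk_{\bigcup\P}(\Delta)}$ as the System Intersection Axiom output $\widetilde b$, it considers the CAT(0) geodesic $\delta$ from $\widetilde b$ to $\widetilde b'$. That geodesic lies in $\widetilde W$, and for each $\Sigma \in \P$ it connects two standard copies of $\widetilde X_{\Sigma'}$ (where $\Sigma' = \Sigma\cup\Delta$), so it lies in the unique standard copy of $\widetilde X_{\st_{\bigcup\P}(\Sigma')}$. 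Intersecting with $\widetilde W$ puts $\delta$ inside a standard copy of $\widetilde X_{\lk_{\Sigma'}(\Delta)\ast\lk_{\bigcup\P}(\Sigma')}$, and here the hypothesis does its job: $\lk_{\Sigma'}(\Delta)\ast\lk_{\bigcup\P}(\Sigma') \subseteq \st_{\bigcup\P}(\Sigma)$, so $\delta$ sits inside the unique copy of $\widetilde X_{\st_{\bigcup\P}(\Sigma)}$ through $\widetilde b$. Its far endpoint $\widetilde b'$ therefore lies on a standard copy $\widetilde Y_\Sigma$ of $\widetilde X_\Sigma$, which is what you need. If you want to salvage your write-up, replace the ``transport by $g$'' step with this geodesic argument; the rest of your setup can stay.
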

\begin{proof}
Let us first set some notation: we let $\Delta = \bigcap \P'$, and for each $\Sigma \in \P$ we set $\Sigma' = \Sigma \cup \Delta \in \P'$.

The System Intersection Axiom gives us for each $\Sigma \in \P$ a
standard copy $\widetilde Z_\Sigma$ of $\widetilde X_\Sigma$ in
$\widetilde X_{\bigcup \P}$ such that there is a point $\widetilde b
\in \widetilde X_{\bigcup \P}$ with
\[ \widetilde b \in \bigcap_{\Sigma \in \P} \widetilde Z_\Sigma. \]
In particular $\widetilde b \in \widetilde Z_\Delta$, and thus there exists a point $\widetilde b' \in \widetilde Y_\Delta$ such that both $\t b$ and $\t b'$ lie in the same standard copy $\t W$ of $\t X_{\lk_{\bigcup \P}(\Delta)}$ (due to the Orthogonal Axiom). Note that the Intersection Axiom guarantees that $\widetilde b' \in \widetilde Y_{\Sigma'}$ for each $\Sigma' \in \P'$.

We want to construct standard copies $\t Y_\Sigma$ for each $\Sigma \in \P$ which contain $\t b'$.

Let $\delta$ denote the geodesic in the complete CAT(0) space $\widetilde X_{\bigcup \P}$ connecting $\widetilde b$ to $\widetilde b'$. Since all maps $\t \iota$ are isometric embeddings, the geodesic $\delta$ lies in $\t W$.

Take $\Sigma \in \P$. Then $\delta$ connects two standard copies of
$\t X_{\Sigma'}$, namely $\t Y_{\Sigma'}$ and $\t Z_{\Sigma'}$, and hence it
must lie in a standard copy of $\t X_{\st_{\bigcup \P}(\Sigma')}$;
this copy is unique since the link of $\st_{\bigcup \P}(\Sigma')$ in $\bigcup \P$ is trivial.
Thus the geodesic lies in the intersection of this copy and $\t W$, which itself is a standard copy of
\[\t X_{\lk_{\bigcup \P}(\Delta) \cap \st_{\bigcup \P}(\Sigma')} = \t X_{\lk_{\Sigma'}(\Delta) \ast \lk_{\bigcup \P}(\Sigma')} \]
by the Intersection Axiom. In particular, this implies that $\t b$ lies in this standard copy. 
But $\t b$ also lies in $\t Z_\Sigma$, and hence in the unique standard copy of $\t X_{\st_{\bigcup \P}(\Sigma)}$. Therefore $\t b$ lies in the intersection of this copy and the copy of $\t X_{\lk_{\Sigma'}(\Delta) \ast \lk_{\bigcup \P}(\Sigma')}$. However
\[\lk_{\Sigma'}(\Delta) \ast \lk_{\bigcup \P}(\Sigma')
\subseteq \st_{\bigcup \P}(\Sigma)\]
 (since $\Sigma \subseteq \Sigma'$ and $\Sigma' = \Sigma \cup \Delta$), and thus
the Intersection Axiom implies that the copy of $\t
X_{\lk_{\Sigma'}(\Delta) \ast \lk_{\bigcup \P}(\Sigma')}$ which
contains $\delta$ lies within the unique copy of $\t X_{\st_{\bigcup
    \P}(\Sigma)}$. This must be also true for $\delta$ itself, and so
there exists a standard copy $\t Y_\Sigma$ of $\t X_\Sigma$ which
contains $\t b'$, the other endpoint of $\delta$.
\end{proof}

\begin{lem}[Matching Property]
\label{lem: mathcing prop}
Let $\X$ be a cubical system for $\L^\phi$, and let $\Sigma_1,
\Sigma_2 \in \L^\phi$ be such that $\Theta = \Sigma_1 \cup \Sigma_2
\in \L^\phi$ as well. Let $\widetilde Y_i$ be a given standard copy of
$\widetilde X_{\Sigma_i}$ in $\widetilde X_\Theta$. Then their images
intersect in a standard copy of $\widetilde X_\Delta$, with $\Delta =
\Sigma_1 \cap \Sigma_2$.
\end{lem}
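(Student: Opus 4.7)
The Intersection Axiom immediately gives the ``intersects in a standard copy of $\widetilde X_\Delta$'' conclusion once we know the intersection is non-empty, so the real task is to prove $\widetilde Y_1 \cap \widetilde Y_2 \neq \emptyset$. I begin by handling a special case: let $\S = \{\Delta, \Sigma_1, \Sigma_2, \Theta\}$, which is a subsystem of invariant graphs (intersection-closed, since $\Sigma_1 \cap \Sigma_2 = \Delta$ and all other intersections remain in $\S$) and is also closed under unions because $\Sigma_1 \cup \Sigma_2 = \Theta$ lies in $\L^\phi$ by hypothesis. The System Intersection Axiom then yields standard copies $\widetilde Z_i$ of $\widetilde X_{\Sigma_i}$ in $\widetilde X_\Theta$ sharing a common point, confirming the matching property in this particular case.

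To adapt this to an arbitrary $\widetilde Y_1$, I apply \cref{lem: extending intersections} with $\P = \S$ and $\P' = \{\Sigma_1, \Theta\}$; one checks that $\P'$ is a subsystem closed under unions, that $\bigcap \P' = \Sigma_1$, and that $\Sigma \cup \Sigma_1 \in \P'$ for every $\Sigma \in \P$. Feeding in $\widetilde Y_{\Sigma_1} = \widetilde Y_1$ and $\widetilde Y_\Theta = \widetilde X_\Theta$, whose intersection is the non-empty $\widetilde Y_1$, produces a standard copy $\widetilde Y_2^\star$ of $\widetilde X_{\Sigma_2}$ with $\widetilde Y_1 \cap \widetilde Y_2^\star \neq \emptyset$.

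The remaining step is to replace $\widetilde Y_2^\star$ by the prescribed $\widetilde Y_2$. Here the key structural facts are the inclusions $\lk_\Theta(\Sigma_1) \subseteq \Sigma_2$ and $\lk_\Theta(\Sigma_2) \subseteq \Sigma_1$: a vertex in $\lk_\Theta(\Sigma_i)$ is adjacent to every vertex of $\Sigma_i$ (and hence cannot lie in $\Sigma_i$) while lying in $\Theta = \Sigma_1 \cup \Sigma_2$. Consequently the parameter space $\widetilde X_{\lk_\Theta(\Sigma_2)}$ that indexes standard copies of $\widetilde X_{\Sigma_2}$ sits naturally inside $\widetilde X_{\Sigma_1}$ via $\widetilde \iota_{\lk_\Theta(\Sigma_2), \Sigma_1}$, and through the Orthogonal Axiom this identification governs which standard copy of $\widetilde X_{\Sigma_2}$ passes through a given point of $\widetilde Y_1$. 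Following a geodesic in $\widetilde X_\Theta$ between points of $\widetilde Y_1$ whose projections to $\widetilde X_{\lk_\Theta(\Sigma_2)}$ correspond to the parameters of $\widetilde Y_2^\star$ and $\widetilde Y_2$, and arguing as in the proof of \cref{lem: extending intersections} that the geodesic remains in appropriate standard copies, should deliver a point of $\widetilde Y_1 \cap \widetilde Y_2$. The main obstacle will be verifying the compatibility between the two avatars of $\widetilde X_{\lk_\Theta(\Sigma_2)}$---the one arising inside $\widetilde Y_1 \cong \widetilde X_{\Sigma_1}$ through $\widetilde \iota_{\lk_\Theta(\Sigma_2), \Sigma_1}$, and the one coming from $\widetilde \iota_{\lk_\Theta(\Sigma_2), \Theta}$ directly---which I expect to handle via an Intersection-Axiom-style uniqueness argument inside $\widetilde X_\Theta$.
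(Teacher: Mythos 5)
Your first two steps match the paper exactly: the special case via the System Intersection Axiom is unnecessary but harmless, and the application of \cref{lem: extending intersections} with $\P = \{\Delta,\Sigma_1,\Sigma_2,\Theta\}$ and $\P' = \{\Sigma_1,\Theta\}$, feeding in $\widetilde Y_1$ and $\widetilde X_\Theta$, correctly produces a standard copy $\widetilde Y_2^\star$ of $\widetilde X_{\Sigma_2}$ meeting $\widetilde Y_1$. You also correctly isolate the structural fact $\lk_\Theta(\Sigma_2) \subseteq \Sigma_1$ as the hinge of the final step.

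The gap is in the last paragraph, and you flag it yourself (``should deliver a point,'' ``I expect to handle,'' ``the main obstacle will be''). Your description of the geodesic as running ``between points of $\widetilde Y_1$'' is circular: you don't yet know that the endpoint near $\widetilde Y_2$ lies in $\widetilde Y_1$ --- that is precisely what must be established. The worry about ``compatibility between two avatars of $\widetilde X_{\lk_\Theta(\Sigma_2)}$'' is a symptom of trying to work inside the product structure of $\widetilde Y_1$, which the paper's argument sidesteps entirely. The clean route is: pick $p \in \widetilde Y_1 \cap \widetilde Y_2^\star$; by the Orthogonal Axiom applied to $\widetilde Y_2^\star$ there is a standard copy $\widetilde C$ of $\widetilde X_{\lk_\Theta(\Sigma_2)}$ through $p$, and since all standard copies of $\widetilde X_{\Sigma_2}$ are parallel slices of the same product, $\widetilde C$ also meets $\widetilde Y_2$; the geodesic $\delta$ in $\widetilde X_\Theta$ from $p$ to that meeting point stays inside $\widetilde C$ by convexity. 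Now $\widetilde C \cap \widetilde Y_1 \ni p$ and $\lk_\Theta(\Sigma_2) \subseteq \Sigma_1$, so the Intersection Axiom forces $\widetilde C \cap \widetilde Y_1$ to be a standard copy of $\widetilde X_{\lk_\Theta(\Sigma_2) \cap \Sigma_1} = \widetilde X_{\lk_\Theta(\Sigma_2)}$, i.e.\ all of $\widetilde C$. Hence $\delta \subseteq \widetilde Y_1$ and its far endpoint lies in $\widetilde Y_1 \cap \widetilde Y_2$. You had the ingredients; what was missing was this direct use of the Intersection Axiom to absorb the whole copy $\widetilde C$ into $\widetilde Y_1$, which makes the ``compatibility'' question evaporate.
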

\begin{proof}
Let
\[ \P = \{ \Delta, \Sigma_1, \Sigma_2, \Theta \} \]
and
\[\P' = \{ \Sigma_1, \Theta \} \subseteq \P\]
Note that $\P'$ satisfies the assumptions of the previous lemma, and so there exists a standard copy $\t Z_{\Sigma_2}$ of $\t X_{\Sigma_2}$ in $\t X_\Theta$ such that $\t Y_{\Sigma_1}$ and  $\t Z_{\Sigma_2}$ intersect.

Now there exists a geodesic $\delta$ connecting standard copies $\t Z_{\Sigma_2}$ and $\t Y_{\Sigma_2}$, such that $\delta$ lies in a standard copy of $\t X_{\lk_\Theta(\Sigma_2)}$, and one of the endpoints of $\delta$ lies in $\t Y_{\Sigma_1} \cap \t Z_{\Sigma_2}$. But
\[ \lk_\Theta(\Sigma_2) \subseteq \Sigma_1 \]
and so $\delta$ lies in $\t Y_{\Sigma_1}$ entirely (by the Intersection Axiom). Thus $\t Y_{\Sigma_1}$ and $\t Y_{\Sigma_2}$ intersect non-trivially (at the other endpoint of $\delta$).
\end{proof}

\begin{lem}[Composition Property]
\label{lem: composition prop}
Let $\X$ be a cubical system for $\P$, and let $\Delta, \Sigma, \Theta \in \P$ satisfy $\Delta \subseteq \Sigma \subseteq \Theta$. Suppose that we are given a standard copy $\t Y_\Delta$ of $\t X_\Delta$ in $\t X_\Sigma$, and a standard copy $\t Z_\Sigma$ of $\t X_\Sigma$ in $\t X_\Theta$. Then there exists a standard copy of  $\t X_\Delta$ in $\t X_\Theta$, whose image is equal to the image of $\t Y_\Delta$ in $\t Z_\Sigma$.
\end{lem}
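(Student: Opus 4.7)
My plan is to show that the set $I := \t\iota_{\Sigma,\Theta}(\t Y_\Delta \times \{z\})$, where $y \in \t X_{\lk_\Sigma(\Delta)}$ and $z \in \t X_{\lk_\Theta(\Sigma)}$ are the parameters with $\t Y_\Delta = \t\iota_{\Delta,\Sigma}(\t X_\Delta \times \{y\})$ and $\t Z_\Sigma = \t\iota_{\Sigma,\Theta}(\t X_\Sigma \times \{z\})$, equals some standard copy of $\t X_\Delta$ in $\t X_\Theta$.

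First I would apply \cref{lem: extending intersections} with $\P = \{\Delta, \Sigma, \Theta\}$ and $\P' = \{\Sigma, \Theta\}$, both visibly closed under unions within themselves, taking as given standard copies $\t Y_\Sigma := \t Z_\Sigma$ and $\t Y_\Theta := \t X_\Theta$ (trivially a standard copy of itself); their intersection is $\t Z_\Sigma$, which is non-empty. This produces a standard copy $\t W_\Delta$ of $\t X_\Delta$ in $\t X_\Theta$ whose image meets $\t Z_\Sigma$.

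Second, I would invoke the Intersection Axiom with $\Sigma_1 = \Delta$ and $\Sigma_2 = \Sigma$ to see that $\t W_\Delta \cap \t Z_\Sigma$ is simultaneously a standard copy of $\t X_\Delta$ in $\t X_\Theta$ and the image, under $\t\iota_{\Sigma,\Theta}(\cdot, z)$, of a standard copy of $\t X_\Delta$ in $\t X_\Sigma$. Because $\t\iota_{\Delta,\Theta}$ is an isometric embedding of the product $\t X_\Delta \times \t X_{\lk_\Theta(\Delta)}$, injectivity forces distinct standard copies of $\t X_\Delta$ in $\t X_\Theta$ (parametrised by the second factor) to be pairwise disjoint. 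Hence the standard copy $\t W_\Delta \cap \t Z_\Sigma$, which is contained in $\t W_\Delta$ and shares a point with it, must coincide with $\t W_\Delta$; so $\t W_\Delta \subseteq \t Z_\Sigma$, and there exists a standard copy $\t Y'_\Delta$ of $\t X_\Delta$ in $\t X_\Sigma$ with $\t W_\Delta = \t\iota_{\Sigma,\Theta}(\t Y'_\Delta \times \{z\})$.

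The final, and hardest, step is to replace the standard copy $\t Y'_\Delta$ produced by the lemma with the \emph{given} $\t Y_\Delta$. For this I would use the deck action: any $g \in A_{\lk_\Sigma(\Delta)}$ lies in $A_\Sigma \cap A_{\st_\Theta(\Delta)} = A_\Sigma \cap N_\Theta(A_\Delta)$, and so acts on $\t X_\Theta$ preserving $\t Z_\Sigma$ while permuting the family of standard copies of $\t X_\Delta$ in $\t X_\Theta$. If $\t Y_\Delta$ and $\t Y'_\Delta$ are parametrised by $y, y' \in \t X_{\lk_\Sigma(\Delta)}$ lying in the same $A_{\lk_\Sigma(\Delta)}$-orbit, then choosing $g$ with $g \cdot y' = y$ yields $g \cdot \t W_\Delta = \t\iota_{\Sigma,\Theta}(g \cdot \t Y'_\Delta \times \{z\}) = \t\iota_{\Sigma,\Theta}(\t Y_\Delta \times \{z\}) = I$, and $g \cdot \t W_\Delta$ is the desired standard copy. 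The crux is therefore guaranteeing that $y'$ can be moved to $y$ by such a $g$; when this fails for the $\t W_\Delta$ produced by the first application of \cref{lem: extending intersections}, one has to re-apply the construction with a judicious choice of initial intersection point in $\t Z_\Sigma$ (as permitted by the flexibility in the proof of \cref{lem: extending intersections}) so that the resulting $\t Y'_\Delta$ and $\t Y_\Delta$ lie in a common $A_{\lk_\Sigma(\Delta)}$-orbit. This orbit-matching step is the main obstacle; once it is secured, the conclusion follows from the preceding two paragraphs.
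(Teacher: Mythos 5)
Your first two steps are sound and, up to minor packaging, coincide with the opening of the paper's proof: the System Intersection Axiom together with \cref{lem: extending intersections} and the Intersection Axiom produce a standard copy $\t W_\Delta$ of $\t X_\Delta$ in $\t X_\Theta$ that is contained in $\t Z_\Sigma$ and is the image under $\t Z_\Sigma$ of \emph{some} standard copy $\t Y'_\Delta$ of $\t X_\Delta$ in $\t X_\Sigma$.

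The third step, however, contains a genuine gap, and you are right to flag it as the crux. The deck-transformation idea cannot work as stated: $A_{\lk_\Sigma(\Delta)}$ acts on $\t X_{\lk_\Sigma(\Delta)}$ freely and properly discontinuously, hence with discrete orbits, so for a generic pair of parameters $y, y' \in \t X_{\lk_\Sigma(\Delta)}$ there is no $g \in A_{\lk_\Sigma(\Delta)}$ with $g\cdot y' = y$. Thus the given $\t Y_\Delta$ and the produced $\t Y'_\Delta$ will typically lie in \emph{different} orbits, and no re-application of \cref{lem: extending intersections} can fix this: that lemma controls which standard copies of $\t X_\Delta$ in $\t X_\Theta$ meet a prescribed point, not which $\t Y'_\Delta$ in $\t X_\Sigma$ they project from, and you give no mechanism to steer the output into the required orbit. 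As written, the orbit-matching step is not a technical detail to be filled in but the entire remaining content of the lemma.

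The paper closes the gap using the \textbf{Orthogonal Axiom} rather than deck transformations. Both $\t Y_\Delta$ and $\t Y'_\Delta$ are parallel slices $\t X_\Delta \times \{y\}$, $\t X_\Delta \times \{y'\}$ inside the product image $\im(\t\iota_{\Delta,\Sigma})$, so the Orthogonal Axiom supplies a standard copy $\t Y_{\lk_\Sigma(\Delta)}$ of $\t X_{\lk_\Sigma(\Delta)}$ in $\t X_\Sigma$ (a vertical slice $\{\t x\}\times \t X_{\lk_\Sigma(\Delta)}$) that meets \emph{both}, say $\t Y'_\Delta$ at $\t b$ and $\t Y_\Delta$ at $\t b'$. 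One then applies the Orthogonal Axiom in $\t X_\Theta$ at the image of $\t b$ to obtain a standard copy $\t W_{\lk_\Theta(\Delta)}$ of $\t X_{\lk_\Theta(\Delta)}$, checks via the Intersection Axiom that $\t W_{\lk_\Theta(\Delta)} \cap \t Z_\Sigma$ is precisely the image of $\t Y_{\lk_\Sigma(\Delta)}$, and finally applies the Orthogonal Axiom once more at the image of $\t b'$ to produce a standard copy $\t W'_\Delta$ of $\t X_\Delta$ in $\t X_\Theta$; a last appeal to the Intersection Axiom shows that $\t W'_\Delta \cap \t Z_\Sigma$ is the image of a standard copy of $\t X_\Delta$ in $\t X_\Sigma$ containing $\t b'$, which must therefore equal $\t Y_\Delta$. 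In short: where you try to \emph{transport} $\t Y'_\Delta$ onto $\t Y_\Delta$ by a group element, the paper instead \emph{travels} from $\t Y'_\Delta$ to $\t Y_\Delta$ along a standard copy of the link, mirroring that motion inside $\t X_\Theta$ at each step. That sliding mechanism is the missing idea in your proposal.
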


\begin{figure}
\label{fig: composition}
\begin{center}
\includegraphics[scale = 0.8]{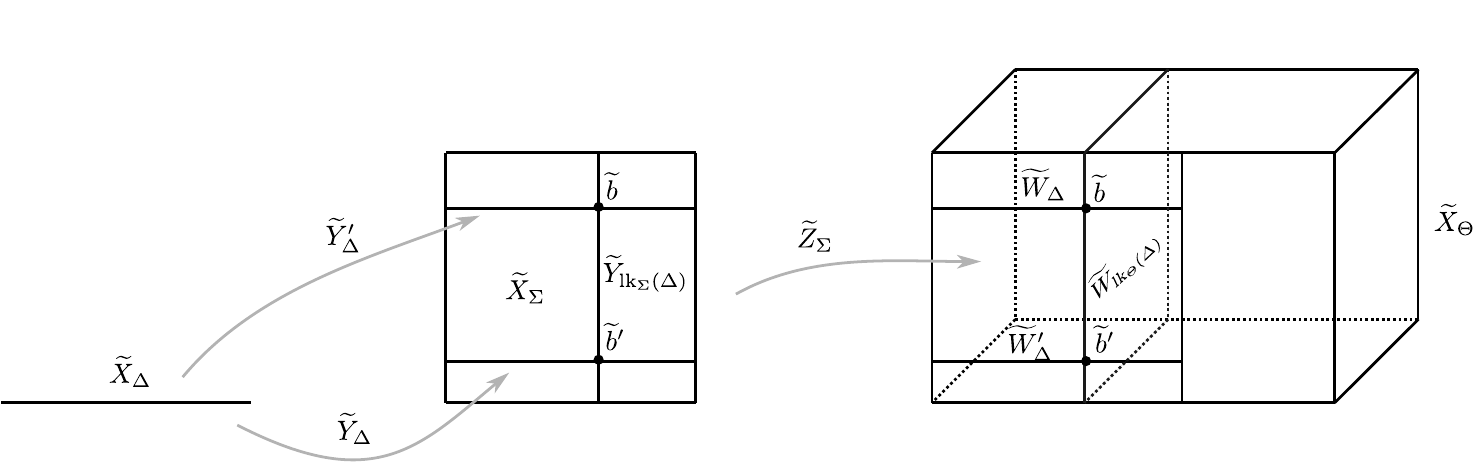}
\caption{A schematic for the proof of Lemma~\ref{lem: composition prop}}
\end{center}
\end{figure}

\begin{proof}
The System Intersection Axiom, Lemma~\ref{lem: extending intersections}, and the Intersection Axiom give us a standard copy $\t W_\Delta$ of $\t X_\Delta$ in $\t X_\Theta$ contained in $\t Z_\Sigma$, which is an image of a standard copy $\t Y'_\Delta$ of $\t X_\Delta$ in $\t X_\Sigma$. Now the Orthogonal Axiom gives us a standard copy $\t Y_{\lk_\Sigma(\Delta)}$ of $\t X_{\lk_\Sigma(\Delta)}$ in $\t X_{\Sigma}$ which intersects both $\t Y_\Delta$ and $\t Y'_\Delta$, the latter in a point $\t b$.

The Orthogonal Axiom also gives us a standard copy $\t W_{\lk_\Theta(\Delta)}$ of $\t X_{\lk_\Theta(\Delta)}$ in $\t X_{\Theta}$ which intersects $\t W_\Delta$ at the image of point $\t b$.
The Intersection Axiom implies that $\t W_{\lk_\Theta(\Delta)}$ intersects $\t Z_\Sigma$ in a copy of $\t X_{\lk_\Sigma(\Delta)}$, which is also the image of a copy of $\t X_{\lk_\Sigma(\Delta)}$ in $\t X_\Sigma$. But this copy contains $\t b$, and two standard copies of a given complex intersect \iff they coincide. Therefore it is equal to $\t Y_{\lk_\Sigma(\Delta)}$.
Thus it intersects $\t Y'_\Delta$ in a point $\t b'$, and the image of this point in $\t X_\Theta$ lies in
the image of $\t Y_\Delta$ in $\t Z_\Sigma$ and $\t W_{\lk_\Theta(\Delta)}$.

The Orthogonal Axiom gives us a copy $\t W'_\Delta$ of $\t X_\Delta$ in $\t X_\Theta$ which contains $\t b'$. Hence $\t W'_\Delta$ intersects $\t Z_\Sigma$, and the Intersection Axiom implies that this intersection is a standard copy equal to the image of a standard copy of $\t X_\Delta$ in $\t X_\Sigma$. But this standard copy intersects $\t Y_\Delta$ in $\t b'$, and so coincides with $\t Y_\Delta$. This finishes the proof.
\end{proof}

\begin{dfn}
Let $\Delta \subseteq \Sigma$ be two elements of $\L^\phi$, and suppose that $\X'$ is a cubical system for $\L^\phi_\Delta$. Let $\Delta = \Delta_1 \ast \dots \ast \Delta_k$ be its join decomposition. We say that a cubical system $\X$ for $\L^\phi_\Sigma$ \emph{extends} $\X'$ \iff
\begin{itemize}
 \item when $\vert \Delta_i \vert \geqslant 2$, for every $E \in \L^\phi_{\Delta_i}$ we have an $H$-equivariant isometry $j_E \colon X'_E \to X_E$ which preserves the markings via a map $\widetilde j_E$;
\item when $\Delta_i$ is a singleton we have an isometry $j_{\Delta_i} \colon X'_{\Delta_i} \to X_{\Delta_i}$, which preserves the markings via a map $\widetilde j_{\Delta_i}$, and is $H$-equivariant up to homotopy;
\item the maps $\widetilde \iota', \widetilde \iota$ and $\widetilde j$ make the obvious diagrams commute.
\end{itemize}

We say that $\X$ \emph{strongly extends} $\X'$ \iff all the maps $j$ are $H$-equivariant.
\end{dfn}


\begin{prop}
\label{prop: product system}
Suppose that $\Gamma = \Gamma_1 \ast \Gamma_2$ with $\Gamma_i \in \L^\phi$ for both values of $i$. Let $\X^i$ be a cubical system for $\L^\phi_{\Gamma_i}$. Then there exists a cubical system $\X$ for $\L^\phi$ which extends both $\X^1$ and $\X^2$ strongly.
\end{prop}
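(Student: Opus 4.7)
The plan is to construct $\X$ as the ``product'' of $\X^1$ and $\X^2$. For each $\Delta \in \L^\phi$ set $\Delta_i := \Delta \cap \Gamma_i$ and define
\[X_\Delta := X^1_{\Delta_1} \times X^2_{\Delta_2},\]
equipped with the product cube-complex structure, the product metric, and the diagonal $H$-action. For $\Delta \subseteq \Theta$ in $\L^\phi$ we let $\iota_{\Delta, \Theta}$ be, after the obvious reordering of factors, the product of $\iota^1_{\Delta_1, \Theta_1}$ and $\iota^2_{\Delta_2, \Theta_2}$; the analogous choice is made for the lifts $\widetilde \iota_{\Delta, \Theta}$.

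For this to be well defined I first check that the data of $\X^1$ and $\X^2$ really cover all of $\L^\phi$. Since $\Gamma_i \in \L^\phi$ and $\L^\phi$ is closed under intersections (\cref{lem: intersections in L}), the subgraphs $\Delta_i$ lie in $\L^\phi_{\Gamma_i}$. The hypothesis $\Gamma = \Gamma_1 \ast \Gamma_2$ implies $\Delta = \Delta_1 \ast \Delta_2$, and a direct check gives $\lk_\Theta(\Delta) = \lk_{\Theta_1}(\Delta_1) \ast \lk_{\Theta_2}(\Delta_2)$ whenever $\Delta \subseteq \Theta$ in $\L^\phi$. Consequently $A_\Delta = A_{\Delta_1} \times A_{\Delta_2}$, the dimension $\dim \Delta_1 + \dim \Delta_2 = \dim \Delta$ matches that of $X_\Delta$, and the domain and codomain of $\iota_{\Delta, \Theta}$ line up correctly as products.

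Next I verify that $X_\Delta$ realises the induced action $\psi \colon H \to \Out(A_\Delta)$. Since both factors $A_{\Delta_i}$ of $A_\Delta$ are $\psi$-invariant and direct summands, an argument in the spirit of the proof of \cref{lem: intersections in L} (using \cref{prop: ccv} and \cref{prop: conj elemnts in RAAGs}) lets me choose, for each $h \in H$, a representative of $\psi(h)$ in $\Aut(A_\Delta)$ which preserves both $A_{\Delta_1}$ and $A_{\Delta_2}$ setwise and restricts on each $A_{\Delta_i}$ to the action realised by $X^i_{\Delta_i}$. The diagonal $H$-action on $X_\Delta$ induces precisely such a product automorphism of $A_{\Delta_1} \times A_{\Delta_2}$; and since conjugation in $A_\Delta$ by $g_1 g_2 \in A_{\Delta_1} \cdot A_{\Delta_2}$ restricts to conjugation by $g_i$ on each factor, the two classes agree in $\Out(A_\Delta)$.

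Finally I verify the four axioms and the strong extension. The key structural observation is that, under the product decomposition $\widetilde X_\Theta = \widetilde X^1_{\Theta_1} \times \widetilde X^2_{\Theta_2}$, every standard copy of $\widetilde X_\Sigma$ in $\widetilde X_\Theta$ is precisely the product of a standard copy of $\widetilde X^1_{\Sigma_1}$ in $\widetilde X^1_{\Theta_1}$ with a standard copy of $\widetilde X^2_{\Sigma_2}$ in $\widetilde X^2_{\Theta_2}$. With this identification in hand the Product, Orthogonal, Intersection, and System Intersection axioms of \cref{dfn: cubical systems} each reduce to the corresponding axiom for $\X^1$ and $\X^2$ applied one factor at a time. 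For strong extension, when $E \in \L^\phi_{\Gamma_i}$ we have $E_{3-i} = \emptyset$, so $X_E = X^i_E \times \{\ast\}$ and the obvious isometry $j_E \colon X^i_E \to X_E$ is $H$-equivariant and respects the markings. The main obstacle is the realisation step above; once the outer classes on a direct product are matched carefully by exploiting that inner automorphisms of $A_\Delta$ by products of factor elements split as inner automorphisms of each factor, the axiom verifications are essentially mechanical.
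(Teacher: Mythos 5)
Your proposal is correct and follows essentially the same approach as the paper: build $\X$ by taking cube complexes $X_\Delta = X^1_{\Delta_1}\times X^2_{\Delta_2}$ and the product maps, then observe that standard copies in a product are exactly products of standard copies, which reduces all four axioms to the corresponding axioms in $\X^1$ and $\X^2$. The extra detail you supply on the realisation step and on strong extension only spells out what the paper leaves implicit.
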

\begin{proof}
This construction is fairly straightforward. Given $\Sigma \in
\L^\phi$ we define $\Sigma_i = \Sigma \cap \Gamma_i$. Note that we
have $\Sigma = \Sigma_1 \ast \Sigma_2$.

Take such a $\Sigma$. We define $\t X_\Sigma = \t X^1_{\Sigma_1}
\times \t X^2_{\Sigma_2}$. We mark it with the product marking, and
immediately see that $\t X_\Sigma$ and its projection $X_\Sigma$ are of
the form required in the definition of a cubical system.

Now let us also take $\Theta \in \L^\phi$ such that $\Sigma \subseteq \Theta$. Crucially,
\[\lk_\Theta(\Sigma) = \lk_{\Theta_1}(\Sigma_1) \ast \lk_{\Theta_2}(\Sigma_2)\]

We define $\t \iota_{\Sigma, \Theta} = \t \iota_{\Sigma_1, \Theta_1}
\times \t \iota_{\Sigma_2, \Theta_2}$. Again, it is clear that these
maps have the form required in the definition.

The four axioms are immediate; they all follow from the observation that any standard copy of $\t X_\Sigma$ in $\t X_\Theta$ for any $\Sigma, \Theta \in \L^\phi$ with $\Sigma \subseteq \Theta$ is equal to a product of standard copies of $\t X_{\Sigma_i}$ in $\t X_{\Theta_i}$ for both values of $i$, and vice-versa -- taking a product of two such copies yields a copy of the former kind.
%
%
%
%
\end{proof}

We will say that the cubical system $\X$ obtained above is the \emph{product} of systems $\X^1$ and $\X^2$.

\begin{lem}
\label{lem: replacing circle}
Suppose that $\Gamma$ is a cone over $s \in \Gamma$, with $\{s\}, \Gamma \s- \{s\} \in \L^\phi$. Let $\X'$ be a cubical system for $\L^\phi_{\{s\} }$, and let $\X$ be a cubical system for $\L^\phi$ which extends $\X'$. Then there exists a cubical system $\X''$ for $\L^\phi$ which extends $\X$ and extends $\X'$ strongly.
\end{lem}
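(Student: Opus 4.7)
The plan is to invoke \cref{prop: product system} on the join decomposition $\Gamma = \{s\} \ast (\Gamma \smallsetminus \{s\})$, feeding $\X'$ into the singleton factor and a suitable restriction of $\X$ into the other factor. Set $\Gamma_1 = \{s\}$ and $\Gamma_2 = \Gamma \smallsetminus \{s\} = \lk(s)$; both lie in $\L^\phi$ by hypothesis. Take $\X^1 := \X'$, and define $\X^2$ on $\L^\phi_{\Gamma_2}$ by $X^2_\Sigma := X_\Sigma$ and $\iota^2_{\Delta,\Theta} := \iota_{\Delta,\Theta}$ for all $\Delta \subseteq \Theta$ in $\L^\phi_{\Gamma_2}$. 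For any $\Theta \in \L^\phi_{\Gamma_2}$ we have $\lk^2_\Theta(\Delta) = \lk_\Theta(\Delta)$, and every subsystem of $\L^\phi_{\Gamma_2}$ closed under union is also such a subsystem of $\L^\phi$ whose union stays inside $\Gamma_2$; it follows that the four axioms pass directly from $\X$ to $\X^2$, so that $\X^2$ is genuinely a cubical system for $\L^\phi_{\Gamma_2}$.

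Apply \cref{prop: product system} to $\X^1$ and $\X^2$ to obtain a cubical system $\X''$ for $\L^\phi$ strongly extending both. This immediately yields the required strong extension of $\X'$, since $\X' = \X^1$. It remains to show that $\X''$ extends $\X$. Write the join decomposition $\Gamma = \{s\} \ast \Lambda_1 \ast \dots \ast \Lambda_m$, where $\Lambda_1, \dots, \Lambda_m$ are the non-join atoms of $\lk(s)$. For each $\Lambda_j$ and each $E \in \L^\phi_{\Lambda_j}$ we have $E \subseteq \Gamma_2$, hence $X''_E = X^2_E = X_E$ and the identity is the required $H$-equivariant isometry preserving markings (and, if $\Lambda_j$ is a singleton, also $H$-equivariant up to homotopy). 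For the singleton atom $\{s\}$, take $j_{\{s\}}^{-1} \colon X_{\{s\}} \to X'_{\{s\}} = X''_{\{s\}}$, where $j_{\{s\}}$ is provided by the hypothesis that $\X$ extends $\X'$; this is an isometry preserving markings and is $H$-equivariant up to homotopy. The compatibility diagrams of the extension are trivial on the $\Gamma_2$ side and, on the $\{s\}$ side, involve only $\emptyset \subseteq \{s\}$, where commutativity is inherited from $\X$ extending $\X'$.

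The main (modest) obstacle is the verification that $\X^2$ really is a cubical system for $\L^\phi_{\Gamma_2}$ rather than an incoherent bundle of data; once that bookkeeping is done, the rest of the argument is essentially a matter of choosing the right inputs for \cref{prop: product system} and translating the product structure back into the language of extensions with respect to the join decomposition of $\Gamma$.
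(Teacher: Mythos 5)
Your argument is correct and follows the paper's own proof essentially verbatim: the paper also defines $\X''$ as the product (in the sense of \cref{prop: product system}) of $\X_{\Gamma \smallsetminus \{s\}}$ and $\X'$, and checks the two extension claims in exactly the way you do. The only extra work you do is re-verifying that $\X_{\Gamma_2}$ is itself a cubical system, which the paper already records as a general remark about subsystems immediately after \cref{dfn: cubical systems}.
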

\begin{proof}
We define $\X''$ to be the product of $\X_{\Gamma \s- \{s\}}$ and
$\X'$. It is then clear that $\X''$ extends $\X'$ strongly. To verify
that $\X''$ extends $\X$ we only need to observe that
$X''_{\{s\}} = X'_{\{s\}}$, which in turn is isometric to $X_{\{s\}}$
in a way $H$-equivariant up to homotopy since $\X$ extends $\X'$.
\end{proof}

\begin{lem}
 \label{replacing circle for extensions}
Let $\X$ be a cubical system for $\L^\phi$, and suppose that the action $\phi$ is link-preserving, and its domain $H$ is finite.
Suppose that we have a group extension
\[ A_\Gamma \to \overline H \to H \]
yielding the given action $\phi$.
Then there exists a cubical system $\Y$ for $\L^\phi$ which extends $\X$, and such that the extension of $A_\Gamma$ by $H$ given by the action of $H$ on $X_\Gamma$ is isomorphic to $\overline H$ (as an extension).
\end{lem}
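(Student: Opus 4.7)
The plan is to surgically replace the ``centre torus'' part of $\X$ with a new cubical system carrying the correct extension, leaving everything else unchanged. Set $Z = Z(\Gamma)$; since $A_Z$ is the centre of $A_\Gamma$ we have $Z \in \L^\phi$ automatically, and $\Gamma \s- Z = \lk(Z) \in \L^\phi$ by the link-preserving hypothesis. The join $\Gamma = (\Gamma \s- Z) \ast Z$ induces a direct product $A_\Gamma = A_{\Gamma \s- Z} \times A_Z$, and one checks that $Z(A_{\Gamma \s- Z}) = \{1\}$ (any vertex central in $\Gamma \s- Z$ would be adjacent to everything in $\Gamma$ and hence lie in $Z$, a contradiction). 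Consequently any two extensions of $A_{\Gamma \s- Z}$ by $H$ realising a given outer action are canonically isomorphic, and in particular the extension arising from $H \curvearrowright X_{\Gamma \s- Z}$ already coincides with $\overline{A_{\Gamma \s- Z}}$.

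The main construction is a cubical system $\Y'$ for $\L^\phi_Z$ whose top complex $Y'_Z$ realises $\overline{A_Z}$. Since $\overline{A_Z}$ is an extension of $A_Z \cong \Z^{|Z|}$ by the finite group $H$, Bieberbach's theorem realises it as a cocompact group of affine isometries of $\R^{|Z|}$ with translation lattice $A_Z$; let $Y'_Z$ be the quotient flat torus, cubically subdivided finely enough for $H$ to act combinatorially. For each $\Sigma \in \L^\phi_Z$ the subgroup $A_\Sigma \leq A_Z$ is $H$-invariant, so its real span descends to an $H$-invariant subtorus $Y'_\Sigma \subseteq Y'_Z$ which itself realises $\overline{A_\Sigma}$. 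All complexes involved are subtori, all structural maps are the evident coordinate inclusions, and the four cubical-system axioms follow directly.

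Applying Proposition~\ref{prop: product system} to $\X_{\Gamma \s- Z}$ and $\Y'$ yields a cubical system $\Y$ for $\L^\phi$ which strongly extends both summands. To verify that $\Y$ extends $\X$, consider the join decomposition of $\Gamma$: for each non-singleton join factor $\Gamma_i$ (necessarily lying in $\Gamma \s- Z$) and each $E \in \L^\phi_{\Gamma_i}$ we have $Y_E = X_E$, so the identity serves as the required $H$-equivariant isometry $j_E$; for each singleton factor $\{s\}$ lying in $\L^\phi$, both $X_{\{s\}}$ and $Y_{\{s\}}$ are circles realising the same outer action of $H$ on $\Z$, and any two such circles admit an isometry that is $H$-equivariant up to homotopy. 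The diagrammatic compatibilities follow from the product construction.

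Finally, the extension $\overline H_\Y$ determined by $H \curvearrowright Y_\Gamma = X_{\Gamma \s- Z} \times Y'_Z$ restricts to $\overline{A_{\Gamma \s- Z}}$ on the first factor and to $\overline{A_Z}$ on the second, exactly as $\overline H$ does. Since extensions of $A_\Gamma$ by $H$ realising a fixed outer action form a torsor over $H^2(H; Z(A_\Gamma)) = H^2(H; A_Z)$ and the cohomology class is detected by the restriction to the centre, the two extensions are isomorphic. The hardest step in practice is verifying the Intersection and System Intersection axioms for $\Y'$: one must arrange the subdivision of $Y'_Z$ so that every $H$-invariant coordinate subtorus is a subcomplex and pairwise intersections are themselves standard subtori. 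This is possible because $\overline{A_Z}$ acts by integral affine transformations of $\R^{|Z|}$, so a common refinement of finitely many rational subdivisions can always be chosen.
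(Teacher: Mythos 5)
Your proof follows the same basic route as the paper: decompose $\Gamma$ as $(\Gamma \s- Z(\Gamma)) \ast Z(\Gamma)$, keep the given system on the centre-free factor, construct a new system on the centre carrying the prescribed extension, and conclude by \cref{prop: product system}. You realise the central factor as a single flat torus and compare extensions via the $H^2(H;Z(A_\Gamma))$-torsor, whereas the paper builds one central circle at a time and lets the extension comparison fall out of the product; these differences are presentational.

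The one step that does not work as stated is the appeal to Bieberbach's theorem. The group $\overline{A_Z}$ need not be a crystallographic group: if $H$ acts non-faithfully on $A_Z$ then $A_Z$ is not self-centralising in $\overline{A_Z}$, so $\overline{A_Z}$ cannot be a discrete cocompact subgroup of $\mathrm{Isom}(\R^{|Z|})$. Already $\Z\times\Z/2$ (the split extension of $\Z$ by the trivial $\Z/2$-action) does not embed in $\mathrm{Isom}(\R)$, because the only isometries of $\R$ commuting with all translations are translations. What is actually needed is weaker and always available: an affine action of $\overline{A_Z}$ on $\R^{|Z|}$ — not required to be faithful or cocompact — in which $A_Z$ acts by unit translations and the translation parts are rational. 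Since the integral $2$-cocycle of the extension is a coboundary $\delta f$ for some rational $1$-cochain $f$ (as $H^2(H;\Q^{|Z|})=0$), one may set $h\cdot x=\rho(h)x+f(h)$; the extension recovered from the resulting $H$-action on $Y'_Z=\R^{|Z|}/A_Z$ is $\overline{A_Z}$ regardless of non-faithfulness, because it is defined through the group of lifts, and rationality of $f$ makes the subdivision argument go through. A further small slip: the coordinate subtorus through the origin need not be invariant under this affine action (the translation part can move it), so $Y'_\Sigma$ should be taken to be the torus $\R^{\Sigma}/\Z^{\Sigma}$ with its own $H$-action and the $H$-equivariant product map $Y'_\Sigma\times Y'_{Z\s-\Sigma}\to Y'_Z$, rather than an $H$-invariant subtorus of $Y'_Z$. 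With these repairs the argument is sound and matches the paper's.
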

\begin{proof}
Let $\Gamma = \Gamma_1 \ast \dots \ast \Gamma_m$ be a join decomposition of $\Gamma$, and suppose that $\Gamma_i$ is a  singleton \iff $i \leqslant k$.
Then $E = \Gamma_{i+1} \ast \dots \ast \Gamma_m \in \L^\phi$, since $\phi$ is link-preserving, and so we have a cubical system $\Y^1 = \X_E$ for $\L^\phi_E$.

We also have $\Delta = \Gamma_1 \ast \dots \ast \Gamma_k \in \L^\phi$, and thus the extension $\overline H$ yields an extension
\[ A_\Delta \to \overline {A_\Delta} \to H \]
We easily build a cubical system $\Y^2$ for $\L^\phi_\Delta$ such that the action of $H$ on $Y^2_\Delta$ gives $\overline {A_\Delta}$ -- it is enough to do it for $\Delta$ being a singleton, in which case we only need to observe that any finite extension of $\Z$
acts on $\R$ by isometries preserving the integer points and without a global fixed point, and taking the quotient by $\Z$ yields an action of $H$ on a circle which gives the desired extension.

Note that $\Y^2$ extends $\X_\Delta$.
\cref{prop: product system} finishes the proof.
\end{proof}

\section{Cubical systems for free products}
\label{sec: AR for RAAGs}

In this section we use Relative Nielsen Realisation (\cref{rel NR}) to build cubical systems for RAAGs whose defininig graph $\Gamma$ is disconnected. The construction will assume that we have already built cubical systems for some unions of connected components of $\Gamma$.

\begin{prop}
\label{prop: sticking complexes together}
Suppose that $\Gamma$ is the disjoint union 
\[\Gamma = \Sigma_1 \sqcup \dots \sqcup \Sigma_n \sqcup \Theta\]
 where $\Sigma_i \in \L^\phi$, each $\Sigma_i$ is a union of connected components, and $\Theta$ is discrete. Let $\X^i$ be a cubical system for $\L^\phi_{\Sigma_i}$ for each $i$. Then there exists a cubical system $\X$ for $\L^\phi_{\Sigma_1} \cup \dots \cup \L^\phi_{\Sigma_n} \cup \{\Gamma\}$ extending each $\X_i$.

Moreover, the (unique) standard copies in $\t X_\Gamma$ of $\t X_{\Sigma_i}$ and $\t X_{\Sigma_j}$ are disjoint when $i \neq j$, unless $\Gamma = \Sigma_1 \sqcup \Sigma_2$, in which case their standard copies intersect in a single point, and the union of their projections covers $X_\Gamma$. 
\end{prop}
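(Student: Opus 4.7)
The plan is to apply the cubical Relative Nielsen Realisation Theorem (\cref{rel NR} together with \cref{cubical rmk}) to the free-product decomposition
\[A_\Gamma = A_{\Sigma_1} \ast \cdots \ast A_{\Sigma_n} \ast A_\Theta,\]
where $A_\Theta$ is a free group because $\Theta$ is discrete, and the conjugacy class of each $A_{\Sigma_i}$ is preserved by $\phi(H)$ by hypothesis.

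First I would replace each $\X^i$ by a cubical system $\Y^i$ extending it whose top complex $Y^i_{\Sigma_i}$ realises the restriction $\overline{A_{\Sigma_i}}$ of the ambient extension $\overline{A_\Gamma}\to H$ as an extension (and not merely the induced outer action); this is achieved by applying \cref{replacing circle for extensions} to each $\X^i$. I would then feed the spaces $Y^i_{\Sigma_i}$ into cubical RNR to produce an NPC cube complex $X_\Gamma$ realising $\phi$, together with pairwise disjoint $H$-equivariant cubical embeddings $\iota_{\Sigma_i, \Gamma}\colon Y^i_{\Sigma_i}\hookrightarrow X_\Gamma$ that respect the markings. In the edge case $\Gamma = \Sigma_1 \sqcup \Sigma_2$, the free part $B$ supplied to RNR is trivial, so collapsing the two images yields a tree; mimicking the concluding step of the proof of \cref{thm:adapted-realisation}, I would collapse the preimage of the unique arc between the two collapsed vertices so that the two images meet in a single point, which is then $H$-fixed by the reasoning of \cref{lem: fixed point in AR with two graphs}.

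I would then assemble the full cubical system $\X$ for $\P := \L^\phi_{\Sigma_1} \cup \cdots \cup \L^\phi_{\Sigma_n} \cup \{\Gamma\}$ by setting $X_\Sigma := Y^i_\Sigma$ and reusing the internal maps of $\Y^i$ whenever $\Sigma \in \L^\phi_{\Sigma_i}$, and using $X_\Gamma$ from the previous step. Maps into $X_\Gamma$ for $\Sigma \subseteq \Sigma_i$ are defined as the composition
\[\iota_{\Sigma, \Gamma} := \iota_{\Sigma_i,\Gamma}\circ \iota^i_{\Sigma,\Sigma_i};\]
the crucial point here is that $\lk_\Gamma(\Sigma)=\lk_{\Sigma_i}(\Sigma)$, which holds because $\Sigma_i$ is a union of connected components of $\Gamma$, so that the domains of the two composed maps agree.

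Finally I would verify the four axioms of \cref{dfn: cubical systems}. The Product, Orthogonal and Intersection axioms reduce cleanly to the corresponding axioms of each $\Y^i$, combined with the pairwise disjointness (or, in the $n=2$ case, the single common point) of the standard copies of $\t X_{\Sigma_i}$ in $\t X_\Gamma$. The main obstacle is the System Intersection Axiom for a subsystem $\S \subseteq \P$ closed under unions whose elements span several $\L^\phi_{\Sigma_i}$. The key combinatorial observation is that if $\emptyset\ne \Sigma\in \L^\phi_{\Sigma_i}$ and $\emptyset\ne \Sigma'\in\L^\phi_{\Sigma_j}$ with $i\ne j$ both lie in $\S$, then $\Sigma\cup\Sigma'\in \P$ forces $\Sigma=\Sigma_i$, $\Sigma'=\Sigma_j$, and $n=2$, $\Theta=\emptyset$. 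Consequently any such cross-system $\S$ must be contained in $\{\emptyset,\Sigma_1,\Sigma_2,\Gamma\}$ in that edge case, and the single intersection point arranged above then serves as the required common point; in all other cases $\S\subseteq \L^\phi_{\Sigma_i}$ for some $i$, and the axiom transfers directly from $\Y^i$.
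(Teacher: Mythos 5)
Your proposal is correct and follows essentially the same strategy as the paper: first normalise each $\X^i$ via \cref{replacing circle for extensions} so that the extension $A_{\Sigma_i}\to\overline{A_{\Sigma_i}}\to H$ coming from the action on $Y^i_{\Sigma_i}$ agrees with the one induced by $\overline{A_\Gamma}$, apply the cubical form of \cref{rel NR}, transplant the lower-level data from the $\X^i$, define the maps into $X_\Gamma$ by composition (using $\lk(\Sigma)\subseteq\Sigma_i$ for nonempty $\Sigma\subseteq\Sigma_i$), and verify the axioms by observing that any nontrivial crossing between different $\L^\phi_{\Sigma_i}$ inside a union-closed subsystem forces $\Sigma=\Sigma_i$, $\Sigma'=\Sigma_j$, and $\Gamma=\Sigma_i\sqcup\Sigma_j$.

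A few remarks on differences in emphasis. You are more explicit than the paper about the collapse step needed in the edge case $\Gamma = \Sigma_1 \sqcup \Sigma_2$: \cref{rel NR} produces \emph{disjoint} images of the $Y^i_{\Sigma_i}$, whereas the Intersection Axiom (and the stated ``moreover'') require the two standard copies to meet in a single point, so an equivariant collapse of a separating forest, as in the end of the proof of \cref{thm:adapted-realisation}, is genuinely required; the paper's proof leaves this tacit and your version is a clarification. Conversely, you do not mention the degenerate cases where $A_\Gamma$ might fail to be centre-free, which the paper addresses explicitly by first disposing of the situations ``all $\Sigma_i$ empty'' (classical Nielsen realisation) and ``$\Gamma = \Sigma_1$'' (take $\X = \X^1$) before invoking the extension $A_\Gamma\to\overline{A_\Gamma}\to H$. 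Finally, in the System Intersection Axiom step, a union-closed $\S$ can also contain $\Gamma$ itself alongside elements from a single $\L^\phi_{\Sigma_i}$; one then applies the axiom of $\X^i$ to $\S\setminus\{\Gamma\}$ and notes that the resulting standard copies in $\t X_{\Sigma_i}$ push forward to standard copies in $\t X_\Gamma$. These are all minor and easily patched; the core of the argument is sound and matches the paper's.
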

\begin{proof}
If each $\Sigma_i$ is empty, then the classical Nielsen Realisation for graphs yields the result. We may thus assume that $\Sigma_1$ is not empty. We may also assume that $\Gamma \neq \Sigma_1$. Therefore we may assume that $A_\Gamma$ has no centre, and hence the action $H \to \Out(A_\Gamma)$ gives us an extension
\[ A_\Gamma \to \overline H \to H \]
This extension in turn gives us extensions
\[ A_{\Sigma_i} \to \overline{ A_{\Sigma_i}} \to H \]
for each $i$.

We use \cref{replacing circle for extensions} and modify each $\X^i$ so that these extensions agree with the ones given by the action of $H$ on $X^i_{\Sigma_i}$.

We now apply \cref{rel NR}, together with \cref{cubical rmk}, using the cube complexes $X^i_{\Sigma_i}$ as input. This way we construct the cube complex $X_\Gamma$.
Now we define the cubical system $\X$.

Given $\Sigma \in \L^\phi_{\Sigma_i}$ we define $X_\Sigma = X^i_\Sigma$. We have already defined $X_\Gamma$. Since the subgraphs $\Sigma_i, \Sigma_j$ are disjoint when $i \neq j$, there are no choices involved in this definition.

Given $\Sigma, \Sigma' \in \L^\phi_{\Sigma_i}$ we set $\t \iota_{\Sigma, \Sigma'}$ to be the corresponding map in $\X^i$. Given $\Sigma \in \L^\phi_{\Sigma_i}$ we define
\[ \t \iota_{\Sigma, \Gamma} = \t \iota_{\Sigma_i, \Gamma} \circ \t \iota_{\Sigma, \Sigma_i} \]
This map is of the desired kind since $\lk(\Sigma) \subseteq \Sigma_i$, as this last subgraph is a union of components of $\Gamma$. 
We also set $\t \iota_{\Gamma, \Gamma}$ to be the identity.

What remains is the verification of the four axioms.

\subsubsection*{Product and Orthogonal Axioms}
Suppose that $\Sigma' = \st_{\Sigma'}(\Sigma)$, with 
\[\Sigma, \Sigma' \in \L = \L^\phi_{\Sigma_1} \cup \dots \cup \L^\phi_{\Sigma_n} \cup \{\Gamma\}\]
Then either $\Sigma = \Sigma'$, which is the trivial case, or $\Sigma \subseteq \Sigma_i$ for some $i$. But then also
$\Sigma' \subseteq \Sigma_i$, since $\Sigma_i$ is a union of connected components. In this latter case we only need the Product or Orthogonal Axiom in $\X^i$.

\subsubsection*{Intersection Axiom}
Take
$\Sigma, \Sigma', \Omega \in \L$ such that $\Sigma \subseteq
\Omega$ and $\Sigma' \subseteq \Omega$, 
and let $\t Y_{\Sigma}$ and $\t Y_{\Sigma'}$ be standard copies of, respectively, ${\t
  X}_{\Sigma}$ and $\t X_{\Sigma'}$ in ${\t X}_\Omega$ with non-empty intersection. We need
to show that the intersection is the image of a standard copy of
$\Sigma \cap\Sigma'$ in each.

The situation becomes trivial when $\Sigma = \Omega$ or $\Sigma' = \Omega$, so let us assume neither of these situations occurs.

If there exists an $i$ such that $\Sigma, \Sigma' \subseteq \Sigma_i$, then either $\Omega \subseteq \Sigma_i$, in which case we use the Intersection Axiom of $\X^i$, or $\Omega = \Gamma$, in which case we use the Intersection Axiom for the triple $\Sigma, \Sigma', \Sigma_i$, noting that the standard copies $\t Y_{\Sigma}$ and $\t Y_{\Sigma'}$ are just images of standard respective standard copies in $\t X_{\Sigma_i}$.

The remaining case occurs when $\Sigma \subseteq \Sigma_i$ and $\Sigma' \subseteq \Sigma_j$ for $i \neq j$. Then $\Sigma \cap \Sigma' = \emptyset$, and so we need to show that $\t Y_{\Sigma} \cap \t Y_{\Sigma'}$ is a single point. Since the standard copies intersect non-trivially, it means that the images of $Y_i$ and $Y_j$ intersected. Thus we must have had $\Gamma = \Sigma_i \sqcup \Sigma_j$, and $\t Y_{\Sigma} \cap \t Y_{\Sigma'}$ is precisely the unique point at which the images of $\t Y_i$ and $\t Y_j$ intersect.

\subsubsection*{System Intersection Axiom}
Take a subsystem $\P \subseteq \L$ closed under taking unions. Suppose that we have
$\Sigma, \Sigma' \in \P$ with $\Sigma \subseteq \Sigma_i$ and $\Sigma' \subseteq \Sigma_j$ with $i \neq j$. Then $\Sigma \cup \Sigma' \in \P \subseteq \L$, and so $\Sigma \cup \Sigma' = \Gamma$.
An analogous reasoning immediately implies that $\P = \{ \emptyset, \Sigma_i, \Sigma_j, \Gamma\}$. Now the last three subgraphs have unique corresponding standard copies, and these intersect at a single point; we take this point to be the standard copy of $\t X_\emptyset$ and the argument is finished.

If all subgraphs in
$\P \s- \{ \Gamma\}$ lie in $\Sigma_i$ for some $i$, then
we are done by the System Intersection Axiom of $\X^i$ applied to $\P \s- \{ \Gamma\}$ -- either this covers all of $\P$, or we need to observe that standard copies in $\t X_\Gamma$ are images of copies in $\t X_{\Sigma_i}$.
\end{proof}

\section{Gluing}
\label{sec: gluing}

In this section we deal with the situation in which we are given cube complexes realising the induced action on $A_{\Sigma}$ and $A_{\Delta}$, where $\Sigma \cap \Delta \neq \emptyset$, and we build a cube complex realising the action on $A_{\Sigma \cup \Delta}$.

Suppose that $\phi \colon H\to\Out(A_\Gamma)$ is link-preserving, and that $H$ is finite.
Let $\Sigma, \Theta \in \L^\phi$ be such that $\Sigma \cup \Theta = \Gamma$. Take $E = \Sigma \cap \Theta$. Let us set $E' = E \s- Z(E)$, and $Z(E) = \{ s_1, \dots, s_k \}$. Since $\phi$ is link-preserving, we have $E' \in \L^\phi$ and $\{s_i\} \in \L^\phi$ for each $i$.

Suppose that we have cubical systems $\X$ and  $\X'$ for $\L^\phi_\Sigma$ and $\L^\phi_\Theta$ respectively, such that $\X'$ extends $\X_E$.

The main goal of this section is to show that (under mild assumptions)
one can equivariantly glue $X_\Sigma$ to $X'_\Theta$ so that the result
realises the correct action $\phi:H\to\Out(A_\Gamma)$. This will be done in two steps;
first showing that the loops $X_{s_i}$ and $X'_{s_i}$ are actually equal (and hence that $\X'$ extends $\X_E$ strongly), and
then constructing a gluing.

We will repeatedly use the following construction.

\begin{dfn}
\label{dfn: geometric rep}
Given a cube complex $X_\Gamma$ realising an action \[\phi \colon H \to \Out(A_\Gamma)\] we say that an element $h_p \in \Aut(A_\Gamma)$ is a \emph{geometric representative} of $h\in H$ \iff it is obtained by the following procedure: take a basepoint $\t p \in \t X_\Gamma$ with a projection $p \in X_\Gamma$, and a path $\gamma$ from $p$ to $h.p$. The choice of $\t p$ induces an identification $\pi(X_\Gamma,p) = A_\Gamma$. We now take $h_p \in \Aut(\pi_1(X_\Gamma,p))$ to be the automorphism induced on the fundamental group by first applying $h$ to $X_\Gamma$, and then pushing the basepoint back to $p$ via $\gamma$.
\end{dfn}

Suppose that $H$ acts on a graph (without leaves) of rank 1 (i.e. on a subdivided circle). Let us fix an orientation on the circle.

\begin{dfn}
Given an element $h \in H$ we say that it \emph{flips} the circle \iff
it reverses the circle's orientation; otherwise we say that it
\emph{rotates} the circle. In the latter case we say that it rotates
by $k$ \iff the simple path from some vertex to its image
under $h$, going along the orientation of the circle, has
combinatorial length $k$.
\end{dfn}

\begin{lem}
\label{lem: rotating circles}
Suppose that $\Sigma \neq \st_\Sigma(s_i)$ and $\Theta \neq
\st_\Theta(s_i)$ for some $i$. Then $X_{\{s_i\}}$ and $X'_{\{s_i\}}$ are
$H$-equivariantly isometric.
\end{lem}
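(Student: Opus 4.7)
Both $X_{\{s_i\}}$ and $X'_{\{s_i\}}$ are marked circles realising the induced outer action $H \to \Out(\Z)$, and the fact that $\X'$ extends $\X_E$ supplies a marking-preserving isometry $j\colon X_{\{s_i\}} \to X'_{\{s_i\}}$ which is $H$-equivariant up to homotopy. In particular, both circles have the same length $\ell$. To upgrade $j$ to an honest $H$-equivariant isometry, I would show that for each $h \in H$ the rotation amount (or reflection point) of $h$ on $X_{\{s_i\}}$ agrees with that on $X'_{\{s_i\}}$; this suffices, since then a uniform rotation of $j$ becomes $H$-equivariant.

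\textbf{Setting up invariant axes.} By the Product Axiom, $X_{\{s_i\}}$ embeds into $X_\Sigma$ as the family of standard copies $\iota_{\{s_i\},\Sigma}(X_{\{s_i\}} \times \{q\})$ indexed by $q \in X_{\lk_\Sigma(s_i)}$. Since $H$ is finite and acts by isometries on the NPC cube complex $X_{\lk_\Sigma(s_i)}$, an $H$-fixed point $q_\Sigma$ exists, producing an $H$-invariant copy $Y_\Sigma \subseteq X_\Sigma$ that is $H$-equivariantly isometric to $X_{\{s_i\}}$. An analogous $Y_\Theta \subseteq X'_\Theta$ is obtained on the $\X'$ side.

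\textbf{Rigidity and comparison.} The hypothesis $\Sigma \neq \st_\Sigma(s_i)$ forces $s_i$ to be non-central in $A_\Sigma$, and in turn this pins down the axis $\t Y_\Sigma$ of $s_i$ in $\t X_\Sigma$ to have $A_\Sigma$-stabiliser exactly $\langle s_i \rangle$ (elements of $A_{\lk_\Sigma(s_i)}$ move this axis to disjoint parallel ones because of the presence of a vertex of $\Sigma$ outside $\st(s_i)$). Hence, after choosing a lift $\t h \in \overline{A_\Sigma}$ preserving $\t Y_\Sigma$, the translation amount $r_h^\Sigma \in \R/\ell\Z$ is well-defined. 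The same holds on the $\Theta$ side, producing $r_h^\Theta$. To conclude $r_h^\Sigma = r_h^\Theta$, I would pick corresponding basepoints on $Y_\Sigma$ and $Y_\Theta$ via $j$, and express each rotation amount as the signed length of a path from the basepoint to its $h$-image, running inside the invariant circle. Combined with the up-to-homotopy equivariance of $j$, this forces equality of the two rotation amounts. The reflection case ($\epsilon_h = -1$) is treated analogously, using the reflection point in place of the rotation amount.

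\textbf{Main obstacle.} The crux is the last comparison step. A conceptually cleaner way to phrase it is in terms of group extensions: the $H$-action on $X_{\{s_i\}}$ gives rise to an extension $\Z \to \overline{A_{\{s_i\}}} \to H$, and it suffices to show that the analogous extension produced from $\X'$ is isomorphic to it. I expect the non-centrality on both sides to pin down this extension uniquely from $\phi$ alone---without the assumption, the stabiliser of $\t Y_\Sigma$ could be larger than $\langle s_i\rangle$ and the extension would have genuine freedom---so that both cubical systems produce the same extension, after which $j$ can be rotated to become $H$-equivariant.
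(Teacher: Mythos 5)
Your high-level plan is correct and in the same spirit as the paper: reduce the lemma to showing that, for each $h$, the rotation (or flip) data on the two circles agree, and seek a $\phi$-level invariant that pins this data down. But as written there are two genuine gaps.

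\textbf{The fixed-point claim is false.} You assert that since $H$ is finite and acts by isometries on the NPC cube complex $X_{\lk_\Sigma(s_i)}$, there is an $H$-fixed point. A finite group acting by isometries on a compact NPC complex need not have a fixed point -- already $\Z/2$ rotating a circle by $\pi$ has none. What is true (and is Lemma~\ref{lem: fixed point for liftable actions}) is that a fixed point exists when the outer action lifts to an honest $\Aut$-action, which is a nontrivial condition you have not verified here. Moreover this whole step is avoidable: you do not need to locate an invariant copy of $X_{\{s_i\}}$ inside $X_\Sigma$ in order to speak of the $H$-action on the circle -- that action is already part of the data of the cubical system $\X$.

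\textbf{The comparison step -- which you correctly flag as the crux -- is not carried out.} The suggestion that ``up-to-homotopy equivariance of $j$ forces equality of the two rotation amounts'' does not work: two rotations of a circle by different amounts are homotopic, so up-to-homotopy equivariance imposes no constraint at all on the rotation numbers (it only fixes the degree, distinguishing rotations from flips). Your alternative phrasing via the extension $\Z\to\overline{A_{\{s_i\}}}\to H$ is exactly the right idea, but stating ``I expect the non-centrality to pin down this extension uniquely from $\phi$ alone'' leaves the entire content of the lemma unproved. The paper's proof is precisely the verification of this expectation: one defines an invariant $K(h)\in\Z/\ord(h)\Z$ by choosing any representative $h_0\in\Aut(A_\Gamma)$ of $\phi(h)$ with $h_0(s_i)=s_i$, writing $h_0^{\ord(h)} = c\bigl(s_i^{K(h_0)} t_0\bigr)$ with $t_0\in A_{\lk(s_i)}$, and checking (i) that $K(h_0)$ is well-defined because $s_i\notin Z(\Gamma)$ -- this is where $\Sigma\neq\st_\Sigma(s_i)$ is used -- and (ii) that $K(h_0)\bmod\ord(h)$ is independent of the representative, which requires knowing $h_0(A_{\lk(s_i)})=A_{\lk(s_i)}$ and therefore uses that $\phi$ is link-preserving. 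Only then does one compute $K(h)$ geometrically in $X_\Sigma$ and separately in $X'_\Theta$, using the non-centrality at the level of $\Sigma$ and $\Theta$ to make the geometric computation unambiguous, concluding that both rotation numbers equal $mK(h)/\ord(h)\pmod m$. Without producing and analysing such an invariant, the proof is not complete.
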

\begin{proof}
To simplify notation set $s = s_i$, $Y = X_{s_i}$ and $Z = X'_{s_i}$. Note that $Y$ and $Z$ are isometric.
Let us subdivide the edges of $Y$ and $Z$ so that both actions of $H$ are combinatorial, and so that the two loops can be made combinatorially isomorphic.

Let $m$ denote the
number of vertices in the subdivided loop $Y$.
Fix an orientation on both $Y$ and $Z$ so that going around the loops once in the positive direction yields $s \in A_\Gamma$.

We first focus on those $h\in H$ which
map the conjugacy class of $s \in A_\Gamma$ to itself.
Then $h$ acts on $Y$ and $Z$ as a rotation. We
claim that the two actions of $h$ rotate by the same number of
vertices.

Consider a representative $h_0 \in \Aut(A_\Gamma)$ of $\phi(h)$ which
 preserves $s$. For any such representative we have $h_0^{\ord(h)}$ equal to a conjugation which fixes $s$. Hence
\[h_0^{\ord(h)} = c(s^{K(h_0)} t_0   )\]
where \[t_0 \in A_{\lk(s)}\]
We know that $s \not\in Z(\Gamma)$ (otherwise
$\Sigma = \st_\Sigma(s)$ which contradicts our assumption).
Thus, the integer $K(h_0)$ is unique.

Since $\lk(s) \in \L^\phi$, the subgroup $h_0(A_{\lk(s)})$ is conjugate to $A_{\lk(s)}$. But the former subgroup must centralise $h_0(s) = s$, and so
\[ h_0(A_{\lk(s)}) \leqslant A_{\st(s)} \]
The only subgroup of $A_{\st(s)} = A_s \times A_{\lk(s)}$ conjugate to $A_{\lk(s)}$ is $A_{\lk(s)}$ itself, and therefore $h_0(A_{\lk(s)}) = A_{\lk(s)}$.

Let $h_1$ and $h_2$ be representatives of $\phi(h)$. There exists a unique integer $l$
such that
\[h_2 = c({s^l} t)\circ h_1\]
with $t \in A_{\lk(s)}$.
Note that
\[ c(x) \circ h_1 = h_1 \circ c(h_1^{-1}(x)) \]
and so, in particular, using $h_1(s)=s$ and $h_1(A_{\lk(s)}) = A_{\lk(s)}$, we get
\[ (c({s^l} t)\circ h_1)^{\ord(h)} = c(s^{\ord(h) l} t') \circ h_1^{\ord(h)}\]
where $t' \in A_{\lk(s)}$.
Thus
\begin{eqnarray*} c({s^{K(h_2)}} t_2) &=& h_2^{\ord(h)} \\ &=& (c({s^l} t)\circ h_1)^{\ord(h)} \\ &=&
c({s^{\ord(h)l}} t')\circ h_1^{\ord(h)} \\ &=& c({s^{\ord(h)l+K(h_1)}}t' t_1) \end{eqnarray*}
This shows that $K(h_1)$ mod $\ord(h)$
is independent of the representative, and so we can define $K(h) \in \Z / \ord(h) \Z$ in the obvious way.
This algebraic invariant will be the main tool in showing that $Y$ and $Z$ are $H$-equivariantly isometric.

Fix a
basepoint $p$ in $X_\Sigma$ lying in $\im(\iota_{E, \Sigma})$, and a basepoint $q$ in $X_\Theta$ lying in $\im(\iota_{E, \Theta})$.

Let $h_p \in \Aut(A_\Sigma)$ be the
geometric representative of $h$ (using the action of $H$ on $X_\Sigma$), obtained by taking the basepoint $p$ and a path $\gamma$
inside
\[\im(\iota_{E, \Sigma}) \cong \prod_{i=1}^k X_{\{s_i\}} \times X_{E'}\]
 which first travels orthogonally to $Y = X_{\{s_i\}}$, and then along the copy of $Y$ containing $p$ (in the negative direction).

If $h$ rotates $Y$ by $\mu$ vertices (in the positive direction), then
$(h_p)^{\ord(h)}$ is equal to the conjugation by $s^{\ord(h) \mu/m} t$ for some $t \in A_{\lk_\Sigma(s)}$. Since $\Sigma \neq \st_\Sigma(s)$, the number $\ord(h) \mu/m$ is unique. Hence, by taking any representative of $\phi(h)$ in $\Aut(A_\Gamma)$ which restricts to $h_p$ on $A_\Sigma$, we see that
\[K(h) = \ord(h) \mu /m \textrm{ mod } \ord(h)\]

Now we define $h_q$ in the analogous manner using $X_\Theta$ instead of $X_\Sigma$. Since $h_p$ and $h_q$ represent the same element $h$, the computation above shows that they rotate by the same number of vertices.
We have thus dealt with elements $h \in H$ which fix the conjugacy class of $s$.

If $h$ maps the conjugacy class of $s$ to the conjugacy class of $s^{-1}$, then $h$ must flip both $Y$ and $Z$, and therefore must have two fixed points on each loop. If another
element $g \in H$ flips $Y$, then $hg$ rotates $Y$,
and by the above rotates $Z$ by the same number of vertices. This implies that the fixed
points of $h$ and $g$ on $Y$
differ by the same number of vertices as the respective
fixed points on $Z$.

Hence there exists an identification between $Y$ and $Z$ which is $H$-equivariant.
\end{proof}

Now suppose that $\X'$ extends $\X_E$ strongly. Suppose further that we have fixed standard copies of $\t X_E$: one in $\t X_\Sigma$, called $\t P$ , and one in $\t X'_\Theta$, called $\t Q$. We can from a cube complex marked by $A_\Gamma$ from $\t X_\Sigma$ and $\t X'_\Theta$ by gluing $\t P$ and $\t Q$. Note that this is in general not unique, as there might be more than one marking-respecting isometry of $\t P$ and $\t Q$ such that the projections $P$ and $Q$ become $H$-equivariantly isomorphic. 

Let $\t Y$ denote the glued-up complex, and let $Y$ denote its
projection. Our gluing gives us an action of $H$ on the projection
$Y$. This induces an action $H \to \Out(A_\Gamma)$ in the obvious way;
but this action is in general not equal to $\phi$. We are now going to measure the difference of these two actions.

Let us choose a point $\t p \in \t P$ as a basepoint. For each $h \in H$ we choose a path $\gamma(h)$ in $P$ connecting $p$ to $h.p$. Since $P$ and $Q$ are standard copies of the same complex, they are isomorphic via a fixed isomorphism. This gives us a copy of $\t p$ and $\gamma(h)$ in $\t Q$ and $Q$ respectively; let us denote the former by $\t q$ and the latter by $\gamma'(h)$. The points $p$ and $q$ are naturally points in $Y$.

Now let $h_p$ and $h_q$ denote the respective geometric representatives of $h$. The former restricts to the same automorphism as $\phi(h)$ on $A_\Sigma$, the latter on $A_\Theta$. They represent the same outer automorphism, and agree on $A_E$. Thus we have
\[ h_p h_q^{-1} = c(x(h))\]
for some $x(h) \in C(A_E)$.

\begin{dfn}
We say that the gluing above is \emph{faulty within} $G \leqslant C(A_E)$ \iff  $x(h) \in G$ for all $h$ (with our choices of $\t p$ and $\gamma(h)$).
\end{dfn}

\begin{prop}[Gluing Lemma]
\label{prop: gluing}
Suppose that $\t Y$ is faulty within $Z(A_E)$. Then there exists another gluing as above, $\t X$, such that its projection $X$ realises $\phi$.
\end{prop}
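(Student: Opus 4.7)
The strategy is to treat the fault as a cohomological obstruction and to kill it by a suitable modification of the gluing.

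First I would verify that $x\colon H \to A_{Z(E)}$ is a 1-cocycle for the $H$-module structure on $A_{Z(E)}$ induced by $\phi$. The module structure is well-defined because $\phi$ is link-preserving: each $\{s_i\} \in \L^\phi$, so each $h\in H$ sends $s_i$ to a conjugate of some $s_j^{\pm 1}$. After making compatible path choices, both $h \mapsto h_p$ and $h \mapsto h_q$ become genuine homomorphisms $H \to \Aut(A_\Gamma)$ lifting $\phi$; combined with the defining identity $h_p = c(x(h)) h_q$, this yields the cocycle identity $x(gh) = x(g) \cdot g_p(x(h))$.

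Next I would analyse how the cocycle responds to modifications of the gluing. A central translation of $\t P$ by $z \in A_{Z(E)}$ is a deck transformation projecting to the identity on $P$, hence automatically yields a valid modified gluing; a direct calculation shows that this modification replaces $x(h)$ by $x(h) \cdot z \cdot h_p(z)^{-1}$, i.e.\ alters $x$ by a coboundary. The obstruction to realising $\phi$ is therefore precisely the class $[x] \in H^1(H, A_{Z(E)})$.

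To kill this obstruction I would use the fact that $|H|$ annihilates $H^1(H, A_{Z(E)})$ for finite $H$, so $|H| \cdot x$ is the coboundary of some $w \in A_{Z(E)}$. To realise the modification by $w/|H|$ combinatorially, I would cubically subdivide each central circle $X_{\{s_i\}}$ by $|H|$; \cref{lem: rotating circles} together with the strong extension hypothesis ensure that this subdivision extends compatibly to both $X_\Sigma$ and $X'_\Theta$. On the subdivided complexes, translation by $w/|H|$ becomes a well-defined combinatorial isometry, and modifying the gluing by it kills the cocycle: the new gluing satisfies $h_p = h_q$ for every $h$, and since $A_\Gamma$ is generated by $A_\Sigma \cup A_\Theta$, this common automorphism realises $\phi(h)$, as required.

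The main difficulty lies in verifying that the fractional-translation modification yields a valid gluing when $H$ contains elements that reflect some of the central circles: in such cases $w/|H|$ is not $H$-invariant, and the corresponding rotation is not $H$-equivariant on $P$ per se. However, the failure of equivariance is measured by a deck translation, which the cocycle bookkeeping absorbs exactly; confirming this compatibility, and that the resulting cube complex is a legitimate gluing in the sense of \cref{lem: markings}, is the main technical content of the proof.
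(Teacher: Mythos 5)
Your approach is cohomological where the paper's is hands-on and combinatorial, but the two are really effecting the same modification, and your argument can be made to work; there are, however, two genuine imprecisions worth flagging.

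First, the claim that ``after making compatible path choices, both $h\mapsto h_p$ and $h\mapsto h_q$ become genuine homomorphisms $H\to\Aut(A_\Gamma)$'' is false: the outer action $\phi$ need not lift to $\Aut(A_\Gamma)$, so no path choice achieves this. The cocycle identity for $x$ is nonetheless true, but for a more subtle reason: the discrepancy $(gh)_p^{-1}g_ph_p$ is conjugation by the loop $\gamma(gh)^{-1}\ast\gamma(g)\ast g.\gamma(h)$, which lies in $A_E$, and such conjugations act trivially on $A_{Z(E)}=Z(A_E)$; the identical element appears on the $q$-side and cancels. So your conclusion stands, but the stated reason does not, and the fix needs to be supplied. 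Second, the statement ``the obstruction to realising $\phi$ is therefore precisely $[x]\in H^1(H,A_{Z(E)})$'' is too strong. Integer central translations change $x$ by coboundaries, so the class $[x]$ is well-defined; but the set of admissible gluings is larger---it includes the half-integer translations on the coordinates that $H$ flips, which are exactly the gluings $\widetilde X^x$ in the paper. So the honest obstruction lives in a quotient of $H^1$ by a larger subgroup, and it vanishes even when $[x]\neq 0$. Your $w/|H|$ trick produces an element of this larger subgroup: writing $|H|x=\delta w$ and checking a flipped coordinate of $h$ gives $w_i=-\tfrac{|H|}{2}x(h)_i$, so $w_i/|H|=-x(h)_i/2$, a half-integer---precisely the translation the paper realises with $\widetilde X^{x'(H)}$. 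In other words, the cohomological shortcut still needs this unwinding at the end, both to see that the gluing is $H$-equivariant and to see that the subdivision needed is only by $2$ (subdividing by $|H|$ works but is overkill, since $|H|$ is even whenever any flip occurs). Relatedly, your final paragraph misdiagnoses the difficulty: the projected fractional rotation commutes with the flips exactly because the translation length is a half-integer, so there is no failure of equivariance on $P$ to be ``absorbed''; what needs to be verified is instead the divisibility just described. Compared to the paper---which shows directly that the normalised $x'(h)$ is supported on flipped coordinates, assembles $x'(H)$ coordinate-by-coordinate, and verifies consistency by writing $g=fh$---your route is shorter and more conceptual, at the cost of hiding the half-integer divisibility check inside the transfer argument and of the unjustified lifting claim.
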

\begin{proof}
First let us note that when $Z(\Gamma) = Z(E)$ then all the conjugations $c(x(h))$ are trivial, and $\t Y$ is already as desired. We will henceforth assume that $Z(\Gamma) \neq Z(E)$.

Take $h \in H$.
By assumption, the gluing $\t Y$ gives us geometric representatives $h_p$ and $h_q$ such that
\[ h_p h_q^{-1} = c(x(h))\]
with $x(h) \in Z(A_E) = A_{Z(E)}$.

We assume that $x(h) \in A_{Z(E) \s- Z(\Gamma)}$ unless $x(h)$ is the identity; we can always do this since conjugating by elements in $A_{Z(\Gamma)} = Z(A_\Gamma)$ is trivial.
Now we define $h'_p \in \Aut(A_\Gamma)$ so that $h_p' h_p^{-1}$ is a conjugation by an element of $Z(A_\Sigma)$ and that $h_p' h_q^{-1}$ is equal to a conjugation by an element in $A_{Z(E) \s- (Z(\Gamma) \cup Z(\Sigma))}$; we further define  $h'_q \in \Aut(A_\Gamma)$ so that $h_q' h_q^{-1}$ is a conjugation by an element of $Z(A_\Theta)$ and that
\[h_p' h_q'^{-1} = c(x'(h))\]
with $x'(h) \in A_{Z(E) \s- (Z(\Gamma) \cup Z(\Sigma) \cup Z(\Theta))}$.
Since $Z(\Sigma) \cap Z(\Theta) = Z(\Gamma)$, the elements $h'_p$ and $h'_q$ are unique.

Note that $h_p$ and $h'_p$ are identical when restricted to $A_\Sigma$; the analogous statement holds for $h_q$ and $h'_q$ restricted to $A_\Theta$.

Consider now ${h'_p}^{\ord (h)}$. It is equal to a conjugation
$c(y_p)$ where \[y_p \in N(A_\Sigma)\]
by construction.
Let $y_q \in N(A_\Theta)$ be the corresponding element for $h'_q$. Now
\[ c(y_p) = {h'_p}^{\ord (h)} = \big(c(x'(h)) {h'_q}\big)^{\ord (h)} = c(x'') {h'_q}^{\ord (h)} = c(x'') c(y_q) \]
where
\[x''  = \prod_{i=0}^{\ord(h)} (h'_q)^i(x'(h)) \in A_{Z(E) \s-(Z(\Sigma) \cup Z(\Theta))}\]
 since $Z(E) \s-(Z(\Sigma) \cup Z(\Theta)) \in \L^\phi$ and $h_q'(A_E) = A_E$ by construction.

 The element $y_p$ is determined up to $C(A_\Sigma)$ by its action by conjugation on
 $A_\Sigma$. Here however we immediately see that $c(y_p)$ is equal to conjugation by the element given by the loop
 obtained from concatenating images of our path $\gamma(h)$ under
 successive iterations of $h$.

We repeat the argument for $y_q$ and conclude that $y_p = y_q$ up to  $C(A_\Sigma) C(A_\Theta)$. But we have already shown that they differ by $x'' \in A_{Z(E) \s-(Z(\Sigma) \cup Z(\Theta))}$ up to $Z(A_\Gamma)$. Hence $c(x'') = 1$ as $Z(E)$ intersects $\lk(\Sigma)$ and $\lk(\Theta)$ trivially. Thus $x'' \in Z(A_\Gamma) \cap A_{Z(E) \s-(Z(\Sigma) \cup Z(\Theta))} = \{1\}$.
We obtain
\[1 = x''  = \prod_{i=0}^{\ord(h)} (h'_q)^i(x'(h))\]
and so
$x'(h)$ must lie in the subgroup of $A_{Z(E)}$ generated by
all vertices $s_i \in Z(E)$ such that $h$ flips the corresponding loop
$P_i = X_{s_i}$, since any other generator $s_j$ satisfies
\[\prod_{i=0}^{\ord(h)} (h'_q)^i(s_j) = {s_j}^{\ord(h)} \neq 1\]

The purpose of the proof so far was exactly to establish that $x'(h)$ lies in the subgroup of $A_{Z(E)}$ generated by
all vertices $s_i \in Z(E)$ such that $h$ flips the loop
$P_i$.

\smallskip
Now we are going to construct a whole family of gluings, and show that one of them is as desired.

To analyse the situation we need to look more closely at $\widetilde P = \widetilde Q$ (with the equality coming from the fact that $\X'$ extends $\X_E$ strongly). By the Product Axiom, we have $\widetilde P = \t P_0 \times \t P_1 \times \dots \times \t P_k$, with $\t P_0 = \t X_{E'}$, and $\t P_i = \t X_{s_i}$ for $i\geqslant 1$.
Our basepoint $\widetilde p$ satisfies
\[
\widetilde p = (\widetilde p_0, \dots, \widetilde p_k) \in \widetilde P_0  \times \dots \times \widetilde P_k
\]
Let $\widetilde q = (\widetilde q_0, \ldots, \widetilde q_k)$ be the corresponding expression for $\widetilde q$.

We construct complexes from $\t X_\Sigma$ and $\t X'_\Theta$ by gluing $\t P$ and $\t Q$ in a way respecting the markings, and so that the projections $P$ and $Q$ are glued in an $H$-equivariant fashion.
The resulting space is determined by the relative position of $\widetilde p$ and $\widetilde q$, now both seen as points in the glued-up complex (so in particular they do not need to coincide). We glue so that the images of $\t p$ and $\t q$ coincide if we project $\t X_E$ onto $\t X_{E'}$ -- this is in fact forced on us since $\t X_{E'}$ can be glued to itself only in one way. Hence any such gluing will give us a geodesic from $\t p$ to $\t q$, which will lie in a subcomplex of $\t X_E$ isomorphic to $\t P_1 \times \dots \times \t P_k$, and hence isometric to a Euclidean space.

We start by taking $x \in A_{Z(E)}$; we form a complex $\widetilde {X}^x$ (with projection $X^x$ as usual) by gluing as above, so that the geodesic $\t \delta$ we just discussed is such that if we extend it to twice its length (which is possible in a Euclidean space), still starting at $\t p$, the projection in $X^x$ becomes a closed loop equal to $x \in \pi_1(X^x,p)$.

Note that given $x \in Z(E)$ such a geodesic (and hence gluing) always exist: the Euclidean space we discussed is marked by $A_{Z(E)}$, and so the point $x(\t p)$ lies therein. We take the unique geodesic from $\t p$ to $x(\t p)$, cut it in half, and declare the first half to be $\t \delta$.

Let us now calculate the action on (conjugacy classes in) $A_\Gamma$ induced from the action
of $h$ on $X^x$. The element $h$ maps the local geodesic $\delta$ (the projection of $\t \delta$) to a
local geodesic $h. \delta$ connecting $h.p$ to $h.q$ such that the
loop obtained by following $\gamma(h)$ (starting at $p$), $h.\delta$,
the inverse of $\gamma'(h)$, and the inverse of $\delta$, gives an
element  \[\overline x^h \in \pi_1(P_1 \times \dots \times P_k,(p_1, \dots, p_k))\] which is the projection of $x$
onto the subgroup of $Z(E)$ generated by all the generators $s_i$ such
that $h'_p(s_i) = s_i^{-1}$.
 Hence the action of $h$ on $X^{x}$, followed by pushing the basepoint $p$ via $\gamma(h)$ as before, gives us an automorphism equal to $h'_p$ on the subgroup $\pi_1(X_{\Sigma},p) = A_{\Sigma}$, and to $c(\overline x^h) h'_q$ on the subgroup $\pi_1(X_ \Theta,p) = A_{\Theta}$.

It follows that the action of $h$ on $X^{x'(h)}$ is the correct one: by the observation above, $x'(h)$ lies in the subgroup of $A_{Z(E)}$ generated by all the vertices in $Z(E)$ which are mapped to their own inverse under $h'_p$. Hence $\overline{x'(h)}^h = x'(h)$, and it is enough to observe that $c(x'(h)) h'_q = h_p$.


We now need to specify a single $x$ that will work for all elements $h \in H$; we will denote such an $x$ by $x'(H)$. If there is a vertex in $Z(E)$ which is preserved by all elements $h'_p$ (for all $h \in H$), then we set the corresponding coordinate of $x'(H)$ to 0. If a generator is mapped to its inverse by some $h'_p$, then we set the corresponding coordinate of $x'(H)$ to be equal to the relevant coordinate in $x'(h)$.

Since this definition involves making choices (of elements $h$ that
flip a generator), we need to make sure that we indeed obtain the desired action.
Let $g \in H$ be any element. We need to confirm that $\overline{x'(g)}^g = \overline{x'(H)}^g$.

Suppose that this is not the case; then there exists a generator $s_i$ such that $g'(s_i) = {s_i}^{-1}$, and $x'(g)$ and $x'(H)$ disagree on the $s_i$-coordinate.
This means that the geometric representative $g'_p$ obtained from the action on $X^{x'(H)}$ is not a representative of $\phi(g)$. To make it such a representative we need to postcompose it with a partial conjugation of $A_\Theta$ by $\overline{x'(g)}^g \big(\overline{x'(H)}^g\big)^{-1}$, which has a non-trivial $s_i$-coordinate.

The construction of $x'(H)$ required an element $h \in H$ such that $h'_p$ flips the loop $P_i$.
We have $g  = f h$ with $f$ acting trivially on the conjugacy class of $s_i$. Hence, even though $f$ might not act correctly on $X^{x'(H)}$, the geometric representative $f'_p$ can be made into a representative of $\phi(f)$  by   postcomposing  it with a partial conjugation of $A_\Theta$ by an element of $Z(E)$ with a trivial $s_i$-coordinate. The exact same statement is true for $h$. Using the fact that $f'_p$ maps each $s_j$ either to itself or its inverse, we deduce that $f'_p h'_p$ can be made into a representative of $\phi(f h ) = \phi(g)$ by postcomposing  it with a partial conjugation of $A_\Theta$ by an element of $Z(E)$ with a trivial $s_i$-coordinate. But $f'_p h'_p$ differs from $(f h)'_p = g'_p$ by a conjugation, and hence $\overline{x'(g)}^g \big({\overline{x'(H)}^g}\big)^{-1}$ cannot have a non-trivial $s_i$-coordinate (as $\Gamma \neq Z(E)$).
\end{proof}

In particular we have

\begin{cor}
\label{cor: gluing}
If $\lk(E) = \emptyset$ then there exists a complex obtained from $\t X_\Sigma$ and $\t X'_\Theta$ by gluing $\t P$ to $\t Q$ such that its projection realises $\phi$.
\end{cor}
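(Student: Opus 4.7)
The plan is to observe that under the hypothesis $\lk(E) = \emptyset$, any gluing of $\t X_\Sigma$ and $\t X'_\Theta$ along $\t P$ and $\t Q$ (of the sort described just before the Gluing Lemma) is automatically faulty within $Z(A_E)$, after which \cref{prop: gluing} immediately produces the desired complex.

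First, I would extract the key algebraic collapse. By \cref{prop: ccv}, the centraliser of $A_E$ in $A_\Gamma$ is
\[ C(A_E) = A_{Z(E)} \times A_{\lk(E)}. \]
Under the standing hypothesis $\lk(E) = \emptyset$, the second factor is trivial, so $C(A_E) = A_{Z(E)} = Z(A_E)$. This is the entire content of the corollary beyond what the Gluing Lemma already provides.

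Next, I would take any gluing $\t Y$ of $\t X_\Sigma$ with $\t X'_\Theta$ along $\t P$ and $\t Q$ (such gluings exist because we are working under the running assumption that $\X'$ extends $\X_E$ strongly, as in the setup preceding \cref{prop: gluing}). The elements $x(h)$ defined by $h_p h_q^{-1} = c(x(h))$ automatically lie in $C(A_E)$, as noted in the discussion before the definition of ``faulty within $G$''. Combining this with the collapse $C(A_E) = Z(A_E)$ from the previous paragraph, we obtain $x(h) \in Z(A_E)$ for every $h \in H$, so $\t Y$ is faulty within $Z(A_E)$ by definition. Applying \cref{prop: gluing} to $\t Y$ now yields a new gluing whose projection realises $\phi$, completing the proof.

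There is essentially no obstacle: the corollary is a one-line algebraic specialisation of the Gluing Lemma, where the nontrivial work -- controlling the choice of gluing so that the resulting action on the fundamental group is exactly $\phi$ and not some partial conjugation of it -- has already been carried out in \cref{prop: gluing}. The only content here is recognising that the hypothesis $\lk(E) = \emptyset$ forces the ``fault'' $x(h)$ to take values in the centre rather than merely in the centraliser, which is precisely what is needed to invoke \cref{prop: gluing}.
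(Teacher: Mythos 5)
Your proof is correct and follows essentially the same route as the paper: observe that $\lk(E)=\emptyset$ forces $C(A_E)=Z(A_E)$ via \cref{prop: ccv}, so every gluing is automatically faulty within $Z(A_E)$, and then invoke \cref{prop: gluing}.
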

\begin{proof}
When $\lk(E) = \emptyset$ we have $C(A_E) = Z(A_E)$. Hence any gluing will be faulty within $Z(A_E)$, and then the existence of a desired gluing follows from the previous proposition.
\end{proof}


Let us record here a (very standard and) very useful result.

\begin{lem}
Let $X$ be a complete NPC space realising the action of a finite group $\phi \colon H \to \Out(A)$, where $A$ is a group. Suppose that there exists a lift $\psi \colon H \to \Aut(A)$ of $\phi$. Then the action of $H$ on $X$ has a fixed point $r$; moreover there exists a lift $\t r \in \t X$ of $r$ such that under the identification $\pi_1(X,r) = A$ induced by choosing $\t r$ as a basepoint, the induced action oh $H$ on $\pi_1(X,r)$ is $\psi$.
\label{lem: fixed point for liftable actions}
\end{lem}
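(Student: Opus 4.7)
The plan is to use the homomorphism $\psi$ to produce a genuine action of $H$ on the universal cover $\widetilde X$, and then to invoke the Bruhat--Tits / Cartan fixed point theorem on a complete CAT(0) space. The first ingredient is the canonical extension $\overline A$ of $H$ by $A$ whose action on $\widetilde X$ combines the deck-group action of $A$ with lifts of the $H$-action on $X$; by construction each $h\in H$ has a whole $A$-coset of lifts inside $\overline A$, and conjugation by any such lift induces the outer class $\phi(h)\in \Out(A)$ on $A$.

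Next I would upgrade the lift $\psi$ into an actual splitting $s\colon H\hookrightarrow \overline A$. For each $h\in H$, the lifts of $h$ to $\overline A$ whose conjugation action on $A$ is the specific automorphism $\psi(h)$ (rather than some other representative of $\phi(h)$) form a single $Z(A)$-coset; choosing a representative $\widetilde h$ in each gives a map $h\mapsto \widetilde h$ which is multiplicative up to a $Z(A)$-valued $2$-cocycle, and this cocycle vanishes because $\psi$ is itself a homomorphism and $X$ realises it. The resulting finite subgroup $s(H)\leqslant \overline A$ acts on the complete CAT(0) space $\widetilde X$ by isometries, so Cartan's fixed point theorem produces a point $\widetilde r\in \widetilde X$ fixed by $s(H)$, whose projection $r\in X$ is then an $H$-fixed point.

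Finally, taking $\widetilde r$ as the basepoint to identify $\pi_1(X,r)$ with $A$, I would verify that the induced action of each $h\in H$ on $\pi_1(X,r)$ coincides with $\psi(h)$. Under this identification the induced automorphism is conjugation on $A$ by the unique lift of $h$ fixing $\widetilde r$; this unique lift is precisely $s(h)$, whose conjugation action on $A$ was arranged to be $\psi(h)$ by construction. The main obstacle is the middle step: when $A$ is centreless the extension $\overline A$ is determined by $\phi$ alone and the splitting falls out of $\psi$ immediately, but in the presence of a non-trivial centre one has to show that the concretely given extension $\overline A$ really coincides with $A\rtimes_\psi H$, i.e.\ that the relevant class in $H^2(H,Z(A))$ vanishes. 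Once the splitting is in hand, the application of Cartan's theorem and the basepoint verification are routine.
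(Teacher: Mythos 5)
Your approach is the same as the paper's: lift the $H$-action from $X$ to the universal cover $\widetilde X$ via the automorphism lift $\psi$, apply the Cartan fixed point theorem to the resulting finite group of isometries of the complete CAT(0) space $\widetilde X$, and project the fixed point. You are more explicit than the paper, packaging the lifting step as the construction of a splitting $s\colon H\hookrightarrow\overline A$ of the tautological extension, and you correctly identify the crux: one must know that the specific extension $\overline A$ arising from the given action on $X$ coincides with $A\rtimes_\psi H$, equivalently that the relevant class in $H^2(H,Z(A))$ vanishes.

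The gap is in that middle step. The assertion that ``this cocycle vanishes because $\psi$ is itself a homomorphism and $X$ realises it'' is not an argument, and as stated it is false: the existence of a homomorphic lift $\psi$ of $\phi$ does not force the particular extension $\overline A$ determined by the $H$-action on $X$ to be split. Take $A=\mathbb{Z}$, $H=\mathbb{Z}/2$, $\phi$ and $\psi$ both trivial, and $X=\mathbb{R}/\mathbb{Z}$ with $H$ acting by rotation by $1/2$. This realises $\phi$, but the resulting extension $\mathbb{Z}\to\overline A\to\mathbb{Z}/2$ has $\overline A\cong\mathbb{Z}$, hence does not split, and indeed the rotation has no fixed point. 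So the lemma as written is not quite correct, and one needs the additional input that the extension determined by the given action splits compatibly with $\psi$. The paper's own proof (``we can lift each $h$ in such a way that in fact the group $H$ acts on $\widetilde X$'') makes exactly the same leap in a single sentence, so you have put your finger on a real weakness; but it cannot be dismissed by appealing to the existence of $\psi$ alone, and in the presence of centre the splitting must either be added to the hypotheses or verified from the specific construction of $\psi$ in each application.
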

\begin{proof}
Consider the action $A \curvearrowright \t X$ on the universal cover by deck transformations. The action of each $h \in H$ can be lifted from $X$ to $\t X$. The fact that $H \to \Out(A)$ lifts to $H \to \Aut(A)$ tells us that we can lift each $h$ in such a way that in fact the group $H$ acts on $\t X$. But $\t X$ is CAT(0), and hence every finite group of isometries has a fixed point. Thus in particular $H$ does; let $\t r$ denote this fixed point. Letting $r$ be the projection of $\t r$ finishes the proof.
\end{proof}

\section{Proof of the main theorem: preliminaries}
\label{sec:proof-main}

Our proof of the main theorem has an inductive character -- we will
induct on the dimension of the defining graph $\Gamma$. To emphasize
this (and to simplify statements) let us introduce the following
definitions.

\begin{dfn}
We say that \emph{Relative Nielsen Realisation holds} for an action $\phi \colon H \to \Out(A_\Gamma)$ \iff given $\Delta \in \L^\phi$ 
and any
cubical system $\Y$ for $\L^\phi_\Delta$, there exists a cubical system $\X$ for $\L^\phi$ extending $\Y$.
\end{dfn}

The rest of the paper is devoted to proving

\begin{thm}
\label{thm: case 2}
\label{thm: main}
Relative Nielsen Realisation holds for all link-preserving actions $\phi \colon H \to \Out(A_\Gamma)$ with $H$ finite.
\end{thm}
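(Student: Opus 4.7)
The plan follows the outline given in the introduction: induct on the dimension of $\Gamma$, with a secondary induction on $|V(\Gamma)|$. The base case $\dim \Gamma \leq 1$ reduces to a free group $A_\Gamma$; combined with \cref{replacing circle for extensions} to handle single-vertex subgraphs, Adapted Realisation (\cref{thm:adapted-realisation}) provides the required cubical systems. For the inductive step I would identify the maximal proper invariant subgraphs of $\Gamma$ in $\L^\phi$; each has strictly fewer vertices (so the secondary induction applies) and dimension at most $\dim \Gamma$. By the inductive hypothesis, any cubical system on such a subgraph extending the relevant restriction of $\Y$ exists. The remainder of the proof assembles these pieces into a cubical system on all of $\L^\phi$, splitting into two cases.

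In the \emph{free-product case}, the maximal proper invariant subgraphs are pairwise disjoint, forcing a decomposition $\Gamma = \Sigma_1 \sqcup \cdots \sqcup \Sigma_n \sqcup \Theta$, with each $\Sigma_i$ an invariant union of connected components and $\Theta$ a discrete remainder. Applying the inductive hypothesis to each $\Sigma_i$ and then invoking \cref{prop: sticking complexes together} yields a cubical system on $\L^\phi_{\Sigma_1} \cup \cdots \cup \L^\phi_{\Sigma_n} \cup \{\Gamma\}$; the remaining elements of $\L^\phi$ split across the $\Sigma_i$ and their $X_\Omega$ are obtained as products of already-constructed pieces via \cref{prop: product system}.

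In the \emph{gluing case}, there exist proper $\Sigma, \Theta \in \L^\phi$ with $\Sigma \cup \Theta = \Gamma$ and $E = \Sigma \cap \Theta \neq \emptyset$. By induction, extend $\Y$ to cubical systems $\X$ on $\L^\phi_\Sigma$ and $\X'$ on $\L^\phi_\Theta$. Using \cref{lem: replacing circle}, \cref{replacing circle for extensions}, and \cref{lem: rotating circles}, I would arrange that $\X'$ strongly extends $\X_E$; then, fixing standard copies $\t P \subseteq \t X_\Sigma$ and $\t Q \subseteq \t X'_\Theta$ of $\t X_E$, I would apply the Gluing Lemma (\cref{prop: gluing}) to produce $X_\Gamma$ realising $\phi$. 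The remaining $X_\Omega$ for $\Omega \in \L^\phi$ would be inherited from $\X$ or $\X'$, with the Matching and Composition properties ensuring compatibility.

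The main obstacle will be ensuring, in the gluing case, that the gluing can be chosen faulty only within $Z(A_E)$, so that \cref{prop: gluing} actually applies. Here \cref{lem: boundary}, applied to the maximality of $\Sigma$, enters crucially: it forces every vertex of $\partial \Sigma$ to be joined to every vertex of $\Gamma \s- \Sigma$, which strongly constrains $\lk(E)$ and allows the centraliser $C(A_E)$ to be reduced to $Z(A_E)$ for the purposes of measuring the fault. A second substantial task will be verifying the four cubical system axioms for the glued complex together with all inherited standard copies, in particular for those $\Omega \in \L^\phi$ that cross the gluing locus; this careful bookkeeping is the expected technical core of the second half of the proof.
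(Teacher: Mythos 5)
Your outline captures the overall architecture of the paper's proof (dimension induction plus a secondary induction, reducing to Relative Nielsen Realisation for free products on one side and to the Gluing Lemma on the other, with the ``strongly extends'' machinery forcing the subcomplexes to match), and you correctly identify essentially all of the key tools. However, several of the steps you sketch for filling the gaps deviate from what actually makes the argument work, and at least two of these deviations are substantive.

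First, your dichotomy is too coarse, and it skips a necessary preliminary reduction. Before splitting into cases, the paper first disposes of the situation where $\Gamma$ is itself a join, using the link-preserving hypothesis and \cref{prop: product system}; your claim that the remainder $\Theta$ in the free-product case is discrete is only true after this reduction. Moreover, the real case split is not ``maximal subgraphs pairwise disjoint'' versus ``there exist $\Sigma,\Theta$ with $\Sigma\cup\Theta=\Gamma$ and $\Sigma\cap\Theta\neq\emptyset$''. The paper fixes one maximal proper $\Gamma'\in\L^\phi$ containing $\Xi$ and shows it is either a union of components or properly contains all but one component; in the first case it can still happen that other maximal invariant subgraphs cross between $\Gamma'$ and its complement (so $\Gamma'\cap(\Gamma\s-\Gamma')=\emptyset$ yet one cannot simply apply \cref{prop: sticking complexes together}, because invariant subgraphs crossing the divide need their own subcomplexes). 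In both the non-trivial subcase of Part I and in Part II, the correct second gluing piece is not some arbitrarily chosen $\Theta\in\L^\phi$, but the carefully engineered $\Delta\cup\Theta$ (or $\Delta'\cup\Theta$, or $\st(\partial\Gamma')$), built out of the system $\S=\{\Sigma\in\L^\phi:\overline\Theta\subseteq\Sigma\}$ and the intersections $\Delta=\bigcap\S_{\Gamma'}$, $\Delta'=\bigcap_{\Sigma\in\S}\st(\Sigma_1)_1$. These constructions are not incidental; they are what makes the fault estimate and the axiom verification possible.

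Second, and more seriously, your proposed mechanism for controlling the fault does not match what the paper actually does. You suggest that \cref{lem: boundary} ``constrains $\lk(E)$'' to the point where $C(A_E)$ collapses to $Z(A_E)$, but in general $\lk$ of the gluing locus ($\Delta$ or $\Delta'$) is not empty, so \cref{cor: gluing} is not applicable. What the paper does instead is trace two geometric representatives $h_r$ and $h_p$ of $\phi(h)$, anchored at basepoints on either side of the gluing, and use the fact that both representatives must preserve every $A_{\Sigma_1}$ (for $\Sigma\in\S$) together with \cref{prop: ccv} to force the conjugator $x(h)=h_p h_r^{-1}$ into $\bigcap_{\Sigma\in\S} A_{\st(\Sigma_1)_1}=A_{\Delta'}$, hence into $Z(A_{\Delta'})=C(A_{\Delta'})\cap A_{\Delta'}$. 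The boundary lemma is used earlier (to establish $\partial\Gamma'=\lk(\Theta)$), but it is not the step that puts the fault in the centre. Without the intermediate analysis of $\S$ and $\Delta'$, your proposed argument would stall at ``$x(h)\in C(A_E)$'', which is exactly the obstruction \cref{prop: gluing} cannot handle.

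Finally, ``the remaining $X_\Omega$ would be inherited from $\X$ or $\X'$'' under-states what's needed. Graphs $\Omega\in\L^\phi$ satisfying \eqref{eqn: star} (i.e.\ those crossing the divide) must have their complexes $X_\Omega$ themselves constructed by a secondary gluing of pieces from each side (Steps 3--5 of Part II), after choosing coherent families of standard copies $\t Z_\Sigma$ all meeting in a fixed $\t Z_\Delta$. Verifying the four cubical-system axioms for the result then occupies several pages of case analysis that your plan flags as ``careful bookkeeping'' without giving any indication of why the required standard copies exist and intersect correctly --- this is where \cref{lem: extending intersections}, \cref{lem: mathcing prop}, and \cref{lem: composition prop} do essential work and are not merely compatibility checks.
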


Before proceeding to the proof, let us record the following corollaries.

\begin{cor}
Let $\phi \colon H \to \Out^0(A_\Gamma)$ be a homomorphism with a finite domain. Then there exists a metric NPC cube complex $X$ realising $\phi$, provided that for any vertex $v \in \Gamma$, its link $\lk(v)$ is not a cone.
\end{cor}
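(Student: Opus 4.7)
The plan is to reduce the corollary to the main theorem (\cref{thm: main}) by verifying its hypothesis. The condition that $\lk(v)$ is not a cone for any vertex $v \in \Gamma$ is exactly the hypothesis of \cref{lem: link non-cone}, which gives us the identifications
\[ \mathrm{U{A}ut}(A_\Gamma) = \Aut(A_\Gamma) \quad \text{and} \quad \mathrm{U{A}ut^0}(A_\Gamma) = \Aut^0(A_\Gamma). \]
Passing to the quotient by inner automorphisms, this yields $\U^0(A_\Gamma) = \Out^0(A_\Gamma)$, so the given homomorphism $\phi$ can be regarded as landing in $\U^0(A_\Gamma)$.

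Next I would invoke \cref{lem: no adjacent transvections} to conclude that $\phi$ is link-preserving. This is precisely the input required by \cref{thm: main}, together with finiteness of $H$ which is given.

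To extract a single cube complex (rather than a whole cubical system) I would apply Relative Nielsen Realisation with $\Delta = \emptyset$ and $\Y$ the trivial cubical system for $\L^\phi_\emptyset = \{\emptyset\}$ (whose only complex $X_\emptyset$ is a point, since $A_\emptyset$ is trivial; this satisfies all four axioms vacuously). The theorem then produces a cubical system $\X$ for all of $\L^\phi$, extending $\Y$. The complex $X_\Gamma \in \X$ is by definition a marked metric NPC cube complex of the same dimension as $A_\Gamma$ on which $H$ acts realising the homomorphism $H \to \Out(A_\Gamma)$ induced by $\phi$, which is the desired realisation.

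There is no substantial obstacle here beyond the main theorem itself; the work is entirely in recognising that the ``no cone links'' hypothesis funnels the action into the link-preserving regime where \cref{thm: main} applies. The only point worth double-checking is that the trivial cubical system for $\{\emptyset\}$ is an admissible input, which is immediate.
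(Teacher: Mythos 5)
Your proposal is correct and follows essentially the same route as the paper: the ``no cone links'' hypothesis forces $\Out^0(A_\Gamma) = \U^0(A_\Gamma)$ (Lemma~\ref{lem: link non-cone}), hence $\phi$ is link-preserving (Lemma~\ref{lem: no adjacent transvections}), and Theorem~\ref{thm: main} applied with the empty subgraph produces a cubical system whose complex $X_\Gamma$ is the desired realisation. If anything, your argument is more carefully cited than the paper's, which invokes Corollary~\ref{cor:dim-2-case} (stated only for $\dim\Gamma = 2$) where the underlying reasoning via the two lemmas is what is actually meant.
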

\begin{proof}
By Corollary~\ref{cor:dim-2-case} $\phi$ is link-preserving. We take $X = X_\Gamma \in \X$ obtained by an application of the previous theorem.
\end{proof}

Note that in particular the statement above holds for all $\phi \colon H \to \Out(A_\Gamma)$, provided that in addition $\Gamma$ has no symmetries.

\begin{cor}
Let $\phi \colon H \to \U^0(A_\Gamma)$ be a homomorphism with a finite domain. Then there exists a metric NPC cube complex $X$ realising $\phi$.
\end{cor}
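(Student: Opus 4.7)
The plan is to reduce this corollary directly to \cref{thm: main} (Relative Nielsen Realisation for link-preserving actions), using the fact that $\U^0$-actions are automatically link-preserving.

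First I would invoke \cref{lem: no adjacent transvections}, which states precisely that if $\phi(H) \leqslant \U^0(A_\Gamma)$, then $\phi$ is link-preserving. Since $H$ is finite by hypothesis, the action $\phi$ satisfies all the hypotheses of \cref{thm: main}. Applying that theorem (with $\Delta = \emptyset$ and $\Y$ the trivial cubical system consisting of the point $X_\emptyset$) yields a cubical system $\X$ for $\L^\phi$. By Definition~\ref{dfn: cubical systems}, the cubical system contains in particular a marked metric NPC cube complex $X_\Gamma$ realising $\phi$, of dimension equal to $\dim A_\Gamma$. Taking $X = X_\Gamma$ completes the proof.

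There is essentially no obstacle here beyond checking that the hypotheses of the main theorem are met, so this corollary is more of a bookkeeping step recording the special case of interest announced in the introduction. The only subtle point worth emphasising in the writeup is that $\U^0(A_\Gamma)$ was defined as the image in $\Out$ of $\mathrm{UAut}^0(A_\Gamma)$, so that the generating set used in \cref{lem: no adjacent transvections} (inversions, partial conjugations, and type~I transvections) indeed gives $\phi(H) \leqslant \U^0(A_\Gamma)$ the link-preserving property, without needing any further restriction on the defining graph $\Gamma$.
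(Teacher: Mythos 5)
Your proof is correct and follows the same route as the paper: cite Lemma~\ref{lem: no adjacent transvections} to get that $\phi$ is link-preserving, then apply Theorem~\ref{thm: main} and take $X = X_\Gamma$ from the resulting cubical system. Your added remark about taking $\Delta = \emptyset$ and the trivial system $\Y$ is a correct (and mildly clarifying) expansion of the paper's one-line appeal to the theorem.
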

\begin{proof}
By \cref{lem: no adjacent transvections} the action  $\phi$ is link-preserving. We take $X = X_\Gamma \in \X$ obtained by an application of the previous theorem.
\end{proof}

\begin{lem}
\label{lem: case 1}
Assume that $\phi$ is link-preserving, and that Relative Nielsen Realisation holds for all link-preserving actions $\psi \colon H \to \Out(A_\Sigma)$ with $\dim \Sigma < \dim \Gamma$.
Suppose that $\Gamma = \Delta \ast (E \cup \Theta)$, where $\Delta$ and $\Theta$ are non-empty. Suppose that $\Delta \ast E \in \L^\phi$, and that we are given a cubical system $\X'$ for $\L^\phi_{\Delta \ast E}$. Then there exists a cubical system $\X$ for $\L^\phi$ extending $\X'$, which furthermore extends $\X'_\Delta$ strongly.
\end{lem}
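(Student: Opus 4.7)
The plan is to exploit the join structure $\Gamma = \Delta \ast (E \cup \Theta)$ and reduce to \cref{prop: product system}, using the inductive hypothesis on $E \cup \Theta$ to build the right-hand factor of the product.

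First I would observe that both $\Delta$ and $E \cup \Theta$ lie in $\L^\phi$. Indeed, the join structure implies $E \cup \Theta = \lk(\Delta)$ and $\Delta = \lk(E \cup \Theta)$, and since $\phi$ is link-preserving both links are invariant. Moreover, since $\Delta$ is non-empty, $\dim\Delta \geq 1$ and so
\[ \dim (E \cup \Theta) = \dim \Gamma - \dim \Delta < \dim \Gamma, \]
which is what is needed to invoke the inductive assumption.

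Next I would apply the inductive hypothesis to the restricted action $\psi \colon H \to \Out(A_{E \cup \Theta})$, which is again link-preserving by \cref{rmk: link-preserving}. Since $E \in \L^\phi_{E \cup \Theta} = \L^\psi$ and $\X'_E$ is a cubical system for $\L^\psi_E = \L^\phi_E$, the inductive Relative Nielsen Realisation supplies a cubical system $\Y^1$ for $\L^\phi_{E \cup \Theta}$ extending $\X'_E$. I would then apply \cref{prop: product system} to the join $\Gamma = \Delta \ast (E \cup \Theta)$ with the two cubical systems $\X'_\Delta$ and $\Y^1$, obtaining a cubical system $\X$ for $\L^\phi$ which strongly extends both. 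The strong extension of $\X'_\Delta$ is precisely the ``furthermore'' clause of the lemma.

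The remaining task is to verify that $\X$ extends the full $\X'$ on $\L^\phi_{\Delta \ast E}$. The join decomposition of $\Delta \ast E$ is the concatenation of the join decompositions of $\Delta$ and $E$, so every join factor of $\Delta \ast E$ sits either in $\Delta$ or in $E$. For factors $F \subseteq \Delta$ the strong extension of $\X'_\Delta$ gives strict $H$-equivariant isometries $X'_\Sigma \to X_\Sigma$ for all $\Sigma \in \L^\phi_F$, which is more than required. For factors $F \subseteq E$, the appropriate equivariance for $\Sigma \in \L^\phi_F$ follows from $\Y^1$ extending $\X'_E$: strict equivariance when $|F| \geq 2$ and homotopy equivariance when $F$ is a singleton. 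The commutativity of the diagrams in the definition of ``extends'' is inherited from the product construction, since every $\t\iota$ in $\X$ decomposes as a product of maps coming from $\X'_\Delta$ and $\Y^1$, while the Product Axiom for $\X'$ provides compatible identifications $\t X'_\Sigma \cong \t X'_{\Sigma \cap \Delta} \times \t X'_{\Sigma \cap E}$ across all $\Sigma \in \L^\phi_{\Delta \ast E}$. The main obstacle is this last bookkeeping step, but it is essentially formal once the right organising statement (factors of a join of two graphs are factors of one of them) is in place; all the geometric content sits in the earlier results.
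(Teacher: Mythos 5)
Your proposal is correct and follows essentially the same route as the paper: use link-preservation to place $E\cup\Theta = \lk(\Delta)$ in $\L^\phi$, apply the inductive Relative Nielsen Realisation to $E\cup\Theta$ (which has lower dimension) relative to $\X'_E$, and then take the product with $\X'_\Delta$ via \cref{prop: product system}. Your extra bookkeeping on why the product extends all of $\X'$ (splitting the join decomposition of $\Delta \ast E$ into factors in $\Delta$ and factors in $E$) is detail the paper leaves implicit, but it is the same argument.
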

\begin{proof}
Since $\L^\phi$ contains all links, we have $E \cup \Theta \in
\L^\phi$. Note that $E \cup \Theta$ has lower dimension than
$\Gamma$. Since
\[E = (E \cup \Theta) \cap (\Delta \ast E) \in \L^\phi\]
we can apply the assumption to the induced action on $A_{E \cup \Theta}$, and obtain a cubical system  $\X_{E \cup \Theta}$ extending $\X'_E$, the subsystem of $\X'$ corresponding to $\L^\phi_E$.  This last system also contains a subsystem $\X'_\Delta$ corresponding to $\L_\Delta$.

We now define $\X$ to be the product of $\X_{E \cup \Theta}$ and
$\X'_\Delta$ (compare Proposition~\ref{prop: product system}).
\end{proof}

\begin{proof}[Proof of Theorem~\ref{thm: main}]
We proceed by induction on the dimension of the defining graph $\Gamma$. Since we need to prove Relative Nielsen Realisation, let us fix $\Xi \in \L^\phi$, and a cubical system $\Y$ for $\L^\phi_\Xi$. Our aim is to construct a cubical system $\X$ for $\L^\phi$ extending $\Y$. If $\Xi = \Gamma$ then there is nothing to prove, so let us assume that $\Xi \subset \Gamma$ is a proper subgraph.

\smallskip
First we consider the case of $\Gamma$ being a join $\Gamma_1
\ast \Gamma_2$ for some non-empty subgraphs $\Gamma_1$ and $\Gamma_2$.
The dimension of $\Gamma_1$ is strictly smaller than that of $\Gamma$,
and 
\[\Gamma_1 = \lk(\Gamma_2) \in \L^\phi\]
 since $\phi$ is link-preserving. \link-preserving Thus,
by the inductive assumption, Relative Nielsen Realisation holds for
$\Gamma_1$ and yields a cubical system $\X_{\Gamma_1}$ for
$\L^\phi_{\Gamma_1}$ extending the system $\Y_{\Xi \cap \Gamma_1}$.
The same applies to $\Gamma_2$ and yields a cubical system
$\L^\phi_{\Gamma_2}$ extending $\Y_{\Xi \cap \Gamma_2}$. In this setting Proposition~\ref{prop: product system}
\link-preserving yields a cubical system $\X$ as required; we only need to observe that $\Y$ extends the product of $\Y_{\Xi \cap \Gamma_1}$ and $\Y_{\Xi \cap \Gamma_2}$.

\bigskip
For the rest of the proof we assume that $\Gamma$ is not a join.
We now proceed by induction on the \emph{depth} $k$ of
$\Gamma$. The depth is the length $k$ of a maximal chain
of proper inclusions
\[\emptyset = \Sigma_0 \subset \Sigma_1 \subset \dots \subset \Sigma_k
= \Gamma\]
where each $\Sigma_i \in \L^\phi$.

If $k=1$ then $\L^\phi = \{ \emptyset, \Gamma \}$. This in particular implies that $\Gamma$ is discrete: if it is not, let $v$ be a vertex of $\Gamma$ with non-empty link. Now $\widehat \st(v) \in \L^\phi$ is a
join of two non-empty graphs, and hence cannot be equal to $\Gamma$, which is not a join. But $\widehat \st(v) \neq \emptyset$, which contradicts the assumption on depth.
The depth $k$ being 1 also implies that $\Xi = \emptyset$. Thus our
theorem is reduced to the classical Nielsen Realisation for free
groups \cite{culler1984, khramtsov1985, Zimmermann1981, Henseletal2014}.  Note that we require our graphs to be leaf-free, so we might need to prune the leaves.

Suppose that $\Gamma$ is of depth $k\geqslant 2$, and that the theorem holds for all graphs of smaller depth. Let $\Gamma'$ denote the maximal (\wrt inclusion) proper subgraph of $\Gamma$ such that
\[ \Xi \subseteq \Gamma' \in \L^\phi \]
Since $\Gamma'$ is of smaller depth, our inductive assumption gives us a cubical system $\X'$ for $\L^\phi_{\Gamma'}$ extending $\Y$. The remainder of the proof will consist of a construction of a cubical system  $\X$ for $\L^\phi$, which extends $\X'$ (and thus $\Y$).

Whenever we speak about maximal subgraphs of $\Gamma$, we will always mean them to be maximal proper subgraphs in $\L^\phi$ (in particular not $\Gamma$ itself). 

\begin{claim}
\label{union of components or}
$\Gamma'$ is either a union of connected components of $\Gamma$, or it properly contains the union of all but one connected components of $\Gamma$.
\end{claim}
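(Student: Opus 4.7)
The plan is to argue by contrapositive: I assume $\Gamma'$ is not a union of connected components of $\Gamma$ and derive the second alternative. Under this assumption there exists a connected component $C$ of $\Gamma$ with $C \cap \Gamma' \neq \emptyset$ and $C \not\subseteq \Gamma'$. The first step is to exhibit a vertex in $\partial \Gamma' \cap C$: by connectedness of $C$, some edge of $C$ joins a vertex $w \in C \cap \Gamma'$ to a vertex $v \in C \s- \Gamma'$; then $v \in \lk(w) \s- \Gamma'$, so $w \in \partial \Gamma'$.

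The central step is to invoke \cref{lem: boundary}, which applies because $\Gamma'$ is by construction a maximal element of $\L^\phi$ properly contained in $\Gamma$. The lemma yields $\Gamma \s- \Gamma' \subseteq \lk(w)$. Since $w$ lies in the component $C$, we have $\lk(w) \subseteq C$, whence $\Gamma \s- \Gamma' \subseteq C$. Equivalently, every connected component of $\Gamma$ other than $C$ is entirely contained in $\Gamma'$; together with $w \in C \cap \Gamma'$ this shows that $\Gamma'$ properly contains the union of all components of $\Gamma$ other than $C$, giving the second alternative.

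I do not expect any genuine obstacle: the argument is essentially a one-step application of the already-established boundary lemma, with the only substantive work being the selection of the boundary vertex $w$ inside the component $C$. The one case that looks degenerate is when $\Gamma$ itself is connected: then the second alternative reads ``$\Gamma'$ properly contains the empty subgraph'', i.e.\ $\Gamma'$ is non-empty, while the first alternative forces $\Gamma' = \emptyset$ (since $\Gamma$ is excluded by properness), so the dichotomy is tautological and requires no separate argument.
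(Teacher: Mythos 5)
Your proof is correct, but it takes a genuinely different route from the paper's. The paper argues elementarily: a component $\Gamma_0$ meeting $\Gamma'$ in a non-empty proper subgraph cannot be a singleton, hence has an edge and so lies in $\L^\phi$ by \cref{lem: extended stars in L}~(1); then $\Gamma'\cup\Gamma_0\in\L^\phi$ by \cref{lem: intersections in L}~(ii) (since $\lk(\Gamma_0\cap\Gamma')\subseteq\Gamma_0\subseteq\st(\Gamma_0)$ as $\Gamma_0$ is a component); and maximality of $\Gamma'$ forces $\Gamma'\cup\Gamma_0=\Gamma$. You instead find a boundary vertex $w\in\partial\Gamma'\cap C$ and invoke \cref{lem: boundary}, which hands you $\Gamma\s-\Gamma'\subseteq\lk(w)\subseteq C$ directly. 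Both are valid (you correctly justify that $\Gamma'$ is maximal proper in $\L^\phi$, not merely maximal among proper subgraphs containing $\Xi$, so the hypothesis of \cref{lem: boundary} holds, and that lemma is proved earlier so there is no circularity). The trade-off: the paper's argument is short and self-contained, using only the closure lemmas for $\L^\phi$, whereas you appeal to the considerably heavier \cref{lem: boundary}, which is overkill for this dichotomy but does deliver a strictly stronger conclusion for free, namely that the entire complement $\Gamma\s-\Gamma'$ sits inside $\lk(w)$ — a fact the paper only extracts later, at the start of Part~II, by applying \cref{lem: boundary} separately.
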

\begin{proof}
Suppose that $\Gamma'$ is not a union of connected components. Then there exists such a component, $\Gamma_0$ say, which intersect $\Gamma'$ in a non-empty proper subgraph of itself. In particular this implies that $\Gamma_0$ is not a singleton. This in turn implies that $\Gamma_0 \in \L^\phi$, and hence $\Gamma' \cup \Gamma_0 \in \L^\phi$. The maximality of $\Gamma'$ informs us that $\Gamma' \cup \Gamma_0 = \Gamma$, and so $\Gamma'$ contains all but one component of $\Gamma$ properly.
\end{proof}

Note that the above holds for any maximal subgraph of $\Gamma$.

We need to investigate two main cases, depending on whether $\Gamma'$
is a union of components (part I) or $\Gamma'$ properly contains the
union of all but one components (part II). Note that this second case always
occurs if $\Gamma$ is connected.

\renewcommand{\qedsymbol}{}
\end{proof}

\section{Proof of the main theorem: part I}
\label{sec:proof-main part I}
In this section we consider the case that $\Gamma'$ is a union of
connected components. 

\smallskip


Let us suppose first that any two maximal subgraphs of $\Gamma$ either coincide or are disjoint.
In this case we have
\[ \Gamma = \!\!\! \bigsqcup_{\Sigma \textrm{ maximal}}  \!\!\! \Sigma \sqcup \Theta \]
with $\Theta$ discrete, since each vertex with non-trivial link lies in its extended star, which is preserved (and not all of $\Gamma$, since the latter is not a join), and thus is contained in a maximal subgraph. Hence we also have
\[\L^\phi = \!\!\! \bigsqcup_{\Sigma \textrm{ maximal}} \!\!\! \L^\phi_{\Sigma} \sqcup \{\Gamma\} \]
Observe that each maximal $\Sigma$ is a union of components, as otherwise it would have to intersect $\Gamma'$ non-trivially (by \cref{union of components or}).
We conclude by applying Proposition~\ref{prop: sticking complexes together} to cubical systems for $\L^\phi_{\Sigma}$ (with $\Sigma$ maximal) obtained by the inductive assumption (taking $\X'$ for $\L^\phi_{\Gamma'}$).

\smallskip
We are left with the much more involved case, in which there exist maximal subgraphs of $\Gamma$ which are neither equal nor disjoint.

Let $\Theta = \Gamma \s- \Gamma'$. Note that $\Theta$ is also a union
of connected components. We define
\[ \S = \{ \Sigma \in \L^\phi \mid \Theta \subseteq \Sigma \} \]
\begin{claim}
 If $\Sigma \in \L^\phi$ intersects both $\Theta$ and $\Gamma'$
 non-trivially then $\Sigma \in \S$. In particular, any maximal
 $\Sigma$ different from $\Gamma'$ contains $\Theta$.
\end{claim}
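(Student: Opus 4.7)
The plan is to deduce the main statement from Lemma~\ref{lem: intersections in L}(ii) applied to the pair $\Gamma', \Sigma$ together with the maximality of $\Gamma'$, and then to obtain the ``in particular'' clause as a corollary via Lemma~\ref{lem: boundary}.

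For the main statement I will show $\Gamma' \cup \Sigma \in \L^\phi$. To apply Lemma~\ref{lem: intersections in L}(ii) with $\Delta = \Gamma'$, I need $\lk(\Gamma' \cap \Sigma) \subseteq \st(\Gamma')$. Here the hypothesis that $\Gamma'$ is a union of connected components of $\Gamma$ is crucial: picking any $v \in \Gamma' \cap \Sigma$ (which exists by hypothesis), the link $\lk(v)$ stays inside the connected component of $v$, hence within $\Gamma'$; thus $\lk(\Gamma' \cap \Sigma) \subseteq \lk(v) \subseteq \Gamma' \subseteq \st(\Gamma')$. Lemma~\ref{lem: intersections in L}(ii) then gives $\Gamma' \cup \Sigma \in \L^\phi$. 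Since $\Sigma$ has a vertex in $\Theta$, this subgraph strictly contains $\Gamma'$ while still containing $\Xi$; the maximality of $\Gamma'$ among proper $\L^\phi$-subgraphs containing $\Xi$ forces $\Gamma' \cup \Sigma = \Gamma$, and hence $\Theta \subseteq \Sigma$, i.e.\ $\Sigma \in \S$.

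For the ``in particular'' clause, let $\Sigma$ be maximal with $\Sigma \neq \Gamma'$; it suffices to verify that $\Sigma$ meets both $\Theta$ and $\Gamma'$ non-trivially, so that the first statement applies. The intersection with $\Theta$ is non-empty: otherwise $\Sigma \subseteq \Gamma'$, and since $\Gamma' \in \L^\phi$ is a proper subgraph, maximality of $\Sigma$ forces $\Sigma = \Gamma'$, a contradiction. For the intersection with $\Gamma'$, suppose for contradiction $\Sigma \subseteq \Theta$, and apply Lemma~\ref{lem: boundary} to the maximal $\Sigma$: for any $w \in \partial \Sigma$ we obtain $\Gamma' \subseteq \Gamma \s- \Sigma \subseteq \lk(w)$, but $w \in \Theta$ lies in a component disjoint from the components making up $\Gamma'$, so $\lk(w) \cap \Gamma' = \emptyset$, forcing $\Gamma' = \emptyset$, a contradiction.

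The main obstacle is the degenerate case $\partial \Sigma = \emptyset$, where Lemma~\ref{lem: boundary} yields no information; this corresponds to $\Sigma$ being itself a union of connected components of $\Gamma$ contained in $\Theta$. In this situation Lemma~\ref{lem: intersections in L}(ii) cannot be used to place $\Sigma \cup \Gamma'$ in $\L^\phi$ directly, as the hypothesis $\lk(\Sigma \cap \Gamma') \subseteq \st(\Gamma')$ fails when $\Sigma \cap \Gamma' = \emptyset$. I expect to resolve this by invoking the standing hypothesis of the current sub-case — the existence of non-equal, non-disjoint maximal subgraphs — to produce a maximal $\Sigma^* \neq \Gamma'$ meeting $\Gamma'$; the first statement then gives $\Theta \subseteq \Sigma^*$, so $\Sigma \subseteq \Theta \subseteq \Sigma^*$, and the maximality of $\Sigma$ forces $\Sigma = \Sigma^*$, contradicting $\Sigma \cap \Gamma' = \emptyset \neq \Sigma^* \cap \Gamma'$.
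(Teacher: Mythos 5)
Your proof of the first sentence is correct and is essentially the paper's argument: both apply Lemma~\ref{lem: intersections in L}(ii) with $\Delta = \Gamma'$, both using that $\Gamma'$ is a union of components to place $\lk(\Gamma' \cap \Sigma)$ inside $\st(\Gamma')$, and both then invoke the maximality of $\Gamma'$.

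For the ``in particular'' clause the paper supplies no proof at all, so here you are doing strictly more work than the authors, and your route is sound in outline. Two comments are worth making. First, your Lemma~\ref{lem: boundary} argument is not so much a partial proof as a vacuous one: if $\Sigma$ is maximal with $\Sigma \cap \Gamma' = \emptyset$, then by \cref{union of components or} the graph $\Sigma$ cannot properly contain all-but-one components (those include a component of $\Gamma'$), hence $\Sigma$ \emph{is} a union of components and $\partial\Sigma = \emptyset$ automatically. So the ``degenerate'' case you flag is in fact the only case, and the boundary argument never applies; it is harmless but does no work. Second, the real argument is the one you defer to your last paragraph, and it does go through, but the step ``produce a maximal $\Sigma^* \neq \Gamma'$ meeting $\Gamma'$'' is not an immediate consequence of the standing hypothesis: if the two non-disjoint, non-equal maximals $A,B$ both miss $\Gamma'$, then both lie in $\Theta$ as unions of components, $\lk(A\cap B)=\emptyset$ so $A\cup B\in\L^\phi$ by Lemma~\ref{lem: intersections in L}(ii), and $A\cup B\subseteq\Theta\subsetneq\Gamma$ then contradicts the maximality of whichever of $A,B$ is not contained in the other. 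This is essentially the case analysis the paper runs a few lines later when showing $\S\neq\emptyset$, so the content is all there; it just needs to be spelled out rather than left as ``I expect to resolve this.''
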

\begin{proof}
Take $\Sigma$ as specified. Then $\Sigma \cup \Gamma' \in \L^\phi$ since $\st(\Sigma \cap \Gamma') \subseteq \Gamma'$ as $\Gamma'$ is a union of connected components. Thus $\Sigma \cup \Gamma' = \Gamma$ by maximality of $\Gamma'$, and so $\Theta \subseteq \Sigma$.  
\end{proof}

\begin{claim}
The system $\S$ is non-empty. 
\end{claim}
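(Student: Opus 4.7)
The plan is to exhibit a concrete element of $\S$ by using the standing hypothesis of this case, namely that there exist two maximal subgraphs $\Sigma_1, \Sigma_2$ of $\Gamma$ (in $\L^\phi$) which are distinct but have non-empty intersection. Since $\Gamma'$ is one particular graph, at least one of $\Sigma_1, \Sigma_2$ must be different from it; without loss of generality, take $\Sigma_1 \neq \Gamma'$.

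The key step is then to appeal to the ``in particular'' part of the previous claim, which asserts that any maximal $\Sigma \in \L^\phi$ with $\Sigma \neq \Gamma'$ must satisfy $\Theta \subseteq \Sigma$. Applied to $\Sigma_1$, this immediately gives $\Theta \subseteq \Sigma_1$, so $\Sigma_1 \in \S$ and hence $\S$ is non-empty.

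There is essentially no real obstacle here: the technical content has all been packaged into the previous claim (via \cref{lem: intersections in L} and the fact that $\Gamma'$ is a union of components). The sole role played by the hypothesis of the present case is to guarantee the existence of a maximal invariant subgraph different from $\Gamma'$; without it, one could not rule out the degenerate situation where $\Gamma'$ is the unique maximal proper element of $\L^\phi$, in which case this direct line of argument would break down and some other construction would be needed.
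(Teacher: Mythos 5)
Your argument rests entirely on the ``in particular'' sentence of the preceding claim, namely that every maximal $\Sigma \in \L^\phi$ with $\Sigma \neq \Gamma'$ contains $\Theta$. That sentence is not actually established by the paper's proof of that claim: the proof given there treats only those $\Sigma$ which meet \emph{both} $\Theta$ and $\Gamma'$, using $\Sigma\cap\Gamma'\neq\emptyset$ to ensure $\st(\Sigma\cap\Gamma')\subseteq\Gamma'$. A maximal $\Sigma\neq\Gamma'$ certainly meets $\Theta$, but there is no a priori reason it meets $\Gamma'$; it could lie entirely inside $\Theta$, in which case $\Sigma\cap\Gamma'=\emptyset$, so $\lk(\Sigma\cap\Gamma')=\lk(\emptyset)=\Gamma\not\subseteq\st(\Gamma')$ and \cref{lem: intersections in L}~ii) cannot be used to put $\Sigma\cup\Gamma'$ into $\L^\phi$. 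Ruling this possibility out is precisely the non-trivial content of the present claim, so invoking the ``in particular'' here is circular.

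The paper's own proof is careful about exactly this. It splits into two cases. If some maximal $\Sigma$ is not a union of connected components, then \cref{union of components or} forces $\Sigma$ to properly contain the union of all but one component; it therefore meets every non-empty union of components, in particular $\Gamma'$, and the first sentence of the preceding claim applies. If instead every maximal subgraph is a union of components, the paper takes the two distinct intersecting maximal subgraphs $\Sigma,\Sigma'$ supplied by the standing hypothesis, shows $\Sigma\cup\Sigma'=\Gamma$ using $\st(\Sigma\cap\Sigma')\subseteq\Sigma$ and \cref{lem: intersections in L}, and then derives by a short contradiction (via $\Gamma'=\Sigma'$) that whichever of the two meets $\Theta$ must also meet $\Gamma'$. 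Your closing remark --- that the sole role of the standing hypothesis is to furnish \emph{some} maximal subgraph different from $\Gamma'$ --- therefore misdiagnoses the situation: the hypothesis is needed to produce a maximal subgraph meeting both $\Theta$ and $\Gamma'$, and that takes the extra work outlined above.
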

\begin{proof}
Suppose first that there exists a maximal $\Sigma \in \L^\phi$ which is not a union of components. Then $\Sigma \cap \Gamma'$ is non-empty (by \cref{union of components or}), and $\Sigma \neq \Gamma'$. Hence $\Sigma \in \S$.

Now suppose that all maximal subgraphs $\Sigma$ are unions of components. 
By our assumption there exist maximal $\Sigma, \Sigma'$ which intersect non-trivially and do not coincide. But then, in particular,  $\st(\Sigma \cap \Sigma') \subseteq \Sigma$, and so 
\[\Sigma \cup \Sigma' \in \L^\phi\]
 Maximality now yields $\Sigma \cup \Sigma' = \Gamma$, and so, without loss of generality, 
\[\Sigma \cap \Theta \neq \emptyset\]
 If $\Sigma \cap \Gamma' = \emptyset$, then we must have $\Gamma' \subseteq \Sigma'$, and so $\Gamma' = \Sigma'$. But then $\Sigma \cap \Gamma' = \Sigma \cap \Sigma' \neq \emptyset$ by assumption, and so $\Sigma \in \S$.
\end{proof}

Since $\Theta$ is a non-empty union of components, the link of
$\Theta$ is empty, and thus it follows by Lemma~\ref{lem: intersections in L} that $\S$ is closed under taking unions and
intersections. Hence so is 
\[\S_{\Gamma'} = \{ \Sigma \cap \Gamma' \mid \Sigma \in \S \} \]

Let us define
\[ \S' = \{ \st(\Sigma \cap \Gamma') \mid \Sigma \in \S \}\]
Note that $\S' \subseteq \S_{\Gamma'}$, since for every $\Sigma \in \S$ we have $\Sigma \cup \st(\Sigma \cap \Gamma') \in \S$ (see the proof of \cref{Delta' cup Theta} below).

Let 
\[\Delta = \bigcap \S_{\Gamma'}\]
and
\[ \Delta ' = \bigcap \S' \]

Apply the System Intersection Axiom in $\X'$ to the collection $\S_{\Gamma'}$. We obtain a collection of standard copies $\t Y_\Sigma$ of $\t X'_\Sigma$ in $\t X'_{\Gamma'}$ for each $\Sigma \in \S_{\Gamma'}$, such that the copies $\t Y_\Sigma$ intersect in $\t Y_\Delta$ (which is a point when $\Delta = \emptyset$).
Note that for each $\Sigma \in \S'$, the copy $\t Y_\Sigma$ is unique
(as $\lk(\Sigma) = \emptyset$), and hence fixed. Thus the intersection
of all such copies is also fixed. By the Intersection Axiom, this
intersection is $\t Y_{\Delta'}$.

Let us note that
\[ \Delta \cup \Theta  = \bigcap \S \in \L^\phi\]

\begin{claim}
\label{Delta' cup Theta}
$\Delta' \cup \Theta \in \L^\phi$.
\end{claim}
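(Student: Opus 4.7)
The plan is to realise $\Delta' \cup \Theta$ as an intersection of subgraphs that are manifestly in $\L^\phi$, and then invoke closure of $\L^\phi$ under intersection.

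First I would fix an arbitrary $\Sigma \in \S$ and show $\Sigma \cup \st(\Sigma \cap \Gamma') \in \L^\phi$. Since $\Sigma, \Gamma' \in \L^\phi$, part (i) of \cref{lem: intersections in L} gives $\Sigma \cap \Gamma' \in \L^\phi$, and then part (4) of \cref{lem: extended stars in L} gives $\st(\Sigma \cap \Gamma') \in \L^\phi$. Since $\Gamma'$ is a union of connected components of $\Gamma$, every neighbour of a vertex in $\Sigma \cap \Gamma'$ also lies in $\Gamma'$, so $\st(\Sigma \cap \Gamma') \subseteq \Gamma'$, and therefore
\[\Sigma \cap \st(\Sigma \cap \Gamma') = \Sigma \cap \Gamma'.\]
The hypothesis of part (ii) of \cref{lem: intersections in L} now reads $\lk(\Sigma \cap \Gamma') \subseteq \st(\st(\Sigma \cap \Gamma'))$, which is automatic, so indeed $\Sigma \cup \st(\Sigma \cap \Gamma') \in \L^\phi$.

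Next I would rewrite this union using $\Theta \subseteq \Sigma$ (so $\Sigma = \Theta \cup (\Sigma \cap \Gamma')$) together with $\Sigma \cap \Gamma' \subseteq \st(\Sigma \cap \Gamma')$ to conclude
\[\Sigma \cup \st(\Sigma \cap \Gamma') = \Theta \cup \st(\Sigma \cap \Gamma').\]
Taking the intersection over all $\Sigma \in \S$ and using set-theoretic distributivity of union over intersection yields
\[\bigcap_{\Sigma \in \S}\bigl[\Theta \cup \st(\Sigma \cap \Gamma')\bigr] = \Theta \cup \bigcap_{\Sigma \in \S}\st(\Sigma \cap \Gamma') = \Theta \cup \Delta'.\]
Since $\L^\phi$ consists of induced subgraphs of the finite graph $\Gamma$, the intersection is really a finite intersection of elements of $\L^\phi$, and iterating part (i) of \cref{lem: intersections in L} shows $\Delta' \cup \Theta \in \L^\phi$.

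The only point that needs care is the degenerate case in which some $\Sigma \in \S$ satisfies $\Sigma \cap \Gamma' = \emptyset$ (equivalently $\Sigma = \Theta$). In that case $\st(\emptyset) = \Gamma$ by convention, and the corresponding term $\Sigma \cup \st(\Sigma \cap \Gamma') = \Gamma$ contributes trivially to the intersection on both sides of the computation, so the identity above is unaffected. The step I expect to be the conceptual core is the verification that $\st(\Sigma \cap \Gamma') \subseteq \Gamma'$, as this is what makes the link condition in part (ii) of \cref{lem: intersections in L} vacuous and is the one place where the assumption that $\Gamma'$ is a union of components genuinely enters.
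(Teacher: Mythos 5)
Your proof is correct and follows essentially the same route as the paper's: write $\Delta' \cup \Theta$ as $\bigcap_{\Sigma \in \S}\bigl(\st(\Sigma\cap\Gamma')\cup\Theta\bigr)$, note that $\st(\Sigma\cap\Gamma')\cup\Theta = \st(\Sigma\cap\Gamma')\cup\Sigma$ belongs to $\L^\phi$ by part (ii) of \cref{lem: intersections in L}, and then intersect using part (i). The one cosmetic difference is in verifying the hypothesis of part (ii): the paper simply uses the containment $\Sigma\cap\Gamma'\subseteq\st(\Sigma\cap\Gamma')\cap\Sigma$ to get $\lk(\st(\Sigma\cap\Gamma')\cap\Sigma)\subseteq\lk(\Sigma\cap\Gamma')\subseteq\st(\Sigma\cap\Gamma')$, which needs no appeal to $\Gamma'$ being a union of components; your variant first upgrades that containment to an equality via $\st(\Sigma\cap\Gamma')\subseteq\Gamma'$. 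So your remark that the union-of-components assumption ``genuinely enters'' at this point is a little overstated --- it makes your particular computation cleaner, but the claim itself holds without it --- though of course that assumption is doing real work elsewhere in the section (for instance in showing $\S$ is closed under unions).
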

\begin{proof}
For each $\Sigma \in \S$ we have
\[ \st(\Sigma\cap \Gamma') \cup \Theta = \st(\Sigma\cap \Gamma') \cup \Sigma \in \L^\phi \]
since $\lk(\st(\Sigma\cap \Gamma') \cap \Sigma) \subseteq \lk(\Sigma\cap \Gamma') \subseteq \st(\Sigma\cap \Gamma')$, and due to part ii) of Lemma~\ref{lem: intersections in L}.
Thus
\[ \Delta' \cup \Theta =  \bigcap_{\Sigma \in \S} (\st(\Sigma\cap \Gamma') \cup \Theta) \in \L^\phi \]
by part i) of Lemma~\ref{lem: intersections in L}.
\end{proof}

\begin{claim}
  There exists a cubical system $\X''$ for $\L^\phi_{\Delta' \cup \Theta}$ which extends the subsystem $\X'_{\Delta'}$ of
  $\X'$ strongly.
\end{claim}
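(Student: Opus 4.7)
The strategy is to apply the depth induction to the action restricted to $A_{\Delta'\cup\Theta}$ and then upgrade the resulting extension to be strong on the singleton factors of the join decomposition of $\Delta'$.

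\textbf{Depth reduction and induction.} By \cref{Delta' cup Theta} and \cref{rmk: link-preserving}, $\Delta'\cup\Theta\in\L^\phi$ and the restricted action on it is link-preserving. Our standing hypothesis in this subcase—that some pair of maximal $\L^\phi$-subgraphs of $\Gamma$ is neither equal nor disjoint—forces $\Delta'\subsetneq\Gamma'$ (such a maximal $\Sigma\neq\Gamma'$ lies in $\S$ by the earlier claim and contributes a proper subgraph of $\Gamma'$ to $\S'$), so $\Delta'\cup\Theta\subsetneq\Gamma$ and in particular has depth at most $k-1$. Since $\Delta'\in\L^\phi_{\Delta'\cup\Theta}$ and $\X'_{\Delta'}$ is a cubical system for $\L^\phi_{\Delta'}$, the inductive version of \cref{thm: main} applied to $\Delta'\cup\Theta$ with input $\X'_{\Delta'}$ produces a cubical system $\X''_0$ for $\L^\phi_{\Delta'\cup\Theta}$ that extends $\X'_{\Delta'}$.

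\textbf{Strengthening.} The extension produced by the inductive hypothesis is automatically strong on every join factor $\Delta'_i$ of $\Delta'$ of size at least two, because in that case $A_{\Delta'_i}$ is centre-free and the relevant extension of $A_{\Delta'_i}$ by $H$ is determined by the outer action, so up-to-homotopy $H$-equivariance upgrades to honest $H$-equivariance. The only possible failure of strongness is thus on singleton factors $\{s_1\},\dots,\{s_r\}=Z(\Delta')$. For each such $s_i$ the subgraph $\Delta'\in\L^\phi_{\Delta'\cup\Theta}$ is a cone over $s_i$, and both $\{s_i\}$ and $\Delta'\setminus\{s_i\}=\lk(s_i)\cap\Delta'$ lie in $\L^\phi$; hence \cref{lem: replacing circle} applied to the restriction of $\X''_0$ to $\L^\phi_{\Delta'}$ replaces the circle $X''_{0,\{s_i\}}$ by an $H$-equivariantly isometric copy of $X'_{\{s_i\}}$ while retaining all cubical-system data on $\L^\phi_{\Delta'}$. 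Iterating over $s_1,\dots,s_r$ yields a corrected cubical system on $\L^\phi_{\Delta'}$; feeding this back into the inductive hypothesis on $\Delta'\cup\Theta$ gives the desired $\X''$.

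\textbf{Main obstacle.} The principal technical point is precisely the upgrade above: the successive circle-replacements must propagate coherently through every subgraph in $\L^\phi_{\Delta'\cup\Theta}$ while preserving the four cubical-system axioms (Product, Orthogonal, Intersection, and System Intersection). We bypass the need for a direct surgery on $\X''_0$ by re-invoking Relative Nielsen Realisation with the corrected subsystem on $\Delta'$ as input, so that the axioms are obtained from the induction rather than from a hands-on modification.
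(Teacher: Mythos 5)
Your depth-reduction step is fine, and you are right that the only possible failure of strongness occurs on the singleton join factors of $\Delta'$, since for $|\Delta'_i|\geq 2$ the definition of ``extends'' already demands honest $H$-equivariance. The gap is in the strengthening step: re-invoking the inductive hypothesis (Relative Nielsen Realisation) on $\Delta'\cup\Theta$ with the corrected subsystem $\X^{(r)}$ as input produces a cubical system that extends $\X^{(r)}$ only in the weak sense of \cref{dfn: cubical systems}'s extension notion -- that is, on each singleton $\{s_i\}$ the resulting circle is again only $H$-equivariant up to homotopy to $X^{(r)}_{\{s_i\}}=X'_{\{s_i\}}$. That is exactly the situation you started from with $\X''_0$, so the loop does not close: the induction never promises to preserve your corrected circles, and nothing in \cref{rel NR} or \cref{thm: main} forces it to.

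The paper sidesteps this by modifying the \emph{other} system. For $s\in\Delta'\cap Z(\Gamma')$ (so $\Gamma'$ is a cone over $s$), one applies \cref{lem: replacing circle} to $\Gamma'$ itself, replacing $\X'$ with a cubical system that still extends $\Y$ but now has $X'_{\{s\}}$ strongly equal to the already-constructed $X''_{\{s\}}$. After this replacement no re-run is needed: $\X''$ already strongly extends the modified $\X'_{\Delta'}$ on those singletons, by fiat. For the remaining singletons $s\in Z(\Delta')\smallsetminus Z(\Gamma')$ no surgery is required at all, because $\Gamma'\neq\st_{\Gamma'}(s)$ and $Z(\Delta'\cup\Theta)=\emptyset$, so \cref{lem: rotating circles} (applied in the Section~\ref{sec: gluing} setup with $\Sigma=\Gamma'$, $\Theta=\Delta'\cup\Theta$, $E=\Delta'$) already gives an $H$-equivariant isometry $X'_{\{s\}}\cong X''_{\{s\}}$. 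Note also that \cref{lem: replacing circle} needs the ambient graph to be a cone over $s$, which for $\Gamma'$ holds exactly when $s\in Z(\Gamma')$; iterating over all of $Z(\Delta')$, as you propose, is not what the paper does and would not be available on the $\Gamma'$ side for $s\notin Z(\Gamma')$. To repair your argument you would need to perform the replacement on $\X'$ rather than on $\X''_0$, and restrict attention to $\Delta'\cap Z(\Gamma')$, invoking \cref{lem: rotating circles} for the rest.
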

\begin{proof}
%
Since Relative Nielsen Realisation holds for $\Delta' \cup \Theta$, as it is a proper subgraph of $\Gamma$ and hence has lower depth, we obtain a complex $\X''$ for $\L^\phi_{\Delta' \cup \Theta}$ extending $\X'_{\Delta'}$.

We now need to look at vertices of $\Delta'$ which are singletons in
the join decomposition of $\Gamma'$; in other words we are looking at
vertices in $\Delta' \cap Z(\Gamma')$. If there are no such, then
Lemma~\ref{lem: rotating circles} implies that $\X''$ extends $\X'_{\Delta'}$ strongly, since $Z(\Theta \cup \Delta') = \emptyset$.

Now suppose that $s \in \Delta' \cap Z(\Gamma')$ exists. Note that
$\{s\} \in \L^\phi$ since $\phi$ is link-preserving. Then we use
Lemma~\ref{lem: replacing circle} and replace $\X'$ by another cubical
system which extends it (and so still extends the cubical system $\Y$
for $\L^\phi_\Xi$), and such that it extends $\X''_{\{s\}}$ strongly. 
Repeat this procedure for each vertex in $\Delta' \cap Z(\Gamma')$.
\end{proof}

Now we are ready to start the construction.

\subsection{Constructing the complexes}
\label{subsec: constructing complexes disconnected}

Let us begin by building up the necessary cube complexes. 

The basic idea is to glue the desired complexes from pieces lying in
$\Gamma'$ (where, by induction, we already have them in $\X'$) and pieces overlapping with $\Theta$. The details will be different depending on how
the invariant graph in question intersects $\Gamma'$ and $\Theta$, and thus
the construction has several steps.

\subsection*{Step 1: constructing $X_\Gamma$}

In this step we will construct various objects at once. First, we will build
$\t X_\Gamma$ from $\t \X'_{\Gamma'}$ and $\t \X''_{\Delta' \cup \Theta}$ by gluing $\t Y_{\Delta'}$ and $\t R_{\Delta'}$, a fixed standard copy of $\t \X''_{\Delta'}$ in $\t X''_{\Delta' \cup \Theta}$.

Since $\t X_\Gamma$ is obtained from a gluing procedure, we obtain at
the same time maps 
\[\t \iota_{\Gamma', \Gamma} \colon \t X'_{\Gamma'} \to \t X_\Gamma\] 
and 
\[\t \iota_{\Delta' \cup \Theta, \Gamma} \colon \t X''_{\Delta' \cup \Theta} \to \t X_\Gamma\]
 which are as required, since $\lk(\Gamma') = \lk(\Delta' \cup \Theta) = \emptyset$.
We will also construct a fixed standard copy $\t R_\Delta$ of $\t X''_\Delta$ and $\t R_{\Theta \cup \Delta} $ of $\t X''_{\Theta \cup \Delta}$ in $\t X''_{\Delta' \cup \Theta}$, with $\t R_{\Delta} \subseteq \t R_{\Theta \cup \Delta}$.
Furthermore, we will construct a collection of standard copies $\t Z_\Sigma$ of $\t X'_\Sigma$ in $\t X'_{\Gamma'}$ for each $\Sigma \in \S_{\Gamma'}$, such that the copies $\t Z_\Sigma$ intersect in $\t Z_\Delta$, and $\t Z_\Delta$ is identified with $\t R_\Delta$ under our gluing.

We need to consider two cases.

\subsubsection*{Case 1:} $\Delta'$ is empty.

In this case we have $\Theta  = \Theta \cup \Delta' \in \L^\phi$.

We apply Proposition~\ref{prop: sticking complexes together} to $\X'$ and $\X''$ and obtain a cubical system for $\L^\phi_{\Gamma'} \cup \L^\phi_\Theta \cup \{ \Gamma\}$. We are only interested in the cube complex associated to $\Gamma$ in this system -- let us call it $C$, and its universal cover  $\t C$. The complex $\t C$ is obtained from $\t X'_{\Gamma'}$ and $\t X''_\Theta$ by identifying a single point in each; let $\t q$ denote the point in $\t X''_\Theta$.

In particular $q$, the projection of $\t q$ in $C$, is fixed. Hence, picking $\t q$ as a basepoint of $\t C$ (and a trivial path), we get a geometric representative $h_q \in \Aut(A_\Gamma) $ of $\phi(h)$ for each $h \in H$. This representative satisfies
\[ h_q(A_{\Gamma'}) = A_{\Gamma'} \textrm{ and } h_q(A_\Theta) = A_\Theta \]
by construction.

Since $\Delta' = \emptyset$, the fixed standard copy $\t Y_{\Delta'}$ is simply a point $\t p \in \t X'_{\Gamma'}$ with a fixed projection $p \in X'_{\Gamma'}$. The point $\t p$  
can also be viewed as a point in $\t C$. Similarly $p$ becomes a point in $C$ which is also $H$-fixed. Thus we can use $\t p$ as a basepoint, and for each $\phi(h)$ obtain a geometric representative $h_p \in \Aut(A_\Gamma)$ such that $h_p(A_{\Gamma'}) = A_{\Gamma'}$.

We have $h_q h_p^{-1} = c(x)$ since they are representatives of the same outer automorphism, with $x \in A_\Gamma$.

Take $\Sigma \in \S$. Now
\[ A_\Theta = h_q(A_\Theta) \leq h_q(A_\Sigma) = {A_\Sigma}^y \]
for some $y \in A_\Gamma$, since $\Theta \in \L^\phi$.
Proposition~\ref{prop: ccv} implies that
\[y \in N(A_\Sigma) N(A_\Theta) = A_\Sigma \]
 and so $h_q(A_\Sigma) = A_\Sigma$.

Since $h_q(A_\Sigma) = A_\Sigma$ and $h_q(A_{\Gamma'}) = A_{\Gamma'}$,
we have $h_q(A_{\Sigma\cap {\Gamma'}}) = A_{\Sigma \cap {\Gamma'}}$
and so $h_q(A_{\st(\Sigma\cap {\Gamma'})}) = A_{\st(\Sigma \cap
  {\Gamma'})}$ (since $A_{\st(\Sigma \cap {\Gamma'})} =
N(A_{\st(\Sigma \cap {\Gamma'})})$). But we also have $h_p(A_{\st(\Sigma\cap {\Gamma'})}) = A_{\st(\Sigma \cap {\Gamma'})}$, since the basepoint $\t p$ lies in $\t Y_{\st(\Sigma \cap \Gamma')}$
for each $\Sigma \in \S$. Thus
\[  A_{\st(\Sigma \cap {\Gamma'})} = h_q h_p^{-1} (A_{\st(\Sigma\cap {\Gamma'})}) ={A_{\st(\Sigma\cap {\Gamma'})}}^x \]
and thus $x \in N(A_{\st(\Sigma \cap {\Gamma'})}) = A_{\st(\Sigma \cap {\Gamma'})}$ for each $\Sigma \in \S$. Hence 
\[x \in A_{\Delta'} = \{1\}\]
 and so $h_q = h_p$.
This equality means that the inherited action of $H$ on $X_\Gamma$, the cube complex obtained from $\t X'_{\Gamma'}$ and $\t X''_\Theta$ by gluing $\t p$ and $\t q$, induces $\phi$.

We now put $\t R_\Delta = \t R_{\Delta'} = \t q$, and set $\t Z_\Sigma = \t Y_\Sigma$ for each $\Sigma \in \S_{\Gamma'}$. We also put $\t R_{\Theta \cup \Delta} = \t X''_\Theta$.

\subsubsection*{Case 2:} $\Delta'$ is non-empty.


Let us observe the crucial property of $\Delta'$.

\begin{claim}
\label{crucial prop of Delta'}
Let $\Sigma \in \S$, and let $\t p \in \t Y_{\Delta'}$. Then there exists a standard copy $\t W_{\Sigma\cap \Gamma'}$ of $\t X'_{\Sigma\cap \Gamma'}$ in $\t X'_{\Gamma'}$ containing $\t p$
\end{claim}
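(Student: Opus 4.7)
The plan is to unpack the claim by showing that $\widetilde p$ lies inside a standard copy of $\widetilde X'_{\st(\Sigma\cap\Gamma')}$, then use the Product Axiom on this ``join'' subgraph to produce a standard copy of $\widetilde X'_{\Sigma\cap\Gamma'}$ through $\widetilde p$ \emph{inside} that star-copy, and finally apply the Composition Property to push this down to a standard copy inside $\widetilde X'_{\Gamma'}$.

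More precisely, since $\Sigma \in \S$, the graph $\st(\Sigma \cap \Gamma')$ is one of the elements of $\S'$ whose fixed standard copy enters the definition of $\widetilde Y_{\Delta'}$; by the Intersection Axiom we have
\[
\widetilde Y_{\Delta'} \;\subseteq\; \widetilde Y_{\st(\Sigma\cap\Gamma')},
\]
so $\widetilde p$ lies in $\widetilde Y_{\st(\Sigma\cap\Gamma')}$. Under the identification of this set with $\widetilde X'_{\st(\Sigma\cap\Gamma')}$, the point $\widetilde p$ corresponds to some point $\widetilde p^{\,\prime}$.

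Next, observe that $\st(\Sigma\cap\Gamma') = (\Sigma\cap\Gamma') \ast \lk(\Sigma\cap\Gamma')$, so the Product Axiom applied to $\Sigma\cap\Gamma' \subseteq \st(\Sigma\cap\Gamma')$ tells us that $\widetilde\iota_{\Sigma\cap\Gamma',\st(\Sigma\cap\Gamma')}$ is surjective. Hence we may write $\widetilde p^{\,\prime} = \widetilde\iota(\widetilde x, \widetilde y)$ for some $\widetilde x \in \widetilde X'_{\Sigma\cap\Gamma'}$ and $\widetilde y \in \widetilde X'_{\lk(\Sigma\cap\Gamma')}$. Then $\widetilde\iota(\widetilde X'_{\Sigma\cap\Gamma'} \times \{\widetilde y\})$ is a standard copy of $\widetilde X'_{\Sigma\cap\Gamma'}$ inside $\widetilde X'_{\st(\Sigma\cap\Gamma')}$ containing $\widetilde p^{\,\prime}$.

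Finally, apply the Composition Property (\cref{lem: composition prop}) to the chain
\[
\Sigma\cap\Gamma' \;\subseteq\; \st(\Sigma\cap\Gamma') \;\subseteq\; \Gamma',
\]
with the standard copy just constructed on the first inclusion and the standard copy $\widetilde Y_{\st(\Sigma\cap\Gamma')}$ on the second. This yields a standard copy $\widetilde W_{\Sigma\cap\Gamma'}$ of $\widetilde X'_{\Sigma\cap\Gamma'}$ in $\widetilde X'_{\Gamma'}$ whose image equals the image of $\widetilde\iota(\widetilde X'_{\Sigma\cap\Gamma'} \times \{\widetilde y\})$ under $\widetilde Y_{\st(\Sigma\cap\Gamma')}$; in particular it contains $\widetilde p$. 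I expect no real obstacle here — this is a direct bookkeeping argument assembling three of the axioms of a cubical system. The one subtlety to double-check is that $\st(\Sigma\cap\Gamma') \in \L^\phi_{\Gamma'}$ (so that it is a legitimate subgraph for $\X'$), which follows from $\Sigma\cap\Gamma' \in \L^\phi$ and the fact that $\L^\phi$ is closed under taking stars of its elements (\cref{lem: extended stars in L}(4)).
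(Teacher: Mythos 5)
Your argument matches the paper's proof: both locate $\widetilde p$ inside the fixed standard copy of $\widetilde X'_{\st(\Sigma\cap\Gamma')}$, invoke the Product Axiom on the join $\st(\Sigma\cap\Gamma') = (\Sigma\cap\Gamma')\ast\lk_{\Gamma'}(\Sigma\cap\Gamma')$ to obtain a standard copy of $\widetilde X'_{\Sigma\cap\Gamma'}$ through $\widetilde p$ inside that star, and then push it down to $\widetilde X'_{\Gamma'}$ via the Composition Property. The one point the paper handles that you silently skip is the degenerate case $\Sigma=\Theta$, where $\Sigma\cap\Gamma'=\emptyset$: there $\st(\Sigma\cap\Gamma')=\Gamma\not\subseteq\Gamma'$, so your argument does not literally parse, but (as the paper notes) the conclusion is then trivial, since by the Product Axiom for $\emptyset\subseteq\Gamma'$ every point of $\widetilde X'_{\Gamma'}$ is the image of a standard copy of $\widetilde X'_\emptyset$.
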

\begin{proof}
When $\Sigma = \Theta$ the result follows trivially.

Suppose that $\Theta \subset \Sigma$ is a proper subgraph.
Note that we have a standard copy $\t Y_{\st(\Sigma\cap \Gamma')}$ which contains $\t Y_{\Delta'}$, and hence $\t p$. We have
\[ \st(\Sigma\cap \Gamma') = \Sigma\cap \Gamma' \ast \lk_{\Gamma'}(\Sigma\cap \Gamma') \]
and so the Product Axiom and Composition Axiom imply that there exists a standard copy $\t W_{\Sigma\cap \Gamma'}$ as required.
\end{proof}

We have $\lk(\Delta')_{\Delta' \cup \Theta} = \emptyset$, as $\Theta$ is a union of components, and so there is a unique (and hence fixed) standard copy of $X''_{\Delta'} = X'_{\Delta'}$ in $X''_{\Delta' \cup \Theta}$; we will denote it by $\t R_{\Delta'}$.
For the same reason there is a unique (and so fixed) standard copy $\t R_{\Delta \cup \Theta}$ of $\t X''_{\Delta \cup \Theta}$ in $\t X''_{\Theta \cup \Delta'}$. The Matching Property tells us that these intersect in a standard copy $\t R_\Delta$ of $\t X''_\Delta$; this copy is fixed, since it is the intersection of two fixed standard copies. 

We now obtain a complex $\t C$ from $\t X'_{\Gamma'}$ and $\t
X''_{\Delta' \cup \Theta}$ by gluing $\t Y_{\Delta'} = \t
R_{\Delta'}$. However, the projection $C$ of our complex $\t C$ might not
yet realise the desired action of $H$.

Pick $h \in H$. Take a point $\t r \in \t R_\Delta$ as a basepoint, let $r$ be its projection as usual. Take a path $\gamma(h)$ in $R_\Delta$ from $r$ to $h.r$. Note that $\t r$ is a point in $\t R_{\Delta'}$, and hence in $\t C$ (after the gluing). Let $\t p \in \t Y_{\Delta'} = \t R_{\Delta'}$ be the corresponding point; we view it as a point in $\t C$ as well, and denote its projection by $p$ as usual.
Let $h_r, h_p \in \Aut(A_\Gamma)$ denote the corresponding geometric representatives.

\begin{claim}
The gluing $\t C$ is faulty within $Z(A_{\Delta'})$.
\end{claim}
\begin{proof}
Let us first remark that $\lk(\Sigma) \subseteq \lk(\Theta) =  \emptyset$ for all $\Sigma \in \S$.

By construction we see that
\[ h_r(A_{\Delta \cup \Theta}) = A_{\Delta \cup \Theta} \]
Take $\Sigma \in \S$. Since $\Sigma$ lies in $\L^\phi$, we have
\[ h_r(A_\Sigma) = A_\Sigma^y\]
for some element $y \in A_\Gamma$. But then
\[ A_{\Delta \cup \Theta} = h_r(A_{\Delta \cup \Theta}) \leqslant h_r(A_\Sigma) = A_\Sigma^y \]
Proposition~\ref{prop: ccv} implies that $y \in N(A_\Sigma) N(A_{\Delta \cup \Theta}) = A_\Sigma A_{\Delta \cup \Theta} = A_\Sigma$, and thus
\[ h_r(A_\Sigma) = A_\Sigma\]
Now $\Sigma\cap{\Gamma'} \in \L^\phi$ and so
\[ h_r(A_{\Sigma\cap{\Gamma'}}) = A_{\Sigma\cap{\Gamma'}}^z\]
We have $A_{\Sigma\cap{\Gamma'}}^z \subseteq A_\Sigma$, and so (using
Proposition~\ref{prop: ccv} again) we get, without loss of generality,
$z \in N(A_{\Sigma}) = A_\Sigma$ since $\lk(\Sigma) = \emptyset$ (as $\Sigma \in \S$).

Let us now focus on $h_p$. By \cref{crucial prop of Delta'}, there exists a standard copy of $\t X'_{\Sigma\cap{\Gamma'}}$ containing $\t p$; it is clear that it will also contain the copy of the path $\gamma(h)$. Hence
\[ h_p(A_{\Sigma\cap{\Gamma'}}) = A_{\Sigma\cap{\Gamma'}}\]
and so
\[ A_{\Sigma\cap{\Gamma'}}^z = h_r(A_{\Sigma\cap{\Gamma'}}) = c(x(h)^{-1})  h_p(A_{\Sigma\cap{\Gamma'}}) = A_{\Sigma\cap{\Gamma'}}^{x(h)^{-1}} \]
where
\[ c(x(h)) = h_p h_r^{-1} \]
and $x(h) \in C(A_{\Delta'})$.

Now
\[ z = z \, x(h) \, {x(h)}^{-1} \in N(A_{\Sigma\cap{\Gamma'}}) C(A_{\Delta'}) \leqslant A_{\Gamma'} \]
and thus 
\[z \in A_\Sigma \cap A_{\Gamma'} = A_{\Sigma\cap{\Gamma'}} \]
and so
\[ h_r(A_{\Sigma\cap{\Gamma'}}) = {A_{\Sigma\cap{\Gamma'}}}^z = A_{\Sigma\cap{\Gamma'}}\]
Finally
\[ c(x(h)) = h_p h_r^{-1}(A_{\Sigma\cap{\Gamma'}}) = A_{\Sigma\cap{\Gamma'}}\]
for all $\Sigma \in \S$, which in turn implies
\[ x(h) \in \bigcap_{\Sigma \in \S} N(A_{\Sigma\cap{\Gamma'}}) = \bigcap_{\Sigma \in \S} A_{\st(\Sigma\cap{\Gamma'})} = A_{\Delta'} \]
Therefore $x(h) \in C(A_{\Delta'}) \cap A_{\Delta'} = Z(A_{\Delta'})$.

This statement holds for
each $h$, and thus
the fault of our gluing satisfies the claim.
\end{proof}

Now we are in a position to apply Proposition~\ref{prop: gluing} and obtain a new glued up complex, which we call $\t X_\Gamma$, which realises our action $\phi$.

Recall that we have a standard copy $\t R_{\Delta}$ in $\t X''_{\Delta' \cup \Theta}$. The gluing sends $\t R_\Delta$ to some standard copy of $\t X'_\Delta$ in $\t X'_{\Gamma'}$, which lies within $\t Y_{\Delta'}$ (we are using the Composition Property here); let us denote this standard copy by $\t Z_\Delta$. It is fixed since $\t R_\Delta$ is.

By \cref{crucial prop of Delta'}, we may pick a standard copy
$\t Z_\Sigma$ of $\t X'_{\Sigma}$ in $\t X'_{\Gamma'}$ for each $\Sigma \in \S_{\Gamma'}$
such that they will all intersect in $\t Z_\Delta$. Let us choose such a family of standard copies.

\subsection*{Step 2: Constructing the remaining complexes}

Let $\Sigma \in \L^\phi$ be a proper subgraph of $\Gamma$. When $\Sigma \subseteq \Gamma'$ we set $\t X_\Sigma = \X'_\Sigma$; when $\Sigma \subseteq \Theta \cup \Delta$, we set $\t X_\Sigma = \t X''_\Sigma$. Note that this is consistent for graphs $\Sigma \subseteq \Delta$ since $\X''$ extends $\X'_{\Delta'}$ (and thus $\X'_\Delta$) strongly.

Now let $\Sigma \in \S$. We have a standard copy $\t Z_{\Sigma \cap \Gamma'}$ in $\t X_{\Gamma'} =  \t X'_{\Gamma'}$. This last complex is embedded in $\t X_\Gamma$, and so we may think of $\t Z_{\Sigma \cap \Gamma'}$ as being embedded therein as well.

By construction $\t Z_{\Sigma \cap \Gamma'}$ contains $\t Z_\Delta$; this last copy is glued to $\t R_\Delta$, which in turn lies within $\t R_{\Theta \cup \Delta}$. We define $ X_\Sigma$ to be the union (taken in $X_\Gamma$) of $Z_{\Sigma \cap \Gamma'}$ and $R_{\Delta \cup \Theta}$.

Observe that when $\Sigma = \Theta \cup \Delta$ we have given two ways of constructing $\t X_\Sigma$; it is however immediate that the outcome of both methods is identical.

The projection $X_\Sigma$ carries the desired
marking by construction. The action of $H$ is also the desired one;
taking any $h \in H$, and looking at a geometric representative (in $\Aut(A_\Gamma)$)
obtained by choosing a basepoint and a path in the subcomplex
$X_\Sigma$, we get an automorphism of $A_\Gamma$ which preserves
$A_\Sigma$. This automorphism is a representative of $\phi(h)$, and so
the restriction to $A_\Sigma$ is the desired one. But this is equal to
the geometric representative of the action of $h$ on $X_\Sigma$
obtained using the same basepoint and path.

We now embed $\t X_\Sigma$ into $\t X_\Gamma$ in such a way that the image contains both $\t Z_{\Sigma \cap \Gamma'}$ and $\t R_{\Theta \cup \Delta}$, and so that this embedding gives the inclusion $X_\Sigma \subseteq X_\Gamma$ when we take the quotients.

Note that the construction gives us an embeddings $\t \iota_{\Sigma , \Gamma} \colon \t X_\Sigma \to \t X_\Gamma$, $\t \iota_{\Sigma\cap \Gamma', \Sigma} \colon \t X_{\Sigma \cap \Gamma'} \to \t X_\Sigma $ and $\t \iota_{\Delta \cup \Theta, \Sigma} \colon \t X_{\Delta \cup \Theta} \to \t X_\Sigma $, which are as required since $\lk(\Sigma) = \lk_{\Sigma}(\Sigma \cap \Gamma') = \lk_\Sigma(\Delta \cup \Theta) =\emptyset$.

\subsection{Constructing the maps}
\label{subsec: constructing maps disconnected} We now need to specify the maps $\widetilde{\iota}$. We take $\Sigma, \Sigma' \in \L^\phi$ with $\Sigma \subseteq \Sigma'$. When the two graphs are identical, we set $\t \iota_{\Sigma, \Sigma'}$ to be the identity. Let us now suppose that $\Sigma \neq \Sigma'$.

\begin{enumerate}
 \item
\textbf{ $\Sigma' \subseteq \Gamma'$ or $\Sigma' \subseteq \Delta \cup \Theta$ }

In this case the cube complexes $X_\Sigma$ and $X_{\Sigma'}$ are
  obtained directly from another cubical system ($\X'$ or $\X''_\Delta$), and
  we take $\t \iota_{\Sigma, \Sigma'}$ to be the map coming from that system.

For the remaining cases, we will assume that the hypothesis of $(1)$
is not satisfied, which implies $\Sigma' \in \S$. 

\item \textbf{$\Sigma \subseteq \Gamma'$}

In this case we have $\st(\Sigma) = \st(\Sigma) \cap \Gamma'$, since $\Gamma'$ is a union of components.
Therefore $\lk_{\Sigma'}(\Sigma) = \lk_{\Sigma'}(\Sigma) \cap \Gamma'$ as well.
We define
\[ \t \iota_{\Sigma, \Sigma'} = \t \iota_{\Sigma' \cap \Gamma', \Sigma'} \circ \t \iota_{\Sigma, \Sigma' \cap \Gamma'} \]
where the last map was defined in (1) above, and the map $\t
\iota_{\Sigma'\cap \Gamma', \Sigma'}$ was constructed together with the complex
$\t X_{\Sigma'}$ in Step~2 of Subsection~\ref{subsec: constructing complexes disconnected}.

\item \textbf{$\Sigma \subseteq \Theta \cup \Delta$}

In this case we have $\st(\Sigma) = \st(\Sigma) \cap (\Theta \cup \Delta)$, since $\Theta$ is a union of components.
Therefore $\lk_{\Sigma'}(\Sigma) = \lk_{\Sigma'}(\Sigma) \cap \Theta$ as before.
We define
\[ \t \iota_{\Sigma, \Sigma'} = \t \iota_{\Delta \cup \Theta, \Sigma'} \circ \t \iota_{\Sigma, \Delta \cup \Theta} \]
where the last map was defined in (1) above, and the map $\t
\iota_{\Sigma'\cap \Gamma', \Sigma'}$ was constructed together with the complex
$\t X_{\Sigma'}$ in Step~2 of Subsection~\ref{subsec: constructing complexes disconnected}.

\item \textbf{ $\Sigma \in \S$}

Observe that both $\t X_\Sigma$ and $\t X_{\Sigma'}$ are defined as subcomplexes of $\t X_\Gamma$. Since $\t Z_{\Sigma} \subset \t Z_{\Sigma'}$ (by the Intersection Axiom, as they both contain $\t Z_\Delta$), we see that $\t X_\Sigma \subset \t X_{\Sigma'}$; we define this embedding to be $\t \iota_{\Sigma, \Sigma'}$. Note that the map is as required since $\lk_{\Sigma'}(\Sigma) = \emptyset$ as $\Theta \subseteq \Sigma$, and $\Theta$ is a union of components.

Note that we have (again) given two constructions when 
\[\Sigma = \Delta \cup \Theta\] but again they are easily seen to be identical.
\end{enumerate}

\subsection{Verifying the axioms}

The first two axioms depend only on two subgraphs $\Sigma, \Sigma' \in
\L^\phi$ with $\Sigma \subseteq \Sigma'$. This is the same assumption
as in~\ref{subsec: constructing maps disconnected}, and hence the verification of the two axioms will follow the same structure as the construction of the maps -- we will consider four cases, and the assumption in each will be identical to the assumptions of the corresponding case above.

\subsubsection*{Product Axiom} Suppose that $\Sigma' = \st_{\Sigma'}(\Sigma)$.
\begin{enumerate}
 \item In this case the Product
Axiom follows from the Product Axiom in $\X'$ or $\X''$.

\item As in the analogous case of Subsection~\ref{subsec: constructing maps disconnected}, we see that $\st_{\Sigma'}(\Sigma) \subseteq \Gamma'$. But here $\Sigma' = \st_{\Sigma'}(\Sigma)$ and so we are in case (1) above.

\item As in the analogous case of Subsection~\ref{subsec: constructing maps disconnected}, we see that 
\[\st_{\Sigma'}(\Sigma) \subseteq \Delta \cup \Theta\]
 But here $\Sigma' = \st_{\Sigma'}(\Sigma)$ and so we are in case (1) above.

\item In this case we have $\lk(\Sigma) = \emptyset$, and so $\Sigma = \Sigma'$. But in such a case we defined $\t \iota_{\Sigma, \Sigma'}$ to be the identity.
\end{enumerate}

\subsubsection*{Orthogonal Axiom} Let $\Lambda = \lk_{\Sigma'}(\Sigma)$.

\begin{enumerate}
 \item Follows from the axiom in $\X'$ or $\X''$.

\item As before we have $\Lambda \subseteq \Gamma'$. The construction of the maps $\t \iota_{\Sigma, \Sigma'}$ and $\t \iota_{\Lambda, \Sigma'}$ immediately tells us that it is enough to verify the axiom for the pair $\Sigma, \Gamma'$. But this is covered by (1). 

\item As before we have $\Lambda \subseteq \Delta \cup \Theta$. The construction of the maps $\t \iota_{\Sigma, \Sigma'}$ and $\t \iota_{\Lambda, \Sigma'}$ immediately tells us that it is enough to verify the axiom for the pair $\Sigma, \Delta \cup \Theta$. But this is covered by (1). 

\item In this case we have $\Lambda = \emptyset$, and hence the axiom follows trivially, since $\t \iota_{\Lambda, \Sigma'}$ is onto (by the Product Axiom, which we have already shown).
\end{enumerate}

\subsubsection*{Intersection Axiom}
Let us now verify that $\X$ satisfies the Intersection Axiom. Take
$\Sigma, \Sigma', \Omega \in \L^\phi$ such that $\Sigma \subseteq
\Omega$ and $\Sigma' \subseteq \Omega$, and let $\t Y_{\Sigma}$ and $\t Y_{\Sigma'}$ be standard copies of, respectively, ${\t
  X}_{\Sigma}$ and $\t X_{\Sigma'}$ in ${\t X}_\Omega$ with non-empty intersection. We need
to show that the intersection is the image of a standard copy of
$\Sigma \cap\Sigma'$ in each.

As in the first two cases, the details depend on the inclusions $\Sigma, \Sigma' \subseteq \Omega$. The cases will thus be labeled by pairs of integers $(n,m)$, the first determining in which case the map $\t \iota_{\Sigma, \Omega}$ was constructed, and the second playing the same role for $\t \iota_{\Sigma', \Omega}$. By symmetry we only need to consider $n \leqslant m$.

\begin{itemize}
 \item[(1,1)] In this case the axiom follows from the Intersection Axiom in $\X'$ or $\X''$. 

In what follows we can assume that $\Omega \in \S$, and hence $\t X_\Omega$ is obtained by gluing $\t Z_{\Omega \cap \Gamma'}$ and $\t R_{\Delta \cup \Theta}$ along $\t Z_\Delta$.

\item[(2,2)] Here the problem is reduced to checking the axiom for the triple $\Sigma, \Sigma', \Omega \cap \Gamma'$, for which it holds by (1,1).

\item[(2,3)] By construction, the given standard
  copies $\t Y_{\Sigma}$ and $\t Y_{\Sigma'}$ must lie within the standard
  copies $\t Z_{\Omega \cap \Gamma'}$ and $\t R_{\Delta \cup \Theta}$ respectively, and
  hence intersect within $\t Z_{\Delta} = \t R_\Delta$.

We use the Intersection Axiom of $\X'$ for
$\t Z_{\Delta}$ and $\t Y_{\Sigma}$ inside $\t Z_{\Omega \cap \Gamma'}$ and see that the two copies intersect in a copy of $\t X_{\Delta \cap \Sigma}$, which is also the image of a standard copy of $\t X_{\Delta \cap  \Sigma}$ in $\t Z_{\Delta \cap \Omega}$.

We repeat the argument for $\Sigma'$ and obtain a standard copy
of $\t X_{\Delta \cap  \Sigma'}$ in $\t Z_{\Delta}$. Now
this copy intersects the one of $\t X_{\Delta \cap  \Sigma}$, and
hence, applying the Intersection Axiom again, they intersect in a
copy of $\t X_{\Delta \cap \Sigma \cap \Sigma'}$ in $\t Z_{\Delta}$. But $\Delta \cap \Sigma \cap \Sigma' = \Sigma \cap
\Sigma'$, and so we have found the desired standard copy in $\t
Z_{\Delta}$. Now the Composition Property (Lemma~\ref{lem: composition prop}) implies that
this is also a standard copy in $\t X_\Sigma$, $\t X_{\Sigma'}$ and
$\t X_{\Omega \cap \Gamma'}$, and thus this is also a standard copy in $\t
X_\Omega$ by construction.

\item[(2,4)] The non-trivial intersection of any standard copy of $\t X_\Sigma$ and any standard copy of $\t X_{\Sigma'}$ in $\t X_\Omega$ is in fact contained in the standard copy of $\t X_{\Omega \cap \Gamma'}$, since any copy of $\t X_{\Sigma}$ is contained therein. Therefore the intersection is also contained in a standard copy of $\t X_{\Sigma' \cap \Gamma'}$ by construction of $\t \iota_{\Sigma', \Omega}$.
We apply the Intersection Axiom in $\t X_{\Omega \cap \Gamma'}$, and observe that the standard copy of $\t X_{\Sigma \cap \Sigma'}$ in $\t X_{\Omega \cap \Gamma'}$ obtained this way is also a standard copy in $\t X_\Omega$ by construction.

\item[(3,3)] Here the problem is reduced to checking the axiom for the triple $\Sigma, \Sigma', \Delta \cup \Theta$, for which it holds by (1,1).

\item[(3,4)] The non-trivial intersection of any standard copy of $\t X_{\Sigma'}$ and any standard copy of $\t X_{\Sigma}$ in $\t X_\Omega$ is in fact contained in the standard copy of $\t X_{\Delta \cup \Theta}$, since any copy of $\t X_{\Sigma'}$ is contained therein. Therefore the intersection is also contained in a standard copy of $\t X_{\Delta \cup \Theta}$ by construction of $\t \iota_{\Sigma', \Omega}$.
We apply the Intersection Axiom in $\t X_{\Delta \cup \Theta}$, and observe that the standard copy of $\t X_{\Sigma \cap \Sigma'}$ in $\t X_{\Delta \cup \Theta}$ obtained this way is also a standard copy in $\t X_\Omega$ by construction.

\item[(4,4)] In this case $\t Y_\Sigma$ and
  $\t Y_{\Sigma'}$ are obtained from $\t Z_{\Sigma\cap \Gamma'}$, $\t R_{\Delta \cup \Theta}$, and $\t Z_{\Sigma'\cap \Gamma'}$, $\t R_{\Delta \cup \Theta}$ respectively.
Hence $\t Z_{\Sigma\cap \Gamma'} \cap \t Z_{\Sigma'\cap \Gamma'}$ contains $\t Z_\Delta$, and so $\t Z_{\Sigma\cap \Gamma'}$ and $\t Z_{\Sigma'\cap \Gamma'}$ intersect non-trivially.
Applying the Intersection Axiom in $\t X_{\Omega \cap \Gamma'}$ tells us that in fact $\t Z_{\Sigma\cap \Gamma'} \cap \t Z_{\Sigma'\cap \Gamma'} = \t Z_{\Sigma \cap \Sigma' \cap \Gamma'}$, which contains $\t Z_\Delta$. Thus $\t Y_\Sigma \cap \t Y_{\Sigma'}$ is equal to the unique standard copy of $\t X_{\Sigma \cap \Sigma'}$ in $\t X_\Gamma$, which lies within the unique standard copy of $\t X_\Omega$ by construction. It is also a standard copy in  $\t X_\Omega$, again by construction.
\end{itemize}

\subsubsection*{System Intersection Axiom}
Take a subsystem $\P \subseteq \L^\phi$ closed under taking unions. If
all elements of $\P$ lie in $\Gamma'$ or in $\Delta \cup \Theta$, then
we are done (from the System Intersection Axiom of $\X'$ or
$\X''$). So let us suppose this is not the case, that is, suppose that
there exists $\Sigma \in \P \cap \S$. Hence $\bigcup \P \in \S$.

Define
\[ \P' =  \{ \Sigma \in \P \mid \Theta \subseteq \Sigma \} \]
Note that each $\Sigma \in \P'$ has $\lk(\Sigma) = \emptyset$, and thus there exists a unique standard copy of $\t X_\Sigma$ in $\t X_{\bigcup \P}$ for each $\Sigma \in \P'$, and all these copies intersect non-trivially (they all contain $\t R_{\Delta \cup \Theta}$).

Note that $\P'$ is closed under taking unions, and that $\Theta \subseteq \bigcap \P'$. Thus we may apply Lemma~\ref{lem: extending intersections}. The construction is now finished by the application of the Intersection Axiom (which we have already proved).

\section{Proof of the main theorem: part II}
\label{sec:proof-main part II}

In this section we deal with the case that $\Gamma'$ properly contains the union of all but one connected components of $\Gamma$.
For notational convenience let $\Gamma_0$ denote the connected component not contained in $\Gamma'$.

\begin{claim}
\label{trivial link}
$\lk(\Gamma') = \emptyset$
\end{claim}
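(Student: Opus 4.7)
The plan is to argue by contradiction: suppose there exists $v \in \lk(\Gamma')$ and derive a contradiction from the maximality of $\Gamma'$ in $\L^\phi$ together with the standing assumption of this section of the proof that $\Gamma$ is not a join.

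The first observations are that $v \notin \Gamma'$ (since a vertex is never in its own link) and, by the symmetry recorded in \cref{lem: link and star calculus}(1), that
\[
\Gamma' \;\subseteq\; \lk(v) \;\subseteq\; \lk(v) \ast \lk(\lk(v)) \;=\; \widehat \st(v).
\]
Consequently $\Gamma' \cap \widehat \st(v) = \Gamma'$, so the hypothesis of \cref{lem: intersections in L}(ii) (applied with $\Delta = \Gamma'$ and $\Sigma = \widehat \st(v)$) reduces to the tautology $\lk(\Gamma') \subseteq \st(\Gamma')$. Since $\phi$ is link-preserving, \cref{lem: extended stars in L} yields $\widehat \st(v) \in \L^\phi$, and then \cref{lem: intersections in L}(ii) produces $\Gamma' \cup \widehat \st(v) \in \L^\phi$. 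Because $v \in \lk(\lk(v)) \subseteq \widehat \st(v)$ while $v \notin \Gamma'$, this union strictly contains $\Gamma'$; maximality of $\Gamma'$ among proper subgraphs in $\L^\phi$ therefore forces $\Gamma' \cup \widehat \st(v) = \Gamma$. Combined with $\Gamma' \subseteq \widehat \st(v)$ this identifies
\[
\Gamma \;=\; \widehat \st(v) \;=\; \lk(v) \ast \lk(\lk(v)),
\]
and both factors are non-empty: the first contains $\Gamma'$, which is non-empty throughout Part~II (either because $\Gamma'$ properly contains a non-empty union of components of $\Gamma$, or, when $\Gamma$ is connected, because $\Gamma' \neq \emptyset$ is automatic), while the second contains $v$ itself. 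This exhibits $\Gamma$ as a non-trivial join, contradicting the standing assumption in the proof of \cref{thm: main}.

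There is no genuine obstacle in this plan. The only points that require any care are (a) making the correct choice $\Delta = \Gamma'$, $\Sigma = \widehat \st(v)$ in \cref{lem: intersections in L}(ii) so that its hypothesis becomes automatic, and (b) checking that $\Gamma' \neq \emptyset$ in Part~II so that $\lk(v)$ really is non-empty in the final join decomposition; both are immediate.
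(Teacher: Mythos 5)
Your proof is correct, but it takes a more roundabout path than the paper's. The paper simply observes that $\st(\Gamma') \in \L^\phi$ (this is immediate from $\Gamma' \in \L^\phi$ by part~(4) of \cref{lem: extended stars in L}), so that if $\lk(\Gamma') \neq \emptyset$ then $\st(\Gamma')$ is an invariant subgraph strictly between $\Gamma'$ and $\Gamma$; maximality forces $\st(\Gamma') = \Gamma$, and $\st(\Gamma') = \Gamma' \ast \lk(\Gamma')$ is a non-trivial join, contradicting the standing hypothesis. You instead pick a vertex $v \in \lk(\Gamma')$, invoke $\widehat\st(v) \in \L^\phi$ from part~(2) of \cref{lem: extended stars in L}, and then combine with \cref{lem: intersections in L}(ii) to place $\Gamma' \cup \widehat\st(v)$ in $\L^\phi$ before applying maximality and concluding $\Gamma = \widehat\st(v)$ is a join. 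Both arguments follow the same overarching template (exhibit an invariant proper join containing $\Gamma'$, then use maximality), but yours requires one extra tool (\cref{lem: intersections in L}(ii)) and a choice of vertex, whereas the paper's one-line observation avoids both; the upshot is the same contradiction, so nothing is gained from the longer route other than the incidental, unused fact that $\Gamma$ would equal $\widehat\st(v)$ for any $v \in \lk(\Gamma')$.
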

\begin{proof}
Note that $\st(\Gamma') \in \L^\phi$ since $\Gamma' \in \L^\phi$. If $\lk(\Gamma') \neq \emptyset$ we have $\Gamma' \subset \st(\Gamma')$ and so $\st(\Gamma') = \Gamma$. But $\Gamma$ is not a join, and $\st(\Gamma')$ is. This is a contradiction.
\end{proof}

We let $\Theta$ be the complement
\[\Theta = \Gamma \s- \Gamma' \subseteq \Gamma_0\]
%
Note that at this point we do not claim that $\Theta$ is contained in
$\L^\phi$.

Lemma~\ref{lem: boundary} (applied to $\Gamma_0$ and $\Gamma' \cap \Gamma_0$) implies that for all $w \in \partial
\Gamma'$ we have $\Theta \subseteq \lk(w)$. Thus we have
\[\partial \Gamma' = \lk(\Theta)\]

%
%

\subsection{Constructing the complexes}
\label{subsec: constructing complexes}
We are now ready to define the cube complexes forming $\X$.

To describe the cases, we need the following additional graphs. We let
$$\overline \Theta = \Theta \ast \partial \Gamma'$$
and we consider the subsystem of all invariant graphs which contain
$\overline\Theta$:
\[ \S = \{ \Sigma \in \L^\phi \mid \overline \Theta \subseteq \Sigma
\} \]
Since $\lk(\overline \Theta) = \emptyset$, the link of every element
in $\S$ is empty as well. Thus, $\S$ is closed under taking unions by Lemma~\ref{lem: intersections in L},  Hence
\[\S_{\Gamma'} = \{ \Sigma \cap \Gamma' \mid \Sigma \in \S \}\] is also
closed under taking unions.

We abbreviate
$$\Delta = \bigcap \S_{\Gamma'}$$
For any $\Sigma \in
\L^\phi$ we let
\[\Sigma_1 = \Sigma \cap \Gamma' \quad\mbox{and}\quad \Sigma_2 = \Sigma \cap
(\Delta \cup \Theta)\]
In other words, these are the part of $\Sigma$ which lie inside
$\Gamma'$ and inside $\Theta \cup \Delta$.


%

\subsection*{Step 1: Constructing $X_\Gamma$}

In this step we will actually do a little more. First we will
construct a suitable cubical system $\X''$ for $\L^\phi_{\st(\partial
  \Gamma ')}$ extending $\X'_{\Delta}$ strongly.
Then we will find a fixed standard copy $\t R_\Delta$ of $\t X''_\Delta$ in $\t X''_{\st(\partial \Gamma')}$. We will also construct a family of standard copies
$\t Z_\Sigma$ of $\t X'_{\Sigma}$ in $\t X'_{\Gamma'}$ for each $\Sigma \in \S_{\Gamma'}$, such that they all intersect in the copy $\t Z_\Delta$. We will show that this last copy is fixed, and obtain $\t X_\Gamma$ from $\t X'_{\Gamma'}$ and $\t X''_{\Delta \cup \Theta}$ (which will be shown to contain $\t R_\Delta$) by gluing $\t R_\Delta$ and $\t Z_\Delta$.

Note that the construction of a gluing then also gives us maps
\[\t \iota_{\Gamma', \Gamma} \colon \t X'_{\Gamma'} \to \t X_\Gamma \]
and
\[\t \iota_{\Delta \cup \Theta, \Gamma} \colon \t X''_{\Delta \cup \Theta} \to \t X_\Gamma \]
which will be as desired since $\lk(\Gamma') = \emptyset = \lk(\Delta \cup \Theta)$.

We need to consider two cases depending on whether $\lk(\partial \Gamma')$
intersects $\Gamma'$ or not.

\subsubsection*{Case 1: $\lk(\partial\Gamma')\cap\Gamma' = \emptyset$}

\begin{figure}
\label{fig: case 1}
\includegraphics{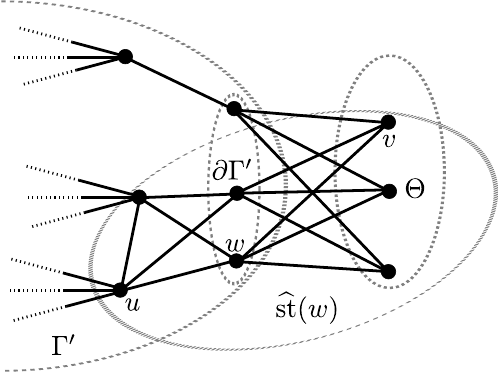}
\caption{The relevant subgraphs of $\Gamma$ in case 1}
\end{figure}

Recall that $\partial \Gamma' = \lk(\Theta)$. By the case assumption we
  therefore have
  \[\Theta = \lk({\partial \Gamma'}) \in \L^\phi \]

Since $\Theta$ has lower depth than $\Gamma$, our inductive assumption gives us a cubical system for $\L^\phi_\Theta$. 
By Proposition~\ref{prop: product system} there is a
system $\X''$ for $\L^\phi_{\overline \Theta}$, strongly extending
both this cubical system and the subsystem $\X'_{\partial
  \Gamma'}$ of $\X'$.

  Take any $v \in \Theta$. We have $\partial \Gamma' =
  \widehat \st(v)_1$ and thus $\partial \Gamma' \in \L^\phi$. Hence in particular $\Delta = \partial \Gamma'$. We also have $\st(\partial \Gamma') = \partial \Gamma' \ast \Theta \in \L^\phi$.

\begin{claim}
\label{claim: 9}
The induced action $H \to \Out(A_\Theta)$ lifts to an action 
\[H \to \Aut(A_\Theta)\]
\end{claim}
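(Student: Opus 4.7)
The key structural facts in Case~1 are that $\overline\Theta = \Theta \ast \partial\Gamma' \in \L^\phi$ with $A_{\overline\Theta} = A_\Theta \times A_{\partial\Gamma'}$, that both $\Theta$ and $\partial\Gamma'$ lie in $\L^\phi$, that $\lk(\overline\Theta) = \emptyset$, and that $\lk_{\Gamma'}(\partial\Gamma') = \emptyset$ by the Case~1 hypothesis.

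The plan is first to exhibit, for each $h \in H$, a representative $h_1 \in \Aut(A_\Gamma)$ of $\phi(h)$ which preserves both $A_\Theta$ and $A_{\partial\Gamma'}$ as subgroups. I would start with $h_1(A_{\overline\Theta}) = A_{\overline\Theta}$, available since $\overline\Theta \in \L^\phi$, and then apply \cref{prop: ccv} together with the observation that both $A_\Theta$ and $A_{\partial\Gamma'}$, being direct factors of $A_{\overline\Theta}$, are the unique conjugates of themselves inside $A_{\overline\Theta}$. The restriction $h_1|_{A_\Theta}$ then lies in $\Aut(A_\Theta)$. Two such choices differ by conjugation by an element of $N(A_{\overline\Theta}) = A_{\overline\Theta}$, and since the $A_{\partial\Gamma'}$-factor centralises $A_\Theta$, the restriction is well-defined modulo $\Inn(A_\Theta)$; this is the given $H \to \Out(A_\Theta)$.

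To upgrade to an actual homomorphism $H \to \Aut(A_\Theta)$, I would use the cubical system $\X'$ to pin down the ambiguity. Because $\lk_{\Gamma'}(\partial\Gamma') = \emptyset$, the Orthogonal Axiom forces the standard copy of $\t X'_{\partial\Gamma'}$ in $\t X'_{\Gamma'}$ to be unique, hence $H$-invariant; passing to the quotient produces a distinguished $H$-invariant subcomplex of $X'_{\Gamma'}$ realising the action on $A_{\partial\Gamma'}$. Since this subcomplex is NPC and its universal cover a complete CAT(0) space on which the finite group $H$ acts (the lift being canonical by the fixed standard copy), \cref{lem: fixed point for liftable actions} (applied in the converse direction using a fixed point in the universal cover) yields a lift $\psi'\colon H \to \Aut(A_{\partial\Gamma'})$. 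I would then restrict to representatives $h_1$ whose action on $A_{\partial\Gamma'}$ equals $\psi'(h)$; the remaining ambiguity is by conjugation by elements of $C_{A_{\overline\Theta}}(A_{\partial\Gamma'}) = Z(A_{\partial\Gamma'}) \times A_\Theta$, whose $A_\Theta$-factor is inner on $A_\Theta$ but the remaining slack is absorbed by fixing a second basepoint on the $\Theta$-side. The resulting assignment $h \mapsto h_1|_{A_\Theta}$ is then forced to be a group homomorphism by construction.

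The hardest part will be the third paragraph: constructing the lift $\psi'$ and checking that the resulting compatibilisation of representatives genuinely yields a homomorphism rather than just a set-theoretic map. The delicate point is that the converse of \cref{lem: fixed point for liftable actions} is not stated in the paper, so one must either prove it inline (using a fixed point $\t r \in \t X'_{\partial\Gamma'}$ for the lifted $H$-action and the resulting identification $\pi_1(X'_{\partial\Gamma'}, r) = A_{\partial\Gamma'}$), or find a more direct argument that packages both the $\Theta$- and $\partial\Gamma'$-parts of the lift simultaneously using the product structure of $X''_{\overline\Theta}$ that will be constructed next.
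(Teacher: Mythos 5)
Your proposal takes a genuinely different route from the paper, and unfortunately the core step of the third paragraph has a real gap.

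The paper's argument is entirely algebraic. After producing, for each $h$, a representative $h_2 \in \Aut(A_\Gamma)$ preserving $A_{\partial\Gamma'}$, $A_\Theta$, \emph{and} $A_{\Gamma'}$ simultaneously, it observes that for $h,g \in H$ the discrepancy $h_2 g_2 (hg)_2^{-1}$ is a conjugation preserving both $A_{\Gamma'}$ and $A_\Theta$. Since $N(A_{\Gamma'}) = A_{\Gamma'}$ (because $\lk(\Gamma') = \emptyset$, Claim~\ref{trivial link}) and $N(A_\Theta) = A_\Theta \times A_{\partial\Gamma'}$, the conjugating element lies in $A_{\Gamma'} \cap (A_\Theta \times A_{\partial\Gamma'}) = A_{\partial\Gamma'}$, which centralises $A_\Theta$ — so $h \mapsto h_2|_{A_\Theta}$ is a homomorphism. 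Your proposal never uses the preservation of $A_{\Gamma'}$, and that is exactly the ingredient that makes the argument close.

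Instead you attempt to construct a lift $\psi' \colon H \to \Aut(A_{\partial\Gamma'})$ geometrically from the uniqueness of the standard copy of $\t X'_{\partial\Gamma'}$ in $\t X'_{\Gamma'}$. The gap is in the parenthetical ``the lift being canonical by the fixed standard copy''. That uniqueness shows the projection $P$ is an $H$-invariant subcomplex of $X'_{\Gamma'}$, but it does not give an action of $H$ on $\t P$. What acts on $\t P$ is the stabiliser of $\t P$ inside the extension $\overline{A_{\Gamma'}}$, which is an extension $A_{\partial\Gamma'} \to \overline{A_{\partial\Gamma'}} \to H$, not the finite group $H$ itself. Extracting a fixed point — and hence the lift $\psi'$ via the converse of \cref{lem: fixed point for liftable actions} — would require this extension to contain $H$ as a complement, i.e.\ it would require the very lifting you are trying to build (or at least a lift of the $\partial\Gamma'$-action). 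Nothing in the hypotheses guarantees that $H \to \Out(A_{\partial\Gamma'})$ lifts — note the paper's argument only produces a lift on the $\Theta$-side, because conjugation by $A_{\partial\Gamma'}$ is trivial there but not on $A_{\partial\Gamma'}$ itself. So your reduction is to a statement that is both harder and unestablished. To repair the argument, drop the attempt to rigidify the $\partial\Gamma'$-side geometrically, and instead add the algebraic normalisation $h_2(A_{\Gamma'}) = A_{\Gamma'}$; the rest then follows as in the paper by the intersection-of-normalisers computation.
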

\begin{proof}
Since $\partial \Gamma' \in \L^\phi$, for each $h \in H$ there exists a representative 
\[h_1 \in \Aut(A_\Gamma)\]
 of $\phi(h)$ such that $h_1(A_{\partial \Gamma'}) = A_{\partial \Gamma'}$. Now
\[ h_1(A_{{\overline \Theta}}) = {A_{{\overline \Theta}}}^x \]
since ${\overline \Theta} \in \L^\phi$, with $x \in A_\Gamma$. Proposition~\ref{prop: ccv} tells us that
\[ x \in N(A_{{\overline \Theta}}) N(A_{\partial \Gamma'}) = A_{{\overline \Theta}} \]
and so
\[ h_1(A_{{\overline \Theta}}) = A_{{\overline \Theta}} \]
for each $h$. Since $\Theta \in \L^\phi$ we also have
\[ h_1(A_{\Theta}) = {A_{\Theta}}^y \]
with $y \in N(A_\Theta) N(A_{{\overline \Theta}}) = A_{{\overline \Theta}}$, again by Proposition~\ref{prop: ccv}. Hence
\[ h_1(A_{\Theta}) = A_{\Theta} \]
Finally, we have 
\[ h_1(A_{\Gamma'}) = {A_{\Gamma'}}^z \]
with $z \in N(A_{\Gamma'}) N(A_{\Theta}) = A_{\partial \Gamma'} A_\Theta$. Without loss of generality we take $z \in A_\Theta$, and so we construct new representatives $h_2 \in \Aut(A_\Gamma)$ for $\phi(h)$ (by multiplying $h_1$ with the conjugation by $z^{-1}$ in the appropriate way) which satisfy
\[ h_2(A_{\partial \Gamma'}) = A_{\partial \Gamma'} , \ h_2(A_{\Theta}) = A_{\Theta}, \textrm{ and }  h_2(A_{\Gamma'}) = A_{\Gamma'} \]
Now, given $h,g \in H$, we know that $h_2 g_2 {(hg)_2}^{-1}$ is equal to a conjugation which preserves $A_{\Gamma'}$ and $A_{\Theta}$. The former fact implies that the conjugating element lies in $A_{\Gamma'}$, and the latter that it lies in $A_\Theta \times A_{\partial \Gamma'}$. Hence the conjugating element lies in $A_{\partial \Gamma'}$. But such a conjugation is trivial when we restrict the action to $\Out(A_\Theta)$, and hence our chosen representatives lift $H \to \Out(A_\Theta)$ to $H \to \Aut(A_\Theta)$ as required.
\end{proof}

Claim~\ref{claim: 9} allows us to use Lemma~\ref{lem: fixed point for liftable actions}, and conclude that the cube complex $X''_\Theta$ contains an $H$-fixed point $r$, with a chosen lift $\t r$ in the universal cover.


Let $\widetilde R$ denote the standard copy of $\widetilde
X''_{{\partial \Gamma'} }$ in $\t X''_{\overline \Theta}$
determined by $\widetilde r$. Its projection $R$ in $X_{\overline
  \Theta}$ is preserved by $H$, since $r$ is.

On the other hand, let $\widetilde Z$ be the standard copy of
$\widetilde X'_{\partial \Gamma'}$ in $\widetilde X'_{\Gamma'}$; this is unique since $\lk_{\Gamma'}(\partial \Gamma') = \emptyset$ by the assumption of Case~1.
The standard copies $\widetilde R$ and $\widetilde Z$ are naturally
isometric since both are standard copies of the same cube complex.
We now form $\widetilde X_\Gamma$ by gluing $\widetilde X'_{\Gamma'}$ to
$\widetilde X''_{\overline \Theta} $ via the natural isomorphism
$\widetilde Z = \widetilde R$.
\begin{claim}
  The $H$-action on $X_\Gamma$ inherited from the gluing is the correct one.
\end{claim}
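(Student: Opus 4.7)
My plan is to apply the Gluing Lemma (\cref{prop: gluing}), after showing that the fault of our gluing lies in $Z(A_{\partial \Gamma'})$. The key structural input is the $H$-fixed point $\widetilde r \in \widetilde X''_\Theta$ obtained from \cref{lem: fixed point for liftable actions} via Claim~\ref{claim: 9}: through the product decomposition $\widetilde X''_{\overline \Theta} = \widetilde X''_{\partial \Gamma'} \times \widetilde X''_\Theta$ supplied by \cref{prop: product system}, the slice $\widetilde R$ is $H$-invariant with constant $\widetilde X''_\Theta$-coordinate, and the lifted $H$-action fixes $\widetilde r$. Since $\X''$ extends $\X'_{\partial \Gamma'}$ strongly, the natural identification $\widetilde Z = \widetilde R$ is $H$-equivariant, so the gluing yields a well-defined $H$-action on $X_\Gamma$.

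To compute the fault, I pick a basepoint $\widetilde p \in \widetilde Z = \widetilde R$ and, for each $h \in H$, matching paths $\gamma(h), \gamma'(h)$ in $Z, R$. The resulting geometric representatives $h_p, h_q \in \Aut(A_\Gamma)$ satisfy: (i) $h_p|_{A_{\partial \Gamma'}} = h_q|_{A_{\partial \Gamma'}}$ by the matching condition combined with strong extension, and (ii) $h_q|_{A_\Theta}$ equals the distinguished lift $\alpha_\Theta \in \Aut(A_\Theta)$ of $\phi(h)|_{A_\Theta}$ provided by the fixed point $\widetilde r$, since $\gamma'(h)$ is constant in the $\widetilde X''_\Theta$-factor. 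Hence $c(x(h)) = h_p h_q^{-1}$ fixes $A_{\partial \Gamma'}$ pointwise, so $x(h) \in C(A_{\partial \Gamma'}) = A_{Z(\partial \Gamma') \ast \Theta}$. Since $\overline \Theta = \st(\Theta) \in \L^\phi$ by \cref{lem: extended stars in L}, \cref{prop: ccv} allows us to choose $h_p$ to preserve $A_{\Gamma'}$ and $A_{\overline \Theta}$ simultaneously; then $h_p|_{A_\Theta}$ is a lift of $\phi(h)|_{A_\Theta}$, and adjusting $h_p$ within its remaining freedom $C(A_{\Gamma'}) = A_{Z(\Gamma')}$ forces $h_p|_{A_\Theta}$ to agree with $\alpha_\Theta$ modulo $Z(A_\Gamma)$, so $x(h) \in Z(A_{\partial \Gamma'}) = A_{Z(\partial \Gamma')}$.

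With the fault in $Z(A_{\partial \Gamma'})$, \cref{prop: gluing} produces a gluing realising $\phi$. The final step is to verify that this corrected gluing coincides with the natural one we wrote down: inspecting the proof of \cref{prop: gluing}, the correcting element $x'(H)$ is determined by loops representing central vertices of $\partial \Gamma'$, and because our basepoint $\widetilde p$ already lies in the fixed slice $\widetilde R$ determined by $\widetilde r$, the relevant loops are trivial, forcing $x'(H) = 1$. The hard part is the simultaneous matching in the previous paragraph: controlling the bookkeeping of inner automorphisms so that a single choice of $h_p$ matches $h_q$ on both $A_{\partial \Gamma'}$ and $A_\Theta$ while still representing $\phi(h)$. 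This step relies essentially on the Case~1 hypothesis $\lk(\partial \Gamma') = \Theta$ to pin down how the available adjustments in $A_{Z(\Gamma')}$ interact with $A_\Theta$, and on the strong extension $\X'_{\partial \Gamma'} \cong \X''_{\partial \Gamma'}$ to guarantee that the matching on $A_{\partial \Gamma'}$ is compatible from both sides of the gluing.
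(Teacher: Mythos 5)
Your approach differs fundamentally from the paper's: you route through the Gluing Lemma (\cref{prop: gluing}), whereas the paper directly verifies that the geometric representative of $h$ on the glued-up complex is a representative of $\phi(h)$, bypassing that machinery entirely. The key observation that makes the direct argument work, and which your proposal misses, is that in Case~1 we have $Z(\Gamma') = \emptyset$ (since $\lk(\partial\Gamma')\cap\Gamma' = \emptyset$ forces $Z(\Gamma')\subseteq\partial\Gamma'$, so any central vertex of $\Gamma'$ would be adjacent to all of $\Theta$ and all of $\Gamma'$, making $\Gamma$ a cone); combined with $\lk(\Gamma') = \emptyset$, this gives $C(A_{\Gamma'}) = \{1\}$, so the geometric representative $h_{\Gamma'} \in \Aut(A_{\Gamma'})$ extends \emph{uniquely} to $\Aut(A_\Gamma)$. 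Once that extension is pinned down, one checks it agrees with the geometric representative on $A_{\overline\Theta}$ up to conjugation by $Z(A_{\overline\Theta})$, which is invisible there, and the action is correct.

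Your Gluing-Lemma route has two genuine gaps. First, the fault bound: from $h_p|_{A_{\overline\Theta}}$ and $h_q|_{A_{\overline\Theta}}$ agreeing (which is what your paragraph establishes), you get $x(h) \in C(A_{\overline\Theta}) = A_{Z(\partial\Gamma')}\times A_{Z(\Theta)}$, not $x(h)\in A_{Z(\partial\Gamma')}$ as you assert; the $A_{Z(\Theta)}$-factor is not killed by intersecting with $C(A_{\partial\Gamma'}) = A_{Z(\partial\Gamma')}\times A_\Theta$, and there is no reason $Z(\Theta)$ should be empty. So the hypothesis of \cref{prop: gluing} (fault within $Z(A_E)$ for $E=\partial\Gamma'$) is not established. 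Second, even granting the Gluing Lemma applies, the claim that the corrected gluing coincides with the natural one because ``the relevant loops are trivial'' since $\widetilde p$ lies in the slice determined by $\widetilde r$ is not an argument: $x'(H)$ in the proof of \cref{prop: gluing} is built from the fault $x(h)$, not from where the basepoint sits, and nothing you say shows $x(h)=1$. Relatedly, you write $C(A_{\Gamma'}) = A_{Z(\Gamma')}$, omitting the $A_{\lk(\Gamma')}$ factor, and then propose to ``adjust $h_p$ within this freedom''; but this group is in fact trivial, so the proposed adjustment is vacuous and cannot force the agreement $h_p|_{A_\Theta}\equiv\alpha_\Theta$ that your next sentence relies on.
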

\begin{proof}
Take $h \in H$. Let us pick a basepoint $\widetilde p \in \widetilde Z = \widetilde R$, and let $p$ denote its projection in $Z = R$.
Note that choosing $\widetilde p$ fixes an isomorphism between
fundamental groups of $Z, X'_{\Gamma'}$, and
$X''_{{\overline \Theta}}$ (based at $p$), and the groups $A_{\partial
  \Gamma'}, A_{\Gamma'}$, and $A_{{\overline
    \Theta}}$ respectively.

Choose a path $\gamma(h)$ in $Z$ connecting $p$ to $h.p$. Let
$h_{\partial \Gamma'} \in \Aut(A_{\partial \Gamma'})$ be the geometric representative of the
restriction of $\phi(h)$.
Note that we can construct any representative of the restriction of $\phi(h)$ this way, so let us choose the path so that $h_{\partial \Gamma'}$ is equal to the restriction of $h_2 \in \Aut(A_\Gamma)$ from the previous claim.

Using the same basepoint and path
we obtain geometric representatives $h_{\overline \Theta } \in \Aut(A_{\overline \Theta})$ and
$h_{\Gamma'} \in \Aut(A_{\Gamma'})$. Each of these representatives can be
(algebraically) extended to a representative of $\phi(h)$ in
$\Aut(A_\Gamma)$, but this is in general not unique; two such
extensions will differ by conjugation by an element of the
centraliser of $A_{\overline \Theta }$ and $A_{\Gamma'}$ respectively.

Since $\lk(\partial \Gamma') \cap \Gamma' = \emptyset$ (which is the
assumption of Case~1 we are in), $Z(\Gamma') \subseteq \partial
\Gamma'$. Therefore each vertex in $\Theta$ is connected to each vertex in $Z(\Gamma')$ by a single edge. Hence, if $Z(\Gamma')$ is non-empty, then $\Gamma$ is a cone
(and hence a join) over any vertex of it. As this would be a
contradiction, we see that $\Gamma'$ has trivial centre. It also has a
trivial link, as we have shown in \cref{trivial link}.
Therefore there is a unique way of extending $h_{\Gamma'}$ to an
automorphism of $A_\Gamma$; we will continue to denote this extension
by $h_{\Gamma'}$.

Note that $h_{\Gamma'}$ and $h_2$ both preserve $A_{\Gamma'}$, and by construction they agree when restricted to $\Aut(A_{\partial \Gamma'})$. These two facts imply that $h_{\Gamma'} {h_2}^{-1} \in \Aut(A_\Gamma)$ is equal to the conjugation by an element of $A_{\Gamma'} \cap C(A_{\partial \Gamma'}) = Z(A_{\partial \Gamma'})$.

\smallskip
Now let us look more closely at $h_{\overline \Theta}$. When
restricted to $A_{\partial \Gamma'}$, it is equal to the restriction of $h_2$. The same is true when restricted to $A_\Theta$, since in this case this is exactly the geometric representative constructed using $\t r$ and no path at all. Hence, picking any representative $h_3 \in \Aut(A_\Gamma)$ of $h_{\overline \Theta}$, we see that $h_3 {h_2}^{-1}$ is equal to the conjugation by an element of $Z(A_{\overline \Theta}) = A_{Z(\partial \Gamma')} \times A_{Z(\Theta)}$. Hence the identical statement holds for $h_{\Gamma'} {h_3}^{-1}$. 

\smallskip
Let us now go back to the action of $h$ on the glued-up complex
$X_\Gamma$. Again we use $\t p$ and $\gamma(h)$ to
obtain a geometric representative in $\Aut(A_\Gamma)$. On the
subgroup $A_{\Gamma'}$ this automorphism is equal to $h_{\Gamma'}$. On the subgroup $A_{\overline \Theta}$, it
is equal to $h_3$, but also to $h_{\Gamma'}$, since
conjugation by elements in $Z(A_{\overline \Theta})$ is trivial here.
Thus $h$ acts as the outer automorphism $\phi(h)$, as required.
\end{proof}

The System Intersection Axiom gives a standard copy $\t Z_\Sigma$ of $\t X'_{\Sigma}$ in $\t X'_{\Gamma'}$ for each $\Sigma \in \S_{\Gamma'}$. Note that this includes $\Delta \in \S_{\Gamma'}$; we have $\t Z = \t Z_\Delta$ since $\lk_{\Gamma'}(\Delta) \subseteq \lk_{\Gamma'}(\partial \Gamma') = \emptyset$ (by the assumption of Case~1 we are in), and so there is only one standard copy of $\t X'_\Delta$ in $\t X'_{\Gamma'}$.

We have thus completed the construction in Case~1.

\subsubsection*{Case 2: $\lk(\partial\Gamma')\cap\Gamma' \neq \emptyset$}

\begin{figure}
\label{fig: case 2}
\includegraphics{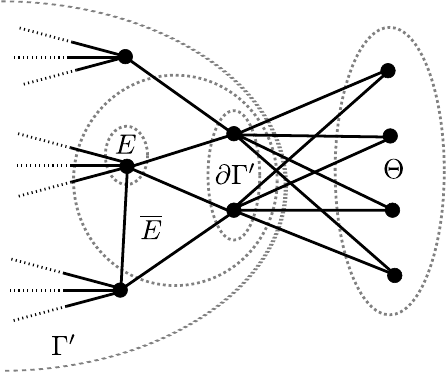}
\caption{The relevant subgraphs of $\Gamma$ in case 2}
\end{figure}

Let
$$E = \lk(\partial \Gamma')_1$$
be the part of the link
of $\partial \Gamma$ lying in $\Gamma'$. We define
$$\overline E = E \ast \partial \Gamma'$$
analogous to the definition of $\overline \Theta = \Theta
\ast \partial \Gamma'$. In general we put
\[ \overline \Sigma = \Sigma \cup \partial \Gamma' \]
for any subgraph $\Sigma \subseteq \Gamma$.

Note that we may assume that $\overline E \neq \Gamma'$, as otherwise
$\Gamma$ is a join, namely $\st(\partial \Gamma')$. We have $\partial
\Gamma' = \lk(\Theta \cup E)$ and thus $\partial\Gamma' \in \L^\phi$,
since it is a link of a non-cone. Thus $\overline E = \st(\partial \Gamma')_1 \in \L^\phi$, by part (4) of Lemma~\ref{lem: extended stars in L} and by Lemma~\ref{lem: intersections in L}.

\begin{claim}
  There exists a cubical system $\X''$ for $\L^\phi_{\st(\partial
    \Gamma')}$ which extends the subsystem $\X'_{\overline E}$ of
  $\X'$ strongly.
\end{claim}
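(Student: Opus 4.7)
The plan is to apply Lemma~\ref{lem: case 1} to the join decomposition $\st(\partial \Gamma') = \partial \Gamma' \ast (E \cup \Theta)$. First I would verify that $\st(\partial \Gamma') \subsetneq \Gamma$: otherwise $\Gamma$ itself would be this join, contradicting the current case assumption that $\Gamma$ is not a join. Hence $\st(\partial \Gamma')$ has strictly lower depth than $\Gamma$, and also $\dim(E \cup \Theta) < \dim \st(\partial \Gamma') \leqslant \dim \Gamma$ since $\partial \Gamma' \neq \emptyset$, so both inductive hypotheses are available. Applying the lemma with ``$\Delta$'' $= \partial \Gamma'$ and the given $E, \Theta$, using the cubical system $\X'_{\overline E}$ for $\L^\phi_{\overline E}$ as input (note that $\overline E = \partial \Gamma' \ast E \in \L^\phi$), yields a cubical system $\X_0$ for $\L^\phi_{\st(\partial \Gamma')}$ that extends $\X'_{\overline E}$ and strongly extends the subsystem $\X'_{\partial \Gamma'}$.

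This already gives the $H$-equivariance of the identifications $j_{\{s\}}$ for the singleton factors of $\overline E = \partial \Gamma' \ast E$ lying in $Z(\partial \Gamma')$; the remaining singletons to handle are those in $Z(E)$. The key geometric input for such $s$ is that the disjointness $E \cap \partial \Gamma' = \emptyset$ forces $s \notin \partial \Gamma'$, so $\lk(s) \subseteq \Gamma'$; in particular $s$ is adjacent to no vertex of $\Theta$. As a consequence, $\overline E$ is itself a cone over $s$ (though $\st(\partial \Gamma')$ is not), and $\overline E \setminus \{s\} = \lk(s) \cap \overline E \in \L^\phi$ as the intersection of two elements of $\L^\phi$. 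For each $s \in Z(E)$, Lemma~\ref{lem: replacing circle} applied to the subsystem of $\X_0$ over $\L^\phi_{\overline E}$ therefore produces a refinement that strongly extends $\X'_{\{s\}}$; concretely, the lemma replaces $X_0^{\{s\}}$ by $X'_{\{s\}}$ via the product decomposition of $\overline E$ into $\{s\} \ast (\overline E \setminus \{s\})$.

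The main obstacle is to propagate this local replacement to the larger complexes $X_0^\Sigma$ of $\X_0$ for $\Sigma \in \L^\phi_{\st(\partial \Gamma')}$ with $s \in \Sigma$ but $\Sigma \not\subseteq \overline E$. For such $\Sigma$, the singleton $\{s\}$ is not a direct join factor (as $\Sigma$ contains vertices of $\Theta$ not adjacent to $s$), so the modification must be carried out at the level of the standard copies $\widetilde{\iota}_{\{s\}, \Sigma}$ while preserving the Product, Orthogonal, Intersection, and System Intersection Axioms. Iterating this propagation over the finite set $Z(E)$ yields the desired cubical system $\X''$ for $\L^\phi_{\st(\partial \Gamma')}$ strongly extending $\X'_{\overline E}$; the bookkeeping is delicate but is guided at each stage by the Matching and Composition Properties of Section~\ref{sec: cubical systems}, applied to the cone structure of $\overline E$ over $s$.
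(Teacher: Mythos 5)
Your setup is correct and matches the paper: apply Lemma~\ref{lem: case 1} to the join decomposition $\st(\partial \Gamma') = \partial \Gamma' \ast (E \cup \Theta)$ to get a system $\X_0$ extending $\X'_{\overline E}$, with the strong extension already in place over $\partial\Gamma'$, so that only the singletons of $Z(E)$ remain to be reconciled. But the argument you then sketch does not close.

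The obstacle you identify in your third paragraph is genuine and is not mere bookkeeping. If you modify the loop $X_0^{\{s\}}$ inside the subsystem $(\X_0)_{\overline E}$ via Lemma~\ref{lem: replacing circle}, you have exploited the cone structure of $\overline E$ over $s$. But for $\Sigma \in \L^\phi_{\st(\partial\Gamma')}$ with $s \in \Sigma$ and $\Theta \cap \Sigma \neq \emptyset$, the graph $\Sigma$ is never a cone over $s$ (since $s$ is not adjacent to any vertex of $\Theta$), so there is no product decomposition of $X_0^\Sigma$ of the form $X_0^{\{s\}} \times X_0^{\Sigma \smallsetminus \{s\}}$ to push the new loop through. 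Replacing a loop inside $X_0^\Sigma$ by a different $H$-action, while keeping the marking, the NPC metric, and all the $\widetilde\iota$-maps coherent, is precisely what Lemma~\ref{lem: replacing circle} cannot do outside of a cone; the Matching and Composition Properties let you recognize and relocate standard copies, not rebuild the ambient complex around a replaced subcomplex. As written, the propagation step is a missing argument, not a deferred computation.

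The paper resolves this by never touching $\X''$ at this stage. It splits into two cases according to whether $\st(s) = \Gamma'$. If $\st_{\Gamma'}(s) \neq \Gamma'$, then since also $\st_{\st(\partial\Gamma')}(s) \neq \st(\partial\Gamma')$ (because $\Theta \not\subseteq \lk(s)$), Lemma~\ref{lem: rotating circles} already shows the loops $X'_{\{s\}}$ and $X''_{\{s\}}$ are $H$-equivariantly isometric — no modification is required at all. If instead $\st(s) = \Gamma'$, then $\Gamma'$ itself is a cone over $s$, so the paper replaces $\X'$ (the system over $\Gamma'$, where the cone structure genuinely exists) using Lemma~\ref{lem: replacing circle}, not $\X''$; the new $\X'$ still extends the old one and hence $\Y$, so nothing upstream breaks. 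Your proposal omits the rotating-circles comparison entirely and attempts the replacement on the wrong side, which is what forces the unresolvable propagation. To repair it you would need to adopt the paper's case split and direction of replacement.
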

\begin{proof}
  We begin by applying Lemma~\ref{lem: case 1}, using the decomposition
  \[ \st(\partial \Gamma') = \partial \Gamma' \ast (E \cup \Theta)\]
   to obtain a cubical system
  $\X''$ for $\L^\phi_{\st(\partial \Gamma')}$ which extends the subsystem
  $\X'_{\overline E}$ of $\X'$, in such a way that it extends
  $\X'_{\partial \Gamma'}$ strongly.  If the join decomposition
  (compare Definition~\ref{def:join-decomp}) of $E$ does not contain
  singletons, then in fact $\X''$ extends $\X'_{\overline E}$ strongly
  by definition.

  Suppose that the decomposition does contain singletons; note that each such singleton belongs to $\L^\phi$, since $\phi$ is link-preserving.
  Moreover, such a singleton is also a singleton in the join decomposition of $\overline E = E \ast \partial \Gamma'$.

   Let $s$ be such a singleton which is
  not connected to every other vertex in $\Gamma'$. Then
  Lemma~\ref{lem: rotating circles} implies that the action on the
  associated loop in $\X'$ and $\X''$ is the same.

  The remaining case
  occurs when $E$ contains a singleton $s$ in its join
  decomposition, and $\st(s) = \Gamma'$. In this situation we will
  replace $\X'$ by the system given by Lemma~\ref{lem: replacing circle}, so that $\X'_{\{s\}}$ and $\X''_{\{s\}}$ are
  $H$-equivariantly isometric; we will continue to denote this
  slightly altered system by $\X'$.  Repeating this operation for each
  singleton as described guarantees that $\X''$ extends
  $\X'_{\overline E}$ strongly.
\end{proof}

We need to introduce one more graph. Let
\[ \Delta ' = \bigcap_{\Sigma \in \S} \st(\Sigma_1)_1 \]
Note that $\Delta' \subseteq \Gamma'$. The significance of $\Delta'$ will be explained shortly. For now let us observe the following.

\begin{claim}
$\Delta' \cup \Theta \in \L^\phi$.
\end{claim}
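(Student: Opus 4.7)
The plan is to adapt the proof of \cref{Delta' cup Theta} from Part~I, which establishes essentially the same statement but with $\Delta'$ defined as $\bigcap_{\Sigma\in\S}\st(\Sigma_1)$ rather than $\bigcap_{\Sigma\in\S}\st(\Sigma_1)_1$; the key observation is that the two definitions agree once one unions with $\Theta$.

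First, for each $\Sigma \in \S$, I would show $\st(\Sigma_1)_1 \cup \Theta \in \L^\phi$. Since $\L^\phi$ is closed under intersections (\cref{lem: intersections in L}(i)), $\Sigma_1 = \Sigma \cap \Gamma' \in \L^\phi$, and \cref{lem: extended stars in L}(4) then gives $\st(\Sigma_1) \in \L^\phi$. Applying \cref{lem: intersections in L}(ii) to $\st(\Sigma_1)$ and $\Sigma$, using $\Sigma_1 \subseteq \st(\Sigma_1) \cap \Sigma$ to see
\[
\lk\bigl(\st(\Sigma_1) \cap \Sigma\bigr) \subseteq \lk(\Sigma_1) \subseteq \st(\Sigma_1),
\]
yields $\st(\Sigma_1) \cup \Sigma \in \L^\phi$ (this is exactly the step used in Part~I).

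Next I would rewrite this union. Because $\Sigma \in \S$ satisfies $\Theta \subseteq \Sigma$ and $\Sigma \cap \Gamma' = \Sigma_1$, we have $\Sigma = \Sigma_1 \cup \Theta$, and since $\Sigma_1 \subseteq \st(\Sigma_1)$ this gives $\st(\Sigma_1) \cup \Sigma = \st(\Sigma_1) \cup \Theta$. Splitting $\lk(\Sigma_1) = (\lk(\Sigma_1)\cap\Gamma') \cup (\lk(\Sigma_1)\cap\Theta)$ and noting that the second piece is already contained in $\Theta$, I get
\[
\st(\Sigma_1) \cup \Theta \;=\; \Sigma_1 \cup (\lk(\Sigma_1)\cap\Gamma') \cup \Theta \;=\; \st(\Sigma_1)_1 \cup \Theta,
\]
so $\st(\Sigma_1)_1 \cup \Theta \in \L^\phi$ for each $\Sigma \in \S$.

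Finally, since $\Delta' \subseteq \Gamma'$ is disjoint from $\Theta$, the set-theoretic identity $\bigcap_\Sigma(A_\Sigma \cup \Theta) = (\bigcap_\Sigma A_\Sigma) \cup \Theta$ gives
\[
\Delta' \cup \Theta \;=\; \bigcap_{\Sigma \in \S}\bigl(\st(\Sigma_1)_1 \cup \Theta\bigr),
\]
and \cref{lem: intersections in L}(i), iterated over the finite collection $\S$, completes the proof. There is no real obstacle: the only subtlety is recognizing that the restriction $\cdot_1$ in the definition of $\st(\Sigma_1)_1$ becomes invisible after adjoining $\Theta$, which reduces the problem to the Part~I argument.
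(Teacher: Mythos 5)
Your proof is correct and takes essentially the same route as the paper's: you show $\st(\Sigma_1)\cup\Sigma\in\L^\phi$ via \cref{lem: intersections in L}(ii) with the same observation $\lk(\st(\Sigma_1)\cap\Sigma)\subseteq\lk(\Sigma_1)\subseteq\st(\Sigma_1)$, identify this union with $\st(\Sigma_1)_1\cup\Theta$, and conclude by intersecting over $\S$ using \cref{lem: intersections in L}(i). The only difference is that you spell out the equality $\st(\Sigma_1)\cup\Sigma=\st(\Sigma_1)_1\cup\Theta$ in detail, whereas the paper asserts it without comment.
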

\begin{proof}
We have
\[ \st(\Sigma_1)_1 \cup \Theta = \st(\Sigma_1) \cup \Sigma \in \L^\phi \]
since $\lk(\st(\Sigma_1) \cap \Sigma) \subseteq \lk(\Sigma_1) \subseteq \st(\Sigma_1) = \st(\st(\Sigma_1))$, and by part ii) of Lemma~\ref{lem: intersections in L}.
Thus
\[ \Delta' \cup \Theta =  \bigcap_{\Sigma \in \S} (\st(\Sigma_1)_1 \cup \Theta) \in \L^\phi \]
by part i) of Lemma~\ref{lem: intersections in L}.
\end{proof}

By construction we have $\overline \Theta \subseteq \Delta' \cup \Theta$ and so $\Delta' \in \S_{\Gamma'}$. We also have $\Delta' \subseteq \overline E$ since $\overline E \in \S_{\Gamma'}$ satisfies $\st(\overline E)_1 = \overline E$.

\begin{claim}
There exist fixed standard copies $\t R_\Delta$ and $\t R_{\Delta'}$ of, respectively,  $\t X''_\Delta$ and $\t X''_{\Delta'}$ in $\t X''_{\st(\partial \Gamma')}$ such that $\t R_\Delta \subseteq \t R_{\Delta'}$.
\end{claim}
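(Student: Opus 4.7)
The plan is to exploit the product structure of $\X''$ and reduce the construction to finding fixed points in appropriate cube complexes. First I would establish $\partial\Gamma'\subseteq\Delta\subseteq\Delta'\subseteq\overline E$. The left containment holds because every $\Sigma\in\S$ contains $\overline\Theta\supseteq\partial\Gamma'$, giving $\partial\Gamma'\subseteq\Sigma_1\subseteq\st(\Sigma_1)_1$ for every such $\Sigma$. The right containment holds because $\overline\Theta\in\S$ with $\overline\Theta_1=\partial\Gamma'$, so $\st(\overline\Theta_1)_1=\overline E$ belongs both to $\S_{\Gamma'}$ and to $\S'$, and thereby appears in the intersections defining $\Delta$ and $\Delta'$ respectively. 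Hence $\Delta=\Delta_E\ast\partial\Gamma'$ and $\Delta'=\Delta'_E\ast\partial\Gamma'$ with $\Delta_E\subseteq\Delta'_E\subseteq E$.

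By the preceding claim, $\X''$ is the product of a cubical system $\X''_{E\cup\Theta}$ for $\L^\phi_{E\cup\Theta}$ (supplied by the inductive hypothesis) with $\X'_{\partial\Gamma'}$, matching the join $\st(\partial\Gamma')=\partial\Gamma'\ast(E\cup\Theta)$. Every standard copy in $\t X''_{\st(\partial\Gamma')}$ splits as a product of standard copies in the two factors, and is fixed precisely when each factor is. As $\t X'_{\partial\Gamma'}$ is its own unique standard copy, the task reduces to producing nested fixed standard copies $\t Y_{\Delta_E}\subseteq\t Y_{\Delta'_E}$ of $\t X''_{\Delta_E}$ and $\t X''_{\Delta'_E}$ in $\t X''_{E\cup\Theta}$; setting $\t R_\Delta=\t Y_{\Delta_E}\times\t X'_{\partial\Gamma'}$ and $\t R_{\Delta'}=\t Y_{\Delta'_E}\times\t X'_{\partial\Gamma'}$ then finishes the construction.

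Standard copies of $\t X''_{\Delta'_E}$ in $\t X''_{E\cup\Theta}$ are parameterised by $\t X''_{\lk_{E\cup\Theta}(\Delta'_E)}=\t X''_{\lk(\Delta')}$, and such a copy is fixed exactly when the determining point projects to an $H$-fixed point of $X''_{\lk(\Delta')}$. Since $\phi$ is link-preserving, $\lk(\Delta')\in\L^\phi$. I would then show that the induced action $H\to\Out(A_{\lk(\Delta')})$ lifts to $\Aut(A_{\lk(\Delta')})$ by an argument parallel to Claim~\ref{claim: 9}: choose representatives $h_1\in\Aut(A_\Gamma)$ of $\phi(h)$ preserving $A_{\lk(\Delta')}$; analyse the ambiguity $h_1g_1=(hg)_1c(z)$ via \cref{prop: ccv}, which forces $z\in A_{\widehat\st(\Delta')}=A_{\lk(\Delta')}\cdot A_{\lk(\lk(\Delta'))}$; and observe that the $A_{\lk(\lk(\Delta'))}$-factor centralises $A_{\lk(\Delta')}$, so the restriction of $c(z)$ to $A_{\lk(\Delta')}$ is inner. \Cref{lem: fixed point for liftable actions} then supplies an $H$-fixed point in $X''_{\lk(\Delta')}$, which determines the desired fixed standard copy $\t Y_{\Delta'_E}$.

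To produce the nested copy $\t Y_{\Delta_E}\subseteq\t Y_{\Delta'_E}$, I would find a fixed standard copy of $\t X''_{\Delta_E}$ inside the $H$-invariant subcomplex $\t Y_{\Delta'_E}\cong\t X''_{\Delta'_E}$ by applying the same fixed-point argument to $X''_{\lk_{\Delta'_E}(\Delta_E)}$. The Composition Property (Lemma~\ref{lem: composition prop}) then yields $\t Y_{\Delta_E}$ as a standard copy of $\t X''_{\Delta_E}$ in $\t X''_{E\cup\Theta}$ contained in $\t Y_{\Delta'_E}$. The main obstacle is the lifting statement $H\to\Aut(A_{\lk(\Delta')})$ (and its analogue for $\lk_{\Delta'_E}(\Delta_E)$): although the algebraic setup mirrors Claim~\ref{claim: 9}, the verification requires delicate use of \cref{prop: ccv} and of the extended-star decomposition $\widehat\st(\Delta')=\lk(\Delta')\ast\lk(\lk(\Delta'))$.
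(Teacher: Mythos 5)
Your reduction to the product structure $\X'' = \X''_{E\cup\Theta}\times\X'_{\partial\Gamma'}$ is valid, and the containments $\partial\Gamma'\subseteq\Delta\subseteq\Delta'\subseteq\overline E$ with the consequent join decompositions $\Delta=\Delta_E\ast\partial\Gamma'$, $\Delta'=\Delta'_E\ast\partial\Gamma'$ are correctly established. However, the route you take — finding fixed points via a lifting $H\to\Aut(A_{\lk(\Delta')})$ — is quite different from the paper's, and the lifting argument as sketched has a genuine gap.

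You note that $h_1g_1(hg)_1^{-1}=c(z)$ forces $z\in A_{\widehat\st(\Delta')}=A_{\lk(\Delta')}\times A_{\lk(\lk(\Delta'))}$, and that since the second factor centralises $A_{\lk(\Delta')}$, the restriction $c(z)\vert_{A_{\lk(\Delta')}}$ is \emph{inner}. But that conclusion is vacuous: for \emph{any} choice of set-theoretic representatives of \emph{any} outer action, the discrepancy $c(z)$ restricted to the invariant subgroup is automatically inner, because all the restrictions represent the same class in $\Out(A_{\lk(\Delta')})$. A lift to $\Aut$ requires $c(z)\vert_{A_{\lk(\Delta')}}$ to be the \emph{identity}. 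In \cref{claim: 9} this is achieved by choosing representatives that preserve $A_{\Gamma'}$ and $A_\Theta$ simultaneously, which traps $z$ in $A_{\partial\Gamma'}=A_{\lk(\Theta)}$, the \emph{centraliser} of $A_\Theta$ — so the restriction is genuinely trivial. You have produced no analogue that confines $z$ to $A_{\lk(\lk(\Delta'))}$ (i.e.\ kills the $A_{\lk(\Delta')}$-component $z_1$), and it is not clear how to do this directly; the same objection applies to the second lift for $\lk_{\Delta'_E}(\Delta_E)$.

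The paper avoids fixed-point theory here entirely. Because $\partial\Gamma'\subseteq\Delta'\subseteq\overline E$, one computes $\lk(\Delta'\cup\Theta)=\emptyset$ and $\lk(\overline E)=\emptyset$, so the standard copies of $\t X''_{\Delta'\cup\Theta}$ and $\t X''_{\overline E}$ in $\t X''_{\st(\partial\Gamma')}$ are each \emph{unique}, hence automatically fixed; their intersection is a fixed standard copy of $\t X''_{\Delta'}$ by the Matching Property. The same with $\Delta\cup\Theta$ gives $\t R_\Delta$, and the Composition Property supplies the nesting $\t R_\Delta\subseteq\t R_{\Delta'}$. No lifting claim is needed. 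It may be worth noting that the $H$-fixed point in $X''_{\lk(\Delta')}$ you seek is a posteriori \emph{equivalent} to the lift existing (by \cref{lem: fixed point for liftable actions} and its converse), so the lift does in fact hold — but it follows from, rather than proves, the claim.
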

\begin{proof}
We have \[\lk(\Delta' \cup \Theta) \subseteq \lk(\Theta) \s- \Delta' \subseteq \partial \Gamma' \s- \partial \Gamma' = \emptyset\]
Thus there is a unique (and therefore fixed) standard copy of $\t X''_{\Delta' \cup \Theta}$ in $\t X''_{\st(\partial \Gamma')}$. Since
\[\lk(\overline E) = \lk(\partial \Gamma') \cap \lk(E) \subseteq \lk(\partial \Gamma') \s- (E \cup \Theta) = \emptyset\]
there is a unique (and therefore again fixed) standard copy of $\t X''_{\overline E}$ in $\t X''_{\st(\partial \Gamma')}$. These two copies intersect in a standard copy of $\t X''_{\Delta'}$ (by the Matching Property); let us denote it by $\t R_{\Delta'}$. Since the two copies are fixed, so is $\t R_{\Delta'}$.

By noting that
\[\lk(\Delta \cup \Theta) \subseteq \lk(\Theta) \s- \Delta \subseteq \partial \Gamma' \s- \partial \Gamma' = \emptyset\]
and applying the same procedure we obtain a fixed standard copy $\t R_{\Delta}$ of $\t X''_\Delta$ in $\t X''_{\st(\partial \Gamma')}$.

Since the unique standard copy of $\t X''_{\Delta \cup \Theta}$ must be contained in the unique standard copy of $\t X''_{\Delta' \cup \Theta}$ (by the Composition Property), we have \[\t R_\Delta \subseteq \t R_{\Delta'}\]
as claimed.
\end{proof}

The System Intersection Axiom gives a standard copy $\t Y_\Sigma$ of
$\t X'_{\Sigma}$ in $\t X'_{\Gamma'}$ for each $\Sigma \in
\S_{\Gamma'}$ such that all these copies intersect in $\t Y_\Delta$.
For each $\Sigma \in \S_{\Gamma'}$ we have
\[ \lk_{\Gamma'}(\st(\Sigma)_1) \subseteq \lk_{\Gamma'}(\Sigma) \s- \st(\Sigma)_1 \subseteq \lk(\Sigma)_1 \s- \lk(\Sigma)_1 = \emptyset\]
and thus the copy $\t Y_{\st(\Sigma)_1}$ is fixed.
Note that $\Delta' \in \S_{\Gamma'}$, and by the Intersection Axiom we have
\[ \t Y_{\Delta'} = \bigcap_{\Sigma \in \S_{\Gamma'}} \t Y_{\st(\Sigma)_1} \]
Thus, the standard copy $\t Y_{\Delta'}$ is fixed.

Let us now observe the crucial property of $\Delta'$.

\begin{claim}
\label{claim: 14}
Let $\Sigma \in \S$, and let $\t p \in \t Y_{\Delta'}$. Then there exists a standard copy $\t W_{\Sigma_1}$ of $\t X'_{\Sigma_1}$ in $\t X'_{\Gamma'}$ such that $\t p \in \t W_{\Sigma_1}$.
\end{claim}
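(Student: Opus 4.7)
The plan is to locate $\t p$ inside a convenient intermediate standard copy and then apply the Product Axiom followed by the Composition Property. The first step is to observe that, by the very definition
\[
\Delta' = \bigcap_{\Sigma \in \S} \st(\Sigma_1)_1,
\]
we have $\Delta' \subseteq \st(\Sigma_1)_1$. Combining this with the Intersection Axiom (applied to the fixed copies $\t Y_{\Delta'}$ and $\t Y_{\st(\Sigma_1)_1}$, which exist by the remarks immediately preceding the claim), we conclude that $\t Y_{\Delta'}$ is contained in $\t Y_{\st(\Sigma_1)_1}$, and in particular $\t p \in \t Y_{\st(\Sigma_1)_1}$.

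Next I would verify the join identity
\[
\st(\Sigma_1)_1 \;=\; \Sigma_1 \ast \bigl(\lk(\Sigma_1) \cap \Gamma'\bigr) \;=\; \Sigma_1 \ast \lk_{\st(\Sigma_1)_1}(\Sigma_1),
\]
which is a direct consequence of the fact that $\st(\Sigma_1) = \Sigma_1 \ast \lk(\Sigma_1)$ and that intersecting with $\Gamma'$ preserves the join structure. This places the pair $\Sigma_1 \subseteq \st(\Sigma_1)_1$ in the hypothesis of the Product Axiom, so the map
\[
\t \iota_{\Sigma_1,\,\st(\Sigma_1)_1} \colon \t X'_{\Sigma_1} \times \t X'_{\lk_{\st(\Sigma_1)_1}(\Sigma_1)} \;\longrightarrow\; \t X'_{\st(\Sigma_1)_1}
\]
is surjective. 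Identifying $\t Y_{\st(\Sigma_1)_1}$ with $\t X'_{\st(\Sigma_1)_1}$ via its standard copy structure, this surjectivity produces a standard copy of $\t X'_{\Sigma_1}$ inside $\t Y_{\st(\Sigma_1)_1}$ that contains $\t p$.

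Finally, I would apply the Composition Property (Lemma~\ref{lem: composition prop}) to the chain $\Sigma_1 \subseteq \st(\Sigma_1)_1 \subseteq \Gamma'$: the standard copy just produced, viewed inside $\t X'_{\Gamma'}$ through $\t Y_{\st(\Sigma_1)_1}$, agrees with the image of some genuine standard copy $\t W_{\Sigma_1}$ of $\t X'_{\Sigma_1}$ in $\t X'_{\Gamma'}$. Since the image contains $\t p$, so does $\t W_{\Sigma_1}$, proving the claim. The only place that needs any care is the first step, where one has to use that $\t Y_{\Delta'}$ is the intersection $\bigcap_{\Sigma \in \S_{\Gamma'}} \t Y_{\st(\Sigma)_1}$ of fixed copies, so that the Intersection Axiom places $\t Y_{\Delta'}$ inside each $\t Y_{\st(\Sigma_1)_1}$ in a compatible manner; the rest is a mechanical application of the cubical system axioms, and I do not expect a serious obstacle.
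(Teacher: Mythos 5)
Your proof is correct and takes essentially the same route as the paper: locate $\t p$ in the fixed copy $\t Y_{\st(\Sigma_1)_1}$ via the established identity $\t Y_{\Delta'} = \bigcap_{\Sigma\in\S_{\Gamma'}}\t Y_{\st(\Sigma)_1}$, observe that $\st(\Sigma_1)_1 = \Sigma_1 \ast \lk_{\Gamma'}(\Sigma_1)$ so the Product Axiom makes $\t\iota_{\Sigma_1,\st(\Sigma_1)_1}$ surjective, and push the resulting standard copy through $\t Y_{\st(\Sigma_1)_1}$ into $\t X'_{\Gamma'}$ via the Composition Property. The one cosmetic difference is that you invoke the Intersection Axiom to argue the containment $\t Y_{\Delta'} \subseteq \t Y_{\st(\Sigma_1)_1}$, whereas the paper gets this for free from the displayed identity; since you acknowledge at the end that this identity is what is actually doing the work, the argument is sound.
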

\begin{proof}
Note that we have a standard copy $\t Y_{\st(\Sigma_1)_1}$ which contains $\t Y_{\Delta'}$, and hence $\t p$. We have
\[ \st(\Sigma_1)_1 = \Sigma_1 \ast \lk_{\Gamma'}(\Sigma_1) \]
and so the Product Axiom and Composition Axiom tell us that there exists a standard copy $\t W_{\Sigma_1}$ as required.
\end{proof}

We obtain a complex $\t C$ from $\t X'_{\Gamma'}$ and $\t
X''_{\st(\partial \Gamma')}$ by gluing $\t Y_{\Delta'} = \t
R_{\Delta'}$. However, the projection $C$ of our complex $\t C$ might not
yet realise the desired action of $H$.

Pick $h \in H$. Take a point $\t r \in \t R_\Delta$ as a basepoint, let $r$ be its projection as usual. Take a path $\gamma(h)$ in $R_\Delta$ from $r$ to $h.r$. Note that $\t r$ is a point in $\t R_{\Delta'}$, and hence in $\t C$ (after the gluing). Let $\t p \in \t Y_{\Delta'} = \t R_{\Delta'}$ be the corresponding point; we view it as a point in $\t C$ as well, and denote its projection by $p$ as usual.

We obtain two geometric representatives of $\phi(h)$ this way, $h_r$ obtained using $r$ and $\gamma(h)$, and $h_p$ obtained using $p$ and the corresponding path. 

\begin{claim}
The gluing $\t C$ is faulty within $Z(A_{\Delta'})$.
\end{claim}
\begin{proof}
Let us first remark that $\lk(\Sigma) \subseteq \lk(\Theta) \s- \partial \Gamma' = \emptyset$ for all $\Sigma \in \S$.

By construction we see that
\[ h_r(A_{\Delta \cup \Theta}) = A_{\Delta \cup \Theta} \]
Take $\Sigma \in \S$. Since $\Sigma$ lies in $\L^\phi$, we have
\[ h_r(A_\Sigma) = {A_\Sigma}^y\]
for some element $y \in A_\Gamma$. But then
\[ A_{\Delta \cup \Theta} = h_r(A_{\Delta \cup \Theta}) \leqslant h_r(A_\Sigma) = {A_\Sigma}^y \]
Proposition~\ref{prop: ccv} implies that $y \in N(A_\Sigma) N(A_{\Delta \cup \Theta}) = A_\Sigma A_{\Delta \cup \Theta} = A_\Sigma$, and thus
\[ h_r(A_\Sigma) = A_\Sigma\]
Now $\Sigma_1 \in \L^\phi$ and so
\[ h_r(A_{\Sigma_1}) = {A_{\Sigma_1}}^z\]
We have ${A_{\Sigma_1}}^z \subseteq A_\Sigma$, and so (using
Proposition~\ref{prop: ccv} again) we get, without loss of generality,
$z \in N(A_{\Sigma}) = A_\Sigma$ since $\lk(\Sigma) = \emptyset$ (as $\Sigma \in \S$).

\smallskip
Let us now focus on $h_p$. \cref{claim: 14} tells us that there exists a standard copy of $\t X'_{\Sigma_1}$ containing $\t p$; it is clear that it will also contain the copy of the path $\gamma(h)$. Hence
\[ h_p(A_{\Sigma_1}) = A_{\Sigma_1}\]
and so
\[ {A_{\Sigma_1}}^z = h_r(A_{\Sigma_1}) = c(x(h)^{-1})  h_p(A_{\Sigma_1}) = {A_{\Sigma_1}}^{x(h)^{-1}} \]
where
\[ c(x(h)) = h_p h_r^{-1} \]
and $x(h) \in C(A_{\Delta'})$.

The construction of $h_r$ also tells us that $A_{\overline E} = h_r(A_{\overline E})$, since $\t r$ and $\gamma(h)$ lie in the fixed standard copy of $\t X''_{\overline E}$ in $\t X''_{\st(\partial \Gamma')}$. We have $A_{\overline E} = h_p(A_{\overline E})$ as well, since $\overline E \in \S_{\Gamma'}$. So
\[ x(h) \in N(A_{\overline E}) \leqslant A_{\Gamma'}\]
Thus, by Proposition~\ref{prop: ccv},  $z \in N(A_{\Sigma_1}) A_{\Gamma'}$.

We claim that ${A_{\Sigma_1}}^z = A_{\Sigma_1}$. If $\Theta \cap \lk(\Sigma_1) \neq \emptyset$, then $\Sigma_1 = \partial \Gamma'$, and so
\[ z \in A_\Sigma \leq N(A_{\Sigma_1})\]
which yields the desired conclusion.

Otherwise we have $\st(\Sigma_1) \subseteq \Gamma'$, and so $N(A_{\Sigma_1}) \leqslant A_{\Gamma'}$, which in turn implies $z \in A_{\Gamma'}$.
Now $z \in A_{\Sigma} \cap A_{\Gamma'} = A_{\Sigma_1}$ and so ${A_{\Sigma_1}}^z = A_{\Sigma_1}$ as claimed.

\smallskip
We have
\[{A_{\Sigma_1}}^{x(h)^{-1}} = {A_{\Sigma_1}}^z = A_{\Sigma_1} \]
for all $\Sigma \in \S$. Hence
\[ x(h) \in \bigcap_{\Sigma \in \S} A_{\st(\Sigma_1)}\]
We have already shown that $x(h) \in A_{\Gamma'}$ and so
\[ x(h) \in \bigcap_{\Sigma \in \S} A_{\st(\Sigma_1)_1} = A_{\Delta'}\]
Therefore $x(h) \in C(A_{\Delta'}) \cap A_{\Delta'} = Z(A_\Delta')$.
This statement holds for
each $h$, and thus
the fault of our gluing satisfies the claim.
\end{proof}

Now we are in a position to apply Proposition~\ref{prop: gluing} and obtain a new glued up complex, which we call $\t X_\Gamma$, which realises our action $\phi$.

Recall that we have a standard copy $\t R_{\Delta}$ in $\t
X''_{\st(\partial \Gamma')}$. The gluing sends $\t R_\Delta$ to some
standard copy of $\t X'_\Delta$ in $\t X'_{\Gamma'}$, which lies
within $\t Y_{\Delta'}$ (we are using the Composition Property here);
let us denote this standard copy by $\t Z_\Delta$. It is fixed since
$\t R_\Delta$ is fixed.

By Claim~\ref{claim: 14}, we may pick a standard copy
$\t Z_\Sigma$ of $\t X'_{\Sigma}$ in $\t X'_{\Gamma'}$ for each $\Sigma \in \S_{\Gamma'}$
such that they will all intersect in $\t Z_\Delta$. Let us choose such a family of standard copies.

To finish the construction in this case we need to remark that the complex $\t X_\Gamma$ we constructed is equal to a complex obtained from
 $\t X'_{\Gamma'}$ and $\t X''_{\Delta \cup \Theta}$ by gluing $\t R_\Delta$ and $\t Z_\Delta$.

\subsection*{Step 2: 
  Constructing $X_\Sigma$ for $\Sigma \subseteq
  \Gamma'$ or $\Sigma \subseteq \Delta\cup\Theta$}

Since $\X'$ and $\X''_{\Delta \cup \Theta}$ strongly extend
$\X'_\Delta$, and $\Delta = \Gamma' \cap (\Delta \cup \Theta)$, we
simply define the complexes in $\X$ for graphs $\Sigma \subseteq
\Gamma'$ or $\Sigma \subseteq \Delta\cup\Theta$ to be the ones in $\X'$
or $\X''_{\Delta \cup \Theta}$, respectively.

\subsection*{Interlude}

Before we begin Step~3, we record the following
\begin{claim}
  For any graph $\Sigma \in \L^\phi$ such that $\Sigma \not\subseteq \Gamma'$ and
  $\Sigma \not\subseteq \overline \Theta$ we have $\Theta \subseteq
  \Sigma$.

  Additionally, every graph $\Sigma \in \L^\phi$ with $\Theta \subseteq
  \Sigma$ satisfies
  \begin{equation}
    \label{eqn: star}
    \lk(\partial \Gamma') \cap (\Theta \cup \Delta) \subseteq \Sigma
    \tag{$\ast$}
  \end{equation}
\end{claim}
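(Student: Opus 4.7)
The plan is to handle the two assertions separately, in each case extracting an element of $\L^\phi$ from the hypothesis using \cref{lem: intersections in L}.

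For the first assertion, assume $\Sigma \in \L^\phi$ with $\Sigma \not\subseteq \Gamma'$ and $\Sigma \not\subseteq \overline\Theta$. I would aim to show $\Sigma \cup \Gamma' \in \L^\phi$; by the maximality of $\Gamma'$ this forces $\Sigma \cup \Gamma' = \Gamma$, whence $\Theta \subseteq \Sigma$. Since $\lk(\Gamma') = \emptyset$ by \cref{trivial link}, we have $\st(\Gamma') = \Gamma'$, so by \cref{lem: intersections in L}(ii) it suffices to verify $\lk(\Sigma \cap \Gamma') \subseteq \Gamma'$. Suppose this fails; then some $\theta \in \Theta$ lies in $\lk(\Sigma \cap \Gamma')$, so $\Sigma \cap \Gamma' \subseteq \lk(\theta)$. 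But any vertex of $\Gamma'$ with a neighbour in $\Theta$ lies in $\partial \Gamma'$ by definition, so $\Sigma \cap \Gamma' \subseteq \partial \Gamma'$. Combined with $\Sigma \smallsetminus \Gamma' \subseteq \Theta$ this gives $\Sigma \subseteq \Theta \cup \partial \Gamma' = \overline \Theta$, contradicting the hypothesis.

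For the second assertion, let $\Sigma \in \L^\phi$ with $\Theta \subseteq \Sigma$. The inclusion $\Theta \subseteq \lk(\partial \Gamma')$ (since $\partial \Gamma' = \lk(\Theta)$) makes the $\Theta$-contribution of the conclusion immediate. For the $\Delta$-contribution, recall $\Delta \subseteq \overline E = E \ast \partial \Gamma'$ and $\partial \Gamma' \cap \lk(\partial \Gamma') = \emptyset$ (no self-loops), so $\Delta \cap \lk(\partial \Gamma') \subseteq E$. My plan is to establish $\Sigma \cup \partial \Gamma' \in \L^\phi$; this graph then lies in $\S$, so
\[
\Delta \subseteq (\Sigma \cup \partial \Gamma') \cap \Gamma' = (\Sigma \cap \Gamma') \cup \partial \Gamma',
\]
and intersecting with $\lk(\partial \Gamma')$ (again using $\partial \Gamma' \cap \lk(\partial \Gamma') = \emptyset$) gives $\Delta \cap \lk(\partial \Gamma') \subseteq \Sigma \cap \Gamma' \subseteq \Sigma$, as required.

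The main obstacle is verifying $\Sigma \cup \partial \Gamma' \in \L^\phi$. A clean preliminary step is $\Sigma \cup \st(\partial \Gamma') \in \L^\phi$: since $\partial \Gamma' \in \L^\phi$ gives $\st(\partial \Gamma') \in \L^\phi$ via \cref{lem: extended stars in L}(4), and since $\Theta \subseteq \Sigma \cap \st(\partial \Gamma')$ yields $\lk(\Sigma \cap \st(\partial \Gamma')) \subseteq \lk(\Theta) = \partial \Gamma' \subseteq \st(\partial \Gamma') \subseteq \st(\st(\partial \Gamma'))$, \cref{lem: intersections in L}(ii) applies. The tricky step is to strengthen this to $\Sigma \cup \partial \Gamma'$ by peeling off the $E$-contribution. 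I would attempt a second application of \cref{lem: intersections in L}(ii), with $\Delta_1 = \partial \Gamma'$, $\Delta_2 = \Sigma$, exploiting that any $w \in \lk(\Sigma \cap \partial \Gamma')$ that is not already in $\st(\partial \Gamma')$ cannot be connected to the whole of $\Theta$ (else $w \in \lk(\Theta) = \partial \Gamma'$), thereby forcing $w \in \Sigma$ by a careful case analysis on whether $\Sigma \cap \partial \Gamma'$ is empty or not, and using the structure of $\Gamma_0$ as the unique connected component meeting $\Theta$. I expect this bookkeeping to be the hardest part.
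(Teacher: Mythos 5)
Your treatment of the first assertion is correct and essentially identical to the paper's: you verify $\lk(\Sigma\cap\Gamma')\subseteq\Gamma'=\st(\Gamma')$ by noting that a vertex $\theta\in\Theta\cap\lk(\Sigma\cap\Gamma')$ would force $\Sigma\cap\Gamma'\subseteq\partial\Gamma'$ and hence $\Sigma\subseteq\overline\Theta$, then invoke \cref{lem: intersections in L}(ii) together with the maximality of $\Gamma'$.

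For the second assertion there is a genuine gap, and you flag it yourself. Your reduction is sound: if $\Sigma\cup\partial\Gamma'\in\L^\phi$ then it lies in $\S$, giving $\Delta\subseteq(\Sigma\cap\Gamma')\cup\partial\Gamma'$ and hence \eqref{eqn: star}. But the proposed route to $\Sigma\cup\partial\Gamma'\in\L^\phi$ via \cref{lem: intersections in L}(ii) applied to the pair $(\partial\Gamma',\Sigma)$ does not go through. The required hypothesis, $\lk(\Sigma\cap\partial\Gamma')\subseteq\st(\partial\Gamma')$ (or $\subseteq\st(\Sigma)$), fails in general: if $\Sigma\cap\partial\Gamma'=\emptyset$ (for instance $\Sigma=\Theta$ when $\Theta\in\L^\phi$) then $\lk(\emptyset)=\Gamma$, and $\Gamma$ is not a join; and when $\Sigma\cap\partial\Gamma'$ is a non-empty proper subset of $\partial\Gamma'$, its link can easily leave both stars (already a pentagon with $\Theta$ a single vertex and $\Sigma\cap\partial\Gamma'$ a single boundary vertex does this). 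Indeed, given that $\Sigma\cup\Delta\in\L^\phi$, the statement $\Sigma\cup\partial\Gamma'\in\L^\phi$ is \emph{equivalent} to the conclusion $\Delta\cap\lk(\partial\Gamma')\subseteq\Sigma$, so one should not expect it to drop out of a direct union-closure check. The paper sidesteps this by working with a smaller graph that is automatically invariant: $\Sigma\cap(E\cup\Theta)=\Sigma\cap\lk(\partial\Gamma')\in\L^\phi$ as an intersection of $\L^\phi$-graphs; since $\Theta\subseteq\Sigma\cap(E\cup\Theta)\subseteq\lk(\partial\Gamma')$ one computes $\lk(\Sigma\cap(E\cup\Theta))=\partial\Gamma'$, hence $\st(\Sigma\cap(E\cup\Theta))=\partial\Gamma'\ast(\Sigma\cap(E\cup\Theta))\in\L^\phi$ by \cref{lem: extended stars in L}(4), and this graph contains $\overline\Theta$, so lies in $\S$ and forces $\Delta\subseteq\partial\Gamma'\cup\Sigma$. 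Your preliminary observation $\Sigma\cup\st(\partial\Gamma')\in\L^\phi$ is correct but by itself does not feed into either of these steps; you should replace the speculative peeling/case analysis with the star argument above.
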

In particular, $\overline \Sigma = \Sigma \cup \Delta$ for subgraphs
$\Sigma$ with \eqref{eqn: star}, and so $\overline \Sigma \in \S$ in this case.

\begin{proof}
First assume that $\Sigma \not\subseteq \Gamma'$ and $\Sigma
\not\subseteq \overline \Theta$. In this case $\lk(\Sigma_1) \subseteq
\Gamma'$, since otherwise $\Sigma_1 \subseteq \partial \Gamma' $,
which would force $\Sigma \subseteq \overline \Theta$, a contradiction.
Therefore, by Lemma~\ref{lem: intersections in L}, $\overline \Sigma =
\Sigma \cup \Gamma' \in \L^\phi$. Thus
\[\Theta \subseteq \Sigma\] since there is no
 subgraph in $\L^\phi$ which is properly contained in
$\Gamma$ and properly contains $\Gamma'$ (recall that $\Gamma'$ is maximal among proper subgraphs of $\Gamma$ in $\L^\phi$, and this is why we had to first check that $\overline \Sigma \in \L^\phi$).

\smallskip
Now let us suppose that $\Sigma \in \L^\phi$ satisfies $\Theta
\subseteq \Sigma$.
Then
\[\lk(\Sigma \cap (\Delta \cup \Theta)) \subseteq \lk(\Theta) \subseteq \Delta \cup \Theta\]
and so $\Sigma \cup \Delta \in \L^\phi$ by Lemma~\ref{lem: intersections in L}.

We now claim that $\lk(\partial \Gamma') \cap \Delta \subseteq \Sigma$. Note that $E \cup \Theta  = \lk(\partial \Gamma') \in \L^\phi$ since $\phi$ is link-preserving.\link-preserving Now $\Sigma \cap (E \cup \Theta) \in \L^\phi$ and thus
\[ \partial \Gamma' \ast (\Sigma \cap (E \cup \Theta)) = \st(\Sigma \cap (E \cup \Theta)) \in \L^\phi \]
where the equality follows from the observation that $\Theta \subseteq \Sigma \cap (E \cup \Theta)$.
But $\partial \Gamma' \ast (\Sigma \cap (E \cup \Theta)) \in \S$, and therefore
\[ \Delta \subseteq \partial \Gamma' \ast (\Sigma \cap (E \cup \Theta))\]
This in turn implies that
\[ \Delta \subseteq \partial \Gamma' \cup \Sigma \]
and so $\Delta \cap \lk(\partial \Gamma') \subseteq \Sigma$. We assumed that $\Theta \subseteq \Sigma$, and so
\[ \lk(\partial \Gamma') \cap (\Theta \cup \Delta) \subseteq \Sigma \]
that is $\Sigma$ satisfies \eqref{eqn: star}.

Lastly, suppose that $\Sigma$ satisfies \eqref{eqn: star}. Then
\[ \overline \Sigma = \Sigma \cup \partial \Gamma' = \Sigma \cup (\Delta \s- (\Delta \cap \lk(\partial \Gamma')) = \Sigma \cup \Delta \]
as required.
\end{proof}

Since  $\Sigma \not\subseteq \Delta \cup \Theta$ implies  $\Sigma \not\subseteq \overline \Theta$, we see that any $\Sigma \in \L^\phi$ not covered by Step~2 satisfies \eqref{eqn: star} and so $\overline \Sigma \in \S$. Hence given such a $\Sigma$ we have
\[\Sigma \subseteq \st_{\overline \Sigma}(\Sigma) \subseteq \overline \Sigma\]
 We will deal with each graph in this chain of inclusions in turn.



\subsection*{Step 3: Constructing $X_\Sigma $ for $\Sigma \in \S \s- \{ \Delta \cup \Theta \}$}

In this case
\[\Sigma_1 = \Sigma \cap \Gamma' \in \S_{\Gamma'}\]
 and so (by construction) we have $\t Z_{\Sigma_1}$ in $\t X_\Gamma$ containing $\widetilde Z_\Delta$ (which in particular implies that it is fixed).
We also have the image under our gluing map $\t \iota_{\Delta \cup
  \Theta,\Gamma}$ of $\t X''_{\Delta \cup \Theta} = \t X_{\Delta \cup \Theta}$; let us call it $\t R_{\Delta \cup \Theta}$. This subcomplex contains $\widetilde Z_\Delta$ by construction.
We obtain $\t X_\Sigma$ from the two complexes by gluing the two
instances of $\widetilde Z_\Delta$. Its projection carries the desired
marking by construction. The action of $H$ is also the desired one;
taking any $h \in H$, and looking at a geometric representative obtained by choosing a basepoint and a path in the subcomplex $X_\Sigma$, we get an automorphism of $A_\Gamma$ which preserves $A_\Sigma$. This automorphism is a representative of $\phi(h)$, and so the restriction to $A_\Sigma$ is the desired one. But this is equal to the geometric representative of the action of $h$ on $X_\Sigma$ obtained using the same basepoint and path.

Our construction also gives us maps $\widetilde \iota_{\Sigma_1, \Sigma}$ and $\widetilde \iota_{\Delta \cup \Theta, \Sigma}$.
These maps are as required, since  $\Delta \cup \Theta$ and $\Sigma_1$ both have trivial links in $\Sigma$; the former statement is clear, and the latter follows from the observation that
\[\lk_\Sigma(\Sigma_1) \neq \emptyset\]
 implies that $\Sigma_1 \subseteq \partial \Gamma'$ and hence $\Sigma_1 = \Delta = \partial \Gamma'$, which in turn gives $\Sigma = \Delta \cup \Theta$, contradicting our assumption.

We also get a map $\t \iota_{\Sigma, \Gamma}$, since we define $\t X_\Sigma$ as a subcomplex of $\t X_\Gamma$. It is as required since $\lk(\Sigma) = \emptyset$ for all $\Sigma \in \S$.

\subsection*{Step 4: Constructing $X_\Sigma $ for $\Sigma$ with \eqref{eqn: star} and such that $\lk_{\overline \Sigma}(\Sigma) = \emptyset$}

By the Composition Property there exists a standard copy of $\t X_{\partial
  \Gamma'}$ in $\t X_{\Delta \cup \Theta}$ which lies within $\t R_\Delta$; let us denote it by $\t R_{\partial \Gamma'}$. The gluing
$\t R_\Delta = \t Z_\Delta$ gives us the corresponding standard copy $\t Z_{\partial \Gamma'}$ in $\t Z_\Delta$.

Note that the assumption
implies that $\lk_{\overline \Sigma_1}(\Sigma_1) = \emptyset$.
Let us take the unique standard copy of $\widetilde X_{\Sigma_1}$ in
$\widetilde X_{\overline \Sigma_1}$; we will denote it by $\t Z_{\Sigma_1}$.
By the Matching Property, it intersects $\t Z_\Delta$ in a standard copy of $\widetilde X_{\Sigma \cap \Delta}$.
Using the Matching Property again, this time in $\t Z_\Delta$, we see that this copy of $\widetilde X_{\Sigma \cap \Delta}$ intersects $\t Z_{\partial \Gamma'}$ in a copy of $\t X_{\Sigma \cap \partial \Gamma'}$; we will denote thus copy by $\t Z_{\Sigma \cap \partial \Gamma'}$, and the corresponding one in $\t R_{\partial \Gamma'}$ by $\t R_{\Sigma \cap \partial \Gamma'}$.

Recall that $\Sigma_2 = \Sigma \cap (\Delta \cup \Theta)$.
Since $\X''$ is a product of $\X''_{\partial \Gamma'}$ and $\X''_{\lk_{\Delta \cup \Theta}(\partial \Gamma')}$, there exists a standard copy of $\t X_{\Sigma_2}$  in $\t X_{\Delta \cup \Theta}$ which contains $\t R_{\Sigma \cap \partial \Gamma'}$; we will denote it by $\t R_{\Sigma_2}$.
We define $\t X_\Sigma$ to be the subcomplex of $\t X_{\overline \Sigma}$ obtained by gluing $\t Z_{\Sigma_1}$ to $\t R_{\Sigma_2}$. Note that these two copies overlap in a copy of $\t X_{\Sigma \cap \Delta}$, which is the unique such copy containing $\t R_{\Sigma \cap \partial \Gamma'}$.

Note that again our gluing procedure determines maps $\widetilde
\iota_{\Sigma_1, \Sigma}$ and $\widetilde \iota_{\Sigma_2, \Sigma}$ of
the required type.

From this construction we also obtain a map $\t \iota_{\Sigma,
  \overline \Sigma}$, since we define $\t X_\Sigma$ as a subcomplex of $\t X_{\overline \Sigma}$.

\subsection*{Step 5: Constructing the remaining complexes.}

As remarked above we are left with graphs $\Sigma \in \L^\phi$ satisfying \eqref{eqn: star} and such that $\Sigma \subset \st_{\overline \Sigma}(\Sigma)$ is a proper subgraph. Let $\Sigma' = \st_{\overline \Sigma}(\Sigma)$, and note that $\Sigma'$ is covered by the previous step. We need to exhibit a product structure on $\t X_{\Sigma'}$, one factor of which will be the desired complex for $\Sigma$, the other for $\Lambda = \lk_{\overline \Sigma}(\Sigma)$.

The complex $\t X_{\Sigma'}$ is obtained from complexes $\t X_{\Sigma'_1}$ and $\t X_{\Sigma'_2}$ by gluing them along a copy of $\t X_{\Sigma' \cap \Delta}$. Since $\Lambda \subseteq \partial \Gamma'$, each of these three complexes is a product of $\t X_\Lambda$ and some other complex by the Product Axiom.
Moreover, the embeddings $\t X_{\Sigma' \cap \Delta} \to \t X_{\Sigma'_i}$ with $i \in \{1,2\}$ respect the product structure, that is the image of any standard copy of $\t X_\Lambda$ in $\t X_{\Sigma' \cap \Delta}$ is still a standard copy in $\t X_{\Sigma'_i}$ by the Composition Property. Hence the glued-up complex $\t X_{\Sigma'}$ is a product of $\t X_\Lambda$ and a complex obtained by gluing some standard copies of $\t X_{\Sigma_i}$ in $\t X_{\Sigma'_i}$ along $\t X_{\Sigma \cap \Delta}$; we call this latter complex $\t X_\Sigma$.
For notational convenience we pick some such standard copies of $\t X_{\Sigma_1}$ and $\t X_{\Sigma_2}$, and denote them by $\t Z_{\Sigma_1}$ and $\t R_{\Sigma_2}$ respectively.

The gluing of the standard copies of $\t X_{\Sigma_i}$ gives us maps $\t \iota_{\Sigma_i, \Sigma}$ for both values of $i$, which are as required.

The construction also gives us a map \[\t \iota_{\Sigma, \Sigma'} \colon \t X_\Sigma \times \t X_\Lambda \to \t X_{\Sigma'}\]
which is again as wanted.

\subsection{Constructing the maps}

Let $\Sigma, \Sigma' \in \L^\phi$ be such that $\Sigma \subseteq \Sigma'$. We need to construct a map $\t \iota_{\Sigma, \Sigma'}$. We will do it in several steps.

\begin{enumerate}
 \item
\textbf{ $\Sigma' = \Sigma'_i$ for some $i\in \{1,2\}$}

In this case the cube complexes $X_\Sigma$ and $X_{\Sigma'}$ are
  obtained directly from another cubical system (in Step~2), and
  we take $\t \iota_{\Sigma, \Sigma'}$ to be the map coming from that system.

We will now assume that the hypothesis of this step is not satisfied, which implies that $\Sigma'$ satisfies \eqref{eqn: star} and that $\Sigma' \neq \Delta \cup \Theta$.

\item \textbf{$\Sigma = \Sigma_i$ and $\Sigma \neq \Sigma_j$ with $\{i,j\} = \{1,2\}$}

In this case we have $\st(\Sigma) = \st(\Sigma)_i$, since if $\lk(\Sigma) \neq \lk(\Sigma)_i$ then (knowing that $\Sigma = \Sigma_i$) we must have $\Sigma \subseteq \partial \Gamma'$, and thus $\Sigma=\Sigma_j$ which contradicts the assumption.
Therefore $\lk_{\Sigma'}(\Sigma) = \lk_{\Sigma'}(\Sigma)_i$ as well.
We define
\[ \t \iota_{\Sigma, \Sigma'} = \t \iota_{\Sigma'_i, \Sigma'} \circ \t \iota_{\Sigma, \Sigma'_i} \]
where the last map was defined in the previous step, and the map $\t
\iota_{\Sigma'_i, \Sigma'}$ was constructed together with the complex
$\t X_{\Sigma'}$ in Step~3,~4~or~5 of Subsection~\ref{subsec: constructing complexes}.

\item \textbf{ $\Sigma$ satisfies \eqref{eqn: star} and $\Sigma \neq \Delta \cup \Theta$}



Observe that $\st_{\overline \Sigma}(\Sigma) = \st_{\overline {\Sigma'}}(\Sigma)$ since $\lk(\Sigma) \subseteq \partial \Gamma'$.
Hence Step~5 above gives us the map $\t \iota_{\Sigma, \st_{\overline {\Sigma'}}(\Sigma)}$.

Let $\Omega = \st_{\overline {\Sigma'}}(\Sigma) \cup \st_{\overline {\Sigma'}}(\Sigma')$

\begin{claim}
$\Omega \in \L^\phi$.
\end{claim}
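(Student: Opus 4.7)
The plan is to show that the restricted stars appearing in $\Omega$ coincide with the full stars (because the relevant links already sit inside $\overline{\Sigma'}$), reducing $\Omega$ to $\st(\Sigma) \cup \st(\Sigma')$, and then to apply part~(ii) of Lemma~\ref{lem: intersections in L}.

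First I would recall that both $\Sigma$ and $\Sigma'$ contain $\Theta$: for $\Sigma$ this follows from \eqref{eqn: star} together with $\Theta \subseteq \lk(\partial\Gamma') \cap (\Theta \cup \Delta)$, and for $\Sigma'$ it is guaranteed by the Interlude, since we are past case~(1) and so $\Sigma'$ lies in neither $\Gamma'$ nor $\overline \Theta$. From $\Theta \subseteq \Sigma$ we get $\lk(\Sigma) \subseteq \lk(\Theta) = \partial\Gamma'$. Moreover $\partial\Gamma' \subseteq \bigcap \S_{\Gamma'} = \Delta \subseteq \overline{\Sigma'}$, because every element of $\S$ contains $\overline\Theta = \Theta \ast \partial\Gamma'$. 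Combining these, $\lk(\Sigma), \lk(\Sigma') \subseteq \overline{\Sigma'}$, so
\[
\st_{\overline{\Sigma'}}(\Sigma) = \st(\Sigma) \quad\text{and}\quad \st_{\overline{\Sigma'}}(\Sigma') = \st(\Sigma'),
\]
and hence $\Omega = \st(\Sigma) \cup \st(\Sigma')$.

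Next I would observe that both $\st(\Sigma)$ and $\st(\Sigma')$ lie in $\L^\phi$ by part~(4) of Lemma~\ref{lem: extended stars in L}. To apply Lemma~\ref{lem: intersections in L}(ii) to this pair I only need
\[
\lk\bigl(\st(\Sigma) \cap \st(\Sigma')\bigr) \subseteq \st(\st(\Sigma)),
\]
which is immediate: since $\Sigma \subseteq \st(\Sigma)$ and $\Sigma \subseteq \Sigma' \subseteq \st(\Sigma')$, the intersection contains $\Sigma$, and so its link is contained in $\lk(\Sigma) \subseteq \st(\Sigma) \subseteq \st(\st(\Sigma))$. The lemma then yields $\Omega \in \L^\phi$.

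The only real content is the collapse of the restricted stars to genuine stars; once $\Omega$ has this simple form, membership in $\L^\phi$ is a one-line application of the invariance lemmas from Section~\ref{sec: systems of graphs}. The main obstacle will therefore be the bookkeeping of the containments among $\Theta, \partial\Gamma', \Delta, \Sigma$, and $\Sigma'$ that support this identification.
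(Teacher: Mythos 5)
Your proof is correct and follows the same route as the paper's: both proofs collapse the restricted stars to genuine stars using $\lk(\Sigma),\lk(\Sigma')\subseteq\partial\Gamma'\subseteq\overline{\Sigma'}$ (which comes from $\Theta\subseteq\Sigma\subseteq\Sigma'$ and $\lk(\Theta)=\partial\Gamma'$), and then invoke Lemma~\ref{lem: intersections in L}(ii) on $\st(\Sigma),\st(\Sigma')\in\L^\phi$, noting that the intersection contains $\Sigma$ and so has link inside $\lk(\Sigma)\subseteq\st(\Sigma)$. Your version spells out a few steps the paper leaves implicit — notably that $\Sigma,\Sigma'$ both satisfy \eqref{eqn: star} (so the restricted stars really are genuine stars and both lie in $\L^\phi$ by Lemma~\ref{lem: extended stars in L}(4)) — but the underlying argument is the same.
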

\begin{proof}
We use part (3) of Lemma~\ref{lem: intersections in L}. Thus we only need to observe that
\[ \lk\big(\st_{\overline {\Sigma'}}(\Sigma) \cap \st_{\overline
  {\Sigma'}}(\Sigma')\big) \subseteq \lk(\Sigma) \subseteq \st(\Sigma)
= \st_{\overline {\Sigma'}}(\Sigma)\]
since $\lk(\Sigma)\subseteq \partial \Gamma'$ as $\Sigma$ satisfies \eqref{eqn: star}.
\end{proof}

We have
\[ \lk(\st_{\overline \Sigma}(\Sigma)) \subseteq \lk(\Sigma) \s- \st_{\overline \Sigma}(\Sigma) = \emptyset\]
since $\lk(\Sigma) \subseteq \partial \Gamma'$ and so $\lk(\Sigma) = \lk_{\overline \Sigma}(\Sigma)$.
Analogously we have $\lk(\st_{\overline
  {\Sigma'}}(\Sigma')) = \emptyset$. It immediately follows that $\lk(\Omega) = \emptyset$ as well.
Now the complex $\t X_{\st_{\overline \Sigma}(\Sigma)}$ is formed by gluing $\t Z_{\st_{\overline \Sigma}(\Sigma)_1}$ and $\t R_{\st_{\overline \Sigma}(\Sigma)_2}$; we have similar statements for $\t X_{\st_{\overline {\Sigma'}}(\Sigma')}$ and $\t X_{\Omega}$.
By construction (see Step 4), $\t Z_{\st_{\overline \Sigma}(\Sigma)_1}$ and $\t Z_{{\Omega}_1}$ are both unique standard copies of the corresponding complexes in $\t Z_{\overline \Sigma_1}$. The Composition Property  in $\X'$ implies that there exists a standard copy of $\t X_{\st_{\overline \Sigma}(\Sigma)_1}$ contained in  $\t Z_{{\Omega}_1}$, and thus
\[ \t Z_{\st_{\overline \Sigma}(\Sigma)_1} \subseteq \t Z_{{\Omega}_1} \]
Similarly
\[ \t Z_{\st_{\overline {\Sigma'}}(\Sigma')_1} \subseteq \t Z_{{\Omega}_1} \]
Now the Matching Property (in $\X'$) tells us that
$\t Z_{\st_{\overline {\Sigma'}}(\Sigma')_1}$ and $\t Z_{\st_{\overline {\Sigma'}}(\Sigma')_1}$
intersect non-trivially.
Since both have a product structure, we find standard copies $\t Z_{\Sigma_1}$ and $\t Z_{\Sigma'_1}$ of $\t X_{\Sigma_1}$ and $\t X_{\Sigma'_1}$ respectively (in $\t Z_{\Omega_1}$) which also intersect non-trivially.

Let us look more closely at the product structure of $\t Z_{\st_{\overline {\Sigma'}}(\Sigma)_1}$. It is isomorphic to a product of $\t X_{\Sigma_1}$ and $\t X_{\lk_{\overline {\Sigma'}}(\Sigma)_1}$. The latter complex contains a standard copy of $\t X_{\lk_{\Sigma'}(\Sigma)_1}$. Hence there is a standard copy $\t Z_{\st_{{\Sigma'}}(\Sigma)_1}$ of $\t X_{\st_{ {\Sigma'}}(\Sigma)_1}$ in $\t Z_{\st_{\overline {\Sigma'}}(\Sigma)_1}$ containing the chosen standard copy $\t Z_{\Sigma_1}$ (by the Intersection Axiom). Thus $\t Z_{\st_{{\Sigma'}}(\Sigma)_1}$ and $\t Z_{\Sigma'_1}$ intersect non-trivially, and so
the intersection Axiom tells us that
\[\t Z_{\st_{{\Sigma'}}(\Sigma)_1} \subseteq \t Z_{\Sigma'_1}\]

After the gluing this yields a map $\t \iota_{\st_{\Sigma'}(\Sigma), \Sigma'}$, and since $\t Z_{\st_{\Sigma}(\Sigma')_1}$ had a product structure, so does $\t X_{\st_{{\Sigma'}}(\Sigma)}$ (arguing as in Step~5 above). This gives us the desired map $\t \iota_{\Sigma, \Sigma'}$.

\item \textbf{$\Sigma \subseteq \partial \Gamma'$}

In this (last) case we observe that $\lk_{\Sigma'}(\Sigma)$ satisfies \eqref{eqn: star}, and so we have already constructed the map
\[\t \iota_{\lk_{\Sigma'}(\Sigma), \Sigma'} \colon \t X_{\lk_{\Sigma'}(\Sigma)} \times \t X_\Sigma \to \t X_{\Sigma'}\]
We define $\t \iota_{\Sigma, \Sigma'}$ by reordering the factors in the domain.
\end{enumerate}

\subsection{Verifying the axioms}

The first two axioms depend only on two subgraphs $\Sigma, \Sigma' \in \L^\phi$ with $\Sigma \subseteq \Sigma'$. This is the same assumption as in the maps part of our proof, and hence the verification of the two axioms will follow the structure as the construction of maps -- we will consider four cases, and the assumption in each will be identical to the assumptions of the corresponding case above.

\subsubsection*{Product Axiom} Suppose that $\Sigma' = \st_{\Sigma'}(\Sigma)$.
\begin{enumerate}
 \item In this case the Product
Axiom follows from the Product Axiom in $\X'$ or $\X''$. Otherwise we assume that $\Sigma'$ satisfies \eqref{eqn: star} and $\Sigma' \neq \Delta \cup \Theta$.

\item If $\Sigma = \Sigma_i$ and $\Sigma \neq \Sigma_j$ with $\{i,j\} = \{1,2\}$, then $\Sigma' = \st_{\Sigma'}(\Sigma)$ also satisfies $\Sigma' = \Sigma'_i$, and so we are in the previous case.

\item The standard copy $\t Z_{\st_{\Sigma'}(\Sigma)_1}$
used in case (3) above is equal to the image of $\t \iota_{\Sigma_1, \st_{\Sigma'}(\Sigma)_1}$, and hence the corresponding statement is still true after the gluing.

\item In this case we defined the map $\t \iota_{\Sigma, \Sigma'}$ using $\t \iota_{\lk_{\Sigma'}(\Sigma), \Sigma'}$, and the graph $\lk_{\Sigma'}(\Sigma)$ is covered by the previous cases.
\end{enumerate}

\subsubsection*{Orthogonal Axiom} Let $\Lambda = \lk_{\Sigma'}(\Sigma)$.

\begin{enumerate}
 \item If $\Sigma' \subseteq \Gamma'$ or $\Sigma' \subseteq \Delta \cup \Theta$ then the axiom is satisfied, since it is satisfied in $\X'$ and $\X''$. Otherwise we assume that $\Sigma'$ satisfies \eqref{eqn: star} and $\Sigma' \neq \Delta \cup \Theta$.

\item If $\Sigma = \Sigma_i$ and $\Sigma \neq \Sigma_j$ with $\{i,j\}
  = \{1,2\}$, then also $\Lambda = \Lambda_i$. Suppose that $\Lambda \neq \Lambda_j$.
In this case both maps $\t \iota_{\Sigma, \Sigma'}$ and $\t
\iota_{\Lambda, \Sigma'}$ factorise through $\t
\iota_{\st_{\Sigma'}(\Sigma), \Sigma'}$, and thus it is enough to
verify the axiom within $\t X_{\st_{\Sigma'}(\Sigma)}$. But
$\st_{\Sigma'}(\Sigma) = \st_{\Sigma'}(\Sigma)_i$ and so we are done by the previous case.

Now suppose that $\Lambda \subseteq \partial \Gamma'$.
In this case the axiom follows trivially from the construction of the map $\t \iota_{\Lambda, \Sigma'}$.

\item In this case we have $\Lambda \subseteq \partial \Gamma'$ and so we are done as above.

\item When $\Sigma \subseteq \partial \Gamma'$ we are again done by construction.
\end{enumerate}

\subsubsection*{Intersection Axiom}
Let us now verify that $\X$ satisfies the Intersection Axiom. Take
$\Sigma, \Sigma', \Omega \in \L^\phi$ such that $\Sigma \subseteq
\Omega$ and $\Sigma' \subseteq \Omega$, and let $\t Y_{\Sigma}$ and $\t Y_{\Sigma'}$ be standard copies of, respectively, ${\t
  X}_{\Sigma}$ and $\t X_{\Sigma'}$ in ${\t X}_\Omega$ with non-empty intersection. We need
to show that the intersection is the image of a standard copy of
$\Sigma \cap\Sigma'$ in each.

As in the first two cases, the details depend on the inclusions $\Sigma, \Sigma' \subseteq \Omega$. The cases will thus be labeled by pairs of integers $(n,m)$, the first determining in which step the map $\t \iota_{\Sigma, \Omega}$ was constructed, and the second playing the same role for $\t \iota_{\Sigma', \Omega}$. By symmetry we only need to consider $n \leqslant m$.

\begin{itemize}
 \item[(1,1)] In this case $\Omega = \Omega_i$, and so the axiom follows from the Intersection Axiom in $\X'$. In what follows we can assume that $\Omega \neq \Delta \cup \Theta$ satisfies \eqref{eqn: star}. Hence $\t X_\Omega$ is obtained by gluing $\t Z_{\Omega_1}$ and $\t R_{\Omega_2}$ along a subcomplex of $\t Z_\Delta$ which is a
  standard copy of $\t X_{\Delta \cap \Omega}$; let us denote it by $\t Z_{\Delta \cap \Omega}$.

\item[(2,2)] This splits into two cases. If $\Sigma = \Sigma_i$ and $\Sigma' = \Sigma'_i$ then both maps $\t \iota$ factor through $\t \iota_{\Omega_i, \Omega}$, and so the problem is reduced to checking the axiom for the triple $\Sigma, \Sigma', \Omega_i$, for which it holds.

In the other case we have, without loss of generality, $\Sigma = \Sigma_1$ and $\Sigma' = \Sigma'_2$. By construction, the given standard
  copies $\t Y_{\Sigma}$ and $\t Y_{\Sigma'}$ must lie within the standard
  copies $\t Z_{\Omega_1}$ and $\t R_{\Omega_2}$ respectively, and
  hence intersect within $\t Z_{\Delta \cap
    \Omega}$.

We use the Intersection Axiom of $\X'$ for
$\t Z_{\Delta \cap \Omega}$ and $\t Y_{\Sigma}$ inside $\t Z_{\Omega_1}$ and see that the two copies intersect in a copy of $\t X_{\Delta \cap \Sigma}$, which is also the image of a standard copy of $\t X_{\Delta \cap  \Sigma}$ in $\t Z_{\Delta \cap \Omega}$.

We repeat the argument for $\Sigma'$ and obtain a standard copy
of $\t X_{\Delta \cap  \Sigma'}$ in $\t Z_{\Delta \cap \Omega}$. Now
this copy intersects the one of $\t X_{\Delta \cap  \Sigma}$, and
hence, applying the Intersection Axiom again, they intersect in a
copy of $\t X_{\Delta \cap \Sigma \cap \Sigma'}$ in $\t Z_{\Delta \cap
  \Omega}$. But $\Delta \cap \Sigma \cap \Sigma' = \Sigma \cap
\Sigma'$, and so we have found the desired standard copy in $\t
Z_{\Delta \cap \Omega}$. Now the Composition Property (Lemma~\ref{lem: composition prop}) implies that
this is also a standard copy in $\t X_\Sigma$, $\t X_{\Sigma'}$ and
$\t X_{\Omega_i}$ for any $i \in \{1,2\}$, and thus this is also a standard copy in $\t
X_\Omega$ by construction.

\item[(2,3)] The non-trivial intersection of any standard copy of $\t X_\Sigma$ and any standard copy of $\t X_{\Sigma'}$ in $\t X_\Omega$ is in fact contained in the standard copy of $\t X_{\Omega_i}$, since any copy of $\t X_{\Sigma}$ is contained therein. Therefore the intersection is also contained in a standard copy of $\t X_{\Sigma_i'}$ by construction of $\t \iota_{\Sigma', \Omega}$.
We apply the Intersection Axiom in $\t X_{\Omega_i}$, and observe that the standard copy of $\t X_{\Sigma \cap \Sigma'}$ in $\t X_{\Omega_i}$ obtained this way is also a standard copy in $\t X_\Omega$ by construction.

\item[(2,4)] In this case $\t Y_\Sigma$ must in fact be contained in $\t Q$, where $\t Q$ is either
  $\t Z_{\Omega_1}$  or $\t R_{\Omega_2}$.

We have $\t Y_\Sigma \cap \t Y_{\Sigma'} \subseteq \t Q$, and hence we only need to prove that $\t Y_{\Sigma'}$ is a standard copy in $\t Q$.

Let $\Lambda = \lk_{\Omega}(\Sigma')$. Note that $\lk(\partial \Gamma') \cap (\Delta \cup \Theta) \subseteq \Lambda$.
By definition of $\t \iota_{\Sigma', \Omega}$, we have
\[\t Y_{\Sigma'} = \t \iota_{\Lambda, \Omega}( \{ \t x\} \times \t X_{\Sigma'})\]
 for some point $\t x \in \t X_\Lambda$.
Since $\t Y_{\Sigma'}$ contains a point in $\t Q$, there exists $\t y \in \t X_{\Sigma'}$ such that $\t \iota_{\Lambda, \Omega}(\t x , \t y) \in \t Q$.

If $\Lambda \not \subseteq \Delta \cup \Theta$ then this is only possible if $\t x$ lies in $\t Z_{\Lambda_i}$ or $\t R_{\Lambda_2}$ (depending on what $\t Q$ is), by the construction of $\t X_{\Lambda}$ and $\t \iota_{\Lambda, \Omega}$. But then, again by the construction of $\t X_{\Lambda}$, we have $\t Y_{\Sigma'} \subseteq \t Q$ being a standard copy as claimed.

We still need to check what happens when $\Lambda \subseteq \Delta \cup \Theta$. Suppose that $\t Q = \t R_{\Omega_2}$. In this case $\t Y_{\Sigma'}$ is a standard copy in $\t Q$ by the Composition Property of $\X''$. Lastly, let us suppose that $\t Q = \t Z_{\Omega_1}$. Since $\im (\t \iota_{\Lambda, \Omega}) \subseteq \t R_{\Omega_2}$ by construction, the Intersection Axiom in $\X''$ tells us that $\t Y_{\Sigma'}$ is a standard copy in $\t Z_{\Omega \cap \Delta}$. But then it is also a standard copy in $\t Q = \t Z_{\Omega_1}$ by the Composition Property in $\X'$.

\item[(3,3)] In this case $\t Y_\Sigma$ and
  $\t Y_{\Sigma'}$ are obtained from $\t Z_{\Sigma_1}$, $\t R_{\Sigma_2}$, and $\t Z_{\Sigma'_1}$, $\t R_{\Sigma'_2}$ respectively.

Suppose that $\t Z_{\Sigma_1}$  and $\t Z_{\Sigma'_1}$ do not intersect.
Then $\t R_{\Sigma_2}$  and $\t R_{\Sigma'_2}$ do intersect, and the Intersection Axiom in $\X''$ tells us that they intersect in a standard copy of $\t X_{\Sigma_2 \cap \Sigma'_2}$. The graph $\Sigma_2 \cap \Sigma'_2$ satisfies \eqref{eqn: star}, and so $(\Sigma_2 \cap \Sigma'_2) \cup \Delta = \Delta \cup \Theta$. This implies that the standard copy of $\t X_{\Sigma_2 \cap \Sigma'_2}$ intersects $\t Z_\Delta$ non-trivially (by the Matching Property in $\X''$). But this intersection lies in  $\t Z_{\Sigma_1}$  and $\t Z_{\Sigma'_1}$, and hence they did intersect.

Now
the Intersection Axiom in $\X'$ tells us that $\t Z_{\Sigma_1}$  and $\t Z_{\Sigma'_1}$ intersect in a standard copy of $\t X_{\Sigma_1 \cap \Sigma'_1}$; let us call it $\t Z_{\Sigma_1 \cap \Sigma'_1}$.

Note that the standard copy $\t Z_{\overline \Sigma_1}$ which contains $\t Z_{ \Sigma_1}$ by construction; similarly $\t Z_{\overline{\Sigma'}_1}$ contains $\t Z_{{\Sigma'}_1}$. Now $\t Z_{\overline{\Sigma}_1}$  and $\t Z_{\overline{\Sigma'}_1}$ intersect in the copy $\t Z_{\overline {\Sigma}_1 \cap \overline{\Sigma'}_1}$, which contains
 $\t Z_{\Sigma_1 \cap \Sigma'_1}$, since $\t Z_{\Sigma_1 \cap \Sigma'_1}$ lies in both $\t Z_{\overline{\Sigma}_1}$ and $\t Z_{\overline{\Sigma'}_1}$.
The copy $\t Z_{\overline
   \Sigma_1 \cap \overline{\Sigma'}_1}$ intersects $\t Z_\Delta$, and
 so the Matching Property implies that $\t Z_{\Sigma_1
   \cap \Sigma'_1}$ intersects $\t Z_\Delta$ as well. The Intersection Axiom implies that this intersection is a copy of $\t X_{ \Sigma_1 \cap \Sigma'_1 \cap \Delta}$. This in turn implies that $\t R_{\Sigma_2}$ and $\t R_{\Sigma'_2}$ intersect, and we have already shown above that in this case they intersect in a standard copy of $\t X_{\Sigma_2 \cap \Sigma'_2}$. The union of this copy with $\t Z_{\Sigma_1 \cap \Sigma'_1}$ is by construction a standard copy of $\t X_{\Sigma \cap \Sigma'}$ in $\t X_\Omega$, and again by construction it is the image of a standard copy of $\t X_{\Sigma \cap \Sigma'}$ in $\t X_{\Sigma }$ and $\t X_{\Sigma'}$.

\item[(3,4)] The standard copy $\t Y_\Sigma$ is obtained from $\t Z_{\Sigma_1}$ and $\t R_{\Sigma_2}$. If $\t Y_{\Sigma'}$ intersects $\t Z_{\Sigma_1}$, then we apply case (2,4) to the triple $\Sigma_1, \Sigma', \Omega$, and see that $\t Y_{\Sigma'}$ intersects $\t Z_{\Sigma_1}$ in a copy of $\t X_{\Sigma_1 \cap \Sigma'} = \t X_{\Sigma \cap \Sigma'}$.

If $\t Y_{\Sigma'}$ intersects $\t R_{\Sigma_2}$, then we apply case (2,4) to the triple $\Sigma_2, \Sigma', \Omega$, and see that $\t Y_{\Sigma'}$ intersects $\t R_{\Sigma_2}$ in a copy of $\t X_{\Sigma_1 \cap \Sigma'} = \t X_{\Sigma \cap \Sigma'}$.

If $\t Y_{\Sigma'}$ intersects both $\t Z_{\Sigma_1}$ and $\t R_{\Sigma_2}$, then the two copies we obtained intersect. But they are copies of the same complex, and hence they coincide.

\item[(4,4)] In case (2,4) we have shown that if $\t Y_{\Sigma'}$ intersects $\t Z_{\Omega_1}$, then it lies within as a standard copy; the analogous statement holds for $\t R_{\Omega_2}$. Now the standard copies $\t Y_\Sigma$ and $\t Y_{\Sigma'}$ intersect, and hence they both lie in $\t Q$ as standard copies, where $\t Q$ is $\t Z_{\Omega_1}$ or $\t R_{\Omega_2}$. But now we just need to apply the Intersection Axiom in $\X'$ or $\X''$.
\end{itemize}

\subsubsection*{System Intersection Axiom}
Take a subsystem $\P \subseteq \L^\phi$ closed under taking unions. If
all elements of $\P$ lie in $\Gamma'$ or in $\Delta \cup \Theta$, then
we are done (from the System Intersection Axiom of $\X'$ or
$\X''$). So let us suppose this is not the case, that is suppose that
there exists $\Sigma \in \P$ satisfying \eqref{eqn: star} and $\Sigma \neq \Delta \cup \Theta$. Hence $\bigcup \P \neq \Delta \cup \Theta$ satisfies \eqref{eqn: star}.

Define
\begin{eqnarray*} \P' &=&  \{ \Sigma \in \P \mid \Theta \subseteq \Sigma \} \\
\overline \P' &=& \{ \overline \Sigma = \Sigma \cup \partial \Gamma' \mid \Sigma \in \P'  \}
\end{eqnarray*}
and $\overline \P = \P \cup \overline \P'$. Observe that $\overline \P' \subseteq \L^\phi$, since all graphs in $\P'$ satisfy \eqref{eqn: star}, and so for any $\Sigma' \in \P'$ we have $\overline {\Sigma'} \in \S \subseteq \L^\phi$.

\begin{claim}
$\overline \P$ is closed under taking unions.
\end{claim}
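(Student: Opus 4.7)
The plan is a straightforward case analysis depending on where the two elements of $\overline{\P}$ reside. Take $\Sigma, \Sigma' \in \overline{\P}$; we want to show that $\Sigma \cup \Sigma' \in \overline{\P}$. There are three cases, using the symmetry of the union.

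First, if both $\Sigma, \Sigma' \in \P$, then $\Sigma \cup \Sigma' \in \P \subseteq \overline{\P}$, since $\P$ is assumed closed under unions.

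Second, suppose $\Sigma \in \P$ and $\Sigma' \in \overline{\P'}$, so $\Sigma' = \Xi \cup \partial \Gamma'$ for some $\Xi \in \P'$. Then $\Sigma \cup \Xi \in \P$ because $\P$ is closed under unions, and it contains $\Theta$ because $\Xi$ does; hence $\Sigma \cup \Xi \in \P'$. Since the overlining operation $\Omega \mapsto \Omega \cup \partial \Gamma'$ distributes over unions,
\[ \Sigma \cup \Sigma' = \Sigma \cup \Xi \cup \partial \Gamma' = \overline{\Sigma \cup \Xi} \in \overline{\P'} \subseteq \overline{\P}. \]
Third, if both $\Sigma = \overline{\Xi}$ and $\Sigma' = \overline{\Xi'}$ lie in $\overline{\P'}$ with $\Xi, \Xi' \in \P'$, then $\Xi \cup \Xi' \in \P$ and contains $\Theta$, hence lies in $\P'$, and
\[ \Sigma \cup \Sigma' = \overline{\Xi} \cup \overline{\Xi'} = \overline{\Xi \cup \Xi'} \in \overline{\P'} \subseteq \overline{\P}. \]

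There is no real obstacle here; the claim is a bookkeeping check that verifies the overlining operation commutes with unions and preserves the defining property $\Theta \subseteq \Sigma$ of $\P'$. The only thing worth remarking is that one also implicitly uses $\overline{\P'} \subseteq \L^\phi$ (already established in the line immediately preceding the claim), which ensures that the unions we produce genuinely live in $\overline{\P} \subseteq \L^\phi$.
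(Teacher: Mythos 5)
Your proof is correct and matches the paper's argument essentially verbatim: the same three-way case split, with the key observation that union with $\partial\Gamma'$ distributes over unions and that $\P'$ is closed under unions because both $\P$ and the property $\Theta\subseteq\Sigma$ are. Your closing remark about $\overline{\P'}\subseteq\L^\phi$ being established just before the claim is a reasonable bit of bookkeeping the paper leaves implicit.
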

\begin{proof}
Take $\Sigma, \Sigma' \in \overline \P$. If both lie in $\P$ then we are done. Let us first suppose that $\Sigma \in \P$ and $\Sigma' \in \overline \P'$. Then $\Sigma' = \overline {\Sigma''}$ for some $\Sigma'' \in \P'$. Now
\[ \Sigma \cup \Sigma' = \Sigma \cup (\Sigma'' \cup \partial \Gamma')  = (\Sigma \cup \Sigma'') \cup \partial \Gamma' \in \overline \P' \]
since $\Theta \subseteq \Sigma \cup \Sigma'' \in \P$.
If both $\Sigma, \Sigma' \in \overline \P'$ then an analogous argument shows that $\Sigma \cup \Sigma' \in \overline \P'$.
\end{proof}

Now observe that $\overline P'$ is a subsystem of $\overline \P$, which is closed under taking unions.
Hence the same is true for systems $(\overline P)_{\Gamma'}$ and $(\overline P')_{\Gamma'}$.

\smallskip

We are now going to construct standard copies for all elements in $\P$, such that they all intersect non-trivially.

Observe that $\overline P' \subseteq \S$, and so for each element $\overline \Sigma \in (\overline P')_{\Gamma'}$ we are given a standard copy $\t Z_{\overline \Sigma}$ in $\t X_{\Gamma'}$ which contains $\t Z_\Delta$.

Let $\Sigma \in \overline P$. Then $\Sigma \cup \bigcap \overline P' \in \overline P'$, and hence $\Sigma_1 \cup \bigcap (\overline P')_{\Gamma'} \in (\overline P')_{\Gamma'}$, and so we are able to apply Lemma~\ref{lem: extending intersections} to the collection of standard copies we just discussed, and extend it by adding copies $\t Z_{\Sigma_1} $ of $\t X_{\Sigma_1}$ with $\Sigma \in \overline P$, such that all these copies intersect non-trivially.
Moreover, for every $\Sigma \in \P'$, the copy $\t Z_{\Sigma_1}$ will intersect $\t Z_\Delta$ (thanks to the Matching Property in $\t Z_{\overline \Sigma_1}$).

By this point we have constructed standard copies $\t Z_{\Sigma_1}$ for each $\Sigma \in \P$ which all intersect non-trivially. We will now extend these copies to copies of $\t X_\Sigma$.

Let $\Sigma \in \P'$.
If $\Sigma \not\subseteq \Delta \cup \Theta$, then  we can extend $\t Z_{\Sigma_1}$ to a standard copy of $\t X_\Sigma$ in $\t X_\Gamma$ by construction.

When $\Sigma \subseteq \Delta \cup \Theta$ (but still $\Sigma \in \P'$) we need to show that there exists a standard copy of $\t X_\Sigma$ in $\t X_\Gamma$ which contains $\t Z_{\Sigma_1}$.
By construction it is enough to find such a copy of $\t X_\Sigma$ in $\t X_{\Delta \cup \Theta}$, bearing in mind that $\t Z_{\Sigma_1}$ lies in $\t Z_\Delta = \t R_\Delta$.

By \eqref{eqn: star} we know that $\Delta
\cap \lk(\partial \Gamma') \subseteq \Sigma$.

The map $\t \iota_{\lk_{\Delta \cup \Theta}(\partial \Gamma'), \Delta \cup \Theta}$ is onto by the Product Axiom, and so there exists a standard copy of  $\t X_{\lk_{\Delta \cup \Theta}(\partial \Gamma')}$ which intersects $\t Z_{\Sigma_1}$.
But we have $\lk_{\Delta \cup \Theta}(\partial \Gamma') \subseteq
\Sigma$, and thus there exists a copy of $\t X_\Sigma$ in $\t X_{\Delta \cup \Theta}$ (since $\t X_{\Delta \cup \Theta}$ is a product) which contains the given copy of $\t X_{\lk_{\Delta \cup \Theta}(\partial \Gamma')}$, and thus is as required by the Intersection Axiom.

We have finished extending the copies for all $\Sigma \in \P'$. For all $\Sigma \in \P$ with $\Sigma = \Sigma_1$ we do not even need to extend.

\smallskip
It is still possible that there exists $\Sigma \in \P \s- \P'$ such that $\Sigma \not\subseteq \Gamma'$.
Such a $\Sigma$ must satisfy $\Sigma \subseteq \overline \Theta$ (by \eqref{eqn: star}) and $\Sigma \cap \Theta \not\in \{\emptyset, \Theta \}$. But then for all $\Sigma' \in \P$ we have $\Theta \subseteq \Sigma'$ or $\Sigma' \subseteq \overline \Theta$, as otherwise $\Sigma \cup \Sigma'$ would violate \eqref{eqn: star}. So in this situation $\P \s- \P' \subseteq \L^\phi_{\Delta \cup \Theta}$.

Consider the subsystem $\P'_{\Delta \cup \Theta}$ of $\P_{\Delta \cup \Theta}$. It is closed under taking unions, since $\P'$ is, and for each $\Sigma \in \P$ we have $\Sigma_2 \cup \bigcap \P'_{\Delta \cup \Theta} \in \P'_{\Delta \cup \Theta}$. Now our standard copies of $\t X_\Sigma$ for $\Sigma \in \P'$ give us (by the Intersection Axiom) standard copies of $\t X_{\Sigma_2}$ in $\t X_{\Delta \cup \Theta}$ which intersect in a standard copy of $\t X_{(\bigcap \P')_2}$, and hence non-trivially. Lemma~\ref{lem: extending intersections} gives us a collection of standard copies of $\t X_{\Sigma_2}$ in $\t X_{\Delta \cup \Theta}$ for all $\Sigma \in \P$, which contains the previously discussed collection, and such that all of these standard copies intersect non-trivially. For each $\Sigma \in \P \s- \P'$ we have $\Sigma = \Sigma_2$, and the standard copy of $\t X_\Sigma$ in $\t X_{\Delta \cup \Theta}$ becomes a standard copy in $\t X_\Gamma$ by construction. \qed

\bibliographystyle{math}
\bibliography{raags}

\begin{thebibliography}{HOP}

\bibitem[Ago]{Agol2013}
Ian Agol.
\newblock {The virtual {H}aken conjecture}.
\newblock {\em Doc. Math.} {\bf 18}(2013), 1045--1087.
\newblock With an appendix by Agol, Daniel Groves, and Jason Manning.

\bibitem[BB]{BestvinaBrady1997}
Mladen Bestvina and Noel Brady.
\newblock {Morse theory and finiteness properties of groups}.
\newblock {\em Invent. Math.} {\bf 129}(1997), 445--470.

\bibitem[BV]{bridsonvogtmann2011}
Martin~R. Bridson and Karen Vogtmann.
\newblock {Actions of automorphism groups of free groups on homology spheres
  and acyclic manifolds}.
\newblock {\em Comment. Math. Helv.} {\bf 86}(2011), 73--90.

\bibitem[CSV]{charneyetal2012}
Ruth Charney, Nathaniel Stambaugh, and Karen Vogtmann.
\newblock {Outer space for untwisted automorphisms of right-angled {A}rtin
  groups}.
\newblock {\em {arXiv}:1212.4791}.

\bibitem[Cul]{culler1984}
Marc Culler.
\newblock {Finite groups of outer automorphisms of a free group}.
\newblock In {\em Contributions to group theory}, volume~33 of {\em Contemp.
  Math.}, pages 197--207. Amer. Math. Soc., Providence, RI, 1984.

\bibitem[HK]{HenselKielak2016a}
Sebastian Hensel and Dawid Kielak.
\newblock {Relative Nielsen realisation for free products}.
\newblock {\em ar{X}iv:1601.02187}.

\bibitem[HOP]{Henseletal2014}
Sebastian Hensel, Damian Osajda, and Piotr Przytycki.
\newblock {Realisation and dismantlability}.
\newblock {\em Geom. Topol.} {\bf 18}(2014), 2079--2126.

\bibitem[Khr]{khramtsov1985}
D.~G. Khramtsov.
\newblock {Finite groups of automorphisms of free groups}.
\newblock {\em Mat. Zametki} {\bf 38}(1985), 386--392, 476.

\bibitem[Kie1]{kielak2013}
Dawid Kielak.
\newblock {Outer automorphism groups of free groups: linear and free
  representations}.
\newblock {\em J. London Math. Soc.} {\bf 87}(2013), 917--942.

\bibitem[Kie2]{kielak2015a}
Dawid Kielak.
\newblock {Low-dimensional free and linear representations of
  {$\mathrm{Out}(F_3)$}}.
\newblock {\em J. Group Theory} {\bf 18}(2015), 913--949.

\bibitem[Lau]{Laurence1995}
Michael~R. Laurence.
\newblock {A generating set for the automorphism group of a graph group}.
\newblock {\em J. London Math. Soc. (2)} {\bf 52}(1995), 318--334.

\bibitem[Ser]{Servatius1989}
Herman Servatius.
\newblock {Automorphisms of graph groups}.
\newblock {\em J. Algebra} {\bf 126}(1989), 34--60.

\bibitem[Zim]{Zimmermann1981}
Bruno Zimmermann.
\newblock {\"{U}ber {H}om\"oomorphismen {$n$}-dimensionaler {H}enkelk\"orper
  und endliche {E}rweiterungen von {S}chottky-{G}ruppen}.
\newblock {\em Comment. Math. Helv.} {\bf 56}(1981), 474--486.

\end{thebibliography}

\end{document}